\newlength{\bibitemsep}\setlength{\bibitemsep}{.2\baselineskip plus .05\baselineskip minus .05\baselineskip}
\newlength{\bibparskip}\setlength{\bibparskip}{0pt}
\let\oldthebibliography\thebibliography
\renewcommand\thebibliography[1]{%
  \oldthebibliography{#1}%
  \setlength{\parskip}{\bibitemsep}%
  \setlength{\itemsep}{\bibparskip}%
}
\theoremstyle{plain}
\newtheorem*{prop}{Proposition}
\newtheorem{thm}{Theorem}
\newtheorem*{lem}{Lemma}
\newtheorem*{cor}{Corollary}
\theoremstyle{definition}
\newtheorem*{example}{Example}
\newtheorem*{defn}{Definition}
\newtheorem*{rem}{Remark}
\theoremstyle{remark}
\newcommand{\lie}[1]{\mathfrak{#1}}
\newcommand\bz{\mathbb Z}
\newcommand{\ev}{\operatorname{ev}}
\newcommand\binko[2]{\genfrac{[}{]}{0pt}{}{#1}{#2}}
\def\gr{\operatorname{gr}}
\newenvironment{pf}{\proof}{\endproof}
\newcounter{cnt}
\def\mydggeometry{\makeatletter\dg@YGRID=1\dg@XGRID=20\unitlength=0.003pt\makeatother}
\makeatother \theoremstyle{remark}
\numberwithin{equation}{section}
\def\section{\def\@secnumfont{\mdseries}\@startsection{section}{1}%
  \z@{.7\linespacing\@plus\linespacing}{.5\linespacing}%
  {\normalfont\scshape\centering}}
\def\subsection{\def\@secnumfont{\bfseries}\@startsection{subsection}{2}%
  {\parindent}{.5\linespacing\@plus.7\linespacing}{-.5em}%
  {\normalfont\bfseries}}
\begin{document}

\title[Parabolic quantum affine algebras]{Parabolic quantum affine algebras}
\author{Kudret Bostanci}
\address{University of Bochum, Faculty of Mathematics, Universit{\"a}tsstr. 150, 44801 Bochum, 
Germany}
\email{kudret.bostanci@rub.de}
\thanks{}
\author{Deniz Kus}
\address{Technical University of Munich, TUM School of Computation, Information and Technology, Department of Mathematics, Boltzmannstr. 3, 85748 Garching bei München, 
Germany}
\email{deniz.kus@tum.de}
\thanks{}

\subjclass[2020]{}
\begin{abstract} 
Maximal parabolic subalgebras of untwisted affine Kac–Moody algebras were studied in the context of Borel–de Siebenthal theory in \cite{CKO18}, where they were realized as certain equivariant map algebras with a non-free abelian group action. In this paper, we show that this perspective naturally extends to non-maximal parabolic subalgebras and introduce their quantum analogues - called parabolic quantum affine algebras — in analogy with ordinary quantum affine algebras and their classical counterpart, the loop algebra.
While the definition in the Drinfeld–Jimbo presentation is straightforward, the realization in Drinfeld’s second presentation requires quantum root vectors associated not only to simple roots but also to certain non-simple roots. A distinguished positive root $\gamma_0$ plays a central role in all constructions. Along the way, we construct a PBW-type basis, establish a second triangular decomposition, and determine the action of the braid group on the Cartan part of the algebra via Lusztig’s automorphisms.
Finally, we classify the finite-dimensional irreducible representations under a technical condition on $\gamma_0$, referred to as repetition-free, in terms of Drinfeld polynomials with some additional data. The key difference from the ordinary quantum affine case is that the degrees of the polynomials are only bounded by a certain highest weight, rather than being uniquely determined by it. In the maximal parabolic case, the classification can alternatively be phrased in terms of Drinfeld polynomials satisfying certain divisibility conditions.
\end{abstract}

\maketitle

\section{Introduction}
Quantized enveloping algebras were independently introduced by Drinfeld and Jimbo in the context of integrable systems and solvable lattice models. They provide a systematic approach to constructing solutions of the quantum Yang–Baxter equation and have since found profound connections with various areas of mathematics and physics, including statistical mechanics, dynamical systems, quiver varieties, and cluster algebras, among others. The representation theory of quantum affine algebras - that is, quantized enveloping algebras associated with affine Kac–Moody algebras - has been intensively studied over the past decades. Nevertheless, the understanding of the structure of their finite-dimensional representations remains limited, except in a few special cases.
Using Drinfeld’s second realization, rigorously established in \cite{B94}, the finite-dimensional irreducible representations of quantum loop algebras were classified by Chari and Pressley \cite{CPsl2,CP95} in terms of Drinfeld polynomials. This fundamental result quantizes the classical version of loop algebras developed in \cite{Cint86,CP95}.
Since then, finite-dimensional irreducible objects have been explored using various methods and approaches, including minimal affinizations, graded limits, cluster algebras, path descriptions of $q$-characters, and crystal bases (see, for example, \cite{BK24a,BC19a,BCKV22,C95min,H05ab,HL10a,HL13b,MY12a,N13b,N13a} and the references therein).

In another direction of research, substantial progress has been made in quantizing further important families of algebras and in studying their structure and representation theory. For an involutive automorphism of a finite-dimensional classical Lie algebra, quantized analogues of the universal enveloping algebra of the fixed-point subalgebra were constructed for example in \cite{Nou95a}; these are known as quantum symmetric pairs. A comprehensive theory of quantum symmetric pairs was later developed independently by Letzter \cite{L99a,L02b} for semisimple symmetric Lie algebras. In the infinite-dimensional setting of symmetrizable Kac–Moody algebras, various quantum symmetric pairs have appeared in the literature, such as the $q$-Onsager algebra \cite{BK05a} (which arises as a symmetry algebra in certain quantum integrable models), twisted $q$-Yangians \cite{MRS03a}, and twisted quantum loop algebras. A general theory for symmetrizable Kac–Moody algebras was developed in the seminal work of Kolb \cite{Ko14a}.

This direction has been further developed, for example in \cite{CLW21a,CLW21b,LWZ24a}, under the name of $\imath$-quantum groups. Many fundamental results - such as the theory of canonical bases, Hall algebra realizations, Drinfeld presentations, geometric realizations, and various categorifications of quantized enveloping algebras - have since been extended to the setting of $\imath$-quantum groups.

The motivation of this paper is to construct quantum analogues of another important family of algebras - namely, the parabolic subalgebras of untwisted affine Kac–Moody algebras, studied for example in \cite{CKO18} in the context of Borel–de Siebenthal pairs. These subalgebras can be realized as fixed-point subalgebras of a finite group action on the associated current algebra, where the action is induced by automorphisms of the first kind. In other words, they form examples of equivariant map algebras in the sense of \cite{NSS}.
The representation theory of equivariant map algebras depends crucially on whether the group acts freely on the underlying scheme. In the case of free actions, it has been shown that the representation theory essentially coincides with that of the current algebra (see, for example, \cite{FKKS11}). However, in the case of parabolic subalgebras, the group action is typically not free, and the order of the automorphism can be arbitrarily large; in particular, these automorphisms are not necessarily involutive. As a result, many interesting and non-trivial differences arise in the structure and classification of finite-dimensional irreducible representations.

Building on the classification results of \cite{NSS}, the main motivation of this paper is to extend this framework to the quantum setting - i.e., to define parabolic quantum affine algebras and determine their finite-dimensional irreducible representations. The definition in the Drinfeld-Jimbo picture (see Definition~\ref{maindefDJ}) is straightforward, and one way to justify it is by showing that the classical limit coincides with the corresponding enveloping algebra of the parabolic subalgebra (see Proposition~\ref{classli1}).

For the development of the representation theory, it is more natural to work within Drinfeld’s second realization, as is done for ordinary quantum affine algebras. However, the image under the Beck isomorphism involves quantum root vectors, defined using the braid group action of Lusztig, corresponding to non-simple roots which makes certain calculations more challenging. In particular, it involves a positive root $\gamma_0$ of the underlying simple Lie algebra, which belongs to a simple system of a reductive Lie subalgebra and plays a crucial role in our constructions. We determine how these root vectors arise from the braid group action in Proposition~\ref{rel3210} and identify the algebra generators of parabolic quantum affine algebras in Proposition~\ref{genalgim} in terms of quantum root vectors. These results are then applied to derive a PBW-type basis in Proposition~\ref{pbwpara}, following the approach of \cite{B94A}.

In order to study the theory of (pseudo) highest weight representations, we prove a second triangular decomposition in Corollary~\ref{trisecond} and determine the action of the braid group on the Cartan part (modulo certain terms) in Proposition~\ref{identfuerphil}. This action turns out to be described by the same formulas as in \cite{C02B}, where the action on the space of certain algebra maps is studied to obtain sufficient conditions for the cyclicity of tensor products. However, to be able to better control the modulo terms, we have to restrict ourselves to parabolic subalgebras where $\gamma_0$ is so-called repetition-free, and all Cartan matrix entries are $\geq -2$. This leads to Corollary~\ref{coxident1}, which is the only place where this restriction is necessary. If this corollary holds in general, then the classification picture holds as well.

Finally, the classification of the simple objects (under the aforementioned restrictions) in the category $\mathcal{C}_q$ of finite-dimensional type 1 representations (see Section~\ref{section7} for the appropriate definition of type 1 in this context) is given in terms of Drinfeld polynomials, along with additional data, in Theorem~\ref{mainthmrep}. The main difference to \cite{CP95} is that the degrees of the involved polynomials are only bounded by a certain highest weight, rather than being completely determined by it.

It would be interesting to extend this line of research by asking the same fundamental questions for quantum affine algebras in the parabolic setting. For instance, the study of Category $\mathcal{O}$ was done in \cite{HJ12a} for Borel subalgebras of quantum affine algebras; see also \cite{Ne25a} for an extension to the non-affine setting.

\textit{Organization of the paper:} In Section~\ref{section2}, we establish the main notations used throughout the paper and prove several results concerning finite Weyl groups. Section~\ref{section3} introduces parabolic subalgebras of Kac-Moody algebras and realizes them as twisted current algebras corresponding to automorphisms of the first kind. We also recall the classification of their finite-dimensional irreducible representations in terms of evaluation modules. Section~\ref{section4} is devoted to parabolic quantum affine algebras, focusing on the Drinfeld–Jimbo description, while Section~\ref{section5} explores the analogous Drinfeld realization in terms of quantum root vectors and identifies the algebra generators. In Section~\ref{section6}, we construct a PBW-type basis and provide a second triangular decomposition. Section~\ref{section7} derives useful identities in quantum affine algebras. Finally, in Section~\ref{section8}, we classify the irreducible finite-dimensional representations in the repetition-free case in terms of Drinfeld polynomials with some additional data.

\textit{Acknowledgement:} The authors are grateful for many stimulating and enlightening conversations with Jonathan Beck, Matheus Brito, David Hernandez and Adriano Moura.
\section{Preliminaries}\label{section2}
\subsection{} We denote by $\mathbb{Z}$, $\mathbb{Z}_+$, and $\mathbb{N}$ the set of integers, the set of non-negative integers and the set of positive integers respectively. Given two vector spaces $V,W$ over a field $\mathbb{K}$  the corresponding tensor product $V\otimes_{\mathbb K}W$ will be denoted just as $V\otimes W$. For an indeterminate $q$ and  $r \in \mathbb{Z},\ s,s'\in \mathbb{Z}_+$ with $s\geq s'$ we set
$$ [r]_{q} = \dfrac{q^r-q^{-r}}{q-q^{-1}},\quad [s]_{q}! = [s]_{q}\cdot [s-1]_{q}\cdots [1]_q,\quad \binko{s}{s'}_{q} = \dfrac{[s]_{q}!}{[s']_{q}![s-s']_{q}!}.$$
For a complex Lie algebra $\lie a$, we denote by $\mathbf{U}(\lie a)$ the corresponding universal enveloping algebra and recall that it is a Hopf algebra with  comultiplication $\Delta(x)=x\otimes 1+1\otimes x$ and antipode $S(x)=-x$ for all $x\in\lie a$. Given any commutative associative algebra $R$ over the complex numbers, we equip the tensor product $\lie a\otimes R $ with a Lie algebra structure defined by $$[x\otimes a, y\otimes b]=[x,y]\otimes ab,\ \ x,y\in\lie a, \ a,b\in R.$$
In the special case when $R$ is $\mathbb C[t]$ or $\mathbb C[t^{\pm}]$ we set $$\lie a[t]=\lie a\otimes \mathbb C[t],\ \ \lie a[t^{\pm}]=\lie a\otimes \mathbb C[t^{\pm }].$$

\subsection{} We denote by $A=(a_{ij})_{i,j}$ an indecomposable affine Cartan matrix, and let $\Gamma$ be the corresponding Dynkin diagram with the labelling of vertices as in Table Aff $1$ from \cite[pg.54]{K90}. Let $\mathring{\Gamma}$ be the Dynkin diagram obtained from $\Gamma$ by dropping the zeroth node and let $\mathring{A}$ be the Cartan matrix whose Dynkin diagram is $\mathring{\Gamma}$. Let $\lie g$ and $\mathring{\lie g}$  respectively be the untwisted affine Kac-Moody Lie algebra and the finite-dimensional simple Lie algebra associated to $A$ and $\mathring{A}$ respectively with index set $I=\{0,\dots,n\}$ and $\mathring{I}=I\setminus \{0\}$. We denote by $(\cdot ,\cdot )$ the normalized  invariant form on $\lie g$ (see \cite[Section 6.2]{K90}). We have 
$\lie g\cong \mathring{\lie g}[t^{\pm}]\oplus \mathbb{C}c\oplus \mathbb{C}d$ with Lie bracket 
\begin{align*} [x\otimes t^r+a_1c+b_1d, y\otimes t^s+a_2c+b_2d] = [x, y]\otimes t^{r+s}+&b_1s(y\otimes t^s)&\\&-b_2r(x\otimes t^r)+r\delta_{r, -s}(x ,y ) c.
\end{align*} for all
$x, y\in  \mathring{\lie g},\  r, s \in \mathbb{Z},\  a_1, a_2, b_1, b_2\in \mathbb{C}$, where $c$ is the canonical central element and $d$ is the derivation of $\lie g$. We fix a Cartan subalgebra $\mathring{\lie h}$ and a Borel subalgebra $\mathring{\lie b}$ 
of $\mathring{\lie g}$ with corresponding set of roots $\mathring{R}$ and positive roots $\mathring{R}^+$ respectively. Note that $$\lie h=\mathring{\lie h}\oplus \mathbb{C}c\oplus \mathbb{C}d$$
is a Cartan subalgebra of $\lie g$ and the standard Borel subalgebra of $\lie g$ is given by 
$$\lie b=(\mathring{\lie g}\otimes t \mathbb{C}[t])\oplus (\mathring{\mathfrak{b}}\otimes 1)\oplus \mathbb{C}c\oplus \mathbb{C}d.$$
We denote by $R$ and $R^+$ respectively the set of roots and positive roots of $\lie g$ respectively. We have the following explicit description
$$R=\{\alpha+r\delta: \alpha\in\mathring{R}, r\in \mathbb{Z}\}\sqcup \{r\delta: r\in \mathbb{Z}\backslash \{0\}\}$$
$$R^+=\{\alpha+r\delta: \alpha\in\mathring{R}, r\in \mathbb{N}\}\sqcup \mathring{R}^+\cup \{r\delta: r\in \mathbb{N}\}$$
where $\delta$ denotes the unique non-divisible positive imaginary root of $R$. 
\begin{rem} The form $(\cdot,\cdot)$ induces an isomorphism $\nu: \lie h\rightarrow \lie h^*$ and we keep the same notation for the induced form on $\lie h^*$. 
\end{rem}
In the rest of the paper we fix simple roots $\Pi=\{\alpha_0,\alpha_1,\dots,\alpha_n\}$ of $R$ such that $\mathring{\Pi}=\{\alpha_1,\dots,\alpha_n\}$ is a set of simple roots of $\mathring{R}$. Furthermore, let $\theta\in\mathring{R}^+$ and $\theta_s\in\mathring{R}^+$ respectively be the highest root and highest short root respectively of $\mathring{\lie g}$ and let $\{x^\pm _\alpha,  h_i :\alpha\in \mathring{R}^+, i\in \mathring{I}\}$ be a Chevalley basis of $\mathring{\lie g}$. We set for simplicity $x^\pm_i=x^{\pm}_{\alpha_i}$. For a root $\alpha\in R$ we denote its coroot by $\alpha^{\vee}$ and note that we have $\alpha^{\vee}_i=h_i$ for simple roots. Furthermore it will be convenient to introduce $d_{\alpha}=\frac{2}{(\alpha,\alpha)}$ for a real root $\alpha$.

Let $ P$ (resp. $P^+$) be the $\bz$-span of $\delta $ and the $\bz$--span (resp. $\bz_+$--span) of the affine fundamental weights $\Lambda_0,\dots,\Lambda_n$ defined by
$$\Lambda_0(c)=1,\ \ \Lambda_0(\lie h\oplus\mathbb Cd)=0,\ \ \Lambda_i(\alpha^{\vee}_j)=\delta_{i,j},\ \ i\in \mathring{I},\ \  j\in I.$$
Let $Q$ (resp. $Q^+)$ be the $\mathbb Z$--span (resp. $\mathbb Z_+$--span) of the affine simple roots. The sets $\mathring{P},\mathring{Q}, \mathring{P}^+$ and $\mathring{Q}^+$ are defined analogously and we denote a basis of $\mathring{P}$ by $\{\varpi_1,\dots,\varpi_n\}$. Furthermore, we denote by $P^{\vee}$ and $Q^{\vee}$ the affine coweight and affine root lattice respectively and introduce similarly $\mathring{P}^{\vee}$ and $\mathring{Q}^{\vee}$. Let $ \mathbf{a}_i,\mathbf{a}^{\vee}_i,\mathrm{ht},\mathrm{ht}^{\vee}: Q\rightarrow \mathbb{Z}$ the functions determined by the formulas
$$\alpha=\sum_{i\in I}\mathbf{a}_i(\alpha)\alpha_i,\ \ \beta^{\vee}=\sum_{i\in I}\mathbf{a}^{\vee}_i(\alpha) \alpha_i^{\vee}, \ \ \mathrm{ht}(\alpha)=\sum_{i\in I} \mathbf{a}_i(\alpha),\ \ \mathrm{ht}^{\vee}(\alpha)=\sum_{i\in I} \mathbf{a}^{\vee}_i(\alpha),\ \ \ \alpha\in Q.$$
The support of $\alpha\in Q$ is defined by $\mathrm{supp}(\alpha)=\{i\in I: \mathbf{a}_i(\alpha)\neq 0\}$.
\subsection{} Let $\mathring{W}$ be the Weyl group of $\mathring{R}$ with longest word $w_0$ and recall that the affine Weyl group $W$ is the subgroup of  ${\rm{Aut}}(\lie h^*)$ generated by the set $\{\mathbf{s}_i: i\in I\}$ where $$\mathbf{s}_i(\lambda)=\lambda-\lambda(\alpha^{\vee}_i)\alpha_i, \ \ i\in I,\ \ \lambda\in  P.$$
Clearly $\mathring{W}$ is a subgroup of $W$ and we have an isomorphism $W=\mathring{W}\ltimes t_{M}$
 where  $t_{M}=\{t_\mu: \mu\in M\}$ and $M=\nu(\mathring{Q}^{\vee})$  is the lattice  generated by the elements $w(\theta)$ for all $w\in \mathring{W}$.  The element $t_{\mu}$ acts on $\beta\in \lie h^{*}$ by 
 $$ t_{\mu}(\beta) = \beta -  ( \beta,\mu )\delta,\ \ \beta\in\mathring{\mathfrak{h}}^*\oplus \mathbb{C}\delta,\ \ t_{\mu}(\Lambda_0)=\Lambda_0+\mu-\frac{1}{2}(\mu,\mu)\delta.$$ 
It will also be necessary to introduce the extended affine Weyl group $W^{\mathrm{ext}}= \mathring{W}\ltimes t_L$  
where
$$ L=\nu(\mathring{P}^{\vee}),\ \ M=\nu(\mathring{Q}^{\vee})\subseteq L$$
The extended affine Weyl group has the alternative description
$$W^{\mathrm{ext}}=\mathcal{T}\ltimes W$$
where $\mathcal{T}$ is the subgroup of $W$ stabilizing the dominant Weyl chamber. Recall that the elements of $\mathcal{T}$ correspond to automorphisms of the affine Dynkin diagram $\Gamma$ and (see \cite[Proposition 6]{Bou1})
$$\mathbf{a}_i(\theta)=1\implies t_{\nu(\varpi_i^{\vee})}=t_{\varpi_i}=\tau w_0w_{0,i},\  \tau(0)=i$$
where $w_{0,i}$ is the longest word in the stabilizer of $\varpi_i$ in $\mathring{W}$. As usual $\ell(\cdot)$ denotes the length function on the extended affine Weyl group and recall that elements in $\mathcal{T}$ have length zero. For $w\in W$, set 
$$I(w)=\{i\in I: \mathbf{s}_i \text{ appears in a reduced expression of $w$}\}$$
The following lemma should be well known, but we were unable to find a reference in the literature, so we include a proof for completeness.
\begin{lem}\label{heightmin}
Let $\alpha\in \mathring{R}^+$ and $w\in \mathring{W}$ be of minimal length such that $w(\alpha)$ is simple. Then we have \begin{equation}\label{ht15}\ell(w)=\begin{cases}\mathrm{ht}(\alpha)-1,& \text{ if $\alpha$ is a short root}\\
\mathrm{ht}^{\vee}(\alpha)-1,& \text{ if $\alpha$ is a long root.}
\end{cases}\end{equation}
\begin{proof}
If $\alpha$ is simple, there is nothing to show. So assume in the rest of the proof that $\alpha$ is not simple. We choose $i\in \mathring{I}$ such that $(\alpha,\alpha_i)>0$; otherwise we have that $\alpha=w_0(\theta)$ or $\alpha=w_0(\theta_s)$ contradicting $\alpha\in \mathring{R}^+$.
Hence $\alpha-\alpha_i$ is a root and
\begin{equation}\label{rew}\frac{2(\alpha,\alpha_i)}{(\alpha_i,\alpha_i)}=\frac{(\alpha,\alpha)}{(\alpha_i,\alpha_i)}-\frac{(\alpha-\alpha_i,\alpha-\alpha_i)}{(\alpha_i,\alpha_i)}+1.\end{equation}
If $\alpha$ is a short root, we claim that $\alpha-\alpha_i$ is also short. Assume that $\alpha-\alpha_i$ is long, then we must have that $\alpha_i$ is short (see for example \cite[Lemma 2.1]{KV20}) which gives with \eqref{rew} the contradiction $(\alpha_i,\alpha_i)\leq 0$. So if 
$\alpha$ is a short root we have $\mathbf{s}_i(\alpha)=\alpha-\alpha_i$. Exactly in the same way using \cite[Lemma 2.1]{KV20} we get $\mathbf{s}_i(\alpha)=\alpha-d_i\alpha_i$ if  $\alpha$ is long.
Now we can repeat the above argument until we end with a simple root:
$$\mathbf{s}_{i_1}\cdots\mathbf{s}_{i_k}(\alpha)=\begin{cases}
\alpha-\sum_{p=1}^k \alpha_{i_p},& \text{if $\alpha$ is short}\\ \\
\displaystyle\alpha-\sum_{p=1}^k d_{i_p}\alpha_{i_p},& \text{if $\alpha$ is long.}
\end{cases}$$
This gives the desired upper bound for $\ell(w)$. Now the other estimation is clear since the same calculation as above gives
$$\frac{2(\beta,\alpha_j)}{(\alpha_j,\alpha_j)}=\begin{cases}\pm 1,& \text{ if $\beta$ is short}\\
\pm d_j,& \text{ if $\beta$ is long}\end{cases}$$
for all non-simple positive roots $\beta$ with $(\beta,\alpha_j)\neq 0$. Note that if $w=\mathbf{s}_{q_1}\cdots \mathbf{s}_{q_r}$ is a reduced expression we can assume in each step $(\mathbf{s}_{q_{r-p}}\cdots \mathbf{s}_{q_r}(\alpha),\alpha_{q_{r-p-1}})\neq 0$ and $\mathbf{s}_{q_{r-p}}\cdots \mathbf{s}_{q_r}(\alpha)\notin\mathring{\Pi}$ by the minimality of $w$. 
\end{proof}
\end{lem}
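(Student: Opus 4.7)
The plan is to establish \eqref{ht15} by matching upper and lower bounds for $\ell(w)$. The upper bound would be constructive: I would build an explicit $w$ step by step by choosing, at each stage, a simple reflection that strips off exactly one unit from $\alpha$. The lower bound then follows from a height/coheight decrease argument along any reduced expression, using the minimality of $w$.

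For the upper bound, I would induct on $\mathrm{ht}(\alpha)$, respectively $\mathrm{ht}^{\vee}(\alpha)$. Assuming $\alpha\notin\mathring{\Pi}$, the first step is to locate $i\in\mathring{I}$ with $(\alpha,\alpha_i)>0$. Such an $i$ must exist, for otherwise $\alpha$ would coincide with the unique antidominant root of its length class, i.e.\ $\alpha=w_0(\theta)$ or $\alpha=w_0(\theta_s)$, contradicting $\alpha\in\mathring{R}^+$. With $i$ fixed, $\alpha-\alpha_i$ is automatically a root, and expanding $(\alpha-\alpha_i,\alpha-\alpha_i)$ expresses $\tfrac{2(\alpha,\alpha_i)}{(\alpha_i,\alpha_i)}$ in terms of the length classes of $\alpha$, $\alpha_i$, and $\alpha-\alpha_i$. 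A root-length analysis (along the lines of \cite[Lemma 2.1]{KV20}) rules out the configuration in which $\alpha$ is short while $\alpha-\alpha_i$ is long, and constrains the corresponding configuration in the long case. Together, these force $\mathbf{s}_i(\alpha)=\alpha-\alpha_i$ when $\alpha$ is short and $\mathbf{s}_i(\alpha)=\alpha-d_i\alpha_i$ when $\alpha$ is long. Since $\mathbf{s}_i$ is an isometry, the length class is preserved. Iteration then drops the height by exactly $1$ at each step in the short case, respectively the coheight by exactly $1$ in the long case (the factors $d_i$ and $(\alpha_i,\alpha_i)/(\alpha,\alpha)$ combining to give unit contribution in $\mathrm{ht}^{\vee}$), and the process terminates at a simple root after the claimed number of steps.

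For the lower bound, fix any reduced expression $w=\mathbf{s}_{q_1}\cdots\mathbf{s}_{q_r}$ with $w(\alpha)\in\mathring{\Pi}$. At every intermediate stage $\beta:=\mathbf{s}_{q_{r-p}}\cdots\mathbf{s}_{q_r}(\alpha)$, the minimality of $w$ ensures $\beta\notin\mathring{\Pi}$ and $(\beta,\alpha_{q_{r-p-1}})\neq 0$. A symmetric case-by-case computation then gives $|\langle\beta,\alpha_{q_{r-p-1}}^{\vee}\rangle|\in\{1,d_{q_{r-p-1}}\}$, so each simple reflection in the reduced expression can decrease the height (resp.\ coheight) by at most $1$. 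Summing yields $r\geq\mathrm{ht}(\alpha)-1$ in the short case and $r\geq\mathrm{ht}^{\vee}(\alpha)-1$ in the long case, matching the upper bound.

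The main obstacle will be the short/long case analysis ensuring that $\mathbf{s}_i$ acts by subtracting exactly the right multiple of $\alpha_i$ and preserves the length class; once this is settled, the height/coheight bookkeeping is routine and both inequalities fall into place.
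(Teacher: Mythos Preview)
Your proposal is correct and follows essentially the same approach as the paper's own proof: both obtain the upper bound by iteratively choosing $i$ with $(\alpha,\alpha_i)>0$ and using the root-length analysis (via \cite[Lemma 2.1]{KV20}) to control $\mathbf{s}_i(\alpha)$, and both deduce the lower bound by showing that along a minimal reduced expression each reflection changes $\mathrm{ht}$ (resp.\ $\mathrm{ht}^{\vee}$) by exactly one. The identification of the short/long case analysis as the crux also matches the paper.
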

For $\alpha\in\mathring{R}^+$ we set 
$$\mathring{W}(\alpha)=\{w\in \mathring{W}: w(\alpha)\in\mathring{\Pi} \text{ and } \ell(w) \text{ as in \eqref{ht15}}\}$$
$$\mathring{W}_\alpha=\{w\in \mathring{W}: w(\alpha)\in\{\theta,\theta_s\} \text{ and $w$ of minimal length if $\alpha$ is short}\}$$
\begin{rem}\label{choosed76}
Given a short root $\alpha$ and $w^{-1}\in \mathring{W}(\alpha)$ there exists always $u\in \mathring{W}_{\alpha}$ such that $(uw)^{-1}\in\mathring{W}(\theta_s)$. This is a case-by-case analysis and we omit the details.
\end{rem}
We get the following corollary. 
\begin{cor}\label{corhilf4q}
Let $\alpha\in\mathring{R}^+$ and $w\in \mathring{W}(\alpha)$, say $w(\alpha)=\alpha_i$. Then we have
\begin{itemize}
    \item $(\alpha,\alpha_k)>0$ for all $k\in\mathring{I}$ with $\ell(w\mathbf{s}_k)=\ell(w)-1$ \vspace{0,1cm}
    
    \item $I(w)\sqcup \{i\}\subseteq \mathrm{supp}(\alpha)$. 
\end{itemize}
\begin{proof} The proof is by induction on $\ell(w)$ where the case $\ell(w)=0$ is obvious. So assume that 
$$w=w'\mathbf{s}_k,\ \ \ell(w)=\ell(w')+1.$$
Note that $\mathbf{s}_k(\alpha)\in \mathring{R}^+$ and the minimality of $w$ implies $w'\in\mathring{W}(\mathbf{s}_{k}(\alpha))$. If we had $(\alpha,\alpha_k)\leq 0$ we would obtain a contradiction by applying Lemma~\ref{heightmin}:
$$\ell(w)=\widetilde{\mathrm{ht}}(\alpha)-1=\ell(w')+1=\widetilde{\mathrm{ht}}(\alpha-\alpha(\alpha_k^{\vee})\alpha_k)\geq \widetilde{\mathrm{ht}}(\alpha).$$  
where $\widetilde{\mathrm{ht}}= \mathrm{ht}^{\vee}$ or $\widetilde{\mathrm{ht}}=\mathrm{ht}$
depending on whether $\alpha$ is short or long. Hence $(\alpha,\alpha_k)>0$ (in particular $k\in \mathrm{supp}(\alpha)$) and the first part follows. Our induction hypothesis gives also the second part of the claim
$$I(w')\sqcup \{i\}\subseteq \mathrm{supp}(\mathbf{s}_k(\alpha))\subseteq \mathrm{supp}(\alpha).$$
\end{proof}
\end{cor}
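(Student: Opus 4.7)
My plan is to prove both claims simultaneously by induction on $\ell(w)$. The base case $\ell(w)=0$ is immediate: then $\alpha=\alpha_i$ is simple, the first bullet is vacuous, and the second reduces to $\{i\}\subseteq \mathrm{supp}(\alpha_i)$. For the inductive step, fix a reduced expression of $w$ and write $w=w'\mathbf{s}_k$ with $\ell(w')=\ell(w)-1$. Since $w(\alpha)=\alpha_i$ is positive, so is $\mathbf{s}_k(\alpha)$, and $w'(\mathbf{s}_k(\alpha))=\alpha_i$; the missing input, needed before invoking the induction hypothesis, is that $w'$ has minimal length with respect to $\mathbf{s}_k(\alpha)$, which will be a byproduct of the first bullet.

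For the first bullet I argue by contradiction: suppose $(\alpha,\alpha_k)\leq 0$, so $\alpha(\alpha_k^{\vee})\leq 0$. Then $\mathbf{s}_k(\alpha)=\alpha-\alpha(\alpha_k^{\vee})\alpha_k$ satisfies $\widetilde{\mathrm{ht}}(\mathbf{s}_k(\alpha))\geq \widetilde{\mathrm{ht}}(\alpha)$, where $\widetilde{\mathrm{ht}}$ equals $\mathrm{ht}$ or $\mathrm{ht}^{\vee}$ depending on whether $\alpha$ is short or long; the same choice applies to $\mathbf{s}_k(\alpha)$ because $\mathring{W}$ preserves root length. Lemma~\ref{heightmin} applied to $w$ gives $\ell(w)=\widetilde{\mathrm{ht}}(\alpha)-1$, and applying it to any minimal-length element sending $\mathbf{s}_k(\alpha)$ to a simple root yields the lower bound $\widetilde{\mathrm{ht}}(\mathbf{s}_k(\alpha))-1\leq \ell(w')$. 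Chaining these gives
\[ \widetilde{\mathrm{ht}}(\alpha)-1 \;=\; \ell(w) \;=\; \ell(w')+1 \;\geq\; \widetilde{\mathrm{ht}}(\mathbf{s}_k(\alpha)) \;\geq\; \widetilde{\mathrm{ht}}(\alpha), \]
a contradiction. Hence $(\alpha,\alpha_k)>0$, which in particular forces $k\in \mathrm{supp}(\alpha)$ and upgrades the inequality to $\widetilde{\mathrm{ht}}(\mathbf{s}_k(\alpha))=\widetilde{\mathrm{ht}}(\alpha)-1$; consequently $w'\in \mathring{W}(\mathbf{s}_k(\alpha))$ and the induction hypothesis applies to $w'$.

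The induction hypothesis then gives $I(w')\cup\{i\}\subseteq \mathrm{supp}(\mathbf{s}_k(\alpha))$. Since $\mathbf{s}_k(\alpha)$ differs from $\alpha$ only in its $\alpha_k$-coefficient and $k\in \mathrm{supp}(\alpha)$ was just established, we have $\mathrm{supp}(\mathbf{s}_k(\alpha))\subseteq \mathrm{supp}(\alpha)$; combined with $I(w)=I(w')\cup\{k\}$ this yields $I(w)\cup\{i\}\subseteq \mathrm{supp}(\alpha)$, completing the induction. The main subtlety, which I view as the key technical point, is keeping consistent track of the short/long distinction in Lemma~\ref{heightmin}: because $\mathring{W}$ preserves root length, the same height function $\widetilde{\mathrm{ht}}$ appears on both sides of the length comparison, and it is precisely this consistency that makes the inequality chain in the first bullet collapse into a contradiction.
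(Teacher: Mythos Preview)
Your proof is correct and follows essentially the same approach as the paper: induction on $\ell(w)$, the decomposition $w=w'\mathbf{s}_k$, the contradiction argument via Lemma~\ref{heightmin} for the first bullet, and the inductive step $I(w')\cup\{i\}\subseteq\mathrm{supp}(\mathbf{s}_k(\alpha))\subseteq\mathrm{supp}(\alpha)$ for the second. The only cosmetic difference is that the paper deduces $w'\in\mathring{W}(\mathbf{s}_k(\alpha))$ directly from the minimality of $w$ (a shorter element for $\mathbf{s}_k(\alpha)$ would yield a shorter element for $\alpha$), whereas you route this through the equality $\widetilde{\mathrm{ht}}(\mathbf{s}_k(\alpha))=\widetilde{\mathrm{ht}}(\alpha)-1$; that equality is immediate from the computations inside the proof of Lemma~\ref{heightmin}, so both routes are fine.
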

\subsection{} We state one more lemma and its corollary, which will be used later.
\begin{lem}\label{domsup1} Let $\lambda, \mu \in \mathring{P}^+$ such that $\lambda- \mu \in \mathring{Q}^+$ and fix $w\in\mathring{W}$. Choose an element $w'\in \mathring{W}$ of minimal length such that $\lambda -w\mu = \lambda-w'\mu.$ Then we have
$$\lambda-w\mu \in  \sum_{k\in \  \text{supp}(\lambda-\mu)\hspace{0,03cm} \sqcup \hspace{0,03cm} I(w')}\mathbb{N} \alpha_k.$$

\begin{proof} If $\ell(w')=0$, there is nothing to show. So assume that $w' = \mathbf{s}_jw''$ and $\ell(w'')+1 = \ell(w').$ Then
$$ \lambda-w\mu= \lambda-w'\mu =\lambda - \mathbf{s}_jw''\mu = \lambda- w''\mu+ \mu((w'')^{-1}(\alpha_j^{\vee}))\alpha_j.$$
Since $\ell(\mathbf{s}_jw'') = \ell(w'')+1$ we know that $(w'')^{-1}(\alpha_j^{\vee})$ is a positive coroot. Furthermore $\mu \in \mathring{P}^+$ and the minimality of $w'$ yields $\mu((w'')^{-1}(\alpha_j^{\vee}))>0$. Hence we can repeat the above procedure with $\lambda- w''\mu$ to finish the proof provided that there is no element $u\in\mathring{W}$ with $\lambda - w''\mu = \lambda -u\mu$ and $\ell(u) < \ell(w'')$. However the existence of such an element would give 
$$\lambda - w''\mu = \lambda -u\mu \implies \mathbf{s}_j\lambda - w'\mu = \mathbf{s}_j\lambda - \mathbf{s}_ju\mu \implies \lambda - w'\mu = \lambda - \mathbf{s}_ju\mu$$
which is a contradiction to the minimality of $w'$ since
$$\ell(w') \leq \ell(\mathbf{s}_ju) \leq \ell(u)+1 < \ell(w'')+1 = \ell(w').$$
\end{proof}
\end{lem}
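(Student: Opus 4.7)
The plan is to induct on $\ell(w')$. The base case $\ell(w')=0$ is immediate: here $w'=e$ (or lies in the stabilizer of $\mu$, so $w'\mu=\mu$), hence $\lambda-w\mu=\lambda-\mu$, which lies in $\sum_{k\in\mathrm{supp}(\lambda-\mu)}\mathbb{N}\alpha_k$ by the very definition of the support and the hypothesis $\lambda-\mu\in\mathring{Q}^+$.

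For the inductive step, write $w'=\mathbf{s}_j w''$ with $\ell(w')=\ell(w'')+1$. I would then compute
\[
\lambda-w\mu \;=\; \lambda-w'\mu \;=\; \lambda-\mathbf{s}_j w''\mu \;=\; \bigl(\lambda-w''\mu\bigr)+\mu\bigl((w'')^{-1}(\alpha_j^\vee)\bigr)\,\alpha_j,
\]
so the problem reduces to controlling the scalar $\mu((w'')^{-1}(\alpha_j^\vee))$ and to applying the inductive hypothesis to the vector $\lambda-w''\mu$. For the scalar: since $\ell(\mathbf{s}_j w'')=\ell(w'')+1$, the coroot $(w'')^{-1}(\alpha_j^\vee)$ is positive; combined with $\mu\in\mathring{P}^+$, this gives $\mu((w'')^{-1}(\alpha_j^\vee))\in\mathbb{Z}_{\geq 0}$. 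Thus $j$ contributes non-negatively, and of course $j\in I(w')$.

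To apply induction, I would need to know that $w''$ itself is the minimal length element in its coset $\{u\in\mathring{W}:u\mu=w''\mu\}$; this is the one genuinely delicate point and the main obstacle. I would argue by contradiction: if some $u\in\mathring{W}$ satisfies $u\mu=w''\mu$ and $\ell(u)<\ell(w'')$, then $\mathbf{s}_j u\,\mu=\mathbf{s}_j w''\mu=w'\mu$, while $\ell(\mathbf{s}_j u)\leq \ell(u)+1<\ell(w'')+1=\ell(w')$, contradicting the minimality of $w'$. In particular this forces $\mu((w'')^{-1}(\alpha_j^\vee))>0$ (otherwise $\mathbf{s}_j w''\mu=w''\mu$ would give a shorter element), so the contribution of $\alpha_j$ lies in $\mathbb{N}\alpha_j$ with $j\in I(w')$.

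With $w''$ shown to be minimal for its own problem, the inductive hypothesis gives $\lambda-w''\mu\in\sum_{k\in\mathrm{supp}(\lambda-\mu)\sqcup I(w'')}\mathbb{N}\alpha_k$, and since $I(w'')\subseteq I(w')$, adding the non-negative multiple of $\alpha_j$ yields the desired conclusion. The only real subtlety is the minimality transfer from $w'$ to $w''$; once that is nailed down, everything else is bookkeeping.
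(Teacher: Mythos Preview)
Your proposal is correct and follows essentially the same approach as the paper's proof: induction on $\ell(w')$, writing $w'=\mathbf{s}_j w''$ with $\ell(w')=\ell(w'')+1$, showing the added coefficient of $\alpha_j$ is strictly positive via the minimality of $w'$, and transferring minimality from $w'$ to $w''$ by the same $\mathbf{s}_j u$ contradiction. Your explicit mention of $I(w'')\subseteq I(w')$ to close the induction is a detail the paper leaves implicit but is exactly what is needed.
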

\begin{cor}\label{observation1}
Let $\beta\in\mathring{R}^+$. There exists $w'\in \mathring{W}_{\beta}$ with $$I(w') \subseteq \mathring{I}\cap 
\mathrm{supp}(-\beta+\delta).$$
\end{cor}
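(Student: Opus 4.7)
Observe that $\delta = \alpha_0 + \theta$ in the untwisted affine setting, so $-\beta + \delta = \alpha_0 + (\theta - \beta)$ and hence $\mathring{I} \cap \mathrm{supp}(-\beta + \delta) = \mathrm{supp}(\theta - \beta)$. Since $\theta - \theta_s \in \mathring{Q}^+$, we also have $\mathrm{supp}(\theta_s - \gamma) \subseteq \mathrm{supp}(\theta - \gamma)$ whenever $\theta_s - \gamma \in \mathring{Q}^+$, so it suffices to construct $w' \in \mathring{W}_\beta$ with $I(w') \subseteq \mathrm{supp}(w'(\beta) - \beta)$. If $\beta \in \{\theta, \theta_s\}$ (the highest root in the length class of $\beta$), we take $w' = 1$, so we may assume $\beta$ is not a highest root in its length class.

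The plan is to build $w'$ by a greedy procedure dual to the one in the proof of Lemma~\ref{heightmin}. Since $\theta$ and $\theta_s$ are the only dominant positive roots in their respective length classes, our assumption on $\beta$ implies that $\beta$ is not dominant, so there exists $k_1 \in \mathring{I}$ with $(\beta, \alpha_{k_1}) < 0$. The formulas established in the proof of Lemma~\ref{heightmin} then yield $\mathbf{s}_{k_1}(\beta) = \beta + \alpha_{k_1}$ if $\beta$ is short, and $\mathbf{s}_{k_1}(\beta) = \beta + d_{k_1}\alpha_{k_1}$ if $\beta$ is long, so in either case $\mathbf{s}_{k_1}(\beta)$ is a positive root of the same length class and of strictly larger height. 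Iterating produces a sequence $\beta = \beta_0, \beta_1, \ldots, \beta_r$ of positive roots with $\beta_i = \mathbf{s}_{k_i}(\beta_{i-1})$, and since heights strictly increase and are bounded the process terminates at the highest root of the same length class, namely $\beta_r = \theta$ (long case) or $\beta_r = \theta_s$ (short case). Setting $w' := \mathbf{s}_{k_r} \cdots \mathbf{s}_{k_1}$ gives $w'(\beta) = \beta_r \in \{\theta, \theta_s\}$.

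It remains to check that $w' \in \mathring{W}_\beta$ and to verify the support condition. In the long case any element sending $\beta$ to $\theta$ belongs to $\mathring{W}_\beta$, so there is nothing to check. In the short case one has $\ell(w') \leq r = \mathrm{ht}(\theta_s) - \mathrm{ht}(\beta)$, and the matching lower bound, obtained in the same spirit as the proof of Lemma~\ref{heightmin} from the fact that each simple reflection changes the height of a positive short root (other than a simple root itself) by at most one, forces $\ell(w') = r$ to be minimal. Finally, telescoping yields $\beta_r - \beta = \sum_{i=1}^r \alpha_{k_i}$ in the short case and $\beta_r - \beta = \sum_{i=1}^r d_{k_i}\alpha_{k_i}$ in the long case, a strictly positive integer combination in both situations, so $\{k_1, \ldots, k_r\} \subseteq \mathrm{supp}(\beta_r - \beta) \subseteq \mathrm{supp}(\theta - \beta)$; combined with the obvious inclusion $I(w') \subseteq \{k_1, \ldots, k_r\}$ (a reduced expression uses a submultiset of the letters in our expression for $w'$), this gives the claim. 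The only real subtlety is the minimality verification in the short case, which is where an analog of the lower bound from Lemma~\ref{heightmin} is required.
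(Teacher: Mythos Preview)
Your argument is correct and takes a different route from the paper. The paper deduces the corollary from Lemma~\ref{domsup1}: starting from any $u\in\mathring W_\beta$, it applies that lemma with $\lambda=\theta$, $\mu\in\{\theta,\theta_s\}$, $w=u^{-1}$, and the minimal-length element $w'$ produced there has $(w')^{-1}\in\mathring W_\beta$ and, from the proof of Lemma~\ref{domsup1}, $I(w')\subseteq\mathrm{supp}(\theta-\beta)$. So the paper's proof is a one-line application of an abstract lemma about $\mathring W$-orbits of dominant weights.

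Your proof instead builds $w'$ directly by the greedy ``climb to the highest root'' procedure, dual to the descent in Lemma~\ref{heightmin}. This is more explicit and avoids Lemma~\ref{domsup1} entirely, at the cost of needing the height lower bound in the short case. Your justification of that lower bound is a bit informal; a clean way to pin it down is: take $u^{-1}\in\mathring W(\beta)$ with $u^{-1}(\beta)=\alpha_i$, so $\ell(u)=\mathrm{ht}(\beta)-1$ by Lemma~\ref{heightmin}. For any $v$ with $v(\beta)=\theta_s$ one has $(u^{-1}v^{-1})(\theta_s)=\alpha_i$, hence $\ell(u^{-1}v^{-1})\geq\mathrm{ht}(\theta_s)-1$ again by Lemma~\ref{heightmin}, and the triangle inequality for length gives $\ell(v)\geq\mathrm{ht}(\theta_s)-\mathrm{ht}(\beta)$. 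This closes the only gap; the rest of your argument (telescoping to control the support, and $I(w')\subseteq\{k_1,\dots,k_r\}$ via Matsumoto) is fine.
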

\begin{proof}
We can always take an element $u\in\mathring{W}_{\beta}$. Now the claim follows from Lemma~\ref{domsup1} by choosing $\lambda=\theta, w=u^{-1}$ and $\mu=\theta$ (resp. $\mu=\theta_s$) if $\beta$ is long (resp. short).
\end{proof}
\section{Parabolic subalgebras and their representations}\label{section3}

\subsection{} In this subsection we recall the realization of parabolic subalgebras of $\lie g$ containing the standard Borel subalgebra $\lie b$ (see for example \cite[Lemma 3.4]{CKO18}). Let $J=\{j_1,\dots,j_k\} \subsetneq I$ be a proper subset of the index set of $\lie g$. We define $\mathfrak{p}_{J}$ to be the parabolic subalgebra associated to $J$, i.e. the subalgebra of $\lie g$ generated by $\lie b$ and $\lie g_{- \alpha_i}$ for all $ i \in J$. Recall that 
$$\lie g_{- \alpha_i}=\mathring{\lie g}_{- \alpha_i}=\mathbb{C} \cdot (x_{i}^-\otimes 1),\ \ i\in\mathring{I},\ \ \lie g_{- \alpha_0}=\mathbb{C} \cdot (x_{\theta}^+\otimes t^{-1})$$
and choose generators $x_{0}^{\pm}$ of $\lie g_{\pm \alpha_0}$ such that $[x_{0}^{+},x_0^{-}]$ is the coroot of $\alpha_0$. The following lemma has been proven in \cite[Lemma 3.4]{CKO18}.
\begin{lem}
Suppose that $\mathfrak{p}_J$ is a proper parabolic subalgebra of $\lie g$. Then 
$$\mathfrak{p}_J=\lie b + \sum_{\alpha}\  \lie g_{\alpha}$$
where the $\alpha's$ run over the set $(-R^+)\cap \sum_{i\in J}\mathbb{Z}\alpha_i$.
\hfill\qed
\end{lem}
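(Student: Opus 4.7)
The plan is to set $\mathfrak{p}'_J := \lie b + \sum_{\alpha \in S}\lie g_\alpha$, where $S=(-R^+)\cap \sum_{i\in J}\mathbb{Z}\alpha_i$, and prove the two inclusions $\mathfrak{p}_J\subseteq \mathfrak{p}'_J$ and $\mathfrak{p}'_J\subseteq \mathfrak{p}_J$ separately. Since $\mathfrak{p}'_J$ is defined as a sum of root spaces plus the Borel, the key technical work is to verify that it is a Lie subalgebra; once this is done, the defining generators of $\mathfrak{p}_J$ (namely $\lie b$ and $\lie g_{-\alpha_i}$ for $i\in J$) visibly lie in $\mathfrak{p}'_J$, and the first inclusion follows by generation.

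To check that $\mathfrak{p}'_J$ is closed under brackets, I would rely on the root space decomposition $[\lie g_\beta,\lie g_\gamma]\subseteq \lie g_{\beta+\gamma}$. Brackets inside $\lie b$ stay in $\lie b$. Brackets of two elements of $S$ yield root spaces indexed by sums in $\sum_{i\in J}\mathbb{Z}_{\leq 0}\alpha_i$; such a sum, if a nonzero root, is again a negative root supported in $J$, hence in $S$. The delicate case is bracketing $\lie g_\beta$ with $\lie g_\alpha$ for $\beta\in R^+$ and $\alpha\in S$: writing $\beta=\sum_{i\in I}m_i\alpha_i$ and $\alpha=-\sum_{i\in J}n_i\alpha_i$ with $m_i,n_i\geq 0$, the sum $\alpha+\beta$ is either zero, a positive root (absorbed by $\lie b$), a non-root (bracket is zero), or a negative root; in the last case, the coefficient constraint $m_i\leq 0$ for $i\notin J$ forces $m_i=0$, so $\alpha+\beta\in S$. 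This case analysis establishes the subalgebra property.

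For the reverse inclusion $\mathfrak{p}'_J\subseteq \mathfrak{p}_J$, I would observe that $\mathfrak{p}_J$ contains both $x_i^+\in\lie b$ and $x_i^-\in\lie g_{-\alpha_i}$ for every $i\in J$, together with the Cartan $\lie h\subseteq \lie b$. Hence $\mathfrak{p}_J$ contains the Kac--Moody subalgebra $\lie g(A_J)$ attached to the sub-Cartan matrix $A_J=(a_{ij})_{i,j\in J}$, which is of finite type since $A$ is indecomposable of affine type and $J\subsetneq I$. Invoking the standard root space decomposition of $\lie g(A_J)$, whose root system is $R\cap \sum_{i\in J}\mathbb{Z}\alpha_i$, we obtain $\lie g_\alpha\subseteq \lie g(A_J)\subseteq \mathfrak{p}_J$ for every $\alpha\in S$, completing the argument.

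The main obstacle is modest and lies in the bracket analysis of the first step, specifically the observation that a positive root $\beta$ whose sum with a root supported in $J$ remains a negative root must itself be supported in $J$; the remaining steps are direct consequences of the standard structure theory of Kac--Moody subalgebras attached to sub-diagrams of $\Gamma$.
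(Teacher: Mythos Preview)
Your argument is correct. The paper does not actually supply a proof of this lemma; it simply cites \cite[Lemma 3.4]{CKO18} and places a \qed, so there is no ``paper's own proof'' to compare against. Your two-inclusion strategy, with the bracket case analysis to verify that $\mathfrak{p}'_J$ is a subalgebra and the appeal to the sub-Kac--Moody algebra $\lie g(A_J)$ for the reverse inclusion, is exactly the standard argument one would expect, and all steps are sound.
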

\subsection{} Let $\tilde{\lie p}_J$ be the commutator subalgebra of $\lie p_J$ modulo the center. We define a $\mathbb{Z}_+$--grading on $\tilde{\lie p}_J$ by declaring that the weight space $\lie g_{\alpha}$ with $\lie g_{\alpha}\subseteq \tilde{\lie p}_J$ has grade 
$$\gr_J(\alpha)=\sum_{i\notin J}\mathbf{a}_i(\alpha).$$
We can decompose our subalgebra into graded components
$$\tilde{\lie p}_J=\bigoplus_{r\in\mathbb{Z}_+} \mathfrak{p}_J[r].$$
\begin{example} Recall that the positive roots of type $B_n$ are given as follows
$$\alpha_{i,j}:=\alpha_i+\cdots+\alpha_j,\ 1\leq i\leq j\leq n,
$$
$$\alpha_{i,\bar{j}}:=\alpha_i+\cdots+\alpha_{j-1}+2(\alpha_j+\cdots+\alpha_n),\ 1\leq i<j\leq n.$$
\begin{enumerate}
\item [(i)] If $J=\mathring{I}$, then $\tilde{\lie p}_{\mathring{I}}$ is the current algebra $\mathring{\lie g}[t]$ associated to $\mathring{\lie g}$ and the above grading is just the usual grading of the current algebra, i.e. $\tilde{\mathfrak{p}}_{\mathring{I}}[r]=\mathring{\lie g}\otimes t^r$.
\item[(ii)] Let $\mathfrak{g}$ be of type $B^{(1)}_3$ and $J=\{0,1,2\}$.  In this case we have
\begin{align*}
&(-R^+)\cap (\mathbb{Z}\alpha_0+\mathbb{Z}\alpha_1+\mathbb{Z}\alpha_2) =\{-\alpha_1, -\alpha_2, -\alpha_{1,2}, \alpha_{2,\bar{3}}-\delta,\alpha_{1,\bar{3}}-\delta,\alpha_{1,\bar{2}}-\delta \}
\end{align*}
and hence we have the following graded basis of $\tilde{\lie p}_J$, $r \in \mathbb{Z}_+$:
$$\hspace{1,25cm} \mathfrak{p}_J[2r]: \ \left\{ x_{\alpha}^{\pm} \otimes t^r, x_{\beta}^{\pm} \otimes t^{r\mp 1},  h_i \otimes t^r: \beta\in\{\alpha_{1,\bar{3}},\alpha_{1,\bar{2}},\alpha_{2,\bar{3}}\},\ \alpha\in\{\alpha_1,\alpha_2,\alpha_{1,2}\},\ i\in \mathring{I} \right\}$$
$$\hspace{0,25cm}
\mathfrak{p}_J[2r + 1]: \  \left\{x_{3}^{+} \otimes t^r,  x_{3}^{-} \otimes t^{r+1}, x_{\alpha_{2,3}}^{+} \otimes t^{r} ,  x_{\alpha_{2,3}}^{-} \otimes t^{r + 1}, x_{\alpha_{1,3}}^{+} \otimes t^{r}, x_{\alpha_{1,3}}^{-} \otimes t^{r + 1}\right\}$$

\item[(iii)] Let $\mathfrak{g}$ be of type $B^{(1)}_3$ and $J=\{0,1\}$. We have 
\begin{align*}(-R^+)\cap (\mathbb{Z}\alpha_0+\mathbb{Z}\alpha_1)=\{-\alpha_1,\alpha_{1,\bar{2}}-\delta \}.
\end{align*}
Hence for $r \in \mathbb{Z}_+$ we have  the following graded basis of $\tilde{\lie p}_J$:
\begin{align*}&\mathfrak{p}_J[4r]: \ \ \ \ \ \  \left\{ x_{1}^{\pm} \otimes t^r,x_{\alpha_{1,\bar{2}}}^{\pm} \otimes t^{r \mp 1},  h_i \otimes t^r: i\in\mathring{I} \right\} &\\\\&
\mathfrak{p}_J[4r + 1]: \  \left\{x_{2}^{+} \otimes t^r,  x_{\alpha_{1,2}}^{+} \otimes t^r,  x_{3}^{+} \otimes t^{r},x_{\alpha_{2,\bar{3}}}^{-} \otimes t^{r + 1} ,  x_{\alpha_{1,\bar{3}}}^{-} \otimes t^{r + 1}\right\}&\\\\& 
 \mathfrak{p}_J[4r+2]: \  \left\{ x_{\alpha_{2,3}}^{-} \otimes t^{r+1},x_{\alpha_{2,3}}^{+} \otimes t^{r}, x_{\alpha_{1,3}}^{-} \otimes t^{r+1}, x_{\alpha_{1,3}}^{+} \otimes t^{r}\right\} &\\\\&
 \mathfrak{p}_J[4r+3]: \  \left\{x_{3}^{-} \otimes t^{r+1}, x_{2}^{-} \otimes t^{r+1},  x_{\alpha_{1,2}}^{-} \otimes t^{r+1}, x_{\alpha_{2,\bar{3}}}^{+} \otimes t^{r} ,  x_{\alpha_{1,\bar{3}}}^{+} \otimes t^{r} \right\}
\end{align*}
\end{enumerate}
\end{example} 
\subsection{} \label{section31}We recall some finite order automorphisms from \cite[Theorem 8.6]{K90} which will be needed later to identify the parabolic subalgebras as twisted current algebras. Consider the tuple $\mathbf{s}=(s_0,s_1,\dots,s_n)$ where $s_i=1$ if $i\notin J$ and $s_i=0$ otherwise. The following map defines an $m$-th order automorphism of $\mathring{\lie g}$:
\begin{equation}\label{Kaciso}\sigma: \mathring{\lie g}\rightarrow \mathring{\lie g},\ \ x_{i}^{+}\mapsto\xi^{s_i}x_{i}^{+},\  i=1,\dots,n,\ \  \ x_{\theta}^{-}\mapsto\xi^{s_0}x_{\theta}^{-}\end{equation}
 where $\xi$ is a $m$-th root of unity and $m=\gr_J(\delta)$. Note that the automorphism $\sigma$ on arbitrary root vectors is given by $\sigma(x_{\alpha}^{\pm})=\xi^{\pm \gr_J(\alpha)}x_{\alpha}^{\pm}$. We obtain a decomposition
 $$\mathring{\lie g}=\mathring{\lie g}_0 \oplus \cdots \oplus \mathring{\lie g}_{m-1},\ \ \mathring{\lie g}_j=\{x\in \mathring{\lie g}: \sigma(x)=\xi^{j} x\},\ \ 0\leq j<m.$$
 The following lemma is clear and can be found in \cite[Proposition 8.6.]{K90}. 
 \begin{lem}\label{irrmod} The Lie algebra $\mathring{\lie g}_0$ is reductive with an $(n-|J|)$-dimensional center and the semisimple part is the Lie algebra whose Dynkin diagram is the subdiagram of $\Gamma$ consisting of the vertices lying in $J$.\hfill\qed
 \end{lem}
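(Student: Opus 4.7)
My approach is to treat $\sigma$ as a diagonal inner automorphism of $\mathring{\lie g}$ and read off the fixed subalgebra from its root-space decomposition. I would first extend the given prescription by declaring $\sigma|_{\mathring{\lie h}}=\mathrm{id}$ and $\sigma(x_i^-)=\xi^{-s_i}x_i^-$ for $i\in\mathring{I}$; the Chevalley and Serre relations are visibly preserved since this is a toral rescaling, and a height induction yields $\sigma(x_\alpha^{\pm})=\xi^{\pm\gr_J(\alpha)}x_\alpha^{\pm}$ for every $\alpha\in\mathring{R}^+$. Consistency with the prescribed value $\sigma(x_\theta^-)=\xi^{s_0}x_\theta^-$ reduces to $-\gr_J(\theta)\equiv s_0\pmod m$, which follows from the identity $m=\gr_J(\delta)=s_0+\gr_J(\theta)$.

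Given the diagonal action, one immediately has
$$\mathring{\lie g}_0=\mathring{\lie h}\oplus\bigoplus_{\alpha\in\mathring{R}_0}\mathring{\lie g}_\alpha,\qquad \mathring{R}_0=\{\alpha\in\mathring{R}:\gr_J(\alpha)\equiv 0\pmod m\},$$
so $\mathring{\lie g}_0$ is automatically reductive with Cartan $\mathring{\lie h}$. To describe $\mathring{R}_0$ I case-split on whether $0\in J$. When $0\notin J$ one has $|\gr_J(\alpha)|\leq\gr_J(\theta)=m-1<m$, so $\mathring{R}_0$ equals the root subsystem $\mathring{R}\cap\mathbb{Z}J$ with simple system $\{\alpha_i:i\in J\}$. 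When $0\in J$ one has $m=\gr_J(\theta)$, so the extremal values $\gr_J(\alpha)=\pm m$ become possible; they are attained precisely on the $\mathring{W}_{J\cap\mathring{I}}$-orbit of $\pm\theta$, and together with the zero-grade roots they form a subsystem with simple system $\{\alpha_i:i\in J\cap\mathring{I}\}\cup\{-\theta\}$.

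Since $\alpha_0$ restricts to $-\theta$ on $\mathring{\lie h}^*$, the resulting simple roots of $\mathring{R}_0$ are indexed exactly by the vertices of $J\subseteq\Gamma$, and the bonds of their Dynkin diagram match $\Gamma|_J$ after comparing the Cartan integers $(-\theta,\alpha_i^\vee)$ for $i\in J\cap\mathring{I}$ with the affine Cartan matrix $A$. Hence $[\mathring{\lie g}_0,\mathring{\lie g}_0]$ has Dynkin diagram $\Gamma|_J$ and rank $|J|$, so its Cartan occupies $|J|$ of the $n$ dimensions of $\mathring{\lie h}$, leaving an $(n-|J|)$-dimensional abelian centre. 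The main technical point is the case $0\in J$: verifying that $-\theta$ attaches to $\{\alpha_i:i\in J\cap\mathring{I}\}$ with exactly the edges prescribed by $\Gamma|_J$. This identification is the structural content of \cite[Proposition~8.6]{K90}, which one may alternatively cite wholesale.
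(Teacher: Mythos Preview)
Your argument is correct in substance and in fact gives more detail than the paper does: the paper simply records the statement and cites \cite[Proposition~8.6]{K90} without further comment, exactly as you do at the end. There is nothing to compare in terms of strategy.

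One small imprecision worth flagging: when $0\in J$, your claim that the roots with $\gr_J(\alpha)=\pm m$ are \emph{precisely} the $\mathring{W}_{J\cap\mathring{I}}$-orbit of $\pm\theta$ is not quite the right characterization in general. What is true (and what you actually need) is that the set $\{\alpha\in\mathring{R}:\gr_J(\alpha)\equiv 0\pmod m\}$ is a root subsystem with simple system $\{\alpha_i:i\in J\cap\mathring{I}\}\cup\{-\theta\}$; the roots with $\gr_J(\alpha)=m$ are then exactly those positive roots $\alpha$ with $\theta-\alpha\in\sum_{i\in J\cap\mathring{I}}\mathbb{Z}_+\alpha_i$, which need not all lie in a single Weyl orbit when the node $0$ sits in a non-simply-laced component of $\Gamma|_J$. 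Since you immediately defer to Kac for this identification anyway, this does not affect the validity of your proof.
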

 A canonical simple system for $\mathring{\lie g}_0$ is given by $\{\alpha_i: i\in J\cap \mathring{I}\}\sqcup \{-\theta: \text{ if $0\in J$}\}$. However this simple system is not compatible with $\mathring{\Pi}$ in the sense that dominant weights with respect to  $\mathring{\Pi}$ are generically not dominant with respect to the above system. To get around this problem let $w_{\circ}\in \mathring{W}$ be the longest element of the Weyl group generated by $\mathbf{s}_i, i\in J\cap \mathring{I}$ and set $\gamma_0=w_{\circ}(\theta)$. Choose the simple system $$\mathring{\Pi}_0=\{-w_{\circ}(\alpha_i): i\in J\cap \mathring{I}\}\sqcup \{w_{\circ}(\theta): \text{ if $0\in J$}\}=\{\gamma_i: i\in J\cap \mathring{I}\}\sqcup \{\gamma_0:\text{ if $0\in J$} \}$$ 
 where $\gamma_i=\alpha_{\bar{i}}=-w_{\circ}(\alpha_i),\ i\in J\cap \mathring{I},\  \gamma_0=w_{\circ}(\theta)$ and $i\mapsto \bar{i}$ denotes the permutation of $\mathring{I}\cap J$ induced by $-w_{\circ}$; we set in the rest of the paper $\bar{0}=0$. We denote by $\mathring{P}_0^+$ the dominant integral weights with respect to the simple system $\mathring{\Pi}_0$. Since
 $$-w_{\circ}(\alpha_i)\in \mathring{\Pi},\ \ i\in J\cap \mathring{I}$$
and
 $$w_{\circ}(\theta)=\displaystyle\sum_{ i\in \mathring{I}\backslash \mathring{I}\cap J} \mathbf{a}_i(\theta)\alpha_i+\sum_{i\in \mathring{I}\cap J} r_i \alpha_i \in \mathring{R}^+,\ \ \text{if }0\in J$$
we get an obvious map $\iota: \mathring{P}^+\rightarrow \mathring{P}_0^+$ which is not injective in general.
\begin{example} We consider the affine algebra of type $B_3^{(1)}$.
\begin{enumerate}
\item If $J=\{0,1\}$, then $w_{\circ}=\mathbf{s}_1$ and we have $\mathring{\Pi}_0^+=\{\theta,\alpha_1\}=\{\gamma_0,\gamma_1\}$.\vspace{0,1cm}

\item If $J=\{0,1,2\}$, then $w_{\circ}=\mathbf{s}_1\mathbf{s}_2\mathbf{s}_1$ and we have 
$\mathring{\Pi}_0^+=\{\alpha_{2,\bar{3}},\alpha_2,\alpha_1\}=\{\gamma_0,\gamma_1,\gamma_2\}$.
\end{enumerate}
\end{example}
Now we extend $\sigma$ to an automorphism of $\mathring{\mathfrak{g}}[t]$ by setting 
 $$\sigma(x\otimes t^r)\mapsto \xi^{-r} \sigma(x)\otimes t^r.$$
 Therefore, we obtain a decomposition of the fixed point algebra:
 $$\mathring{\mathfrak{g}}[t]^{\sigma}= \left(\mathring{\lie g}_0\otimes \mathbb{C}[t^m]\right)\oplus  \left(\mathring{\lie g}_1\otimes t\mathbb{C}[t^m]\right)\oplus \cdots \oplus \left(\mathring{\lie g}_{m-1}\otimes t^{m-1}\mathbb{C}[t^m]\right).$$
 \begin{rem} In fact, the above automorphism can be extended to $\lie g$. The automorphism group of an infinite-dimensional symmetrizable Kac-Moody algebra has been studied in \cite{KW92} and an automorphism is either of the first or of the second kind (but never both) meaning that the automorphism either stabilizes or exchanges the two conjugacy classes (under the corresponding Kac-Moody group) of Borel subalgebras. Using \cite[Section 4.6]{KW92} we see that the above described automorphisms are all of the first kind since $$\mathrm{dim}(\sigma(\mathfrak{b})\cap \mathfrak{b})=\infty.$$ 
 \end{rem}
Recall that the $\mathring{\lie g}_0$-modules $\mathring{\lie g}_j$ need not to be irreducible, for example, $\mathring{\lie g}_1$ and $\mathring{\lie g}_{m-1}$ decompose into a direct sum of $(n+1-|J|)$ irreducible components. However, the irreducibility, which is guaranteed only for maximal parabolic subalgebras, is crucial in the proof of \cite[Proposition 3.2.]{CKO18}. A slight modification of the proof shows that the following result still holds. For the reader's convenience, we include the proof below.
 \begin{prop}\label{finco}
 Any non-zero ideal in $\mathring{\mathfrak{g}}[t]^{\sigma}$ is of finite-codimension. 
 \begin{proof} Let $\mathfrak{I}$ be an ideal of $\mathring{\mathfrak{g}}[t]^{\sigma}$ and let $\alpha,\beta\in \mathring{R}^+$ be such that $\beta(\alpha^{\vee})\neq 0$. We start by proving the following fact:
\begin{equation}\label{fact} x_{\alpha}^+\otimes t^{\gr_J(\alpha)}g \in\mathfrak{I} \text{ for some $g\in\mathbb{C}[t^m]$ } \implies x_{\beta}^+\otimes  t^{\gr_J(\beta)+\ell m} g \in\mathfrak{I} \text{  for all $\ell\geq 1$. } \end{equation}
 To see this we simply take 
 $(\alpha^{\vee}\otimes t^{\ell m} g)=[x_{\alpha}^+\otimes t^{\gr_J(\alpha)} g,x_{\alpha}^-\otimes t^{\ell m-\gr_J(\alpha)}]\in \mathfrak{I}$
 and thus
 $$(x_{\beta}^+\otimes t^{\gr_J(\beta)+\ell m} g)=\beta(\alpha^{\vee})^{-1}[\alpha^{\vee}\otimes t^{\ell m} g,x_{\beta}^+\otimes t^{\gr_J(\beta)}]\in \mathfrak{I}$$
which shows \eqref{fact}. Since $\mathfrak{I}$ is preserved by the adjoint action of $\mathring{\lie{h}}$, there exists a non-zero element $H\in \mathfrak{I}\cap (\mathring{\mathfrak{h}}\otimes \mathbb{C}[t^m])$. Moreover, since the Cartan matrix $\mathring{A}$ is invertible we can also find an element $p\in\{1,\dots,n\}$ with $[H,x_p^+]\neq 0$. This implies $x_p^+\otimes t^{\gr_J(\alpha_p)} g \in \mathfrak{I}$ for some $g\in\mathbb{C}[t^m]$. Applying \eqref{fact} for all indices connected to $p$ in the Dynkin diagram $\mathring{\Gamma}$ and repeating the argument with the newly obtained indices we get the following
$$\exists g'\in\mathbb{C}[t^m] : (x_i^+\otimes t^{\gr_J(\alpha_i)} g ')\in \mathfrak{I},\ \  \forall i\in \mathring{I}.$$
In particular, by taking the bracket of the above element with $(x_i^-\otimes t^{m-\gr_J(\alpha_i)})$ we obtain
$$\exists f\in\mathbb{C}[t^m] : (\alpha^{\vee}_i\otimes f)\in \mathfrak{I},\  \text{ $\forall$ $i\in \mathring{I}$}.$$
Now let $(x_{\alpha}^{\pm}\otimes t^{j} f')\in \mathring{\lie g}_j\otimes t^j\mathbb{C}[t^m]$. Since there is  an index $i$ with $\alpha(\alpha^{\vee}_i)\neq 0$ we have
$$\pm \alpha(\alpha^{\vee}_i)^{-1} [\alpha^{\vee}_i\otimes f,x_{\alpha}^{\pm}\otimes t^{j} f']=(x_{\alpha}^{\pm}\otimes t^{j} ff')\in\mathfrak{I}.$$
In particular, $\mathring{\lie g}_j\otimes t^j \langle f \rangle\subseteq\mathfrak{I}$ for all $j=0,\dots,m-1$ and therefore $\mathfrak{I}$ is of finite-codimension.
\end{proof}
 \end{prop}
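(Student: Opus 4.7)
The strategy is to show that $\mathring{\mathfrak{g}}[t]^\sigma/\mathfrak{I}$ is finite-dimensional by producing a single non-zero polynomial $f\in\mathbb{C}[t^m]$ with the property that
$$\mathring{\lie g}_j\otimes t^j f\mathbb{C}[t^m]\ \subseteq\ \mathfrak{I}\qquad\text{for every } j=0,\dots,m-1.$$
Once such an $f$ is found, the quotient is spanned by finitely many graded components, so it is finite-dimensional.

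The plan is first to locate a non-zero Cartan element in $\mathfrak{I}$. Since $\mathring{\lie h}\subseteq \mathring{\lie g}_0$, the ideal $\mathfrak{I}$ is stable under the adjoint action of $\mathring{\lie h}$; decomposing any non-zero element into root weights therefore yields a non-zero $H\in \mathfrak{I}\cap (\mathring{\lie h}\otimes\mathbb{C}[t^m])$. Non-degeneracy of the finite Cartan matrix $\mathring A$ guarantees some $p\in\mathring{I}$ with $[H,x_p^+]\neq 0$; choosing the appropriate grade shift gives $x_p^+\otimes t^{\gr_J(\alpha_p)} g\in\mathfrak{I}$ for some non-zero $g\in\mathbb{C}[t^m]$.

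The core step is a propagation lemma: if $x_\alpha^+\otimes t^{\gr_J(\alpha)}g\in\mathfrak{I}$ and $\beta\in\mathring{R}^+$ satisfies $\beta(\alpha^\vee)\neq 0$, then
$$x_\beta^+\otimes t^{\gr_J(\beta)+\ell m}g\ \in\ \mathfrak{I}\qquad\text{for every }\ell\geq 1.$$
This is proved in two strokes: bracket with $x_\alpha^-\otimes t^{\ell m-\gr_J(\alpha)}$ to produce $\alpha^\vee\otimes t^{\ell m}g\in\mathfrak{I}$, and then bracket with $x_\beta^+\otimes t^{\gr_J(\beta)}$ to extract a non-zero scalar multiple of the desired element. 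Connectivity of $\mathring\Gamma$ together with the fact that adjacent simple roots pair non-trivially lets me walk along the Dynkin diagram and conclude that there is a common $g'\in\mathbb{C}[t^m]$ with $x_i^+\otimes t^{\gr_J(\alpha_i)}g'\in\mathfrak{I}$ for every $i\in\mathring I$. Bracketing with suitable $x_i^-\otimes t^{m-\gr_J(\alpha_i)}$ then yields $\alpha_i^\vee\otimes f\in\mathfrak{I}$ for a single $f\in\mathbb{C}[t^m]$ and all simple coroots.

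Finally, given any graded piece $x_\alpha^{\pm}\otimes t^j f'\in \mathring{\lie g}_j\otimes t^j\mathbb{C}[t^m]$, pick $i\in\mathring I$ with $\alpha(\alpha_i^\vee)\neq 0$ and compute
$$[\alpha_i^\vee\otimes f,\ x_\alpha^{\pm}\otimes t^j f']=\pm\alpha(\alpha_i^\vee)\,(x_\alpha^{\pm}\otimes t^j ff'),$$
which lies in $\mathfrak{I}$. Hence $\mathring{\lie g}_j\otimes t^j f\mathbb{C}[t^m]\subseteq\mathfrak{I}$ for all $j$, and finite codimension follows. The main technical obstacle is making sure that the polynomial factors one acquires during the propagation can be merged into a single $f$ common to all simple roots; this is handled by replacing $g$ and $g'$ in successive applications by their products, and by noting that the arguments above never require the polynomial to be a specific element but only to be non-zero.
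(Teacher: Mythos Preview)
Your proposal is correct and follows essentially the same approach as the paper: obtain a non-zero Cartan element via $\mathring{\lie h}$-stability, prove the same propagation fact to walk along the Dynkin diagram, extract a common polynomial $f$ with $\alpha_i^\vee\otimes f\in\mathfrak{I}$ for all $i$, and then bracket to conclude $\mathring{\lie g}_j\otimes t^j\langle f\rangle\subseteq\mathfrak{I}$. One minor imprecision worth tightening: root-space decomposition alone need not produce a Cartan element directly (the element could have trivial zero-weight part), but after decomposing you can bracket any non-zero root component $x_\alpha^{\pm}\otimes t^r g$ with an appropriate $x_\alpha^{\mp}\otimes t^{km-r}$ to land in $\mathring{\lie h}\otimes\mathbb{C}[t^m]$; the paper glosses over this in the same way.
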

 \begin{rem}
 In fact we never used the restriction on the tuple $\mathbf{s}$ in the above proposition and hence the statement is true for any isomorphism of the form \eqref{Kaciso}. 
 \end{rem}
 Obvioulsy $[\mathring{\lie g}[t]^{\sigma},\mathring{\lie g}[t]^{\sigma}]$ is an ideal in $\mathring{\lie g}[t]^{\sigma}$ and hence of finite-codimension by Proposition~\ref{finco}. This implies that there exists $f_y\in\mathbb{C}[t^m]$ for each $y\in \mathring{\lie g}_0$ such that 
$$y\otimes f_y \in [\mathring{\lie g}[t]^{\sigma},\mathring{\lie g}[t]^{\sigma}].$$
In particular, $$y\in [\mathring{\lie g},\mathring{\lie g}]\implies y\otimes t^m \mathbb{C}[t^m]\in  [\mathring{\lie g}[t]^{\sigma},\mathring{\lie g}[t]^{\sigma}].$$
Moreover each $\mathring{\lie g}_k$ with $k>0$ decomposes into irreducible $\mathring{\lie g}_0$-modules since $\mathring{\lie g}_0$ is reductive and its center acts by semisimple endomorphisms. Since $\mathring{\lie g}_k$ is $\mathring{\lie h}$-stable (recall $\mathring{\lie h}\subseteq \mathring{\lie g}_0$) it is a direct sum of root spaces and we have $\mathring{\lie g}_k=[\mathring{\lie g}_0,\mathring{\lie g}_k]$ for all $k>0$. Hence
$$\mathring{\lie g}_k\otimes t^k \mathbb{C}[t^m]\subseteq [\mathring{\lie g}[t]^{\sigma},\mathring{\lie g}[t]^{\sigma}],\ \ k\neq 0.$$
So the center is given by
\begin{equation}\label{zentr}Z(\mathring{\lie g}_0)\cong \mathring{\lie g}[t]^{\sigma}/[\mathring{\lie g}[t]^{\sigma},\mathring{\lie g}[t]^{\sigma}]\cong \mathbb{C}^{n-|J|}.\end{equation}
 \subsection{}
 The following result extends \cite[Proposition 3.5]{CKO18} from maximal to arbitrary parabolic subalgebras and establishes their identification with twisted current algebras.
 \begin{prop}\label{isotw}
 The following map given on graded elements by
 $$x_{\alpha}^{\pm}\otimes t^r\mapsto (x_{\alpha}^{\pm}\otimes t^{\gr_J(\pm \alpha+r\delta)}),\ \ \alpha\in \mathring{R}^+$$
defines a graded isomorphism $\tilde{\lie p}_J\cong \mathring{\mathfrak{g}}[t]^{\sigma}$.
\begin{proof}
The map is well-defined:
$$\sigma(x_{\alpha}^{\pm}\otimes t^{\gr_J(\pm \alpha+r\delta)})=\xi^{\pm \gr_J(\alpha)-\gr_J(\pm \alpha+r\delta)}(x_{\alpha}^{\pm}\otimes  t^{\gr_J(\pm \alpha+r\delta)})=x_{\alpha}^{\pm}\otimes  t^{\gr_J(\pm \alpha+r\delta)}$$
and is clearly an injective Lie algebra homomorphism. The surjectivity follows from the fact that an element $x_{\alpha}^{\pm}\otimes t^{ms+j}$ in the space $\left(\mathring{\lie g}_j\otimes t^j\mathbb{C}[t^m]\right)$ has preimage $x_{\alpha}^{\pm}\otimes t^k$ where 
$$k=s+\frac{j\mp \gr_J(\alpha)}{m}.$$
Note that $\sigma(x_{\alpha}^{\pm})=\xi^{\pm \gr_J(\alpha)}x_{\alpha}^{\pm}=\xi^{j}x_{\alpha}^{\pm}$ and thus $j\mp \gr_j(\alpha)\equiv 0 \mod m$. It remains to check that the element $x_{\alpha}^{\pm}\otimes t^k$ lies in $\tilde{\lie p}_J$. So if we consider $x_{\alpha}^{+}\otimes t^{ms+j}$, then $x_{\alpha}^{+}\otimes t^k$ lies in $\mathfrak{b}$ unless $k<0$, which can only happen if $j- \gr_J(\alpha)=-m$ and $s=0$. Equivalently $j=0=s$ and $\gr_J(\alpha)=m$. This implies 
$$\mathbf{a}_i(\theta)=\mathbf{a}_i(\alpha), \ \ \text{ for all $i\notin J$}$$
and hence $(\alpha-\delta)\in (-R^{+})\cap \sum_{i\in J} \mathbb{Z}\alpha_i$ (since $\gr_J(\alpha)=m$ forces $0\in J$). The remaining case works similarly and we omit the details. 
\end{proof}
 \end{prop}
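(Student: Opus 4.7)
The plan is to verify the four standard requirements — well-definedness (target lies in the fixed-point subalgebra), Lie bracket compatibility, injectivity, and surjectivity with image controlled — with the real work concentrated in the last step.

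First I would verify well-definedness by direct computation: using that $\sigma$ acts on $x_\alpha^\pm$ by $\xi^{\pm\gr_J(\alpha)}$ and on $t^k$ by $\xi^{-k}$, and that $\gr_J$ is additive on $Q$ with $\gr_J(\delta)=m$, we have
$$\sigma\bigl(x_\alpha^\pm \otimes t^{\gr_J(\pm\alpha + r\delta)}\bigr) = \xi^{\pm\gr_J(\alpha) - \gr_J(\pm\alpha) - rm}\,x_\alpha^\pm\otimes t^{\gr_J(\pm\alpha+r\delta)},$$
and the exponent vanishes modulo $m$. I would also extend the formula to the Cartan-plus-derivation basis elements by setting $h_i\otimes t^r \mapsto h_i\otimes t^{rm}$ (equivalently, using the same formula with $\alpha$ replaced by the zero weight $0$ in $\gr_J$), which again is $\sigma$-invariant.

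For the Lie bracket property, the only nontrivial relation to check is that the assignment respects $[x_\alpha^{\varepsilon}\otimes t^{r}, x_\beta^{\varepsilon'}\otimes t^{s}] = [x_\alpha^\varepsilon, x_\beta^{\varepsilon'}]\otimes t^{r+s}$ whenever $\varepsilon\alpha + \varepsilon'\beta\in \mathring R$ (and the analogous Cartan relations). Since $\gr_J$ is $\mathbb{Z}$-linear, the exponent on the right-hand side equals $\gr_J(\varepsilon\alpha + \varepsilon'\beta + (r+s)\delta)$, which matches the sum of the two target exponents; this makes the homomorphism property a formal consequence of the well-definedness. Preservation of the grading is then tautological, since $x_\alpha^\pm\otimes t^r$ has $\gr_J$-degree $\gr_J(\pm\alpha + r\delta)$ by definition, which is exactly the $t$-degree of its image.

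The substantive step — and the main obstacle — is surjectivity together with the verification that every preimage actually lies in $\tilde{\lie p}_J$. Given $x_\alpha^\pm\otimes t^{ms+j}\in \mathring{\lie g}_j\otimes t^j\mathbb{C}[t^m]$, $\sigma$-invariance forces $j \equiv \pm\gr_J(\alpha) \pmod m$, so the candidate preimage is $x_\alpha^\pm\otimes t^k$ with $k = s + (j \mp \gr_J(\alpha))/m\in\mathbb{Z}$. For the minus case, $k \geq 0$ automatically (both summands are nonnegative once one observes $0 \leq \gr_J(\alpha) \leq m$ for $\alpha\in\mathring{R}^+$), so $x_\alpha^-\otimes t^k$ lies in $\mathring{\lie g}\otimes t\mathbb{C}[t]\subseteq \lie b$ when $k > 0$ and lies in $\mathring{\lie b}\otimes 1 \subseteq \lie p_J$ when $k=0$. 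The delicate case is $x_\alpha^+\otimes t^k$ with $k<0$: this forces $s=0$, $j=0$, $\gr_J(\alpha)=m$, and $k=-1$, and the point I need to make is that under these constraints $\mathbf{a}_i(\alpha) = \mathbf{a}_i(\theta)$ for all $i\notin J$, so $\alpha - \delta$ lies in $(-R^+)\cap \sum_{i\in J}\mathbb{Z}\alpha_i$ (noting that $\gr_J(\alpha) = m$ forces $0 \in J$), and hence $x_\alpha^+\otimes t^{-1} = x_{\alpha-\delta}$ belongs to $\lie p_J$. Injectivity is then clear since the assignment is a bijection on the fixed basis of root vectors. Modding out by the center $\mathbb{C}c$ (and the fact that $d$ is excluded since $\tilde{\lie p}_J$ is the commutator subalgebra modulo the center) is compatible with the target, finishing the identification.
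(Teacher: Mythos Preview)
Your approach matches the paper's almost exactly: verify $\sigma$-invariance of the image, note the homomorphism property follows from linearity of $\gr_J$, then invert the formula and check the preimage lies in $\tilde{\lie p}_J$. The treatment of the delicate $x_\alpha^+\otimes t^{-1}$ case is identical to the paper's.

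There is, however, one concrete slip in your handling of the minus case. You write that when $k=0$, the element $x_\alpha^-\otimes 1$ ``lies in $\mathring{\lie b}\otimes 1 \subseteq \lie p_J$''. This is false: $x_\alpha^-$ is a \emph{negative} root vector for $\alpha\in\mathring R^+$, so it is not in $\mathring{\lie b}$. The correct argument is that $k=0$ in the minus case forces $s=0$ and $j+\gr_J(\alpha)=0$, hence $j=0$ and $\gr_J(\alpha)=0$. The latter means $\mathbf{a}_i(\alpha)=0$ for all $i\notin J$, so $\mathrm{supp}(\alpha)\subseteq J\cap\mathring I$ and $-\alpha\in(-R^+)\cap\sum_{i\in J}\mathbb{Z}\alpha_i$. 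By the description of $\lie p_J$ this gives $x_\alpha^-\otimes 1\in\lie p_J$. Once you replace your incorrect sentence with this observation, the argument is complete and agrees with what the paper sketches (the paper only spells out the plus case and says the other ``works similarly'').
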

 \begin{rem}
Using Proposition~\ref{finco} and Proposition~\ref{isotw}, one can follow the same strategy as in \cite{CKO18} to show that parabolic subalgebras $\tilde{\lie p}_J$ with $m=\gr_J(\delta)>1$ do not arise as equivariant map algebras $(\mathfrak{a}\otimes R)^{\Sigma}$ with $\mathfrak{a}$ semi--simple and $\Sigma$ acting freely on the set of maximal ideals of $R$. As in the proof of \cite[Proposition 3.3]{CKO18}, one uses the fact that $\tilde{\lie p}_J$ has two non-isomorphic simple finite-dimensional quotients. These are constructed from the evaluation maps
 $$\ev_0: \mathring{\mathfrak{g}}[t]^{\sigma}\rightarrow \mathring{\mathfrak{g}}_0,\ \ \ \ev_z: \mathring{\mathfrak{g}}[t]^{\sigma}\rightarrow \mathring{\mathfrak{g}},\ \ z\neq 0.$$

 \end{rem}
\subsection{}Recall from the Gabber-Kac theorem \cite{GK81} that $\lie g$ is generated as a Lie algebra by elements $x_r^{\pm}$, $r\in I$, and $\lie h$ subject to the following relations
\begin{align}\label{1}[x_r^{+},x_s^{-}]&=\delta_{r,s} h_r,\ \ [h,x_s^{\pm}]=\pm\alpha_{s}(h) x_s^{\pm},\ \ [h,h']=0,\ h,h'\in \lie h&\\& \label{2} \left[x_r^{\pm},\dots,\left[x_r^{\pm},\left[x_r^{\pm},x_s^{\pm}\right]\right]\cdots\right]=0,\ \ r\neq s\end{align}
where the number of occurrences of $x_r^{\pm}$ in the Lie word \eqref{2} is $(1-a_{r,s})$.
From this description we obtain that the defining relations for the parabolic subalgebra $\mathfrak{p}_J$ with generators $x_i^{+},x_j^{-},h_i,d$ for $i\in I$ and $j\in J$ are given by the same relations \eqref{1} and \eqref{2} with adapted index set, i.e. we consider only relations where $x_s^{-}$ with $s\in J$ is involved. To see this, assume that we have a relation $Y=0$ in $\mathfrak{p}_J\subseteq \lie g$ which means that $Y$ is a linear combination of  Lie words each contained in the ideal generated by the elements \eqref{1} und \eqref{2}. Let $Y^-\in \lie n^-$ be the projection with respect to the triangular decomposition $\lie g=\lie n^+\oplus \lie h \oplus \lie n^-$. In particular, $Y^-=0$ in $\mathfrak{p}_J$ and is contained in the ideal generated by  (c.f. \cite[Theorem 9.11]{K90})
$$\left[x_r^{-},\dots,\left[x_r^{-},\left[x_r^{-},x_s^{-}\right]\right]\cdots\right]=0,\ \ r\neq s.$$
Since $Y^-$ is contained in a sum of weight spaces $\lie g_{\alpha}$ with $\gr_J
(\alpha)=0$ we must have $r,s\in J$ and the claim follows.
This gives a generators and relations presentation for $\mathbf{U}(\mathfrak{p}_J)$.
\subsection{} In this subsection we will combine the results of \cite{NSS} and Proposition~\ref{isotw}
to obtain a full list of finite-dimensional irreducible representations for $\mathbf{U}(\tilde{\lie p}_J)$. Recall the construction of the simple system $\mathring{\Pi}_0=\{\gamma_i: i\in J\cap \mathring{I}\}\sqcup \{\gamma_0:\text{ if $0\in J$} \}$ from Subsection~\ref{section31}.
\begin{defn}  We define $\mathcal{P}^+$ to be the set of tuples
$$(\boldsymbol{\pi}_{1}(u),\dots,\boldsymbol{\pi}_{n}(u),\mu,w),\ \ \boldsymbol{\pi}_{i}(u)\in\mathbb{C}[u],\ \ \mu\in \mathring{P}_0^+,\ \  w\in \mathbb{C}^{n-|J|}$$
satisfying 
$$\boldsymbol{\pi}_i(0)=1,\ \forall i\in\mathring{I},\ \ \mathrm{deg}(\boldsymbol{\pi}_{\bar{j}}(u))\leq \mu(\gamma^{\vee}_j) \ \forall j\in J$$ 

where we understand $\boldsymbol{\pi}_0(u)=\prod_{i\in \mathring{I}} \boldsymbol{\pi}_i(u)^{\mathbf{a}^{\vee}_i(\gamma_0)}$.
\end{defn}
The next result is essentially a reformulation of the main theorem \cite[Theorem 5.5]{NSS} combined with Proposition~\ref{isotw} and \eqref{zentr}.
\begin{thm}
There is a one-to-one correspondence 
$$\{\text{finite-dimensional irreducible $\mathbf{U}(\tilde{\lie p}_J)$-modules}\}/\sim \ \ \longrightarrow \mathcal{P}^+$$
\begin{proof} For a semi-simple Lie algebra $\lie{a}$ we denote by $V_{\lie a}(\lambda)$ the corresponding finite-dimensional irreducible $\lie a$ module of highest weight $\lambda$. By \cite[Theorem 5.15]{NSS}, Proposition~\ref{isotw} and \eqref{zentr} every finite-dimensional irreducible $\mathbf{U}(\tilde{\lie p}_J)$ module is isomorphic to a tensor product of evaluation representations and a one-dimensional representation of the following form

$$\mathbf{V}_{\underline{k},\underline{\lambda}}^{\mu, w}=\mathrm{ev}_{k_1}^* V_{\mathring{\lie g}}(\lambda_1)\otimes \cdots \otimes \mathrm{ev}_{k_r}^* V_{\mathring{\lie g}}(\lambda_r)\otimes \mathrm{ev}_{0}^* V_{[\mathring{\lie g}_0,\mathring{\lie g}_0]}(\mu)\otimes \mathbb{C}_{w}$$
where 
$$k_1,\dots,k_r\in\mathbb{C}^{\times},\ k^m_i\neq k^m_j \  \forall i\neq j,\  w\in \mathbb{C}^{n-|J|},\ \lambda_1,\dots,\lambda_r\in \mathring{P}^+, \mu\in \mathring{P}_0^+.$$ 
Recall the map $\iota:\mathring{P}^+\rightarrow \mathring{P}_0^+$ and set $\nu=\iota(\lambda_1+\cdots+\lambda_r)+\mu\in\mathring{P}_0^+$.
We define
$$\boldsymbol{\pi}^{\underline{k},\underline{\lambda}}_i(u)=\ \prod_{\substack{s=1}}^r (1-k_s^mu)^{\lambda_s(h_i)},\ i\in \mathring{I}$$
and note that $\mathrm{deg}(\boldsymbol{\pi}^{\underline{n},\underline{\lambda}}_{j}(u))\leq \nu(\gamma^{\vee}_{\bar{j}})$ for all $j\in  J$. Together with \cite[Theorem 5.5]{NSS} we obtain a well-defined map
$$\mathbf{V}_{\underline{k},\underline{\lambda}}^{\mu, w}\mapsto \left(\boldsymbol{\pi}^{\underline{k},\underline{\lambda}}_1(u),\dots,\boldsymbol{\pi}^{\underline{k},\underline{\lambda}}_n(u),\nu,w\right)\in\mathcal{P}^+.$$
Conversely, given $(\boldsymbol{\pi},\nu,w)\in\mathcal{P}_{}^+$ let $\{a_1,\dots,a_{\ell}\}$ be the inverses of the roots of
 $\boldsymbol{\pi}_{1}(u)\cdots \boldsymbol{\pi}_{n}(u)$. We choose for each $a_i$ an $m$-th root $b_i$, i.e. $b_i^m=a_i$, and set $\underline{k}_{\boldsymbol{\pi}}=(b_1,\dots,b_{\ell})$ and $\underline{\lambda}_{\boldsymbol{\pi}}=(\lambda_1,\dots,\lambda_{\ell})$ where $\lambda_s\in \mathring{P}^+$, $1\leq s\leq \ell$, is determined by
$$\lambda_s(h_i)=\text{ multiplicity of $a_s^{-1}$ in $\boldsymbol{\pi}_i(u)$},\ \ i\in \mathring{I}.$$

Furthermore, we define $\mu\in \mathring{P}_0^{+}$ by
$$\mu(\gamma^{\vee}_j)=\nu(\gamma^{\vee}_j)-\mathrm{deg}(\boldsymbol{\pi}_{\bar{j}}(u)),\ \ j\in J.$$
A simple verification shows that the assignment $(\boldsymbol{\pi},\nu,w)\mapsto \mathbf{V}_{\underline{k}_{\boldsymbol{\pi}},\underline{\lambda}_{\boldsymbol{\pi}}}^{\mu,w}$ defines the inverse map.
\end{proof}
\end{thm}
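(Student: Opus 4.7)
The plan is to leverage the identification $\tilde{\lie p}_J \cong \mathring{\mathfrak{g}}[t]^\sigma$ from Proposition~\ref{isotw} together with the description \eqref{zentr} of the abelianization, in order to reduce the classification to the theorem of \cite{NSS} for equivariant map algebras. First I would invoke \cite[Theorem 5.15]{NSS}: any finite-dimensional irreducible module over $\mathring{\mathfrak{g}}[t]^\sigma$ is isomorphic to some
\[
\mathbf{V}_{\underline{k},\underline{\lambda}}^{\mu,w} \;=\; \mathrm{ev}_{k_1}^{*} V_{\mathring{\lie g}}(\lambda_1) \otimes \cdots \otimes \mathrm{ev}_{k_r}^{*} V_{\mathring{\lie g}}(\lambda_r) \otimes \mathrm{ev}_0^{*} V_{[\mathring{\lie g}_0,\mathring{\lie g}_0]}(\mu) \otimes \mathbb{C}_w,
\]
with $k_1,\dots,k_r \in \mathbb{C}^{\times}$ in pairwise distinct $\sigma$-orbits (equivalently $k_i^m \neq k_j^m$), $\lambda_s \in \mathring{P}^+$, $\mu \in \mathring{P}_0^+$, and $w \in \mathbb{C}^{n-|J|}$ a character of the $(n-|J|)$-dimensional center identified in \eqref{zentr}.

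Next, I would encode the tensor factors by Drinfeld-type polynomials. Since only the $\sigma$-orbit $k_s^m$ of each evaluation point is an isomorphism invariant, the natural definition is
\[
\boldsymbol{\pi}_i^{\underline{k},\underline{\lambda}}(u) \;=\; \prod_{s=1}^{r} (1 - k_s^m u)^{\lambda_s(h_i)}, \qquad i \in \mathring{I},
\]
together with the global $\mathring{\lie g}_0$-highest weight $\nu = \iota(\lambda_1+\cdots+\lambda_r) + \mu \in \mathring{P}_0^+$. A quick check using the pairwise distinctness of the $k_s^m$ yields $\deg\boldsymbol{\pi}_{\bar{j}}^{\underline{k},\underline{\lambda}}(u) = \iota(\sum_s \lambda_s)(\gamma_j^\vee) \le \nu(\gamma_j^\vee)$ for $j \in J$, and the condition $\boldsymbol{\pi}_0 = \prod_{i\in\mathring I} \boldsymbol{\pi}_i^{\mathbf{a}^\vee_i(\gamma_0)}$ is automatic from the relation $\gamma_0 = w_\circ(\theta)$. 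This gives a well-defined map to $\mathcal{P}^+$.

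For the inverse, I would take $(\boldsymbol{\pi}, \nu, w) \in \mathcal{P}^+$, read off the distinct roots $a_s^{-1}$ of $\prod_i \boldsymbol{\pi}_i(u)$ with multiplicities $\lambda_s(h_i)$, choose arbitrary $m$-th roots $b_s$ with $b_s^m = a_s$, and define $\mu \in \mathring{P}_0^+$ by $\mu(\gamma_j^\vee) = \nu(\gamma_j^\vee) - \deg\boldsymbol{\pi}_{\bar j}(u)$, which is non-negative precisely by the degree bound in the definition of $\mathcal{P}^+$. The expected main obstacle is to verify that this is well-defined on isomorphism classes and genuinely bijective: different choices of the $m$-th roots $b_s$ produce isomorphic modules because the evaluation points then lie in a common $\sigma$-orbit and are interchanged by the twist; conversely, the polynomial data $\boldsymbol{\pi}$ uniquely recovers the pairs $(k_s^m, \lambda_s)$, the weight $\mu$ is forced by $\nu$ and $\deg\boldsymbol{\pi}_{\bar j}$, and $w$ is the central character. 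Together these checks show that the forward and backward assignments are mutually inverse, completing the classification.
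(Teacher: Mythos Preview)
Your proposal is correct and follows essentially the same approach as the paper: invoke \cite[Theorem 5.15]{NSS} via the identification of Proposition~\ref{isotw} and \eqref{zentr} to write every simple as $\mathbf{V}_{\underline{k},\underline{\lambda}}^{\mu,w}$, encode it by the polynomials $\boldsymbol{\pi}_i(u)=\prod_s(1-k_s^m u)^{\lambda_s(h_i)}$ together with $\nu=\iota(\sum_s\lambda_s)+\mu$ and $w$, and construct the inverse by reading off roots, choosing $m$-th roots, and recovering $\mu$ from the degree deficit. The only cosmetic difference is that the paper cites \cite[Theorem 5.5]{NSS} explicitly for well-definedness of the forward map, whereas you sketch the isomorphism-invariance argument directly.
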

\begin{example} We continue discussing Example~\ref{section31}(2). The centre is trivial since $n=|J|$. So the finite--dimensional irreducible representations are parametrized by elements of the form 
$$(\boldsymbol{\pi}_1(u),\boldsymbol{\pi}_2(u),\boldsymbol{\pi}_3(u),\mu),\ \ \boldsymbol{\pi}_i(0)=1,\ i=1,2,3,\ \mu\in\mathring{P}^+_0$$ such that
$$\mathrm{deg}(\boldsymbol{\pi}_1(u))\leq \mu (\gamma_2^{\vee}),\ \mathrm{deg}(\boldsymbol{\pi}_2(u))\leq \mu (\gamma_1^{\vee}),\ \mathrm{deg}(\boldsymbol{\pi}_2(u))+\mathrm{deg}(\boldsymbol{\pi}_3(u))\leq \mu (\gamma_0^{\vee})$$
\end{example}
The aim of the article is to develop a quantization of the above setting.
 \section{Parabolic quantum affine algebras: Drinfeld-Jimbo picutre}\label{section4}
 The aim of this section is to study quantized universal enveloping algebras of parabolic subalgebras. We first introduce them and calculate their classical limit. 
 \subsection{} For an indeterminate $q$ we denote by $\mathbb{C}(q)$ the ring of rational functions. Recall that the Cartan matrix $A$ is symmetrizable, meaning that there exists a diagonal matrix $B=\mathrm{diag}(\epsilon_0,...,\epsilon_n)$, where $\epsilon_i \in \mathbb{N}$ are relatively prime positive integers, such that $BA$ is symmetric. We set $q_i := q^{\epsilon_i}$ and $[a]_{q_i}:=[a]_i$ for simplicity. Consider the symmetric bilinear form on the affine root lattice $Q$ given by 
$$(\alpha_i,\alpha_j)^{\dagger}=\epsilon_{i}a_{i,j}\ (i,j=0,\dots,n)$$
and observe that it coincides with the lacing number times the restriction of the bilinear form introduced in Section~\ref{section2}. We denote the quantum groups of $\lie g$ and $\mathring{\lie g}$ respectively by $\mathbf{U}_q$ and $\mathring{\mathbf{U}}_q$ respectively and refer the reader to \cite{CP94b,L10a} for a precise definition.  The Hopf-algebra structure on $\mathbf{U}_q$ is given as follows
$$\Delta(K_i^{\pm})=K_i^{\pm}\otimes K_i^{\pm},\ \  \Delta(D^{\pm})=D^{\pm}\otimes D^{\pm}$$
$$\Delta(E_i)=E_i\otimes 1+K_i\otimes E_i,\ \ \Delta(F_i)=F_i\otimes K^{-}_i+1\otimes F_i$$
$$S(K_i^{\pm})=K_i^{\mp},\ \ S(D^{\pm})=D^{\mp},\ \ S(E_i)=-K^{-}_iE_i,\ \ S(F_i)=-F_iK_i.$$
Moreover, $\mathbf{U}_q$ admits a $Q$-grading $\mathbf{U}_q=\bigoplus_{\eta\in Q}\mathbf{U}_{q,\eta}$ determined by
$$E_i\in \mathbf{U}_{q,\alpha_i},\ \ F_i\in \mathbf{U}_{q,-\alpha_i},\ \ D^{\pm},K_i^{\pm}\in \mathbf{U}_{q,0},\ \forall i\in I$$
and an automorphism $\Phi$ and anti-automorphism $\Omega$ defined on the generators as follows
$$\Phi(E_i)=F_i,\ \ \Phi(F_i)=E_i,\ \ \Phi(K^{\pm}_i)=K^{\pm}_i,\ \Phi(q)=q^{-1},\ \ \Phi(D)=D$$
$$\Omega(E_i)=F_i,\ \ \Omega(F_i)=E_i,\ \ \Omega(K^{\pm}_i)=K^{\mp}_i,\ \Omega(q)=q^{-1},\ \ \Omega(D)=D^{-}$$
For any representation $V$ of $\mathbf{U}_q$  we denote by $^{\Phi}V$ the representation obtained by twisting the action by $\Phi$. 
 \begin{defn}\label{maindefDJ}
 The \textit{parabolic quantum affine algebra} $\mathbf{U}_q^J$ is the $\mathbb{C}(q)$-subalgebra of $\mathbf{U}_q$  generated by the homogeneous elements 
\begin{equation}\label{setofgen}E_i,F_j,K_i^{\pm}, D^{\pm}, C^{\pm 1/2},\ \ \  i\in I, \ j\in J.\end{equation} 
Similarly, we denote by $\mathring{\mathbf{U}}^J_q$ the subalgebra generated by $E_j,F_j,K_j^{\pm},\  j\in J$.
 \end{defn}
We follow \cite[Section 1.3]{B94} and add the square root of the canonical central element as a generator. Note that by definition $\mathbf{U}_q^J$ and $\mathring{\mathbf{U}}_q^J$ are both Hopf subalgebras by restricting the unit, counit, antipode, multiplication and comultiplication. Moreover, some further properties of the quantum affine algebra $\mathbf{U}_q$ are inherited to the parabolic case. We denote by $\mathbf{U}_q^{J,+}$ (respectively $\mathbf{U}_q^{J,-}$)  the subalgebra of $\mathbf{U}_q^J$ generated by the elements $E_i$ for $i \in I$ (respectively $F_j$ for $j\in J$) and by $\mathbf{U}_q^{J,0}$ the subalgebra generated by the elements $K_i^{\pm}, D^{\pm}$ and $C^{\pm 1/2}$ for $i \in I$. Then we have a triangular decomposition: 
$$\mathbf{U}_q^J \cong \mathbf{U}_q^{J,-}  \otimes \mathbf{U}_q^{J,0} \otimes \mathbf{U}_q^{J,+}.$$
The algebras $\mathbf{U}_q^{\pm}$ and $\mathbf{U}_q^{0}$ are defined in the obvious way. Similarly, the parabolic quantum affine algebra is $Q_J\hspace{0,03cm} \sqcup \hspace{0,03cm} Q_{I\backslash{J}}^+$-graded where
we understand $Q_A=\sum_{i\in A} \mathbb{Z} \alpha_{i}$ for any subset $A\subseteq I$.
The same properties hold for $\mathring{\mathbf{U}}^J_q$ with obvious modifications. 
\subsection{}\label{section42}
The following justifications and calculations are standard and we give them only for the readers convinience. Instead of defining $\mathbf{U}_q^{J}$ as a subalgebra in $\mathbf{U}_q$ by the set of generators \eqref{setofgen} we can consider the algebra freely generated by the elements \eqref{setofgen} subject to the following defining relations. For $i,i'\in I$ and $j,j'\in J$ we have  
$$C^{\pm 1/2} \text{  is central $C^{\pm}=K_{\delta}^{\pm}$ and }\  [K_{i},K_{i'}] =[K_i,D]=0$$ 
$$K_iK^-_i = K^-_iK_i = C^{1/2} C^{-1/2}=D D^-=D^{-}D=1,\ \  [E_i,F_j]= \delta_{i,j}[K_i;0]$$
$$K_iE_{i'}K_i^- = q_i^{a_{ii'}}E_{i'}, \    K_iF_{j}K_i^- = q_i^{-a_{ij}}F_{j},\  DE_{i'}D^- = q^{\delta_{0,i'}}E_{i'}, \   DF_{j}D^- = q^{-\delta_{0,j}}F_{j}$$
$$\displaystyle \sum_{s=0}^{1-a_{ii'}}(-1)^s E_i^{(1-a_{ii'}-s)}E_{i'}E_i^{(s)} = 0,\ \ i\neq i',\ \sum_{s=0}^{1-a_{jj'}}(-1)^s F_j^{(1-a_{jj'}-s)}F_{j'}F_j^{(s)} = 0, \ \   j\neq j'$$
where $$X^{(k)}=1/[k]_{i}! X^k,\ \ K_{\alpha}=\prod_{i=0}^n K_i^{r_i},\ \text{if }\alpha=\sum_{i=0}^nr_i\alpha_i\in Q,\ \ [K_i;a]=\dfrac{q_i^aK_i-q_i^{-1}K_i^-}{q_i-q_i^{-1}}.$$ Let us denote this algebra by $(\mathbf{U}_q^{J})^{\dagger}$ and similarly we define $(\mathring{\mathbf{U}}_q^{J})^{\dagger}$. Note that, as for ordinary quantum groups, $(\mathbf{U}_q^{J})^{\dagger}$ admits a triangular decomposition and is graded as above.  We clearly have  algebra homomorphisms 
$$\begin{tikzpicture}
    \node (A) at (2,2) {$(\mathring{\mathbf{U}}_q^{J})^{\dagger}$};
     \node (B) at (4,2) {$\mathring{\mathbf{U}}_q^{J}$};
      \node (C) at (6,2) {$\mathbf{U}_q^{J}$};
      \node (F) at (8,2) {$\mathbf{U}_q$};
       \node (E) at (4,1) {$(\mathbf{U}_q^{J})^{\dagger}$};
       
\draw[dotted,->, thick] (A) to[out=35, in=145] node[midway,below]{$\mathring\varphi$} (F);
\draw[dotted,->, thick] (E) to[out=-15, in=-145] node[midway,below]{$\varphi$}(F);
         \draw[thick,->>] (A) -- (B) node[midway,above]{};
          \draw[thick,->>] (E) -- (C) node[midway,left,rotate=-5] {};
           \draw[thick,->] (A) -- (E) node[midway,left,rotate=0] {};
            \draw[thick,right hook-latex] (C) -- (F) node[midway,left,rotate=0] {};
            \draw[thick,right hook-latex] (B) -- (C) node[midway,left,rotate=0] {};
    \end{tikzpicture}
    $$
and all diagrams commute. Now the standard trick shows in fact that $\varphi$ (similarly $\mathring{\varphi}$) is injective and therefore both definitions coincide; we will explain only the idea below. Each $\mathbf{U}_q$ module $V$ can be considered as a module for $(\mathbf{U}_q^{J})^{\dagger}$ and $(\mathring{\mathbf{U}}_q^{J})^{\dagger}$ respectively by pulling back $V$ via $\varphi$ and $\mathring \varphi$ respectively; denote the corresponding modules by $\varphi^{*} V$ and $\mathring{\varphi}^{*} V$. Now the same proof as \cite[Proposition 5.11]{J96ab} shows that there is no non-zero element $u\in (\mathbf{U}_q^{J})^{\dagger}$ acting trivially on each representation of the form $\varphi^{*}(V^{\Phi}_q(\lambda)\otimes V_q(\lambda'))$ for all $\lambda,\lambda'\in P^+$ where $V_q(\lambda)$ denotes the irreducible highest weight module for $\mathbf{U}_q$ of highest weight $\lambda\in P^+$. This will give the injectivity.
\subsection{} In this subsection we calculate the classical limit $\tilde{\mathbf{U}}^J_{1}$ of $\mathbf{U}_q^J$ and show that a suitable quotient $\mathbf{U}^J_{1}$ is isomorphic to $\mathbf{U}(\mathfrak{p}_J)$ as Hopf algebras, thus justifying the definition made in Definition~\ref{maindefDJ}. Again the calculations in this subsection are standard and mainly follow the ideas from \cite[Section 3]{HK02} and \cite[Section 1]{KdC90}. We will only sketch the ideas. We denote by $\mathbf{A}_1$ be the localization of $\mathbb{C}[q^{\pm}]$ by the ideal $(q-1)$ and let $\mathbf{U}_{q,\mathbf{A}_1}^J$ be the $\mathbf{A}_1$ subalgebra of $\mathbf{U}_q^J$ generated by the elements $E_i,F_j,K_i^{\pm}, C^{\pm 1/2}, D^{\pm 1}$ and
$$H_i := \frac{K_i-K_i^-}{q_i-q_i^{-1}},\ \ c'= \frac{C-C^-}{q-q^{-1}}, \ \ d'= \frac{D-D^-}{q-q^{-1}}$$
where $i\in I,j\in J$. Now taking the classical limit of an object refers to considering its image under the canonical projection 
 $$ \mathbf{U}_{q,\mathbf{A}_1}^J \rightarrow  \mathbf{U}_{q,\mathbf{A}_1}^J\otimes_{\mathbf{A}_1}  \mathbf{A}_1/(q-1)\mathbf{A}_1\cong \mathbf{U}_{q,\mathbf{A}_1}^J/(q-1)\mathbf{U}_{q,\mathbf{A}_1}^J=:\tilde{\mathbf{U}}^J_{1}$$
Similarly as above we can define $\mathbf{U}_{q,\mathbf{A}_1}$ and $\tilde{\mathbf{U}}_1$. Note that the identity map $\mathbf{U}_{q,\mathbf{A}_1}^J\hookrightarrow \mathbf{U}_{q,\mathbf{A}_1}$ induces an embedding $\tilde{\mathbf{U}}^J_1\hookrightarrow \tilde{\mathbf{U}}_1$ and set
$$\mathbf{U}_1^J:=\tilde{\mathbf{U}}_1^J/(K_i-1,D-1,C^{\pm 1/2}-1).$$
Similarly we define $\mathbf{U}_1$ and recall that we have an isomorphism $\mathbf{U}(\lie g)\cong\mathbf{U}_1$ as algebras observed in \cite[Proposition 1.5]{KdC90} (see also \cite[Proposition 2.1]{B94}). This isomorphism is induced by checking certain relations in $\tilde{\mathbf{U}}_1$. Among them are the following ones which already hold in $\tilde{\mathbf{U}}^J_1$. For $i,i'\in I$ and $j,j'\in J$ we have
 \begin{align*}
 &c' \text{ is central and $K_i$ are central elements with $K_i^2=1$ }&\\&
 [E_i,F_j] = \delta_{i,j}H_i, \: [H_i,E_{i'}] = a_{i,i'} K_i E_{'},\: [H_i, F_j] = -a_{i,j}K_iF_j, &\\&
 [d',E_{i'}]=\delta_{i,0} DE_{i'},\ \ [d',F_j]=-\delta_{j,0} DF_j,\ \ D^2=1,\ \ C^2=1 &\\&
 [E_i,\dots,\left[E_i,\left[E_i,E_{i'}\right]\right]\cdots]=0, \ \ i\neq i' &\\&
[F_j,\dots ,\left[F_j,\left[F_j,F_{j'}\right]\right]\cdots]=0, \ \ j\neq j'
 \end{align*}
where the number of occurences of $E_i$ respectively $F_j$ is $(1-a_{ii'})$ and $(1-a_{jj'})$ respectively. It is straightforward to see that the above relations transform to the defining relations of $\mathbf{U}(\lie p_J)$ under the specialization made by passing from $\tilde{\mathbf{U}}^J_1$ to $\mathbf{U}^J_1$. We can derive the following proposition justifying our definition.
 \begin{prop}\label{classli1} We have that $\tilde{\mathbf{U}}^J_1$ is an associative algebra over $\mathbb{C}\cong \mathbf{A}_1/(q-1)\mathbf{A}_1$ generated by $E_i$,$F_j$,$K_i$,$H_i$, $C^{\pm 1/2}$, $D$, $c',d'$ with $i\in I$ and $j\in J$. Moreover, the following map
 $$\mathbf{U}(\mathfrak{p}_J)\rightarrow \mathbf{U}_1^J$$
 $$x_i^{+}\mapsto E_i,\ \ x_j^{-}\mapsto F_j,\ \ h_i\mapsto H_i,\ \ d\mapsto d',\ \  i\in I, j\in J$$
 defines an isomorphism of Hopf algebras. 
\qed
 \end{prop}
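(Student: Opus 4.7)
The plan is to verify the two parts of the proposition in sequence. The generator claim for $\tilde{\mathbf{U}}^J_1$ is essentially built into the construction: by definition $\mathbf{U}_{q,\mathbf{A}_1}^J$ is the $\mathbf{A}_1$-subalgebra of $\mathbf{U}_q^J$ generated by the listed elements $E_i, F_j, K_i^{\pm 1}, C^{\pm 1/2}, D^{\pm 1}, H_i, c', d'$, and since specializing $\mathbf{A}_1 \to \mathbb{C}$ at $q = 1$ commutes with forming the subalgebra generated by a subset, the images in $\tilde{\mathbf{U}}^J_1$ of these generators generate $\tilde{\mathbf{U}}^J_1$ as a $\mathbb{C}$-algebra.

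For the isomorphism, I would first verify well-definedness of the assignment $x_i^+\mapsto E_i$, $x_j^-\mapsto F_j$, $h_i\mapsto H_i$, $d\mapsto d'$ as an algebra morphism $\mathbf{U}(\mathfrak{p}_J)\to \mathbf{U}_1^J$. The presentation of $\mathbf{U}(\mathfrak{p}_J)$ recalled in the subsection preceding Section~\ref{section4} uses exactly the relations \eqref{1} and \eqref{2} restricted so that $x_s^-$ appears only for $s\in J$. The relations listed in Section~4.3 for $\tilde{\mathbf{U}}^J_1$ specialize, after quotienting further by $(K_i - 1,\, D - 1,\, C^{\pm 1/2} - 1)$, to precisely these Chevalley--Serre relations; for instance $[E_i,F_j]=\delta_{i,j}[K_i;0]$ reduces to $\delta_{i,j}H_i$, the $K_i$-twisted weight relations become the usual bracket relations, and the quantum Serre relations become the classical ones. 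Surjectivity is then immediate from the first part, since modulo $K_i=D=C^{\pm 1/2}=1$ the image already contains all the algebra generators of $\mathbf{U}_1^J$.

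The main obstacle is injectivity, and my plan is to use the commutative diagram in which the inclusion $\mathbf{U}_{q,\mathbf{A}_1}^J \hookrightarrow \mathbf{U}_{q,\mathbf{A}_1}$ descends to a morphism $\mathbf{U}_1^J \to \mathbf{U}_1$, and then exploit the known algebra isomorphism $\mathbf{U}_1 \cong \mathbf{U}(\mathfrak{g})$ from \cite[Proposition 1.5]{KdC90}. The composite $\mathbf{U}(\mathfrak{p}_J)\to \mathbf{U}_1^J \to \mathbf{U}_1 \cong \mathbf{U}(\mathfrak{g})$ is then the standard inclusion of the parabolic enveloping subalgebra, injective by the classical PBW theorem; hence it suffices to show that $\mathbf{U}_1^J \to \mathbf{U}_1$ is itself injective. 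The delicate point is that the embedding of integral forms $\mathbf{U}_{q,\mathbf{A}_1}^J \hookrightarrow \mathbf{U}_{q,\mathbf{A}_1}$ must remain injective after tensoring with $\mathbb{C}$ and then passing to the quotient by the Cartan/central relations. This can be established using the triangular decomposition $\mathbf{U}_q^J \cong \mathbf{U}_q^{J,-}\otimes \mathbf{U}_q^{J,0}\otimes \mathbf{U}_q^{J,+}$ together with a PBW-type basis on each factor — the negative part uses only the generators corresponding to $J$, the positive part uses all simple roots, and the Cartan part specializes transparently — so that each factor of $\mathbf{U}_{q,\mathbf{A}_1}^J$ is an $\mathbf{A}_1$-free direct summand of the corresponding factor of $\mathbf{U}_{q,\mathbf{A}_1}$, mirroring the strategy of \cite[Section~1]{KdC90}. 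Finally the Hopf structure check is routine: the comultiplication formulas $\Delta(E_i)=E_i\otimes 1+K_i\otimes E_i$ and $\Delta(F_j)=F_j\otimes K_j^{-1}+1\otimes F_j$ together with the antipode formulas $S(E_i)=-K_i^{-1}E_i$, $S(F_j)=-F_jK_j$ all reduce under the specialization to the primitive coproduct $x\mapsto x\otimes 1+1\otimes x$ and the sign antipode $x\mapsto -x$ on $\mathbf{U}(\mathfrak{p}_J)$.
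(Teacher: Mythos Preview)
Your proposal is correct and follows essentially the same route the paper indicates: the paper marks the proposition with \qed\ and relies on the preceding discussion, which lists the relations in $\tilde{\mathbf{U}}^J_1$ and points to \cite[Section 3]{HK02} and \cite[Section 1]{KdC90} as the standard template. Your argument fleshes this out in exactly the expected way --- relation-checking for well-definedness, surjectivity from the generating set, and injectivity by factoring through the known isomorphism $\mathbf{U}_1\cong \mathbf{U}(\mathfrak{g})$ together with the PBW/triangular decomposition to control the specialization of the $\mathbf{A}_1$-forms.
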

\section{Parabolic quantum affine algebras: Drinfeld picture}\label{section5}
In this section we use the results of \cite{B94} to describe the image of a parabolic quantum affine algebra under the Drinfeld-Beck isomorphism. 
\subsection{}\label{section51} For $i\in I$ let $\mathbf{T}_i: \mathbf{U}_q\rightarrow \mathbf{U}_q$ the Lusztig isomorphism defined on the generators of $\mathbf{U}_q$ as follows. For $j\in I$, we have
$$\mathbf{T}_i(E_i)=-F_iK_i,\ \ \mathbf{T}_i(E_j)=\sum_{s=0}^{-a_{ij}}(-1)^{s-a_{ij}} q_i^{-s} E_i^{(-a_{ij}-s)}E_j E_i^{(s)},\ \ \text{ for $i\neq j$}$$
$$\mathbf{T}_i(F_i)=-K^{-}_iE_i,\ \ \mathbf{T}_i(F_j)=\sum_{s=0}^{-a_{ij}}(-1)^{s-a_{ij}} q_i^{s} F_i^{(s)}F_j F_i^{(-a_{ij}-s)},\ \ \text{ for $i\neq j$}$$
$$\mathbf{T}_i(K_j)=K_{j}(K^{-}_i)^{a_{ij}},\ \ \mathbf{T}_i(D)=DK_{i}^{-\delta_{0,i}}$$
Recall that these operators commute with $\Omega$ and satisfy the braid relations \cite{L10a}, hence the definition $\mathbf{T}_{w}=\mathbf{T}_{i_1}\cdots \mathbf{T}_{i_k}$ for a reduced expression $w=\mathbf{s}_{i_1}\cdots \mathbf{s}_{i_k}$, $w\in W$ is well-defined.  This induces an action of the braid group associated to $W$ on $\mathbf{U}_q$. We extend this definition to an action of the braid group of $W^{\mathrm{ext}}$ by setting $$\mathbf{T}_{\sigma}(E_i)=E_{\sigma(i)},\ \ \mathbf{T}_{\sigma}(F_i)=F_{\sigma(i)},\ \ \mathbf{T}_{\sigma}(K_i)=K_{\sigma(i)},\ \ \ \sigma\in \mathcal{T}.$$ The next proposition can be deduced from \cite[Propositon 1.8]{Lu90a} and \cite[Lemma 40.1.2]{L10a} respectively, and will be used throughout the remainder of the paper without further comment.
\begin{prop}\label{spanlem}
Let $w\in W$ and $i\in I$ such that $w(\alpha_i)\in \sum_{j\in J}\mathbb{Z}_+\alpha_j$. Then $\mathbf{T}_w(E_i)$ lies in the subalgebra generated by $E_j, j\in J$. Moreover, if $w(\alpha_i)=\alpha_k$ for some $k\in I$, then $\mathbf{T}_w(E_i)=E_k$.
\hfill\qed
    \end{prop}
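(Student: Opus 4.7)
The plan is to argue by induction on $\ell(w)$, reducing the statement to Lusztig's explicit rank-two calculation inside the dihedral subgroup generated by $\mathbf{s}_i$ and one other simple reflection. The base case $\ell(w)=0$ is trivial: then $w(\alpha_i)=\alpha_i$ lies in $\sum_{j\in J}\mathbb{Z}_+\alpha_j$ only if $i\in J$, and $\mathbf{T}_e(E_i)=E_i$ is already in the required subalgebra.

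For the inductive step, I would fix a reduced expression $w=\mathbf{s}_{j_1}\cdots\mathbf{s}_{j_m}$. Since $w(\alpha_i)$ is a positive root (being in $\sum_{j\in J}\mathbb{Z}_+\alpha_j$), we have $\ell(w\mathbf{s}_i)=\ell(w)+1$, forcing $j_m\neq i$. Set $k:=j_m$ and decompose $w=w'y$ where $y$ is the longest suffix of the fixed reduced expression alternating only in the letters $\mathbf{s}_i$ and $\mathbf{s}_k$. Then $\ell(w)=\ell(w')+\ell(y)$; by maximality of $y$, both $w'(\alpha_i)$ and $w'(\alpha_k)$ are positive roots, and by reducedness of $w$ we get $\ell(y)<h(i,k)$, where $h(i,k)$ denotes the order of $\mathbf{s}_i\mathbf{s}_k$ in $W$ (possibly $\infty$).

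The key input is Lusztig's rank-two braid computation \cite[Lemma~40.1.1]{L10a}: because $y$ lies in the dihedral subgroup $\langle\mathbf{s}_i,\mathbf{s}_k\rangle$ and $\ell(y)<h(i,k)$, the image $\mathbf{T}_y(E_i)$ is a polynomial in $E_i$ and $E_k$ of homogeneous $Q$-weight $y(\alpha_i)=a\alpha_i+b\alpha_k$ with $a,b\in\mathbb{Z}_+$. Hence $\mathbf{T}_w(E_i)=\mathbf{T}_{w'}\bigl(\mathbf{T}_y(E_i)\bigr)$ is a polynomial in $\mathbf{T}_{w'}(E_i)$ (appearing only if $a>0$) and $\mathbf{T}_{w'}(E_k)$ (appearing only if $b>0$). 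From the identity
$$a\,w'(\alpha_i)+b\,w'(\alpha_k)=w(\alpha_i)\in\sum_{j\in J}\mathbb{Z}_+\alpha_j$$
together with the positivity of $w'(\alpha_i)$ and $w'(\alpha_k)$, one concludes that whichever of these two roots actually appears must itself lie in $\sum_{j\in J}\mathbb{Z}_+\alpha_j$ — no cancellation can hide an $\alpha_\ell$-contribution for $\ell\notin J$, as the summands are non-negative combinations of simple roots. The inductive hypothesis applied to $w'$ at the relevant simple root(s) then places $\mathbf{T}_{w'}(E_i)$ and $\mathbf{T}_{w'}(E_k)$ in the subalgebra generated by $\{E_j:j\in J\}$, closing the induction.

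The refinement for the case $w(\alpha_i)=\alpha_k$ simple is not a by-product of this induction but rather exactly the content of \cite[Lemma~40.1.2]{L10a}, which can be cited directly. The only mildly subtle point in the main argument is the verification that the maximal alternating suffix $y$ satisfies both $w'(\alpha_i),w'(\alpha_k)\in R^+$ and $\ell(y)<h(i,k)$; both facts follow from the exchange condition in the Coxeter group $W$ applied to the reduced factorisation $w=w'y$. The remaining content is then purely Lusztig's rank-two formulas combined with the support bookkeeping on the root lattice described above.
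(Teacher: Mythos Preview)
Your approach is essentially identical to the paper's intended argument (and to Lusztig's original proof in \cite[Proposition~1.8]{Lu90a}): induct on $\ell(w)$, peel off a dihedral tail $y\in\langle\mathbf{s}_i,\mathbf{s}_k\rangle$, invoke \cite[Lemma~40.1.1]{L10a} for the rank-two step, and finish by support bookkeeping on the root lattice. The second assertion is indeed exactly \cite[Lemma~40.1.2]{L10a}.

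There is one genuine imprecision, however. Defining $y$ as the longest alternating suffix \emph{of a fixed reduced expression} does not guarantee $w'(\alpha_i)\in R^+$. Concretely, in type $A_3$ take the reduced word $w=\mathbf{s}_1\mathbf{s}_2\mathbf{s}_1\mathbf{s}_3$ with $i=2$; then $w(\alpha_2)=\alpha_2+\alpha_3\in R^+$, $k=j_m=3$, the maximal $\{2,3\}$-alternating suffix of this particular expression is just $y=\mathbf{s}_3$, $w'=\mathbf{s}_1\mathbf{s}_2\mathbf{s}_1$, and $w'(\alpha_2)=-\alpha_1$ is negative --- so the inductive hypothesis cannot be applied to $w'$ at $i$. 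The exchange condition alone does not rescue this, because it would require switching to a different reduced expression of $w'$. The correct formulation --- the one the paper uses --- is to let $w'$ be the minimal-length coset representative of $w$ in $W/\langle\mathbf{s}_i,\mathbf{s}_k\rangle$ and set $y=(w')^{-1}w\in\langle\mathbf{s}_i,\mathbf{s}_k\rangle$; the standard parabolic decomposition then gives $\ell(w)=\ell(w')+\ell(y)$ together with $w'(\alpha_i),w'(\alpha_k)\in R^+$, and $\ell(y)<h(i,k)$ follows from $\ell(w\mathbf{s}_i)>\ell(w)$. With this adjustment the rest of your argument goes through verbatim.
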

In the rest of this section we choose a function $\kappa : \mathring{I}\rightarrow \{\pm 1\}$ such that two adjacent nodes  $r,s\in \mathring{I}$ in the Dynkin diagram of $\mathring{\lie g}$ have different signs. i.e. $\kappa(r)=-\kappa(s)$. We extend this map by 
$$\kappa(\alpha)=\kappa(1)^{c_1}\cdots \kappa(n)^{c_n},\ \ \alpha=\sum_{i=1}^nc_i\alpha_i\in \mathring{Q}^+.
$$Let $\mathbf{u}^{\vee}=\varpi_{u_1}^{\vee}+\cdots+\varpi_{u_p}^{\vee}\in \mathring{Q}^{\vee}$ (repetition in the indices is allowed) be an element such that $(\mathbf{u}^{\vee},\alpha_i)>0$ for $i=1,\dots,n$ and fix a reduced expression  \begin{equation}\label{seteq0}t_{\nu(\mathbf{\mathbf{u}^{\vee}})}=\mathbf{s}_{i_1}\cdots \mathbf{s}_{i_{N}}\in W.\end{equation}
We get from \cite[Proposition 8]{Pa95a}) (choosing $A=\emptyset$ in their notation) the following lemma; see also \cite[Theorem 5]{Pa95a}.
\begin{lem}\label{papi} 
Each positive real root for the affine Lie algebra is obtained as
\begin{equation*}\label{seteq1}\{-\alpha+r\delta: \alpha\in \mathring{R}^+,\ r\geq 1\}=\{\beta_k:=\mathbf{s}_{i_1}\mathbf{s}_{i_2}\cdots \mathbf{s}_{i_{k-1}}(\alpha_{i_k}),\ k>0 \}\end{equation*}
\begin{equation*}\label{seteq2}\{\alpha+r\delta: \alpha\in \mathring{R}^+, r\geq 0\}=\{\beta_k:=\mathbf{s}_{i_0}\mathbf{s}_{i_{-1}}\cdots \mathbf{s}_{i_{k+1}}(\alpha_{i_k}),\ k\leq  0\}\end{equation*}
where the indices are understood modulo $N$.
\qed
\end{lem}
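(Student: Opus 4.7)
The plan is to identify the families $\{\beta_k : k>0\}$ and $\{\beta_k : k\leq 0\}$ with inversion sets of suitable translation elements of the affine Weyl group, and then to compute these inversion sets directly from the explicit action of $t_\mu$ on affine roots recalled in Section~\ref{section2}.

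First, I will observe that the hypothesis $(\mathbf{u}^{\vee},\alpha_i)>0$ for all $i\in\mathring{I}$ is exactly the statement that $\nu(\mathbf{u}^{\vee})\in\mathring{\mathfrak{h}}^*$ is strictly dominant. Combined with the classical length formula $\ell(t_\lambda)=\sum_{\alpha\in\mathring{R}^+}|(\alpha,\lambda)|$ on the translation lattice, this yields $\ell(t_{\ell\nu(\mathbf{u}^{\vee})})=\ell\cdot N$ for every $\ell\geq 1$. Consequently, the $\ell$-fold concatenation
$$\mathbf{s}_{i_1}\mathbf{s}_{i_2}\cdots\mathbf{s}_{i_{\ell N}}$$
(with indices taken modulo $N$) is a reduced expression of $t_{\ell\nu(\mathbf{u}^{\vee})}$, and the reversed word
$$\mathbf{s}_{i_0}\mathbf{s}_{i_{-1}}\cdots\mathbf{s}_{i_{-\ell N+1}}$$
is a reduced expression of $t_{-\ell\nu(\mathbf{u}^{\vee})}$.

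Next, I will invoke the standard enumeration of inversion sets: for any reduced decomposition $w=\mathbf{s}_{j_1}\cdots\mathbf{s}_{j_k}$ one has
$$R(w):=\{\gamma\in R^+ : w^{-1}(\gamma)\in -R^+\}=\{\mathbf{s}_{j_1}\cdots\mathbf{s}_{j_{p-1}}(\alpha_{j_p}) : 1\leq p\leq k\},$$
and the listed roots are pairwise distinct. Applied to the two reduced expressions above this identifies $\{\beta_k : 1\leq k\leq\ell N\}$ with $R(t_{\ell\nu(\mathbf{u}^{\vee})})$ and $\{\beta_k : 0\geq k>-\ell N\}$ with $R(t_{-\ell\nu(\mathbf{u}^{\vee})})$. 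I will then compute these two inversion sets explicitly. Using the formula $t_\mu(\alpha+r\delta)=\alpha+(r-(\alpha,\mu))\delta$ for $\alpha\in\mathring{R}$, together with the rule that a real affine root $\beta+s\delta$ with $\beta\in\mathring{R}$ is negative iff $s<0$, or $s=0$ and $\beta<0$, a short case analysis on the sign of $\alpha$ gives
\begin{align*}
R(t_{\ell\nu(\mathbf{u}^{\vee})}) &= \{-\alpha+r\delta : \alpha\in\mathring{R}^+,\ 1\leq r\leq\ell(\alpha,\nu(\mathbf{u}^{\vee}))\},\\
R(t_{-\ell\nu(\mathbf{u}^{\vee})}) &= \{\alpha+r\delta : \alpha\in\mathring{R}^+,\ 0\leq r\leq\ell(\alpha,\nu(\mathbf{u}^{\vee}))-1\}.
\end{align*}
Taking the union over $\ell\geq 1$ and using $(\alpha,\nu(\mathbf{u}^{\vee}))>0$ for every $\alpha\in\mathring{R}^+$ exhausts precisely the two sets on the left-hand sides of the lemma.

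The only non-mechanical step is the length-additivity used in the first paragraph: it is essential that $\nu(\mathbf{u}^{\vee})$ is \emph{strictly} dominant, as otherwise adjacent reflections in the concatenated word could cancel and the enumeration would over-count. Everything else is a direct calculation using the definition of $t_\mu$.
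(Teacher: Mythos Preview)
Your argument is correct. The paper does not actually prove this lemma; it simply cites \cite[Proposition~8 and Theorem~5]{Pa95a} and marks the statement with a \qed. Your self-contained proof via the inversion sets $R(t_{\pm\ell\nu(\mathbf{u}^\vee)})$ is the standard direct route: the key points---that strict dominance of $\nu(\mathbf{u}^\vee)$ forces $\ell(t_{\ell\nu(\mathbf{u}^\vee)})=\ell N$ (so the concatenated word stays reduced), and that the inversion set of a translation $t_\mu$ is computed from $t_\mu(\alpha+r\delta)=\alpha+(r-(\alpha,\mu))\delta$---are all handled correctly. This is essentially how one would unwind Papi's general result in the special case $A=\emptyset$ indicated by the paper, so your approach is not really different from the cited source, just made explicit.
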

This definition allows a total order on the set of positive roots
\begin{equation}\label{convexordö}\beta_0<\beta_{-1}<\beta_{-2}<\cdots<2\delta <\delta<\cdots<\beta_2<\beta_1\end{equation}
and the order is convex in the sense that $\alpha<\alpha+\beta<\beta$ for all $\alpha,\beta,\alpha+\beta\in R^+$ with $\alpha$ real. 
\begin{rem}\label{remwicht} We briefly summarize a few properties, although they are straightforward. 
\begin{enumerate}
\item [(i)] If $\beta_k= -\alpha+p\delta$ and $\beta_{\ell} = -\alpha + r\delta$ for some $0<k<\ell$ then we have $p<r$. This is easy to see since $p\geq r$ gives
$$(r-p)\delta  = \mathbf{s}_{i_1}\cdots \mathbf{s}_{i_k}\left (  \mathbf{s}_{i_{k+1}}\cdots \mathbf{s}_{i_{\ell-1}}(\alpha_{i_{\ell}})+\alpha_{i_k} \right )$$
$$\implies
-\alpha_{i_k}+ (r-p)\delta =  \mathbf{s}_{i_{k+1}}\cdots \mathbf{s}_{i_{\ell-1}}(\alpha_{i_{\ell}}).$$
and the left hand side is a negative root. However the right hand side is a positive root since $\mathbf{s}_{i_{k+1}}\cdots \mathbf{s}_{i_{\ell}}$ is a reduced word and we get a contradiction.
    \item [(ii)] If $\alpha\in R^+$ is such that $$\mathbf{s}_{i_1}\cdots\mathbf{s}_{i_{k-1}}(\alpha)=-\beta+s\delta\in (-\mathring{R}^+ +\mathbb{Z}_+\delta)$$
we must have $s\in\mathbb{N}$ and $-\beta+s\delta\leq \beta_k$. This is because $s=0$ would give that $\alpha = \mathbf{s}_{i_{k-1}}\cdots \mathbf{s}_{i_{\ell+1}}(\alpha_{i_\ell}) $ for some $\ell\in\{1,\dots,k-1\}$ (see, for example \cite[pg. 115 Exercise 1]{Hu90}) and hence $\beta=\mathbf{s}_{i_1}\cdots \mathbf{s}_{i_{\ell-1}}(\alpha_{i_\ell})$
which is impossible by Lemma~\ref{papi}. The second part of the claim can be seen similarly.
\item [(iii)] If $\beta_k<\beta_{\ell}$ for $k,\ell >0$, then $\mathbf{s}_{i_{k-1}}\cdots \mathbf{s}_{i_{1}}(\beta_{\ell})\in -R^+,\ \mathbf{s}_{i_{\ell-1}}\cdots \mathbf{s}_{i_{1}}(\beta_{k})\in R^+.$
\end{enumerate}
\end{rem}
Following \cite[Section 1.6]{BK96} we introduce the quantum root vectors:
\begin{align*}\chi^{+}_{\alpha,r}&= \kappa(\alpha)^r \ \mathbf{T}_{i_0}^{-1}\cdots \mathbf{T}^{-1}_{i_{k+1}}(E_{i_k}), \ \ \text{if } \beta_k=\alpha+r\delta,\  k\leq 0\\
\chi^{-}_{\alpha,r}&=\kappa(\alpha)^r \ \mathbf{T}_{i_1}\cdots \mathbf{T}_{i_k}(F_{i_k}), \ \ \text{if } \beta_k=-\alpha+r\delta ,\  k>0\end{align*}
$$\chi^{-}_{\alpha,-r}=\Omega(\chi^{+}_{\alpha,r}),\ \ \chi^{+}_{\alpha,-r}=\Omega(\chi^{-}_{\alpha,r}).$$
Note that if $\alpha=\alpha_i$ is a simple root then we have 
$$\chi^+_{i,r}=\kappa(i)^r \ \mathbf{T}^{-r}_{\varpi_i}(E_i),\ \chi^-_{i,r}=\kappa(i)^r\ \mathbf{T}^r_{\varpi_i}(F_i),\ r\in\mathbb{Z},\  \text{ where }\mathbf{T}_{\varpi_i}:=\mathbf{T}_{t_{\nu(\varpi_i^{\vee})}},\ \ t_{\nu(\varpi_i^{\vee})}\in W^{\mathrm{ext}}.$$
Moreover, define recursively for $s\in\mathbb{N}$: 
$$sh_{i,s}:=s\bar{\Psi}_{i,s}-(q_i-q_i^{-1})\sum_{k=1}^{s-1}k\cdot \bar{\Psi}_{i,s-k}\cdot h_{i,k},\ \ \bar{\Psi}_{i,r}=\kappa(i)^rC^{r/2}K_i^{-1}\big[E_i,\mathbf{T}_{\varpi_i}^r(F_i)\big]$$
and let $h_{i,-s}=\Omega(h_{i,s}).$
The second realization of Drinfeld \cite{D872},\cite[Theorem 4.7]{B94} reads as follows.
\begin{thm}\label{definingrelationQAA} The quantum affine algebra $\mathbf{U}_q$ is generated over $\mathbb{C}(q)$ by the elements $$\chi_{i,r}^{\pm},\ h_{i,s},\ K_i^{\pm},\ C^{\pm 1/2},\ D, \ \ i\in\mathring{I},\ r\in \mathbb{Z},\ s\in\mathbb{Z}\backslash\{0\}$$ subject to the following defining relations $\big(i,j \in \mathring{I}, r,r' \in \mathbb{Z}, m,m' \in \mathbb{Z}\setminus \{ 0\} \big)$:
\begin{enumerate}
\item $C^{\pm 1/2}$ is central and $K_iK_{i}^{-}=1=K_i^{-}K_i$,\ $[K_i,K_j] = [K_i, h_{j,m}] = 0$\vspace{0,3cm}
\item $K_i \chi_{j,r}^\pm K_i^{-} = q_i^{\pm a_{ij}}\chi_{j,r}^\pm$, $D \chi_{j,r}^\pm D^{-1} = q^{r}\chi_{j,r}^\pm$, $D h_{j,m}D^{-1} = q^{m}h_{j,m}$ \vspace{0,3cm}
\item $\displaystyle [h_{i,m},h_{j,m'}]=\delta_{m,-m'}\frac{1}{m} [ma_{ij}]_{i}\frac{C^m-C^{-m}}{q_j-q_j^{-1}}$\vspace{0,3cm}
\item $\displaystyle [h_{i,m}, \chi_{j,r}^{\pm}]= \pm \frac{1}{m}[m a_{ij}]_{i}C^{\mp (|m|/2)} \chi_{j,r+m}^{\pm}$\vspace{0,3cm}
\item $\chi_{i,r+1}^{\pm} \chi_{j,r'}^{\pm} -\,q_i^{\pm a_{ij}}\chi_{j,r'}^\pm \chi_{i,r+1}^\pm 
  = q_i^{\pm a_{ij}}\chi_{i,r}^{\pm} \chi_{j,r'+1}^{\pm} - \chi_{j,r'+1}^{\pm}\chi_{i,r}^\pm$\vspace{0,3cm}
  \item   $\displaystyle [\chi_{i,r}^{+}, \chi_{j,r'}^{-}] = \delta_{ij} \frac{1}{q_i - q_i^{-1}}\big(C^{(r-r')/2}\phi_{i,r+r'}^+ - C^{(r'-r)/2}\phi_{i,r+r'}^-\big)$\vspace{0,3cm}
  \item  $\displaystyle\sum_{\tau \in \mathfrak{S}_{1-a_{ij}}} \sum_{k=0}^{1-a_{ij}} (-1)^k \begin{bmatrix} 1-a_{ij} \\ k \end{bmatrix}_{\!q_i} 
    \chi_{i, r_{\tau(1)}}^\pm \cdots \chi_{i, r_{\tau(k)}}^\pm \chi_{j,r'}^\pm \chi_{i,r_{\tau(k+1)}}^\pm \cdots \chi_{i,r_{\tau(1-a_{ij})}}^\pm
    = 0 \ \ (i \neq j)$
  for all sequences of integers $r_1,\dots,r_{1-a_{ij}},r'$

\end{enumerate}
where $\phi_{i,r}^\pm$'s are determined by equating coefficients of powers of $u$ in the formula
\[ \phi_i^{\pm}(u)=\sum_{r = 0}^{\infty} \phi_{i, \pm r}^\pm u^{\pm r} = K_i^{\pm} \exp \left( \pm(q_i - q_i^{-1})\sum_{m = 1}^{\infty}
   h_{i, \pm m} u^{\pm m} \right),
\]
and $\phi_{i,\mp r}^{\pm} = 0$ for $r > 0$.
\qed
\end{thm}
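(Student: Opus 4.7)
The approach follows Beck's strategy in \cite{B94}. Let $\mathbf{U}_q^{\mathrm{Dr}}$ denote the associative $\mathbb{C}(q)$-algebra on abstract generators $\chi^\pm_{i,r}, h_{i,s}, K_i^\pm, C^{\pm 1/2}, D$ modulo the relations (1)-(7); the goal is to show that assigning to each abstract generator the corresponding element of $\mathbf{U}_q$ from Subsection~\ref{section51} produces a bijective homomorphism $\Psi: \mathbf{U}_q^{\mathrm{Dr}} \to \mathbf{U}_q$. Surjectivity is the easier half: one has $\chi^+_{i,0}=E_i$ for $i\in\mathring{I}$ (from $\kappa(i)^0=1$ and the convention $\beta_0 = \alpha_i$ for a suitable rewriting), and the remaining affine Chevalley generator $E_0$ is recovered as a $\kappa$-signed Lusztig image of a $\chi^-_{\theta,1}$ times Cartan elements, using $\alpha_0 = \delta-\theta$.

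The main work is to verify the Drinfeld relations hold under $\Psi$. Relations (1) and (2) are immediate from the fact that Lusztig's operators preserve the $K_i$'s up to $W$-translation and that $D$ implements the translation by $\delta$. Relation (3) on the Cartan part and relation (4) are consequences of the recursive passage from $\bar{\Psi}_{i,s}$ to $h_{i,s}$, which is engineered so that the generating series $\phi_i^\pm(u)$ exponentiates the bracket structure correctly; these reduce to computations inside $\mathring{\mathbf{U}}_q$ at each weight. Relation (6) reduces via Proposition~\ref{spanlem} to the rank-one $\widehat{\mathfrak{sl}}_2$ case and is handled by a direct braid computation.

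The hard part is relations (5) and (7). The main tool is the Levendorskii-Soibelman formula: for any two convex-adjacent roots $\beta_k<\beta_\ell$ in the order \eqref{convexordö}, the $q$-commutator of their root vectors lies in the subalgebra generated by strictly intermediate root vectors, with explicit coefficients. Induction on $|r-r'|$, combined with the properties of the order listed in Remark~\ref{remwicht} (in particular, the fact that for fixed $\alpha$ the heights $r$ appear monotonically along the list of $\beta_k$'s), reduces each Drinfeld Serre identity to a boundary check between consecutive $\beta_k$'s in the reduced expression \eqref{seteq0}. The sign normalisation $\kappa$ is precisely what is needed to keep the signs consistent along these reductions, and $\Omega$-equivariance propagates the $+$ case to the $-$ case.

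Finally, injectivity of $\Psi$ proceeds by producing a PBW-type basis of $\mathbf{U}_q^{\mathrm{Dr}}$ using only the Drinfeld relations - ordering monomials by the convex order \eqref{convexordö} together with imaginary root blocks controlled by the $h_{i,s}$ - and then comparing graded dimensions against the PBW basis of $\mathbf{U}_q$ obtained by applying the braid operators along the reduced expression \eqref{seteq0}, which exhausts all positive real roots by Lemma~\ref{papi}. The principal obstacle throughout is the bookkeeping for relation (7): controlling the large number of PBW terms produced by iterated Levendorskii-Soibelman reductions so that they organize into the symmetric-group sum with binomial coefficients. Once a non-degenerate pairing (or a faithful family of highest-weight modules) is found that separates the PBW monomials, injectivity follows and $\Psi$ is an isomorphism.
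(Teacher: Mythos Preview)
The paper does not give its own proof of this theorem: it is stated with a \qed immediately after the statement and attributed to \cite{D872} and \cite[Theorem 4.7]{B94}. Your sketch follows exactly the strategy of Beck's paper that is being cited, and as a high-level outline of that argument it is accurate --- the map on generators, the verification of relations via the Levendorskii--Soibelman convexity formula and rank-one reductions, the role of the sign function $\kappa$, and the injectivity via PBW comparison are all the essential ingredients of \cite{B94}. So there is nothing to compare against in the paper itself; your proposal is a faithful summary of the source the paper defers to.
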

We have the relation 
$$x_{i,r}^{\pm}[K_j;a] = [K_j;a\mp a_{ji}]x_{i,r}^{\pm}.$$
\subsection{} The description of the comultiplication in the second realization appears to be rather complicated. Nevertheless, by setting $$X^+=\sum_{i\in \mathring{I}, s\in\mathbb{Z}} \mathbb{C}(q) \chi_{i,s}^+$$ we can derive the following expression from \cite{Da98a}.
\begin{thm}\label{Dami} Let $i\in\mathring{I}$ and $r\in\mathbb{Z}$. We have
    \begin{enumerate}
    \item $\Delta(\chi_{i,r}^+)\in\chi_{i,r}^+\otimes 1+ \mathbf{U}_q\otimes \mathbf{U}_q X^+$ \vspace{0,2cm} 
      \item $\Delta(\chi_{i,r}^-)\in\chi_{i,r}^-\otimes K_i+1\otimes \chi_{i,r}^- +\sum_{s=1}^{r-1} \chi_{i,r-s}^-\otimes \phi_{i,s}^+ +\mathbf{U}_q\otimes \mathbf{U}_q X^+,\ r>0$ \vspace{0,2cm} 
      
    \hspace{-0,5cm}  $\Delta(\chi_{i,r}^-)\in\chi_{i,r}^-\otimes K^{-}_i+1\otimes \chi_{i,r}^- +\sum_{s=1}^{-r} \chi_{i,r+s}^-\otimes \phi_{i,-s}^+ +\mathbf{U}_q\otimes \mathbf{U}_q X^+,\ r\leq 0$\vspace{0,2cm} 
      
      \item $\Delta(\phi_{i,\pm r}^{\pm})\in \sum_{s=0}^r \phi_{i,\pm s}^{\pm}\otimes \phi_{i,\pm(r-s)}^{\pm}+\mathbf{U}_q\otimes \mathbf{U}_q X^+,\ \ r>0$
\end{enumerate}
\qed
\end{thm}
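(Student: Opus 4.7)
The plan is to establish all three statements simultaneously by induction on $|r|$, following the strategy of Damiani~\cite{Da98a}. The right tail $\mathbf{U}_q \otimes \mathbf{U}_q X^+$ plays the role of a two-sided error term whose stability under the operations arising in the induction must be controlled throughout.

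For the base case $r = 0$, one has $\chi_{i,0}^+ = E_i$, $\chi_{i,0}^- = F_i$, and $\phi_{i,0}^\pm = K_i^\pm$, so the statements reduce directly to the Drinfeld--Jimbo coproduct recorded in Section~\ref{section4}. For (i), the term $K_i \otimes E_i$ lies in $\mathbf{U}_q \otimes \mathbf{U}_q X^+$ because $E_i \in X^+$. For (ii) at $r = 0$ the sum is empty and $\Delta(F_i) = F_i \otimes K_i^{-} + 1 \otimes F_i$ matches the prescribed leading terms exactly. For (iii) at $r = 0$ the identity $\Delta(K_i^\pm) = K_i^\pm \otimes K_i^\pm$ is the single term of the sum.

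For the inductive step one uses the Drinfeld relation (iv) of Theorem~\ref{definingrelationQAA} specialized to $i = j$ and $m = \pm 1$, which (since $a_{ii}=2$) expresses $\chi_{i,r\pm 1}^{\pm}$ as a nonzero scalar multiple of a commutator of the form $[h_{i,\pm 1},\chi_{i,r}^{\pm}]$, thereby recovering the next Drinfeld generator from two of lower absolute grade. Relation (iii) provides an analogous shift for the $h_{i,m}$, which through the generating-function identity defining $\phi_i^\pm(u)$ translates into an induction on $\phi_{i,\pm r}^\pm$. Applying $\Delta$ to these commutator identities and invoking the inductive hypothesis on each factor then yields candidate expressions for $\Delta(\chi_{i,r\pm 1}^{\pm})$ and $\Delta(\phi_{i,\pm(r+1)}^{\pm})$.

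The principal obstacle is verifying that the cross-terms produced by the commutators genuinely lie in $\mathbf{U}_q \otimes \mathbf{U}_q X^+$. Two auxiliary inputs are needed: first, $\Delta(h_{i,1})$ itself admits a triangular expression modulo $\mathbf{U}_q \otimes \mathbf{U}_q X^+$, which one deduces from the identity $h_{i,1} = C^{1/2} K_i^{-}[E_i,\chi_{i,1}^-]$ combined with the base-case coproduct of $E_i$ and the coproduct of $\chi_{i,1}^- = \kappa(i)\mathbf{T}_{\varpi_i}(F_i)$ computed directly from the braid-group definition of Subsection~\ref{section51}; second, $\mathbf{U}_q X^+$ must be shown closed under left multiplication by the generators encountered, which is established by induction on weight using the Drinfeld relations (iv) and (vi), with (vi) introducing only harmless $\phi$-factors on the right. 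Keeping careful track of the $C^{\pm 1/2}$- and $K_i$-weights throughout this bookkeeping is the technical heart of the argument and is worked out in detail in~\cite{Da98a}.
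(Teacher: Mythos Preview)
The paper does not give its own proof of this theorem: it states the result with a terminal \qed and attributes it to Damiani~\cite{Da98a}. Your proposal sketches exactly the Damiani induction on $|r|$ via the Drinfeld commutator relations, so you are aligned with what the paper invokes; there is nothing further to compare.
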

\subsection{}\label{section52} Now, in this presentation, we aim to describe generators of $\mathbf{U}_q^J$ and determine a PBW basis as in \cite[Proposition 3]{B94A}. For this purpose it will be more convenient to work with the modified quantum root vectors 
\begin{align*}
\Dot{\chi}_{\alpha,r}^+ &:= \chi_{\alpha,r}^+ ,\ \Dot{\chi}_{\alpha,-r}^- := \Omega(\Dot{\chi}_{\alpha,r}^+),\ \ \alpha\in \mathring{R}^+, \ r\geq 0 \\
\Dot{\chi}_{\alpha,r}^- &:= -C^rK^{-}_{\alpha}\chi_{\alpha,r}^- ,\ \Dot{\chi}_{\alpha,-r}^+ := \Omega (\Dot{\chi}_{\alpha,r}^-),\ \alpha\in \mathring{R}^+, \ r >0 \\
\Dot{h}_{i,r}&:= C^{r/2}h_{i,r},\ \Dot{h}_{i,-r} := \Omega(\Dot{h}_{i,r}), \ r>0.
\end{align*}
\begin{lem}\label{lemsubset667} We have
$$\{\Dot{\chi}_{\alpha,r}^+,\ \Dot{\chi}_{\alpha,s}^-,\ \Dot{h}_{i,s} : \alpha\in \mathring{R}^+, \ r\in\mathbb{Z}_+,\ s\in\mathbb{N}\}\subseteq \mathbf{U}_q^{J,+}$$
$$\{\Dot{\chi}_{\alpha,-1}^+,\ \Dot{\chi}_{\beta,0}^- :  \alpha,\beta \in \mathring{R}^+, \ \mathrm{supp}(\alpha-\delta),\ \mathrm{supp}(\beta)\subseteq J\}\subseteq \mathbf{U}_q^{J,-}$$ 
\begin{proof}
This is an immediate consequence of Proposition~\ref{spanlem} and the definition of the (modified) quantum root vectors. For $\Dot{h}_{i,s}$ one can use $\Dot{h}_{i,1}=q_i^{-2}\Dot{\chi}_{i,0}^+\Dot{\chi}_{i,1}^--\Dot{\chi}_{i,1}^-\Dot{\chi}_{i,0}^+$ and the recursive formula to argue by induction.
\end{proof}
\end{lem}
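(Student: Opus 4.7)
The plan is to verify the two claimed inclusions separately: the first (positive part) by unpacking the definitions together with one commutator identity and a short induction, and the second (negative part) by passing through the anti-automorphism $\Omega$ so that Proposition~\ref{spanlem} can be applied directly.

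For $\Dot{\chi}^+_{\alpha,r}$ with $r \geq 0$, the weight $\alpha + r\delta$ is positive; by definition the element is a composition of $\mathbf{T}_i^{-1}$'s applied to some $E_{i_k}$, which lies in $\mathbf{U}_q^+ = \mathbf{U}_q^{J,+}$ by standard Lusztig theory. For $\Dot{\chi}^-_{\alpha,s}$ with $s \geq 1$, the weight $-\alpha + s\delta$ is likewise positive; I would unwind $\chi^-_{\alpha,s} = \kappa(\alpha)^s \mathbf{T}_{i_1}\cdots\mathbf{T}_{i_k}(F_{i_k})$ using $\mathbf{T}_{i_k}(F_{i_k}) = -K_{i_k}^{-1}E_{i_k}$. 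The Cartan contribution that appears is $-\mathbf{T}_{i_1}\cdots\mathbf{T}_{i_{k-1}}(K_{i_k}^{-1}) = -K_{-\alpha+s\delta}^{-1}$, whose inverse is exactly the modification factor $-C^s K_\alpha^{-1}$ in the definition of $\Dot{\chi}^-_{\alpha,s}$. Hence $\Dot{\chi}^-_{\alpha,s}$ equals $\pm\kappa(\alpha)^s\mathbf{T}_{i_1}\cdots\mathbf{T}_{i_{k-1}}(E_{i_k})$, which lies in $\mathbf{U}_q^+$. For the $\Dot{h}_{i,s}$, the identity in the sketch (up to a sign) writes $\Dot{h}_{i,1}$ as a commutator of elements already shown to lie in $\mathbf{U}_q^{J,+}$; the recursive formula given before Theorem~\ref{definingrelationQAA} then expresses $h_{i,s}$ as a combination of the $\bar{\Psi}_{i,r}$ (each, after the central factor, being a commutator $[E_i,\mathbf{T}_{\varpi_i}^r(F_i)]$) and lower $h_{i,k}$; this closes an induction on $s$.

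For the second inclusion I would exploit that $\Omega$ commutes with every $\mathbf{T}_i$ and exchanges $E_j \leftrightarrow F_j$, so it carries the subalgebra generated by $\{E_j : j \in J\}$ onto $\mathbf{U}_q^{J,-}$. For $\Dot{\chi}^-_{\beta,0}$ with $\mathrm{supp}(\beta) \subseteq J$, we have $\Dot{\chi}^-_{\beta,0} = \Omega(\chi^+_{\beta,0})$; the element $\chi^+_{\beta,0}$ is of the form $\mathbf{T}_v(E_{i_k})$ (after rewriting the composition of $\mathbf{T}_i^{-1}$'s in its definition, or via the analogous statement of Proposition~\ref{spanlem} for $\mathbf{T}_w^{-1}$) with $v(\alpha_{i_k}) = \beta \in \sum_{j \in J}\mathbb{Z}_+\alpha_j$, so Proposition~\ref{spanlem} places $\chi^+_{\beta,0}$ in $\langle E_j : j \in J\rangle$, and $\Omega$ delivers the claim. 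For $\Dot{\chi}^+_{\alpha,-1}$ with $\mathrm{supp}(\alpha - \delta) \subseteq J$, use $\Dot{\chi}^+_{\alpha,-1} = \Omega(\Dot{\chi}^-_{\alpha,1})$ and the Cartan cancellation from the first paragraph to exhibit $\Dot{\chi}^-_{\alpha,1}$ as $\pm\mathbf{T}_w(E_{i_k})$ with $w(\alpha_{i_k}) = -\alpha + \delta \in \sum_{j \in J}\mathbb{Z}_+\alpha_j$; Proposition~\ref{spanlem} applied to this $w$, followed by $\Omega$, closes the case.

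The hard part, and really the only nontrivial point, is verifying the Cartan cancellation: that the modification factor $-C^s K_\alpha^{-1}$ in $\Dot{\chi}^-_{\alpha,s}$ exactly undoes the Cartan contribution that arises when $\mathbf{T}_{i_k}$ acts on $F_{i_k}$. This cancellation is essential both for the first inclusion (so the element ends up in $\mathbf{U}_q^+$ rather than in $\mathbf{U}_q^0\mathbf{U}_q^+$) and for the second (so the support hypothesis can be fed directly into Proposition~\ref{spanlem} applied to a clean $\mathbf{T}_w(E_{i_k})$). The rest is bookkeeping with weights and with the orientation of the braid operators in the convex order fixed by \eqref{seteq0}.
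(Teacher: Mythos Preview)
Your proposal is correct and follows essentially the same route as the paper's one-line proof: unpack the definition of the modified root vectors, observe the Cartan cancellation that turns $\Dot{\chi}^-_{\alpha,s}$ into a scalar multiple of $\mathbf{T}_{i_1}\cdots\mathbf{T}_{i_{k-1}}(E_{i_k})$, and feed the resulting weight condition into Proposition~\ref{spanlem} (or its $\mathbf{T}_w^{-1}$-variant from Lusztig). Your use of $\Omega$ to pass from the $E$-side to $\mathbf{U}_q^{J,-}$ and the induction for $\Dot{h}_{i,s}$ match the paper's intended argument; you have simply made explicit what the paper compresses into ``immediate consequence''.
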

For $u\in W^{\mathrm{ext}}$ (resp. $u\in \mathring{W}$) let 
$$R_{u}^+=\{\alpha\in R^+: u(\alpha)\in -R^+\},\ \ (\text{resp. }\mathring{R}_{u}^+=\{\alpha\in \mathring{R}^+: u(\alpha)\in -\mathring{R}^+\}).$$
The same proof of \cite[Proposition 2.8]{CSVW16} gives for all $u_1,u_2\in W^{\mathrm{ext}}\ (\text{resp. } u_1,u_2\in \mathring{W})$:
\begin{equation}\label{dcond1}
R_{u_2}^+\subseteq R_{u_1u_2}^+ ,\ \ (\text{resp. }\mathring{R}_{u_2}^+\subseteq \mathring{R}_{u_1u_2}^+)\implies \ell(u_1u_2)=\ell(u_1)+\ell(u_2).\end{equation}
\begin{prop}\label{hilflem} Let $\beta\in\mathring{R}^+$ and $w^{-1}_{\beta}\in \mathring{W}(\beta)$.
\begin{enumerate}
\item For all $i\in \mathring{I}\cap J$ we have $\mathbf{s}_iw_{\circ}=w_{\circ}\mathbf{s}_{\bar{i}}$ as Weyl group elements of $\mathring{\lie{g}}_0$.\vspace{0,2cm}
\item  Let $u\in \mathring{W}$ such that $u(\beta)\in \mathring{P}^+$. Then we have  $\ell(uw_{\beta}) = \ell(u)+\ell(w_{\beta})$.\vspace{0,2cm}
\item Write 
\begin{equation}\label{thetakurz}-\theta_s+\delta=\alpha_{p_0}+\alpha_{p_1}+\cdots+\alpha_{p_{s+1}},\ 0=p_0<p_1<\cdots<p_{s+1}\leq n.\end{equation} \begin{enumerate}[(i)]\item Let $\beta$ be a short root, $u\in \mathring{W}_{\beta}$ and assume $-\beta+\delta=\beta_k$. Then 
$$\ell(\mathbf{s}_{p_{s}}\cdots\mathbf{s}_{p_0}u\mathbf{s}_{i_1}\cdots \mathbf{s}_{i_{k-1}})=\ell(u\mathbf{s}_{i_1}\cdots \mathbf{s}_{i_{k-1}})-s-1$$
unless $\mathring{\lie g}$ is of type $G_2$. \vspace{0,1cm}
\item For all dominant coweights $\lambda^{\vee}$ and $w_{\theta_s}^{-1}\in \mathring{W}(\theta_s)$ with $(w_{\theta_s}^{-1}(\theta_s),\lambda^{\vee})>0$ we have
$$\ell(\mathbf{s}_{p_{s}}\cdots\mathbf{s}_{p_0}w_{\theta_s}t_{\lambda^{\vee}})=\ell(w_{\theta_s}t_{\lambda^{\vee}})-s-1$$
unless $\mathring{\lie g}$ is of type $G_2$. \vspace{0,2cm}
\item If $\mathring{\mathfrak{g}}$ is of type $G_2$ and $\beta, u$ as in (i) then
$$\hspace{-6,5cm}\ell(\mathbf{s}_{1}\mathbf{s}_{0}u\mathbf{s}_{i_1}\cdots \mathbf{s}_{i_{k-1}})=$$
$$\hspace{3,5cm}\begin{cases}
\ell(\mathbf{s}_{0}u\mathbf{s}_{i_1}\cdots \mathbf{s}_{i_{k-1}})+1=\ell(u\mathbf{s}_{i_1}\cdots \mathbf{s}_{i_{k-1}}) &  \text{if} \ \beta \in\{\alpha_2, \alpha_1+\alpha_2\}\\
 \ell(\mathbf{s}_{0}u\mathbf{s}_{i_1}\cdots \mathbf{s}_{i_{k-1}})-1=\ell(u\mathbf{s}_{i_1}\cdots \mathbf{s}_{i_{k-1}})-2 & \text{if} \ \beta = \theta_s.\\
\end{cases}$$
Furthermore for all dominant coweights $\lambda^{\vee}$ as in (ii) we have
$$\ell(\mathbf{s}_1\mathbf{s}_0w_{\theta_s}t_{\lambda^{\vee}}) = \ell(\mathbf{s}_0w_{\theta_s}t_{\lambda^{\vee}})+1 =\ell(w_{\theta_s}t_{\lambda^{\vee}}). $$
\end{enumerate}

\end{enumerate}
\begin{pf} In order to prove (1) let $k\in \mathring{I}\cap J$. Then
$$\mathbf{s}_iw_{\circ}(\alpha_k)=-\alpha_{\bar{k}}+a_{i\bar{k}}\alpha_i=w_{\circ}(\alpha_k-a_{\bar{i}k}\alpha_{\bar{i}})=w_{\circ}\mathbf{s}_{\bar{i}}(\alpha_k)$$
and 
$$\mathbf{s}_iw_{\circ}(\alpha_0)=-\gamma_0+\delta+\gamma_0(\alpha_i^{\vee})\alpha_i=w_{\circ}(\alpha_0-a_{\bar{i}0}\alpha_{\bar{i}})=w_{\circ}\mathbf{s}_{\bar{i}}(\alpha_0).$$
Hence $\mathbf{s}_iw_{\circ}$ and $w_{\circ}\mathbf{s}_{\bar{i}}$ coincide on $\mathrm{span}\{\alpha_k: k\in J\}.$ We prove part (2) by induction on $\ell(w_{\beta})$ where the initial step is clear; so let $\ell(w_{\beta})>0$. So assume that the length additivity holds for all $\beta'\in \mathring{R}^+$ with $\ell(w_{\beta'})<\ell(w_{\beta})$, i.e. for all $u'\in \mathring{W}$ with $u'(\beta')\in \mathring{P}^+$ we have $\ell(u'w_{\beta'}) = \ell(u')+\ell(w_{\beta'})$.
Let $\alpha \in \mathring{R}_{w_{\beta}}^+$ such that $\alpha\notin \mathring{R}_{uw_{\beta}}^+$ (otherwise we are done with \eqref{dcond1}). We write 
$w_{\beta}(\alpha) = - \sum_{i\in \mathring{I}}r_i\alpha_i$ for some non-negative integers $r_i \geq 0.$
Now applying $u$ yields
\begin{align*}
uw_{\beta}(\alpha)&= -\sum_{i\in I(u)}r_{i}u(\alpha_{i}) -\sum_{i \in \mathring{I}\setminus I(u)}r_iu(\alpha_i).
\end{align*}
If $r_i\neq 0$ for some $i\notin I(u)$ we have $uw_{\beta}(\alpha)\in -\mathring{R}^+$ since $u$ involves only reflections corresponding to nodes in $I(u)$. However, this is impossible since $\alpha\notin \mathring{R}_{uw_{\beta}}^+$. Hence 
$$\alpha=-\sum_{i\in I(u)}r_i w_{\beta}^{-1}(\alpha_i)\in \mathring{R}^+$$
and there exists a node $k \in I(u)$ such that $w_{\beta}^{-1}(\alpha_{k})$ is a negative root, i.e. $\ell(\mathbf{s}_{k}w_{\beta})=\ell(w_{\beta})-1$. We write
$$w_{\beta}=\mathbf{s}_{k}w',\ \ \ell(w_{\beta})=\ell(w')+1$$
and Corollary~\ref{corhilf4q} gives $\beta(\alpha_k^{\vee})=u(\beta)(u(\alpha_k^{\vee}))>0$. This is only possible if $u(\alpha_k)$ is a positive root since $u(\beta)$ is dominant. Hence $\ell(u\mathbf{s}_k)=\ell(u)+1$ and together with our induction hypothesis we obtain 
$$\ell(uw_{\beta})=\ell((u\mathbf{s}_k)w')=\ell(u\mathbf{s}_k)+\ell(w')=\ell(u)+\ell(w')+1=\ell(u)+\ell(w_{\beta}).$$
For part (3)(i) and (3)(ii) we shall show that
\begin{equation}\label{toshow11}t_{-\lambda^{\vee}}w_{\theta_s}^{-1}\mathbf{s}_{p_{0}}\cdots \mathbf{s}_{p_{e}}(\alpha_{p_{e+1}}),\ \mathbf{s}_{i_{k-1}}\cdots \mathbf{s}_{i_{1}}u^{-1}\mathbf{s}_{p_{0}}\cdots \mathbf{s}_{p_{e}}(\alpha_{p_{e+1}})\in -R^+,\ \ -1\leq e\leq s-1.\end{equation}
First note that
$$-\theta_s+\delta=\begin{cases}
    \alpha_0+\alpha_1,&\ \text{if $\mathring{\lie g}$ is of type $C_n$}\\
    \alpha_0+\alpha_2+\cdots+\alpha_n,&\ \text{if $\mathring{\lie g}$ is of type $B_n$}\\
    \alpha_0+\alpha_1+\alpha_2+\alpha_3,&\ \text{if $\mathring{\lie g}$ is of type $F_4$}
\end{cases}\ \ \ \ \ \ \theta_s=\begin{cases}
    \varpi_2,&\ \text{if $\mathring{\lie g}$ is of type $C_n$  
    }\\
    \varpi_1,&\ \text{if $\mathring{\lie g}$ is of type $B_n$  
    }\\
    \varpi_4,&\ \text{if $\mathring{\lie g}$ is of type $F_4$}\\
\end{cases}$$
From the explicit formulas for $-\theta_s+\delta$ and $\theta_s$ we see on the one hand
$$\mathbf{s}_{p_{0}}\cdots \mathbf{s}_{p_{e}}(\alpha_{p_{e+1}})=\alpha_{p_0}+\cdots+\alpha_{p_{e+1}},\ \ \mathbf{s}_{p_{e+1}}\cdots\mathbf{s}_{p_{s}}(\alpha_{p_{s+1}})=\alpha_{p_{e+1}}+\cdots+\alpha_{p_{s+1}},\ \ -1\leq e\leq s-1$$
and on the other hand
\begin{equation}\label{g2prob1}\tau_x:=x^{-1}(\alpha_{p_{e+2}}+\cdots+\alpha_{p_{s+1}})\in\mathring{R}^+,\ \ -1\leq e\leq s-1,\ \ x\in \{u,w_{\theta_s}\}.\end{equation}
To see \eqref{g2prob1} suppose that there exists $q\in\{e+2,\dots,s+1\}$ such that $x^{-1}(\alpha_{p_q})\in -\mathring{R}^+$ (in particular $\ell(s_{p_q}x)<\ell(x)$). Then
$$s_{p_q}u(\beta)=s_{p_q}(\theta_s)=\theta_s,\ \ (s_{p_q}w_{\theta_s})^{-1}(\theta_s)=w^{-1}_{\theta_s}(\theta_s)$$
which contradicts the minimality of $u$ and $w_{\theta_s}$ respectively. Hence \eqref{g2prob1} is obtained and thus
\begin{align}x^{-1}\mathbf{s}_{p_{0}}\cdots \mathbf{s}_{p_{e}}(\alpha_{p_{e+1}}) &\notag = x^{-1}(\alpha_{p_0}+\cdots+\alpha_{p_{e+1}})=-x^{-1}(\theta_s+\alpha_{p_{e+2}}+\cdots+\alpha_{p_{s+1}})+\delta&\\&\label{2409uuh} =-(x^{-1}(\theta_s)+\tau_x)+\delta\in -\mathring{R}^++\mathbb{N}\delta.\end{align}
This shows, choosing $x=u$, that $u^{-1}\mathbf{s}_{p_{0}}\cdots \mathbf{s}_{p_{e}}(\alpha_{p_{e+1}})=\beta_{\ell}$ for some $\ell>0$ and the definition of the order guarantees $\tau_u<\beta_{\ell}$. Now, the convexity together with Remark~\ref{remwicht}(iii) gives the second part of \eqref{toshow11}:
$$\tau_u<\beta_{\ell}\implies \tau_u<\beta_k=\tau_u+\beta_{\ell}<\beta_{\ell}\implies \mathbf{s}_{i_{k-1}}\cdots \mathbf{s}_{i_{1}}(\beta_{\ell})\in -R^+.$$
For the first part of \eqref{toshow11} we choose $x=\theta_s$ and apply $t_{-\lambda^{\vee}}$ to \eqref{2409uuh}:
$$t_{-\lambda^{\vee}}w_{\theta_s}^{-1}\mathbf{s}_{p_{0}}\cdots \mathbf{s}_{p_{e}}(\alpha_{p_{e+1}})=-(w_{\theta_s}^{-1}(\theta_s)+\tau_{w_{\theta_s}})+\delta-(w_{\theta_s}^{-1}(\theta_s)+\tau_{w_{\theta_s}},\lambda^{\vee})\delta\in -R^+.$$
Hence (3)(i) and (3)(ii) are proven. The second part in (iii) is obtained with $w_{\theta_s}^{-1}(\theta_s) = \mathbf{s}_1\mathbf{s}_2(\theta_s) = \alpha_2$ and 
\begin{align*}t_{-\lambda^{\vee}}w_{\theta_s}^{-1}\mathbf{s}_0(\alpha_1) &= \alpha_1 + (1+ (\alpha_1,\lambda^{\vee}))\delta \in R^+, \\ t_{-\lambda^{\vee}}w_{\theta_s}^{-1}(\alpha_0) &= -\alpha_1 -3\alpha_2 + (1- (\alpha_1+3\alpha_2,\lambda^{\vee}))\delta \in -R^+.\end{align*} 
The first part in (iii) requires a case-by-case analysis of the three short roots of $G_2$ and we omit the details.  
\end{pf}
\end{prop}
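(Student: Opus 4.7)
The three parts of the proposition require progressively more care; I would tackle them in the given order.

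\emph{Part (1).} Since both sides are elements of the Weyl group of $\mathring{\lie g}_0$, it suffices to verify the equality on the simple system $\mathring{\Pi}_0$. For $k\in J\cap\mathring{I}$, writing $\gamma_k=\alpha_{\bar k}$ (as $-w_\circ$ permutes $\{\alpha_j:j\in J\cap\mathring{I}\}$ via $\bar{\phantom{k}}$), both $\mathbf{s}_iw_\circ(\alpha_k)$ and $w_\circ\mathbf{s}_{\bar i}(\alpha_k)$ simplify to $-\alpha_{\bar k}+c\alpha_i$, with $c=a_{i\bar k}$ on one side and $c=a_{\bar i k}$ on the other; these agree because $-w_\circ$ is an isometric involution of the sub-root system. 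When $0\in J$ the analogous verification on $\gamma_0=w_\circ(\theta)$ uses $w_\circ(\alpha_i^\vee)=-\alpha_{\bar i}^\vee$ together with $a_{\bar i,0}=-\theta(\alpha_{\bar i}^\vee)$.

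\emph{Part (2).} The plan is to induct on $\ell(w_\beta)$ and invoke criterion \eqref{dcond1}, reducing the claim to $\mathring{R}_{w_\beta}^+\subseteq \mathring{R}_{uw_\beta}^+$. Suppose to the contrary that some $\alpha$ lies in the former but not the latter. Writing $w_\beta(\alpha)=-\sum_i r_i\alpha_i$ with $r_i\geq 0$ and applying $u$, the observation that $u(\alpha_j)$ for $j\in I(u)$ lies in the span of $\{\alpha_k:k\in I(u)\}$ while $u(\alpha_i)$ for $i\notin I(u)$ has a positive $\alpha_i$-component forces $r_i=0$ for all $i\notin I(u)$. Thus $\alpha=-\sum_{i\in I(u)}r_iw_\beta^{-1}(\alpha_i)\in\mathring{R}^+$, and some $k\in I(u)$ satisfies $w_\beta^{-1}(\alpha_k)\in -\mathring{R}^+$. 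Factor $w_\beta=\mathbf{s}_kw'$ reduced; Corollary~\ref{corhilf4q} yields $\beta(\alpha_k^\vee)>0$, hence $u(\beta)(u(\alpha_k^\vee))>0$. Dominance of $u(\beta)$ forces $u(\alpha_k)\in\mathring{R}^+$ (otherwise $u(\alpha_k^\vee)$ is a non-positive combination of positive coroots, yielding non-positive pairing). Consequently $\ell(u\mathbf{s}_k)=\ell(u)+1$; since $(w')^{-1}\in\mathring{W}(\mathbf{s}_k(\beta))$ by minimality and $(u\mathbf{s}_k)(\mathbf{s}_k(\beta))=u(\beta)\in\mathring{P}^+$, the inductive hypothesis closes the loop via $\ell(uw_\beta)=\ell(u\mathbf{s}_k)+\ell(w')=\ell(u)+\ell(w_\beta)$.

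\emph{Part (3).} The strategy is to exhibit, for each $-1\leq e\leq s-1$, a positive root witnessing that the corresponding simple reflection in the prefix is a left descent of the cumulative product; this is equivalent to showing that the suffix $v$ (either $u\mathbf{s}_{i_1}\cdots\mathbf{s}_{i_{k-1}}$ or $w_{\theta_s}t_{\lambda^\vee}$) satisfies $v^{-1}(\alpha_{p_0}+\cdots+\alpha_{p_{e+1}})\in -R^+$. The explicit description of $-\theta_s+\delta$ in types $B_n$, $C_n$, $F_4$ ensures that the labels $p_0,\ldots,p_{s+1}$ support a chain in the affine Dynkin diagram, so the partial sums $\alpha_{p_0}+\cdots+\alpha_{p_{e+1}}$ coincide with $\mathbf{s}_{p_0}\cdots\mathbf{s}_{p_e}(\alpha_{p_{e+1}})$ and are positive. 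For the $u\mathbf{s}_{i_1}\cdots\mathbf{s}_{i_{k-1}}$ suffix, apply $u^{-1}$ first: minimality of $u\in\mathring{W}_\beta$ prevents $u^{-1}$ from sending any $\alpha_{p_q}$ with $q\geq e+2$ to a negative root (else $\mathbf{s}_{p_q}u$ would still take $\beta$ to $\theta_s$ but be shorter), so $\tau_u:=u^{-1}(\alpha_{p_{e+2}}+\cdots+\alpha_{p_{s+1}})\in\mathring{R}^+$ and
\begin{align*}
u^{-1}\mathbf{s}_{p_0}\cdots\mathbf{s}_{p_e}(\alpha_{p_{e+1}})=-(u^{-1}(\theta_s)+\tau_u)+\delta\in -\mathring{R}^++\mathbb{N}\delta.
\end{align*}
This element lies strictly below $\beta_k=-\beta+\delta$ in the convex order \eqref{convexordö} (since $\tau_u$ is a positive real subroot), and convexity combined with Remark~\ref{remwicht}(iii) forces $\mathbf{s}_{i_{k-1}}\cdots\mathbf{s}_{i_1}$ to send it into $-R^+$. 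The translation version is entirely parallel: after applying $t_{-\lambda^\vee}$, strict positivity of $(w_{\theta_s}^{-1}(\theta_s),\lambda^\vee)$ preserves the sign.

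The main obstacle, I expect, is the $G_2$ case of (3)(iii). Here $-\theta_s+\delta=\alpha_0+\alpha_1$ does not sit inside a Dynkin chain, and a direct enumeration over the three short positive roots $\alpha_2$, $\alpha_1+\alpha_2$, and $\theta_s$ is needed to determine in each case whether $\mathbf{s}_1\mathbf{s}_0$ raises or lowers the cumulative length. The translation subcase reduces to the explicit computation $w_{\theta_s}^{-1}(\theta_s)=\mathbf{s}_1\mathbf{s}_2(\theta_s)=\alpha_2$, from which one checks the signs of $t_{-\lambda^\vee}w_{\theta_s}^{-1}\mathbf{s}_0(\alpha_1)$ and $t_{-\lambda^\vee}w_{\theta_s}^{-1}(\alpha_0)$ by hand to extract the two claimed length identities.
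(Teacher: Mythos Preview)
Your proposal is correct and follows essentially the same approach as the paper's proof in all three parts. One small slip in part~(3): the element $u^{-1}\mathbf{s}_{p_0}\cdots\mathbf{s}_{p_e}(\alpha_{p_{e+1}})=\beta_\ell$ lies strictly \emph{above} $\beta_k$ in the convex order (not below), since convexity gives $\tau_u<\beta_k=\tau_u+\beta_\ell<\beta_\ell$; this is precisely the inequality $\beta_k<\beta_\ell$ needed to invoke Remark~\ref{remwicht}(iii), so your conclusion stands.
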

\subsection{} For $\beta\in \mathring{R}^+$ and $u_{\beta}\in\mathring{W}_{\beta}$ we write 
$$-u_{\beta}(\beta)+\delta=\alpha_{p_0}+\alpha_{p_1}+\cdots+\alpha_{p_{s+1}},\ 0=p_0<p_1<\cdots<p_{s+1}\leq n$$
and note that the notation is compatible with the one in Proposition~\ref{hilflem}.
\begin{prop}\label{rel3210} 
Let $\beta\in \mathring{R}^+$. The following statements are true:
\begin{enumerate}
    \item For any $u_{\beta}\in\mathring{W}_{\beta}$ we have \begin{align*} \chi_{\beta,-1}^+&=\kappa(\beta)\cdot \mathbf{T}_{u_{\beta}}^{-1}\mathbf{T}_{p_0}\cdots \mathbf{T}_{p_{s-1}}\mathbf{T}_{p_s}^{\epsilon}\mathbf{T}_{p_{s+1}}(E_{p_{s+1}})\\
\chi_{\beta,1}^-&=\kappa(\beta)\cdot \mathbf{T}_{u_{\beta}}^{-1}\mathbf{T}_{p_0}\cdots \mathbf{T}_{p_{s-1}}\mathbf{T}_{p_s}^{\epsilon}\mathbf{T}_{p_{s+1}}(F_{p_{s+1}}) \end{align*}
where $\epsilon=-1$ if $\mathring{\mathfrak{g}}$ is of type $G_2$ and $\beta\in\{\alpha_1+\alpha_2, \alpha_2\}$ and $\epsilon=1$ otherwise.
\vspace{0,2cm}
\item Let $w_{\beta}^{-1}\in\mathring{W}(\beta)$, say $w_{\beta}(\alpha_i)=\beta$, $i\in\mathring{I}$. Then 
\begin{align*}\chi_{\beta,-1}^+&= \kappa(\beta)\cdot \mathbf{T}_{w_{\beta}}\mathbf{T}_{\varpi_i}(E_i)\\
\chi_{\beta,1}^-& = \kappa(\beta)\cdot  \mathbf{T}_{w_{\beta}}\mathbf{T}_{\varpi_i}(F_i)\end{align*}
unless $\mathring{\lie{g}}$ is of type $G_2$ and $\beta=\theta_s$.
\end{enumerate}

In particular, the above elements are independent of the choice of reduced expression \eqref{seteq0}.
\begin{enumerate}
    \item [(3)] If the reduced expression in \eqref{seteq0} is induced from a concatenation of reduced expressions of the translations in the order $t_{\nu(\mathbf{u}^{\vee})}=t_{\nu(\varpi_{u_1}^{\vee})}\cdots t_{\nu(\varpi_{u_p}^{\vee})}$
we have
 $$\chi_{\beta,z_{u_1}+\cdots+
 z_{u_{\ell -1}}+1}^- = \kappa(\beta)^{z_{u_1}+\cdots+
 z_{u_{\ell -1}}} \mathbf{T}_{\varpi_{u_1}} \cdots \mathbf{T}_{\varpi_{u_{\ell-1}}}(\chi_{\beta,1}^-),\ \ \ell\in\mathbb{N}$$
 where $z_i:=(\beta, \varpi_i^{\vee})$ and the indices are understood modulo $p$, i.e. $z_{u_{ap+j}}=z_{u_j}$.
\end{enumerate}


\begin{pf}
Notice that the first line of equations in (1) and (2) follow from the second line of equations by applying $\Omega$. To see (1) let $-\beta+\delta=\beta_k$ (recall the reduced expression from \eqref{seteq0}) and note that if $\alpha\in R^+$ such that $\gamma:=\mathbf{s}_{i_1}\cdots \mathbf{s}_{i_{k-1}}(\alpha)\in -R^+$, then $\alpha = \mathbf{s}_{i_{k-1}}\cdots \mathbf{s}_{i_{\ell+1}}(\alpha_{i_\ell})$ (see \cite[pg. 115 Exercise 1]{Hu90}) for a suitable $\ell\in\{1,\dots,k-1\}$. Thus 
$$-\gamma=\mathbf{s}_{i_1}\cdots \mathbf{s}_{i_{\ell-1}}(\alpha_{i_\ell})=\sum_{i\in I}s_i\alpha_i$$
with $s_0>0$ (c.f. Lemma~\ref{papi}) which implies that $u_{\beta}(\gamma)\in -R^+$ since $0\notin I(u_{\beta})$. Hence we have once more $R_{\mathbf{s}_{i_1}\cdots \mathbf{s}_{i_{k-1}}}^+\subseteq R_{u_{\beta}\mathbf{s}_{i_1}\cdots \mathbf{s}_{i_{k-1}}}^+$ and the length additivity is guaranteed by \eqref{dcond1}. We obtain
$$\mathbf{T}_{u_{\beta}\mathbf{s}_{i_1}\cdots \mathbf{s}_{i_{k-1}}}=\mathbf{T}_{u_{\beta}}\mathbf{T}_{i_1}\cdots \mathbf{T}_{i_{k-1}}.$$
Moreover, using $\mathbf{s}_{p_s}\cdots \mathbf{s}_{p_1}\mathbf{s}_{p_0}u_{\beta}\mathbf{s}_{i_1}\cdots \mathbf{s}_{i_{k-1}}(\alpha_{i_k})=\alpha_{p_{s+1}}$ we get with Proposition~\ref{spanlem} and Proposition~\ref{hilflem}(3)(i),(iii):
$$E_{p_{s+1}}=\mathbf{T}_{\mathbf{s}_{p_s}\cdots \mathbf{s}_{p_1}\mathbf{s}_{p_0}u_{\beta}\mathbf{s}_{i_1}\cdots \mathbf{s}_{i_{k-1}}}(E_{i_k})=\mathbf{T}^{-\epsilon}_{p_s}\cdots \mathbf{T}^{-1}_{p_1}\mathbf{T}^{-1}_{p_0}\mathbf{T}_{u_{\beta}}\mathbf{T}_{i_1}\cdots \mathbf{T}_{i_{k-1}}(E_{i_k}).$$
Hence part (1) is proven. 

For part (2) we fix an arbitrary $\tilde{u}_{\beta}\in \mathring{W}_{\beta}$ if $\beta$ is a long root and $\tilde{u}_{\beta}\in \mathring{W}_{\beta}$ as in Remark~\ref{choosed76} if $\beta$ is short, i.e. $(\tilde{u}_{\beta}w_{\beta})^{-1}\in\mathring{W}(\theta_s)$. Note that 
\begin{align} \label{corw0}
(\mathbf{s}_{p_{s+1}}\cdots\mathbf{s}_{p_{0}}\tilde{u}_{\beta}w_{\beta}t_{\nu(\varpi^{\vee}_i)})^{-1}(\alpha_{p_{s+1}}) = (w_{\beta}t_{\nu(\varpi^{\vee}_i)})^{-1}(\beta-\delta) = \alpha_i \in \mathring{R}^+.
\end{align}
Therefore 
\begin{align*}
\mathbf{T}_{\tilde{u}_{\beta}}^{-1}\mathbf{T}_{p_0}\cdots \mathbf{T}_{p_{s-1}}\mathbf{T}_{p_s}^{\epsilon}\mathbf{T}_{p_{s+1}}\mathbf{T}_{\mathbf{s}_{p_{s+1}}\cdots\mathbf{s}_{p_{0}}\tilde{u}_{\beta}w_{\beta}t_{\nu(\varpi^{\vee}_i)}}&= \mathbf{T}_{\tilde{u}_{\beta}}^{-1}\mathbf{T}_{p_0}\cdots\mathbf{T}_{p_{s-1}}\mathbf{T}_{p_s}^{\epsilon}\mathbf{T}_{\mathbf{s}_{p_{s}}\cdots\mathbf{s}_{p_{0}}\tilde{u}_{\beta}w_{\beta}t_{\nu(\varpi^{\vee}_i)}}&\\&=
\mathbf{T}_{\tilde{u}_{\beta}}^{-1}\mathbf{T}_{\tilde{u}_{\beta}w_{\beta}t_{\nu(\varpi^{\vee}_i)}}=
\mathbf{T}_{w_{\beta}}\mathbf{T}_{\varpi_i}
\end{align*}
where the first equation follows from \eqref{corw0}, the second from Proposition~\ref{hilflem}(3)(ii),(iii) and the last one from Proposition~\ref{hilflem}(2). Now the statement follows from (1) and \eqref{corw0} by applying the above equations to $E_i$. 

For the last part of the proposition set $r=z_{u_1}+\cdots+z_{u_{\ell-1}}$ and assume $\mathbf{s}_{i_1}\cdots\mathbf{s}_{i_{k-1}}(\alpha_{i_k}) = -\beta + (r+1)\delta$. We first claim that $\mathbf{T}_{\varpi_{\ell}}(\chi_{\beta,1}^-)=\chi_{\beta,1}^-$ whenever $(\beta,\varpi_{\ell}^{\vee})=0$. If $\beta=\theta_s$ and $\mathring{\lie{g}}$ is of type $G_2$, we never have $(\beta,\varpi_{\ell}^{\vee})=0$. So we can use part (2) to get with Corollary~\ref{corhilf4q}:
$$\mathbf{T}_{\varpi_{\ell}}(\chi_{\beta,1}^-)=\kappa(\beta)\cdot \mathbf{T}_{\varpi_{\ell}}\mathbf{T}_{w_{\beta}}\mathbf{T}_{\varpi_i}(F_i)=\kappa(\beta)\cdot \mathbf{T}_{w_{\beta}}\mathbf{T}_{\varpi_i}\mathbf{T}_{\varpi_{\ell}}(F_i)=\chi_{\beta,1}^-.$$ 
Therefore we can assume without loss of generality (in order to describe $\chi_{\beta,r+1}^-$) that (c.f. \cite[Lemma 1]{B94A})
$$\mathbf{s}_{i_1}\cdots\mathbf{s}_{i_{k-1}}(\alpha_{i_k}) = t_{\nu(\varpi^{\vee}_{u_1})}\cdots t_{\nu(\varpi^{\vee}_{u_{\ell-1}})} \tau \mathbf{s}_{e_1}\cdots \mathbf{s}_{e_{m-1}}(\alpha_{e_m})$$
where $\tau\mathbf{s}_{e_1}\cdots \mathbf{s}_{e_{m-1}}$ is an initial subword of $t_{\nu(\varpi^{\vee}_{u_{\ell}})}$. Since
$$\tau\mathbf{s}_{e_1}\cdots \mathbf{s}_{e_{m-1}}(\alpha_{e_m})=-\beta+((r+1)-z_{u_1}-\cdots-z_{u_{\ell -1}})\delta = -\beta + \delta,$$
we get $\chi_{\beta,1}^-=\kappa(\beta)\cdot \mathbf{T}_{\tau}\mathbf{T}_{e_1}\cdots \mathbf{T}_{e_{m-1}}\mathbf{T}_{e_{m}}(F_{e_m})$ by the independence of $\chi_{\beta,1}^-$ of the reduced expression. Hence the claim follows. 
\end{pf}
\end{prop}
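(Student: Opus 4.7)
The plan is to reduce every statement to the very definition
$$\chi^-_{\beta,1} \;=\; \kappa(\beta)\,\mathbf{T}_{i_1}\cdots\mathbf{T}_{i_k}(F_{i_k}), \qquad -\beta+\delta \;=\; \mathbf{s}_{i_1}\cdots \mathbf{s}_{i_{k-1}}(\alpha_{i_k}) \;=\; \beta_k,$$
and to obtain the parallel formulae for $\chi^+_{\beta,-1}$ automatically by applying the anti-automorphism $\Omega$, which commutes with each $\mathbf{T}_j$ and swaps $E_j \leftrightarrow F_j$. For part (1) the central identity to check is
$$\mathbf{s}_{p_s}\cdots\mathbf{s}_{p_0}\,u_\beta\,\mathbf{s}_{i_1}\cdots\mathbf{s}_{i_{k-1}}(\alpha_{i_k}) \;=\; \alpha_{p_{s+1}},$$
which follows from $u_\beta\mathbf{s}_{i_1}\cdots\mathbf{s}_{i_{k-1}}(\alpha_{i_k}) = -u_\beta(\beta) + \delta = \alpha_{p_0}+\cdots+\alpha_{p_{s+1}}$ together with the direct check (using the explicit form of $-\theta_s+\delta$ in each type listed in Proposition~\ref{hilflem}(3)) that $\mathbf{s}_{p_s}\cdots\mathbf{s}_{p_0}$ strips off the first $s+1$ summands. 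Proposition~\ref{spanlem} then yields
$$\mathbf{T}_{p_s}\cdots\mathbf{T}_{p_0}\,\mathbf{T}_{u_\beta}\,\mathbf{T}_{i_1}\cdots\mathbf{T}_{i_{k-1}}\mathbf{T}_{i_k}(E_{i_k}) \;=\; E_{p_{s+1}},$$
and the claimed formulae follow after rearrangement, provided every braid product corresponds to a reduced expression.

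The bookkeeping that needs to be done is precisely the length-additivity of these products. For the inner factor $u_\beta\mathbf{s}_{i_1}\cdots\mathbf{s}_{i_{k-1}}$ this follows from Remark~\ref{remwicht}(ii) (which, together with $0 \notin I(u_\beta)$, ensures no cancellation) and \eqref{dcond1}. For the outer factor $\mathbf{s}_{p_s}\cdots\mathbf{s}_{p_0}$ it is exactly the content of Proposition~\ref{hilflem}(3)(i); the failure of length additivity in type $G_2$ with $\beta \in \{\alpha_2,\alpha_1+\alpha_2\}$ (Proposition~\ref{hilflem}(3)(iii)) is the origin of the exponent $\epsilon = -1$. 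Independence of the choice of reduced expression \eqref{seteq0} is then automatic, since the right-hand sides make no reference to it.

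For part (2), I would start from part (1) applied to a carefully chosen $\tilde u_\beta \in \mathring{W}_\beta$: arbitrary in the long case, and selected by Remark~\ref{choosed76} so that $(\tilde u_\beta w_\beta)^{-1} \in \mathring{W}(\theta_s)$ in the short case. The problem then reduces to the braid identity
$$\mathbf{T}_{\tilde u_\beta}^{-1}\mathbf{T}_{p_0}\cdots\mathbf{T}_{p_{s-1}}\mathbf{T}_{p_s}^{\epsilon}\mathbf{T}_{p_{s+1}} \;=\; \mathbf{T}_{w_\beta}\mathbf{T}_{\varpi_i},$$
which I would verify by showing that both sides are the $\mathbf{T}$-operator attached to a reduced expression of $\mathbf{s}_{p_{s+1}}\cdots\mathbf{s}_{p_0}\tilde u_\beta w_\beta t_{\nu(\varpi_i^\vee)}$ sending $\alpha_{p_{s+1}}$ to $\alpha_i$, which is an explicit computation. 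The required length-additivities are $\ell(\tilde u_\beta w_\beta) = \ell(\tilde u_\beta) + \ell(w_\beta)$ coming from Proposition~\ref{hilflem}(2), and the reduction through $\mathbf{s}_{p_s}\cdots \mathbf{s}_{p_0}$ given by Proposition~\ref{hilflem}(3)(ii), (3)(iii); it is the latter that forces the exclusion of the $(G_2,\theta_s)$ case.

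For part (3), the key substitute tool is the invariance $\mathbf{T}_{\varpi_\ell}(\chi^-_{\beta,1}) = \chi^-_{\beta,1}$ whenever $(\beta,\varpi_\ell^\vee) = 0$, which follows from part (2) together with Corollary~\ref{corhilf4q} (all simple reflections in $w_\beta$ lie in $\mathrm{supp}(\beta)$ and so their $\mathbf{T}$-operators commute with $\mathbf{T}_{\varpi_\ell}$). With this one may assume that the reduced expression of $t_{\nu(\mathbf{u}^\vee)}$ decomposes as $t_{\nu(\varpi_{u_1}^\vee)} \cdots t_{\nu(\varpi_{u_{\ell-1}}^\vee)}$ followed by an initial subword of $t_{\nu(\varpi_{u_\ell}^\vee)}$ realising $-\beta+\delta$, at which point the recursive formula falls out of the definition. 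The most delicate step throughout is the $G_2$ bookkeeping, but it is entirely absorbed into the exponent $\epsilon$ and the exception in (2); I do not anticipate genuine obstacles beyond the careful tracking of reduced expressions.
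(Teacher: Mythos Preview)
Your approach is essentially the paper's own proof: the reduction via $\Omega$, the central Weyl-group identity for part (1), the choice of $\tilde u_\beta$ via Remark~\ref{choosed76} for part (2), and the invariance argument plus Corollary~\ref{corhilf4q} for part (3) all match exactly. The length-additivity bookkeeping you cite (Proposition~\ref{hilflem}(2), (3)(i)--(iii), \eqref{dcond1}) is precisely what the paper uses, and your diagnosis of the $\epsilon=-1$ phenomenon in $G_2$ is correct.

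There is one genuine misstep in your plan for part (2). The displayed ``braid identity''
\[
\mathbf{T}_{\tilde u_\beta}^{-1}\mathbf{T}_{p_0}\cdots\mathbf{T}_{p_{s-1}}\mathbf{T}_{p_s}^{\epsilon}\mathbf{T}_{p_{s+1}} \;=\; \mathbf{T}_{w_\beta}\mathbf{T}_{\varpi_i}
\]
is \emph{false} as an operator identity: already at the Weyl-group level the left side projects to $\tilde u_\beta^{-1}\mathbf{s}_{p_0}\cdots\mathbf{s}_{p_{s+1}}$, which is a word of bounded length not involving any translation, whereas the right side projects to $w_\beta t_{\nu(\varpi_i^\vee)}$. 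What is actually true (and what the paper proves) is that both sides agree \emph{after evaluation at $E_i$}, because the intermediate element $v=\mathbf{s}_{p_{s+1}}\cdots\mathbf{s}_{p_0}\tilde u_\beta w_\beta t_{\nu(\varpi_i^\vee)}$ satisfies $v(\alpha_i)=\alpha_{p_{s+1}}$ and hence $\mathbf{T}_v(E_i)=E_{p_{s+1}}$. The correct operator identity is
\[
\mathbf{T}_{\tilde u_\beta}^{-1}\mathbf{T}_{p_0}\cdots\mathbf{T}_{p_s}^{\epsilon}\mathbf{T}_{p_{s+1}}\mathbf{T}_v \;=\; \mathbf{T}_{w_\beta}\mathbf{T}_{\varpi_i},
\]
and this is what Proposition~\ref{hilflem}(2) and (3)(ii),(iii) combine to give. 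Your phrase ``both sides are the $\mathbf{T}$-operator attached to a reduced expression of $v$'' suggests you sensed the role of $v$, but the claim as you wrote it would not go through; you must keep the extra factor $\mathbf{T}_v$ in play and then apply both sides to $E_i$.
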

\begin{rem}\label{remrepresentable} \begin{enumerate}
    \item If $\mathring{\lie g}$ is of type $G_2$ and $\beta=\theta_s$ one can show that 
$$\chi_{\theta_s,1}^-=\kappa(\theta_s)\cdot \mathbf{T}_0\mathbf{T}^2_1\mathbf{T}^{-1}_0\mathbf{T}_2\mathbf{T}_1\mathbf{T}_{\varpi_2}(F_2).$$
\item Since $\chi_{\beta,\mp 1}^{\pm}$ does not depend on the choice of the reduced expression by Proposition~\ref{rel3210}, we get that for any $k\in\mathring{I}$ with $(\beta,\varpi_k^{\vee})>0$ the element $\chi_{\beta,1}^-$ is representable as 
$$\chi_{\beta,1}^-=\kappa(\beta)\cdot \mathbf{T}_{\varpi_k}\mathbf{T}^{-1}_{k_q}\cdots \mathbf{T}^{-1}_{k_{r+1}}(F_{k_{r}})$$
where $t_{\nu(\varpi_k^{\vee})}=\tau \mathbf{s}_{k_1}\cdots \mathbf{s}_{k_{r-1}}\mathbf{s}_{k_r}\cdots \mathbf{s}_{k_q}$ and $\tau \mathbf{s}_{k_1}\cdots \mathbf{s}_{k_{r-1}}(\alpha_{k_r})=-\beta+\delta$. We only have to choose a reduced expression of $t_{\nu(\mathbf{u}^{\vee})}$ which starts with a reduced expression of the translation $t_{\nu(\varpi_k^{\vee})}$.
\end{enumerate}
\end{rem}
We have seen in Proposition~\ref{rel3210} that certain $\chi_{\beta,r+1}^-$ are obtained from $\chi_{\beta,1}^-$ by applying suitable operators $\mathbf{T}_{\varpi_i}$'s. We conclude this subsection by providing an example that demonstrates this is not true in general. However, it is not difficult to establish a criterion under which this property still holds, leading to the identification of Weyl group compatible coweights (for further details, see \cite{KBthesis}).
\begin{example}
We discuss the $B^{(1)}_3$ case here where $J=\lbrace 0,1,3 \rbrace$, $w_{\circ} = \mathbf{s}_1\mathbf{s}_3, \ \gamma_0 = w_{\circ}(\theta) = \theta$ and $\mathbf{u}^{\vee} = \varpi^{\vee} _1+\varpi^{\vee} _2+\varpi^{\vee} _3$. We consider the reduced expression
$$t_{\nu(\mathbf{u}^{\vee})}=\mathbf{s}_0\mathbf{s}_2\mathbf{s}_3\mathbf{s}_2\mathbf{s}_0\cdot \mathbf{s}_1\mathbf{s}_2\mathbf{s}_3\mathbf{s}_0\mathbf{s}_2\mathbf{s}_3\mathbf{s}_0\mathbf{s}_2\cdot \mathbf{s}_1\mathbf{s}_2\mathbf{s}_3\mathbf{s}_0\mathbf{s}_2\mathbf{s}_3\mathbf{s}_1\mathbf{s}_2\mathbf{s}_3.$$
A direct calculation shows
$$\beta_1 = -\gamma_0 + \delta, \ \ \beta_6 = -\gamma_0 + 2\delta, \ \ \beta_{10} = -\gamma_0 + 3\delta, \ \ \beta_{14} = -\gamma_0 +4\delta.$$
Proposition~\ref{rel3210} gives
$$\chi_{\gamma_0,2}^- = \kappa(\gamma_0)\cdot \mathbf{T}_{\varpi_1}(\chi_{\gamma_0,1}^-), \ \ \chi_{\gamma_0,4}^- = \kappa(\gamma_0)\cdot \mathbf{T}_{\varpi_1}\mathbf{T}_{\varpi_2}(\chi_{\gamma_0,1}^-).$$
    However we can not obtain $\chi_{\gamma_0,3}^-$  from $\chi_{\gamma_0,1}^-$ by applying suitable $\mathbf{T}_{\varpi_i}$'s. To see this we will apply (because of weight reasons)
    $$\mathbf{T}_{\varpi_1}^2 \text{ or } \mathbf{T}_{\varpi_2} \ \text{or} \ \mathbf{T}_{\varpi_3}$$
    to $\chi_{\gamma_0,1}^-$ and verify that it is indeed not equal to $\chi_{\gamma_0,3}^-$ in all three cases. We have
    \begin{align*}
    \kappa(\gamma_0)\cdot  \mathbf{T}_{\varpi_1}^2(\chi_{\gamma_0,1}^-) &= \mathbf{T}_{\tau_1}\mathbf{T}_1\mathbf{T}_2\mathbf{T}_3\mathbf{T}_2\mathbf{T}_1\mathbf{T}_{\tau_1}\mathbf{T}_1\mathbf{T}_2\mathbf{T}_3\mathbf{T}_2\mathbf{T}_1\mathbf{T}_0(F_0) \\
    &=\mathbf{T}_0\mathbf{T}_2\mathbf{T}_3\mathbf{T}_2\mathbf{T}_0\mathbf{T}_1\mathbf{T}_2\mathbf{T}_3\mathbf{T}_2\mathbf{T}_1\mathbf{T}_0(F_0) 
    \end{align*}
    where $\tau_1$ is the automorphism of the affine Dynkin diagram given by $\tau_1(0) = 1$ and
    \begin{align*}
    \kappa(\gamma_0)\cdot \chi_{\gamma_0,3}^- &=  \mathbf{T}_0 \mathbf{T}_2 \mathbf{T}_3 \mathbf{T}_2 \mathbf{T}_0 \mathbf{T}_1 \mathbf{T}_2 \mathbf{T}_3 \mathbf{T}_0\mathbf{T}_2(F_2).
    \end{align*}
    Now suppose $\mathbf{T}_{\varpi_1}^2(\chi_{\gamma_0,1}^-)=\chi_{\gamma_0,3}^-$. Then 
    $$\mathbf{T}_0\mathbf{T}_2\mathbf{T}_3\mathbf{T}_2\mathbf{T}_0\mathbf{T}_1\mathbf{T}_2\mathbf{T}_3\mathbf{T}_2\mathbf{T}_1\mathbf{T}_0(F_0) = \mathbf{T}_0 \mathbf{T}_2 \mathbf{T}_3 \mathbf{T}_2 \mathbf{T}_0 \mathbf{T}_1 \mathbf{T}_2 \mathbf{T}_3 \mathbf{T}_0\mathbf{T}_2(F_2) $$
        $$\implies \mathbf{T}_2\mathbf{T}_1\mathbf{T}_0(F_0) = \mathbf{T}_0\mathbf{T}_2(F_2) \implies  \mathbf{T}_2\mathbf{T}_0(F_0) = \mathbf{T}_0\mathbf{T}_2(F_2) \implies \mathbf{T}_2(E_0) = \mathbf{T}_0(E_2)
   $$
    which is false. With similar calculations the remaining two cases also yield false statements.
    
\end{example} 
\subsection{} 
In this subsection, we will outline a few consequences.
\begin{prop}\label{zzhnböö9} The following statements are true:
\begin{enumerate}
\item [(a)] Let $\beta \in \mathring{R}^+$ and $k\in\mathring{I}$ such that $(\varpi_k^{\vee},\beta) =1$. Then $\chi_{\beta,1}^-$ lies in the $\mathbb{C}(q)$-subalgebra generated by $\chi_{k,1}^-$ and $\chi_{i,0}^-$ for $i\in \text{supp}(\beta)\setminus \lbrace k \rbrace.$
\end{enumerate}
Assume $J\neq \mathring{I}$, so that $\gamma_0\in \mathring{R}^+$.
\begin{enumerate}
\item [(b)] Let $w= \mathbf{s}_{k_1}\cdots \mathbf{s}_{k_p}$ be a reduced expression with $k_1,\dots,k_p\in\mathring{I}\cap J$. Then
$\mathbf{T}_{w}(\chi_{\gamma_0,1}^-)$ lies in the $\mathbb{C}(q)$-subalgebra generated by the elements $\chi_{\gamma_0,1}^-,\chi_{k_1,0}^-,\dots,\chi_{k_p,0}^-$.

\item [(c)] If $\beta \in \mathring{R}^+$ satisfies $\mathrm{supp}(-\beta +\delta)\subseteq J$, then the element $\chi_{\beta,1}^-$ lies in the $\mathbb{C}(q)$-algebra generated by the elements $\chi_{\gamma_0,1}^-,\chi^-_{j,0}$,  $j\in \mathring{I}\cap J$.
\end{enumerate}
\begin{proof}
We first prove (a). From Remark~\ref{remrepresentable} we know that (given such $k\in\mathring{I}$)  
$$\chi_{\beta,1}^-=\kappa(\beta)\cdot \mathbf{T}_{\varpi_k}\mathbf{T}^{-1}_{k_q}\cdots \mathbf{T}^{-1}_{k_{r+1}}(F_{k_{r}})$$
where $t_{\nu(\varpi_k^{\vee})}=\tau \mathbf{s}_{k_1}\cdots \mathbf{s}_{k_{r-1}}\mathbf{s}_{k_r}\cdots \mathbf{s}_{k_q}$ and $\tau \mathbf{s}_{k_1}\cdots \mathbf{s}_{k_{r-1}}(\alpha_{k_r})=-\beta+\delta$. However, $(\varpi_k^{\vee},\beta) = 1$ gives additionally 
 $\mathbf{s}_{k_q}\cdots \mathbf{s}_{k_{r+1}}(\alpha_{k_r}) = \beta$ and thus 
 $$\mathbf{T}^{-1}_{k_q}\cdots \mathbf{T}^{-1}_{k_{r+1}}(F_{k_{r}})=\Phi\circ \mathbf{T}_{\mathbf{s}_{k_q}\cdots \mathbf{s}_{k_{r+1}}}(E_{k_r})$$
 is contained in the $\mathbb{C}(q)$-algebra generated by $F_i: i \in \text{supp}({\beta})$. Now applying $\mathbf{T}_{\varpi_k}$ shows (a). 
 
By Proposition~\ref{hilflem}(1) and Proposition~\ref{rel3210} we have
$$\mathbf{T}_j(\chi_{\gamma_0,1}^-)=\kappa(\gamma_0)\cdot \mathbf{T}^{-1}_{\omega_{\circ}}\mathbf{T}_{\bar{j}}\mathbf{T}_0(F_0),\ \forall j\in \mathring{I}\cap J.$$
Since $\mathbf{T}_{\bar{j}}\mathbf{T}_0(F_0)$ lies in the algebra span of the elements $\mathbf{T}_0(F_0)$ and $K_{\bar{j}}^{-1}E_{\bar{j}}$ we see that the action of $\mathbf{T}^{-1}_{\omega_{\circ}}$ on $\mathbf{T}_{\bar{j}}\mathbf{T}_0(F_0)$ lies in the subalgebra generated by the elements $\chi_{\gamma_0,1}^-$ and $\chi_{j,0}^-$ since $\mathbf{T}^{-1}_{\omega_{\circ}}(E_{\bar{j}})=-K_j^{-1}F_j$. In particular $\mathbf{T}_{w}(\chi_{\gamma_0,1}^-)$ is contained in the subalgebra generated by $\mathbf{T}_{w\mathbf{s}_{k_p}}(\chi_{\gamma_0,1}^-)$ and  $\mathbf{T}_{w\mathbf{s}_{k_p}}(\chi_{k_p,0}^-)$. Now we can argue by induction that the first element lies in the subalgebra generated by the elements  $\chi_{\gamma_0,1}^-,\chi_{k_1,0}^-,\dots,\chi_{k_{p-1},0}^-$ and for the second element we use Proposition~\ref{spanlem}. This shows part (b).

Now we prove part (c) through the equivalent statement that $CK^-_{w_{\circ}(\beta)}\mathbf{T}_{w_{\circ}}(\chi_{\beta,1}^-)$ lies in the $\mathbb{C}(q)$-algebra generated by $E_0, E_j$, $j\in \mathring{I}\cap J$: 
\begin{align*}
    CK^-_{w_{\circ}(\beta)}\mathbf{T}_{w_{\circ}}(\chi_{\beta,1}^-) \in \langle E_0, E_j \ | \ j\in \mathring{I}\cap J \rangle &\Leftrightarrow C^{} K^-_{\beta}\chi_{\beta,1}^- \in  \langle \mathbf{T}_{w_{\circ}}^{-1}(E_0), \mathbf{T}_{w_{\circ}}^{-1}(E_j) \ | \ j\in \mathring{I}\cap J\rangle \\
    &\Leftrightarrow C^{} K^-_{\beta}\chi_{\beta,1}^- \in  \langle CK_{\gamma_0}^-\chi_{\gamma_0,1}^-, K_j^-\chi_{j,0}^- \ | \ j\in \mathring{I}\cap J\rangle \\
    &\Leftrightarrow \chi_{\beta,1}^- \in  \langle \chi_{\gamma_0,1}^-,\chi_{j,0}^- \ | \ j\in \mathring{I}\cap J\rangle.
\end{align*} 
By Corollary~\ref{observation1} we can assume that we have an element $u_{\beta}\in\mathring{W}_{\beta}$ with $I(u_{\beta})\subseteq \mathring{I}\cap J$ and therefore we can write $w_\circ=u u_{\beta}$ for some $u\in\langle \mathbf{s}_i: i\in \mathring{I}\cap J \rangle$ and $\ell(u_{\beta})+\ell(u)=\ell(w_\circ)$. 
With Proposition~\ref{rel3210} we have 
$$\mathbf{T}_{w_{\circ}}(\chi_{\beta,1}^-) =\kappa(\beta)\cdot \mathbf{T}_{u}\mathbf{T}_{p_0} \cdots \mathbf{T}_{p_{s}}^{}\mathbf{T}_{p_{s+1}}(F_{p_{s+1}})$$
because $\mathrm{supp}(-\beta +\delta)\subseteq J\subsetneq I$ excludes the case $\beta\in\{\alpha_1+\alpha_2,\alpha_2\}$ in type $G_2$.
Since 
\begin{align*}u\mathbf{s}_{p_0}\cdots \mathbf{s}_{p_e}(\alpha_{p_{e+1}}) &= u(-\theta_s + \delta - \alpha_{p_{e+2}}-\cdots- \alpha_{p_{s+1}}) &\\&= -w_{\circ}(\beta)-u(\alpha_{p_{e+2}}+ \cdots + \alpha_{p_{s+1}}) +\delta \in R^+\end{align*}
    for all $e\in \{-1,\dots,s\}$ we can conclude that
    $$-CK^-_{w_{\circ}(\beta)}\mathbf{T}_u\mathbf{T}_{p_0} \cdots \mathbf{T}_{p_{s}}\mathbf{T}_{p_{s+1}}(F_{p_{s+1}}) = \mathbf{T}_{u\mathbf{s}_{p_0}\cdots \mathbf{s}_{p_s}}(E_{p_{s+1}}).$$ Now again with Proposition~\ref{spanlem} we obtain that the above element lies in the algebra generated by $E_0, E_j, j\in \mathring{I}\cap J$, which finishes the proof.
\end{proof}
\end{prop}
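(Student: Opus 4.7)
The plan is to handle the three parts in order, in each case exploiting the explicit braid-operator formulas for $\chi_{\beta,\mp 1}^{\pm}$ from Proposition~\ref{rel3210} and Remark~\ref{remrepresentable}, together with the spanning lemma Proposition~\ref{spanlem}.

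For (a), the hypothesis $(\varpi_k^{\vee},\beta)=1$ forces the root $-\beta+\delta$ to occur exactly once in a reduced expression of $t_{\nu(\varpi_k^{\vee})} = \tau \mathbf{s}_{k_1}\cdots \mathbf{s}_{k_q}$, say at step $r$, and the ``complementary half'' satisfies $\mathbf{s}_{k_q}\cdots\mathbf{s}_{k_{r+1}}(\alpha_{k_r}) = \beta$. Remark~\ref{remrepresentable}(2) then gives $\chi_{\beta,1}^- = \kappa(\beta)\mathbf{T}_{\varpi_k}\mathbf{T}_{k_q}^{-1}\cdots\mathbf{T}_{k_{r+1}}^{-1}(F_{k_r})$. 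The inner expression is a finite-type root vector for $\beta$, hence by Proposition~\ref{spanlem} lies in the subalgebra generated by $F_i$, $i\in\mathrm{supp}(\beta)$. Applying the algebra automorphism $\mathbf{T}_{\varpi_k}$, the generator $F_k$ is sent to $\kappa(k)\chi_{k,1}^-$, while each $F_i$ with $i\neq k$ is fixed (since $\varpi_k^{\vee}$ pairs trivially with $\alpha_i$), yielding the desired expression.

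For (b), I would induct on $p=\ell(w)$. The heart of the argument is the single-step identity $\mathbf{T}_j(\chi_{\gamma_0,1}^-)\in \langle\chi_{\gamma_0,1}^-,\chi_{j,0}^-\rangle$ for $j\in \mathring{I}\cap J$, obtained by combining Proposition~\ref{hilflem}(1) (i.e.\ $\mathbf{s}_jw_\circ = w_\circ \mathbf{s}_{\bar j}$) with the formula $\chi_{\gamma_0,1}^- = \kappa(\gamma_0)\mathbf{T}_{w_\circ}^{-1}\mathbf{T}_0(F_0)$ from Proposition~\ref{rel3210}(2), to get $\mathbf{T}_j(\chi_{\gamma_0,1}^-)=\kappa(\gamma_0)\mathbf{T}_{w_\circ}^{-1}\mathbf{T}_{\bar j}\mathbf{T}_0(F_0)$; expanding $\mathbf{T}_{\bar j}\mathbf{T}_0(F_0)$ via the explicit Lusztig rule as a linear combination of $\mathbf{T}_0(F_0)$ and $K_{\bar j}^{-1}E_{\bar j}$, and then applying $\mathbf{T}_{w_\circ}^{-1}$, one recovers $\chi_{\gamma_0,1}^-$ and, using $\mathbf{T}_{w_\circ}^{-1}(E_{\bar j})\sim K_j^{-1}F_j$, a multiple of $\chi_{j,0}^-$. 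In the inductive step $w=w'\mathbf{s}_{k_p}$ I would apply $\mathbf{T}_{w'}$ to both factors: the first stays in the required algebra by the inductive hypothesis, and the second is controlled via Proposition~\ref{spanlem} since $w'(\alpha_{k_p})$ is positive and supported in $\mathring{I}\cap J$.

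For (c), my approach is to reformulate the claim, via the automorphism $\mathbf{T}_{w_\circ}$, as the equivalent assertion that $\mathbf{T}_{w_\circ}(\chi_{\beta,1}^-)$ (up to a central–Cartan factor) lies in the subalgebra generated by $E_0$ and $E_j$, $j\in\mathring{I}\cap J$; the translation between the two presentations rests on the identities $\mathbf{T}_{w_\circ}^{-1}(E_0)\sim CK_{\gamma_0}^{-}\chi_{\gamma_0,1}^-$ and $\mathbf{T}_{w_\circ}^{-1}(E_j)\sim K_j^{-}\chi_{j,0}^-$. Using Corollary~\ref{observation1}, I pick $u_\beta\in\mathring{W}_\beta$ with $I(u_\beta)\subseteq\mathring{I}\cap J$ and decompose $w_\circ = u u_\beta$ length-additively, where $u$ is a word in the $\mathbf{s}_j$, $j\in\mathring{I}\cap J$. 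The formula from Proposition~\ref{rel3210}(1) then expresses $\mathbf{T}_{w_\circ}(\chi_{\beta,1}^-)$ as a product of braid operators applied to $F_{p_{s+1}}$; the hypothesis $\mathrm{supp}(-\beta+\delta)\subseteq J\subsetneq I$ is exactly what rules out the $G_2$ exception. The hard part will be verifying at each intermediate stage that $u\mathbf{s}_{p_0}\cdots\mathbf{s}_{p_e}(\alpha_{p_{e+1}})$ is a positive root, so that the entire product collapses into a single $\mathbf{T}_{u\mathbf{s}_{p_0}\cdots\mathbf{s}_{p_s}}(E_{p_{s+1}})$ with no stray $K$-factors; once that positivity is established, Proposition~\ref{spanlem} delivers the required containment.
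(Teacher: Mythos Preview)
Your proposal is correct and follows essentially the same approach as the paper in all three parts: the same representation from Remark~\ref{remrepresentable} and application of $\mathbf{T}_{\varpi_k}$ for (a); the same single-step identity $\mathbf{T}_j(\chi_{\gamma_0,1}^-)=\kappa(\gamma_0)\mathbf{T}_{w_\circ}^{-1}\mathbf{T}_{\bar j}\mathbf{T}_0(F_0)$ plus induction for (b); and the same $\mathbf{T}_{w_\circ}$-reformulation, the choice of $u_\beta$ via Corollary~\ref{observation1}, and the positivity check of $u\mathbf{s}_{p_0}\cdots\mathbf{s}_{p_e}(\alpha_{p_{e+1}})$ feeding into Proposition~\ref{spanlem} for (c). Two small remarks: in (b), $\mathbf{T}_{\bar j}\mathbf{T}_0(F_0)$ lies in the \emph{algebra} span of $\mathbf{T}_0(F_0)$ and $K_{\bar j}^{-1}E_{\bar j}$ (not a linear combination), and the formula $\chi_{\gamma_0,1}^-=\kappa(\gamma_0)\mathbf{T}_{w_\circ}^{-1}\mathbf{T}_0(F_0)$ you invoke is the special case of Proposition~\ref{rel3210}(1) with $u_\beta=w_\circ$ (note $\gamma_0=w_\circ(\theta)$ is always long, so $-u_\beta(\gamma_0)+\delta=\alpha_0$), rather than part (2).
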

\begin{rem}\label{trisp2} Assume that $\mathfrak{p}_J$ is maximal, i.e. $I\setminus J = \lbrace i_0 \rbrace$, $i_0\neq 0$ and $\mathbf{a}_{i_0}(\delta)\leq 2$. Then $\chi_{\beta,1}^-$ is contained in the $\mathbb{C}(q)$-algebra generated by
$$\chi_{\gamma_0,1}^-,\chi_{k,1}^-,\chi_{j,0}^-,\ j\in\mathring{I}\cap J,\ k\in\mathring{I}.$$
To see this we consider a few cases.
If $\mathrm{supp}(-\beta+\delta)\subseteq J$, we can use Proposition~\ref{zzhnböö9}(c). Otherwise, we must have $(\varpi_{i_0}^{\vee},\beta)\leq 1$ since $\mathbf{a}_{i_0}(\delta)\leq 2$ and if $(\varpi_{i_0}^{\vee},\beta)= 1$ we can use directly Proposition~\ref{zzhnböö9}(a). We are left with the case $(\varpi_{i_0}^{\vee},\beta)= 0$ which implies in particular $i_0\notin \mathrm{supp}(\beta)$ and $\mathrm{supp}(\beta)\subsetneq \mathring{I}$. However, we can always find $k\in\mathring{I}$ with $(\varpi_k^{\vee},\beta) =1$ provided that $\mathrm{supp}(\beta)\subsetneq \mathring{I}$. Hence we can use once more Proposition~\ref{zzhnböö9}(a).
\end{rem}
\subsection{} The algebra generators are given as follows. 
\begin{prop}\label{genalgim}
The following elements generate $\mathbf{U}_q^J$ as an algebra 
$$\chi_{j,r}^{\pm},\ \chi_{i,r}^{+},\ \chi_{i,r+1}^{-},\ \   i\in \mathring{I}\backslash (\mathring{I}\cap J),\ j\in \mathring{I}\cap J,\ r\in\mathbb{Z}_+$$
$$K_i^{\pm}, h_{i,s}, C^{\pm 1/2}, D^{\pm},\ \   s \in \mathbb{N},\   i\in\mathring{I}$$
$$\chi_{\gamma_0,r+1}^{-}, \ r\in\mathbb{Z}_+,\  \text{ if } \ J\neq \mathring{I},\ \ \chi_{\gamma_0,r- 1}^{+}, \ r\in\mathbb{Z}_+,\  \text{ if } \ 0\in J.$$
\begin{pf} Let $\mathcal{A}_q^J$ be the algebra generated by the elements above. From Lemma~\ref{lemsubset667} we can derive that $\mathcal{A}_q^J$ is contained in $\mathbf{U}_q^J$. 
For the converse direction it is enough to show that the elements $E_i, i\in I$ and $F_j$, $j\in J$ are contained in $\mathcal{A}_q^J$ which is obvious if $i\in\mathring{I}$ and $j\in\mathring{I}\cap J$. This fact is also well-known if $\mathring{I}=J$ (see for example \cite[Section 12.2]{CP95}). So let $\mathring{I}\neq J$ and use Proposition~\ref{rel3210}(1) to write 
$$E_0=-\kappa(\gamma_0)\cdot K_0\mathbf{T}_{w_{\circ}}(\chi_{\gamma_0,1}^{-}),\ \ F_0=-\kappa(\gamma_0)\cdot\mathbf{T}_{w_{\circ}}(\chi_{\gamma_0,-1}^+) K_0^{-1}\ \ (\text{if $0\in J$}).$$
Now these elements are contained in $\mathcal{A}_q^J$ by Proposition~\ref{zzhnböö9}(b). 
\end{pf}
\end{prop}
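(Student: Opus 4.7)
The plan is to verify two inclusions. For the forward direction — that each listed element lies in $\mathbf{U}_q^J$ — Lemma~\ref{lemsubset667} already places $\chi_{\alpha,r}^+$, $\chi_{\alpha,s}^-$ (for $s\geq 1$), the $\Dot{h}_{i,s}$, and the Cartan-type generators $K_i^{\pm}, C^{\pm 1/2}, D^{\pm}$ inside $\mathbf{U}_q^J$. The only non-trivial point is $\chi_{\gamma_0,r+1}^-$ when $0\in J$: Proposition~\ref{rel3210} realises $\chi_{\gamma_0,1}^-$ as a scalar multiple of $\mathbf{T}_{w_{\circ}}^{-1}\mathbf{T}_0(F_0)$, and since $w_{\circ}\in\mathring{W}$ admits a reduced expression using only reflections $\mathbf{s}_k$ with $k\in\mathring{I}\cap J$, this element lies in $\mathbf{U}_q^J$. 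Higher vectors $\chi_{\gamma_0,r+1}^-$ follow by applying the translation braid operators $\mathbf{T}_{\varpi_i}$ (Proposition~\ref{rel3210}(3)), which preserve the parabolic subalgebra.

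For the converse inclusion let $\mathcal{A}_q^J$ denote the subalgebra generated by the listed elements; it suffices to show that every Drinfeld--Jimbo generator $E_i$ ($i\in I$), $F_j$ ($j\in J$) of $\mathbf{U}_q^J$ lies in $\mathcal{A}_q^J$. The identifications $E_i=\chi_{i,0}^+$ for $i\in\mathring{I}$ and $F_j=\chi_{j,0}^-$ for $j\in\mathring{I}\cap J$ handle all but at most two generators. In the case $\mathring{I}=J$ (so $0\notin J$) only $E_0$ remains, and this follows from the standard recovery of the affine simple generators in Drinfeld's second realisation of the full quantum affine algebra, as in \cite[Section 12.2]{CP95}. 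When $0\in J$, Proposition~\ref{rel3210}(1) applied with $\beta=\gamma_0$ — using that $w_{\circ}(\theta)=\gamma_0$ is a simple root of the Levi $\mathring{\lie g}_0$ — produces identities of the form
\[
E_0 \;=\; -\kappa(\gamma_0)\,K_0\,\mathbf{T}_{w_{\circ}}(\chi_{\gamma_0,1}^-),\qquad F_0 \;=\; -\kappa(\gamma_0)\,\mathbf{T}_{w_{\circ}}(\chi_{\gamma_0,-1}^+)\,K_0^{-1}.
\]
Proposition~\ref{zzhnböö9}(b), invoked on a reduced expression of $w_{\circ}$ in the generators $\mathbf{s}_k$ ($k\in\mathring{I}\cap J$), then rewrites $\mathbf{T}_{w_{\circ}}(\chi_{\gamma_0,1}^-)$ as a polynomial in $\chi_{\gamma_0,1}^-$ and the $\chi_{k,0}^-$, all of which are listed generators; the analogous statement for $\mathbf{T}_{w_{\circ}}(\chi_{\gamma_0,-1}^+)$ follows symmetrically by applying $\Omega$.

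The principal obstacle — really more conceptual than technical — is the bridge identifying the affine simple root vectors $E_0,F_0$, up to the braid-group twist by $\mathbf{T}_{w_{\circ}}$, with Drinfeld root vectors attached to the non-simple root $\gamma_0$. This bridge is precisely the content of Proposition~\ref{rel3210}(1), and it relies crucially on the fact that $\gamma_0$ is, by construction, a simple root of the reductive Levi subalgebra $\mathring{\lie g}_0$. Once this identification is in hand, the reduction via Proposition~\ref{zzhnböö9}(b) is a routine exercise in tracking the action of the braid operators $\mathbf{T}_k$ with $k\in\mathring{I}\cap J$, and the proof closes.
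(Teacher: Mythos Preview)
Your overall approach matches the paper's, but there is a gap in the case analysis for the converse inclusion. You treat two cases: $J=\mathring{I}$ (handled via \cite{CP95}) and $0\in J$ (handled via the $E_0,F_0$ formulas). This leaves out the case $0\notin J$ with $J\subsetneq\mathring{I}$, where you still need $E_0\in\mathcal{A}_q^J$. The fix is simple: the identity $E_0=-\kappa(\gamma_0)\,K_0\,\mathbf{T}_{w_\circ}(\chi_{\gamma_0,1}^-)$ from Proposition~\ref{rel3210}(1) is valid whenever $J\neq\mathring{I}$ (so that $\gamma_0=w_\circ(\theta)\in\mathring{R}^+$ is defined and $\chi_{\gamma_0,1}^-$ is among the listed generators), not only when $0\in J$; the clause ``$\gamma_0$ is a simple root of the Levi'' plays no role in that proposition. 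Proposition~\ref{zzhnböö9}(b) then applies exactly as you describe, since $w_\circ$ involves only reflections with indices in $\mathring{I}\cap J$. The paper accordingly conditions only the $F_0$ identity on $0\in J$, and uses the $E_0$ identity for all $J\neq\mathring{I}$.

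A minor point on the forward direction: your extra argument for $\chi_{\gamma_0,r+1}^-\in\mathbf{U}_q^J$ is unnecessary, since Lemma~\ref{lemsubset667} already gives $\Dot{\chi}_{\alpha,s}^-\in\mathbf{U}_q^{J,+}$ for every $\alpha\in\mathring{R}^+$ and every $s\geq 1$, in particular for $\alpha=\gamma_0$. Your assertion that the translation braid operators $\mathbf{T}_{\varpi_i}$ preserve $\mathbf{U}_q^J$ is unjustified (and not true in general), but fortunately it is not needed.
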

\begin{rem}
In fact, the last set of generators is only required when $r=0$, and thus the parabolic quantum affine algebra is independent of the choice of reduced expression \eqref{seteq0}. While this is not surprising given the definition, we include this observation for completeness.
\end{rem}
\subsection{} Similarly, if we denote by $\mathring{\mathcal{A}}_q^{J}$ the subalgebra of $\mathbf{U}_q^J$
generated by the elements $$\chi_{j,0}^-,\ \ \chi_{j,0}^+,\ \ K_{j}^{\pm},\ \ j\in \mathring{I}\cap J$$
$$ \chi_{\gamma_0,1}^-,\ \ \chi_{\gamma_0,-1}^+,\ \  K^{\pm}_{\gamma_0-\delta},\ \ \text{ (if $0\in J$)}$$
we can naturally identify $\mathring{\mathcal{A}}_q^{J}$ with $\mathring{\mathbf{U}}_q^J$, the quantum group associated to the semi-simple part of $\lie{\mathring{g}}_0$ (c.f. Lemma~\ref{irrmod}), via the isomorphism 
$$F_{\bar{j}}\mapsto \chi_{j,0}^-,\ \ E_{\bar{j}}\mapsto \chi_{j,0}^+,\ \ K^{\pm}_{\bar{j}}\mapsto K_{j}^{\pm},\ \ j\in \mathring{I}\cap J$$
$$F_0\mapsto \chi_{\gamma_0,1}^-,\ \ E_0\mapsto \chi_{\gamma_0,-1}^+,\ \ K^{\pm}_0\mapsto K^{\pm}_{\gamma_0-\delta},\ \ \text{ (if $0\in J$)}.$$
Thus, we have obtained an analogue of the Drinfeld picture: the parabolic quantum affine algebra generated by the Drinfeld-type generators described in Proposition~\ref{genalgim}, together with a Hopf subalgebra
$$\mathring{\mathbf{U}}_q^J\cong \mathring{\mathcal{A}}_q^{J} \hookrightarrow \mathcal{A}_q^{J}\cong \mathbf{U}_q^J$$ 
whose category of finite-dimensional representations is semi-simple.
\section{PBW basis and second triangular decomposition}\label{section6}
\subsection{}The same methods presented in \cite{B94,B94A} also yield a PBW type basis of $\mathbf{U}_q^J$. We mention this result here for completeness.

\begin{prop}\label{pbwpara} Fix an arbitrary order on $\{1,\dots,n\}\times \mathbb{N}$ and recall the convex order from \eqref{convexordö}. 
\begin{enumerate}

\item We have a basis of $\mathbf{U}_q^{J,+}$ given by the following ordered monomials 

\begin{align} \label{uqjpbw0}
   M_{f,g,\varphi}=\prod_{\substack{\beta\in \mathring{R}^+, r\geq 0 \\ \text{convexly ordered}}} (\Dot{\chi}_{\beta,r}^+)^{f(\beta,r)}\prod_{(i,r)\in \lbrace 1,...,n \rbrace \times \mathbb{N}}(\Dot{h}_{i,r})^{g(i,r)}\prod_{\substack{\beta\in \mathring{R}^+, r> 0 \\ \text{convexly ordered}}}(\Dot{\chi}_{\beta,r}^-)^{\varphi(\beta,r)}
\end{align}
for various finitely supported functions $f: \mathring{R}^+ \times \mathbb{Z}_+ \to \mathbb{Z}_+, \ g: \lbrace 1,...,n \rbrace \times \mathbb{N} \to \mathbb{Z}_+, \ \varphi: \mathring{R}^+\times \mathbb{N} \to \mathbb{Z}_+$ and the second product is taken in the fixed order. \vspace{0,2cm}
\item We have a basis of $\mathbf{U}_q^{J,-}$ given by the following ordered monomials
\begin{align} \label{uqjpbw}
  N_{f,\varphi}=\prod_{\substack{\beta\in \mathring{R}^+,\ 
    \mathrm{supp}(\beta-\delta)\subseteq J\\ \text{reversed convexly ordered}}}(\Dot{\chi}_{\beta,-1}^+)^{\varphi(\beta)}\prod_{\substack{\beta\in \mathring{R}^+,\  \mathrm{supp}(\beta)\subseteq J\\ \text{reversed convexly ordered}}}(\Dot{\chi}_{\beta,0}^-)^{f(\beta)}
\end{align}
for various functions $f,\varphi: \mathring{R}^+ \to \mathbb{Z}_+$.
    \end{enumerate}
\end{prop}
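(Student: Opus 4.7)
The plan is to deduce both parts from Beck's PBW theorem for the full quantum affine algebra $\mathbf{U}_q$, combined with the $Q$-grading. For part (1) the key observation is that $\mathbf{U}_q^{J,+}$ is generated by $E_i$ for \emph{every} $i\in I$, so as a subalgebra of $\mathbf{U}_q$ it coincides with the positive part $\mathbf{U}_q^+$ of the full quantum affine algebra. The claimed basis is then exactly Beck's PBW basis of $\mathbf{U}_q^+$ from \cite[Proposition 3]{B94A}, so no further work is required.

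For part (2) the easy steps are the following. Lemma~\ref{lemsubset667} shows that each factor $\Dot{\chi}_{\beta,-1}^+$ (with $\mathrm{supp}(\beta-\delta)\subseteq J$) and $\Dot{\chi}_{\beta,0}^-$ (with $\mathrm{supp}(\beta)\subseteq J$) lies in $\mathbf{U}_q^{J,-}$, so by multiplicative closure every $N_{f,\varphi}$ lies in $\mathbf{U}_q^{J,-}$. Linear independence is automatic, since the $N_{f,\varphi}$ form a subset of the PBW basis of $\mathbf{U}_q^-$ obtained by applying $\Omega$ to the basis in (1). It remains to prove spanning.

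For spanning I would use that $\mathbf{U}_q^{J,-}$ is generated by the negatively weighted elements $F_j$, $j\in J$, so its weights lie in $-Q_J^+:=-\sum_{j\in J}\mathbb{Z}_+\alpha_j$, giving
\begin{equation*}
\mathbf{U}_q^{J,-}\ \subseteq\ \mathbf{U}_q^-\cap\bigoplus_{\eta\in Q_J^+}\mathbf{U}_{q,-\eta},
\end{equation*}
and then identify the right-hand side with the span of the $N_{f,\varphi}$. By part (1) applied to $\mathbf{U}_q^-$ via $\Omega$, the right-hand side is spanned by those PBW monomials of $\mathbf{U}_q^-$ whose total weight lies in $-Q_J^+$. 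Since a sum of negative roots lies in $-Q_J^+$ if and only if each summand does (coefficients of $\alpha_i$, $i\notin J$, being non-negative on every $\alpha\in R^+$), each factor of such a monomial must individually have weight in $-Q_J^+$. A short check on the three types of factors then shows: $\Dot{h}_{i,-r}$ is excluded because $\delta$ has full support $I\neq J$; $\Dot{\chi}_{\beta,-r}^+$ of weight $\beta-r\delta$ requires $r=1$ and $\mathrm{supp}(\beta-\delta)\subseteq J$, since $(r-1)\delta$ would otherwise have to lie in $Q_J^+$; and $\Dot{\chi}_{\beta,-r}^-$ of weight $-\beta-r\delta$ requires $r=0$ and $\mathrm{supp}(\beta)\subseteq J$ by the same reasoning. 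These are exactly the factors that occur in $N_{f,\varphi}$, so the intersection coincides with $\mathrm{span}\{N_{f,\varphi}\}$ and spanning follows.

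The main obstacle I anticipate is this last grading calculation, which pins down precisely which PBW monomials of $\mathbf{U}_q^-$ contribute to the parabolic weight subspace; all other ingredients are formal consequences of Lemma~\ref{lemsubset667} and Beck's PBW theorem.
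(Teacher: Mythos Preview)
Your proposal is correct and follows essentially the same route as the paper: part (1) is identified with Beck's PBW basis of $\mathbf{U}_q^+$ via $\mathbf{U}_q^{J,+}=\mathbf{U}_q^+$, and part (2) combines Lemma~\ref{lemsubset667} for containment, the PBW basis of $\mathbf{U}_q^-$ for linear independence, and a $Q$-grading argument for spanning. Your factor-by-factor weight analysis is exactly what the paper's terse phrase ``which is only possible if $x=y$'' encodes; you have simply unpacked it explicitly.
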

\begin{proof} The first part follows immediately from \cite[§1.Corollary 4]{B94A}. Clearly the elements in \eqref{uqjpbw} are linearly independent by \cite[Proposition 3]{B94A} and are contained in $\mathbf{U}_q^{J,-}$ by Lemma~\ref{lemsubset667}. Let $B$ be the set of monomials obtained by applying $\Omega$ to \eqref{uqjpbw0} and let $B_1 \subseteq B$ the subset of elements of monomials of the form \eqref{uqjpbw}. Given $x\in \mathbf{U}_q^{J,-}$ we can write  $x=y+z$ by using the PBW basis of $\mathbf{U}_q$ constructed in \cite{B94A} where $y$ (resp. $z$) is a linear combination of elements in $B_1$ (resp. $B\setminus B_1$). However, $x-y = z$ and thus the weight of $z$ is contained in $-\sum_{j\in J}\mathbb{Z}_+\alpha_j$ which is only possible if $x=y$.
\end{proof}
\begin{rem}\label{reprdspa} Proposition~\ref{pbwpara} induces a PBW basis of $\mathbf{U}_q^J$
\begin{equation}\label{bhg4902}\{N_{f',\varphi'}K_{\alpha}C^{\pm s/2}D^rM_{f,g,\varphi},\ \ f',\varphi',f,g,\varphi,\alpha,s,r\}.\end{equation}
Using this basis, one can define as in \cite{KdC90} (see also \cite[Section 1.8]{BK96}) a filtration on $\mathbf{U}_q^J$ such that its associated graded space is described as in \cite[Proposition 1.8]{BK96}. In particular, the quantum root vectors commute (resp. $q$-commute) in the associated graded space. This implies that if we reorder the elements in \eqref{bhg4902} in any given fixed order, we will remain with a basis of $\mathbf{U}_q^J$. This observation will be used later.
\end{rem}
\subsection{} The quantum affine algebra has two well-known triangular decompositions. We want to derive an analogue for parabolic quantum affine algebras in the rest of this section. We first note the following lemma. 
\begin{prop}\label{algebrengleich3} The following four subspaces of $\mathbf{U}_q^J$ coincide
\begin{enumerate}
    \item the subalgebra generated by 
\begin{equation}\label{ghset41}\{\chi_{\alpha,-1}^+, \chi_{\beta,r}^+, \ \  \alpha,\beta \in \mathring{R}^+,\ \mathrm{supp}(\alpha-\delta )\subseteq J,\ r\geq 0\} \end{equation}
\item the subalgebra generated by 
\begin{equation}\label{ghset411}\{\chi_{i,r}^+, \ i \in \mathring{I},\ r\geq 0\}\sqcup \{\chi_{\gamma_0,-1}^+: \text{ if $0\in J$}\} \end{equation} 
\item the subspace spanned by the elements 
\begin{equation}\label{ghset411111}\prod_{\substack{\beta\in \mathring{R}^+, r\geq 0 \\ \text{convexly ordered}}} (\chi_{\beta,r}^+)^{\varphi'(\beta,r)}\prod_{\substack{\beta\in \mathring{R}^+,\ 
    \mathrm{supp}(\beta-\delta)\subseteq J \\ \text{reversed convexly ordered}}}(\chi_{\beta,-1}^+)^{\varphi(\beta)} \end{equation}
    for various (finitely supported) functions $\varphi':\mathring{R}^+\times \mathbb{Z}_+\rightarrow \mathbb{Z}_+$, $\varphi:\mathring{R}^+\rightarrow \mathbb{Z}_+$ \vspace{0,2cm}
\item $\mathbf{U}_q^J \cap \langle \chi_{i,r}^+ : i\in\mathring{I},\ r\in \mathbb{Z} \rangle.$
\end{enumerate}
We denote this subalgebra by $\mathbf{U}_q^J(+)$.
\begin{proof} From \cite[Lemma 5.4]{Da15a} and Lemma~\ref{lemsubset667} we obtain that  \begin{align}\label{eqsch5r}
        \langle \chi_{\alpha,-1}^+, \chi_{\beta,r}^+ : \alpha,\beta \in \mathring{R}^+, \mathrm{supp}(\alpha-\delta)\subseteq J,\ r\geq 0 \rangle\subseteq \mathbf{U}_q^J \cap \langle \chi_{i,r}^+ : i\in\mathring{I},\ r\in \mathbb{Z} \rangle 
       \end{align}
Clearly \eqref{ghset411} is a subset of \eqref{ghset41} and the subspace spanned by \eqref{ghset411111} is contained in the subalgebra generated by the elements \eqref{ghset41}. 
So in order to finish the proof we are left to show
\begin{itemize}
    \item The set \eqref{ghset41} is contained in the subalgebra generated by the elements \eqref{ghset411} \vspace{0,2cm}
    \item The subalgebra in (4) is contained in the subspace generated by \eqref{ghset411111}.
\end{itemize} From \eqref{eqsch5r} and \cite[Proposition 9.3 (i)]{Da15a} (see also the discussion in \cite[Section 2.3]{HJ12a}) we have 
\begin{equation}\label{stabil23}\chi_{\beta,r}^+ \in \langle \chi_{i,r}^+ : i\in \mathring{I},\ r\in\mathbb{Z}\rangle\cap \langle E_i : i\in I\rangle=\langle \chi_{i,r}^+ : i\in \mathring{I},\ r\geq 0\rangle.\end{equation} So it remains to consider the elements $\chi_{\alpha,-1}^+$ with $\mathrm{supp}(\alpha-\delta) \subseteq J$ (in particular $0\in J$). However, this case follows from Proposition~\ref{zzhnböö9}(c) by applying $\Omega$ to the generators and the first part is obtained. The proof of the second part closely follows the idea of \cite[Lemma 5]{B94A}. So suppose that we find an element 
 $$y\in \mathbf{U}_q^J \cap \langle \chi_{i,r}^+ : i\in\mathring{I},\ r\in \mathbb{Z} \rangle$$
 which is not contained in the linear span of the elements \eqref{ghset411111}. We have
    $$\mathbf{T}_{t_{\nu(\mathbf{u}^{\vee})}}^ky\in \mathbf{U}_q^- \cdot \mathbf{U}_q^0,\ \ \mathbf{T}_{t_{\nu(\mathbf{u}^{\vee})}}^{-k}y\in \mathbf{U}_q^{+},\  \text{ for some $k>>0$}.$$
By Remark~\ref{reprdspa} we can write  
    $$y = \sum c_{f_1,f_2,z,g_1,p,g_2}\ g_1\cdot f_1 \cdot z\cdot f_2\cdot p\cdot  g_2,$$
   where $z\in\mathbf{U}_q^0$ and 
$$f_1=\prod_{\substack{\beta\in \mathring{R}^+,\ 
    \mathrm{supp}(\beta-\delta)\subseteq J \\ \text{reversed convexly ordered}}}(\chi_{\beta,-1}^+)^{\varphi(\beta)} 
    ,\ \ f_2=\prod_{\substack{\beta\in \mathring{R}^+,\ \mathrm{supp}(\beta)\subseteq J \\ \text{reversed convexly ordered}}}(\Dot{\chi}_{\beta,0}^-)^{f(\beta)} 
$$  

$$g_1=\prod_{\substack{\beta\in \mathring{R}^+, r\geq 0 \\ \text{convexly ordered}}} (\chi_{\beta,r}^+)^{\varphi'(\beta,r)},\ \ p=\prod_{(i,r)\in \lbrace 1,...,n \rbrace \times \mathbb{N}}(\Dot{h}_{i,r})^{g(i,r)},\ \ 
   g_2=\prod_{\substack{\beta\in \mathring{R}^+, r> 0 \\ \text{convexly ordered}}}(\Dot{\chi}_{\beta,r}^-)^{g'(\beta,r)}$$
for some suitable choice of functions $\varphi,\varphi',f,g,g'$. Although we have dotted quantum root vectors in the PBW basis, we can move all $K_i's$ and $D's$ into $z$. By assumption we have $c_{f_1,f_2,z,g_1,p,g_2} \neq 0$ for some  $f_2, p,z$ or $g_2$ not equal to $1$. Choosing $k >>0$ big enough, we can guarantee $\mathbf{T}_{t_{\nu(\mathbf{u}^{\vee})}}^k(y) \in \mathbf{U}_q^-\cdot \mathbf{U}_q^0$ and
$$
    \mathbf{T}_{t_{\nu(\mathbf{u}^{\vee})}}^k(f_1)\in \mathbf{U}_q^- \cdot \mathbf{U}_q^0,\ \  \mathbf{T}_{t_{\nu(\mathbf{u}^{\vee})}}^k(g_1) \in \mathbf{U}_q^- \cdot \mathbf{U}_q^0,\ \  \mathbf{T}_{t_{\nu(\mathbf{u}^{\vee})}}^k(z) \in \mathbf{U}_q^0$$
   $$ \mathbf{T}_{t_{\nu(\mathbf{u}^{\vee})}}^k(f_2) \in \mathbf{U}_q^0\cdot \mathbf{U}_q^+, \ \ \mathbf{T}_{t_{\nu(\mathbf{u}^{\vee})}}^k(p\cdot g_2) \in \mathbf{U}_q^+
 $$
while also $\mathbf{T}_{t_{\nu(\mathbf{u}^{\vee})}}^{-k}(y)\in\mathbf{U}_q^{+}$ and
$$
\mathbf{T}_{t_{\nu(\mathbf{u}^{\vee})}}^{-k}(f_1) \in \mathbf{U}_q^{+},\ \ \mathbf{T}_{t_{\nu(\mathbf{u}^{\vee})}}^{-k}(g_1) \in \mathbf{U}_q^{+}, \ \
\mathbf{T}_{t_{\nu(\mathbf{u}^{\vee})}}^{-k}(z) \in \mathbf{U}_q^0.
$$
As in the proof of \cite[Lemma 5]{B94A} the expression
$$
\sum c_{f_1,f_2,z,g_1,p,g_2} \mathbf{T}_{t_{\nu(\mathbf{u}^{\vee})}}^k(g_1)\cdot \mathbf{T}_{t_{\nu(\mathbf{u}^{\vee})}}^k(f_1)\cdot \mathbf{T}_{t_{\nu(\mathbf{u}^{\vee})}}^k(z)\cdot \mathbf{T}_{t_{\nu(\mathbf{u}^{\vee})}}^k(f_2) \cdot \mathbf{T}_{t_{\nu(\mathbf{u}^{\vee})}}^k(p\cdot g_2)
$$
is a sum in PBW monomials, thus we get $f_2 = p = g_2 = 1$, i.e. 
$$y = \sum c_{f_1,z,g_1} g_1 \cdot f_1 \cdot z.$$ Also if $c_{f_1,z,g_1} \neq 0$ for some $z\neq 1$ we get a contradiction since $\mathbf{T}_{t_{\nu(\mathbf{u}^{\vee})}}^{-k}(y)\in \mathbf{U}_q^{J,+}$.

\end{proof}    
\end{prop}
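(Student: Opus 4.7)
My plan is to settle the easy inclusions first and then focus on the hard direction $(4)\subseteq(3)$. The inclusions $(2)\subseteq(1)$ and $(3)\subseteq(1)$ are automatic, since each generator in (2) is already among those of (1) and each monomial in \eqref{ghset411111} is a product of generators of (1). The inclusion $(1)\subseteq(4)$ follows by combining Lemma~\ref{lemsubset667} (which places the generators of (1) inside $\mathbf{U}_q^J$) with Damiani's Lemma~5.4 in \cite{Da15a} (which places them inside $\langle\chi^+_{i,r}:i\in\mathring I,\,r\in\mathbb Z\rangle$).

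For $(1)\subseteq(2)$ I need to express each generator of (1) using the generators of (2). For $\chi^+_{\beta,r}$ with $r\geq 0$ I would invoke the identity
\[
\langle\chi^+_{i,r}:i\in\mathring I,\,r\in\mathbb Z\rangle\cap\langle E_i:i\in I\rangle=\langle\chi^+_{i,r}:i\in\mathring I,\,r\geq 0\rangle,
\]
extracted from \cite[Proposition 9.3]{Da15a} and the discussion in \cite[Section~2.3]{HJ12a}; this immediately places each $\chi^+_{\beta,r}$ with $r\geq 0$ in the required subalgebra. For $\chi^+_{\alpha,-1}$ with $\mathrm{supp}(\alpha-\delta)\subseteq J$ (which forces $0\in J$), I apply the anti-automorphism $\Omega$ to Proposition~\ref{zzhnböö9}(c): that identity writes $\chi^-_{\alpha,1}$ as a polynomial in $\chi^-_{\gamma_0,1}$ and the $\chi^-_{j,0}$, $j\in\mathring I\cap J$, and $\Omega$ converts it into a polynomial expression for $\chi^+_{\alpha,-1}$ in $\chi^+_{\gamma_0,-1}$ and the $\chi^+_{j,0}$, all among the generators of (2). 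The remaining inclusion $(1)\subseteq(3)$ is a straightening argument: using the filtration underlying Proposition~\ref{pbwpara}, any product of generators of (1) reduces modulo lower-degree terms to a linear combination of ordered monomials of the form \eqref{ghset411111}, and induction closes the argument.

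For $(4)\subseteq(3)$ I would follow the strategy of \cite[Lemma~5]{B94A}. Suppose $y\in(4)$ is not a linear combination of monomials in \eqref{ghset411111}, and expand $y$ in the PBW basis of Proposition~\ref{pbwpara} as a sum of products $g_1 f_1 z f_2 p g_2$, where $g_1$ collects the $\chi^+_{\beta,r}$ with $r\geq 0$, $f_1$ the $\chi^+_{\beta,-1}$ with $\mathrm{supp}(\beta-\delta)\subseteq J$, $z\in\mathbf{U}_q^{J,0}$, $f_2$ the $\chi^-_{\beta,0}$ with $\mathrm{supp}(\beta)\subseteq J$, $p$ the imaginary $\dot h_{i,r}$, and $g_2$ the $\chi^-_{\beta,r}$ with $r>0$. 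By assumption some monomial with non-trivial $f_2$, $p$, $z$, or $g_2$ has nonzero coefficient. Since $y\in\langle\chi^+_{i,r}\rangle$, for $k\gg 0$ the elements $\mathbf{T}^{k}_{t_{\nu(\mathbf u^\vee)}}(y)$ and $\mathbf{T}^{-k}_{t_{\nu(\mathbf u^\vee)}}(y)$ lie respectively in $\mathbf{U}_q^-\cdot\mathbf{U}_q^0$ and in $\mathbf{U}_q^+$. Applying these operators term by term, for $k$ large enough each PBW factor is rotated along the convex order into a single triangular component, so the image of each summand is itself a PBW monomial of the full $\mathbf{U}_q$. Linear independence of those monomials then forces $f_2=p=g_2=1$, and the $\mathbf{T}^{-k}$ side additionally rules out $z\neq 1$, contradicting the choice of $y$.

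The main obstacle is the bookkeeping in this final step: one must verify, using the convex order of Lemma~\ref{papi} and the properties in Remark~\ref{remwicht}, that for $k$ sufficiently large every individual PBW factor is sent by $\mathbf{T}^{\pm k}_{t_{\nu(\mathbf u^\vee)}}$ into the expected triangular component. Without this, the reduction to a PBW-basis linear-independence argument in $\mathbf{U}_q$ would not be available, and the implication $(4)\subseteq(3)$ would fail to close.
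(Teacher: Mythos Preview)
Your proposal is correct and follows essentially the same approach as the paper: the easy inclusions are identical, the inclusion $(1)\subseteq(2)$ is handled in the same way via \cite[Proposition~9.3]{Da15a} for $r\geq 0$ and $\Omega$ applied to Proposition~\ref{zzhnböö9}(c) for the $\chi^+_{\alpha,-1}$, and the hard direction $(4)\subseteq(3)$ is exactly Beck's argument from \cite[Lemma~5]{B94A} with the translation operators $\mathbf{T}^{\pm k}_{t_{\nu(\mathbf u^\vee)}}$ and PBW linear independence. Your separate straightening argument for $(1)\subseteq(3)$ is redundant (the chain $(3)\subseteq(1)\subseteq(4)\subseteq(3)$ already closes once you have $(4)\subseteq(3)$), but it does no harm.
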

\subsection{}
We also have \begin{align*}\mathbf{U}_q^J(0):&=\mathbf{U}_q^J\cap \langle h_{i,r}, K_i^{\pm}, C^{\pm 1/2}, D^{\pm}: i\in\mathring{I},\ r\in \mathbb{Z}\setminus \{0\} \rangle &\\&
=\langle h_{i,s}, K_i^{\pm}, C^{\pm 1/2}, D^{\pm}: i\in\mathring{I},\ s>0 \rangle. \end{align*}
Only the intersection $\mathbf{U}_q^J(-):=\mathbf{U}_q^J \cap \langle \chi_{i,r}^- : i\in\mathring{I},\ r\in \mathbb{Z} \rangle$ does not have a nice description in terms of Drinfeld-type generators. In certain important situations this is still possible. We begin by recalling the Levendorskii–Soibelman formula from \cite[Proposition 7]{B94A}, in the form required for the purposes of this article.
\begin{lem}\label{LSformel}
Let $\beta\in\mathring{R}$ and $r\geq 0$. Then $[C^{-1/2}h_{i,1},\chi_{\beta,r+1}^-]$ is a $\mathbb{C}(q)$-linear combination of ordered monomials $(\chi_{\tau_1,s_1+1}^{-})^{a_1}\cdots (\chi_{\tau_k,s_k+1}^{-})^{a_k}$ satisfying $$\delta <-\tau_1+(s_1+1)\delta<\cdots<-\tau_k+(s_k+1)\delta<-\beta+(r+1)\delta.$$
\qed
\end{lem}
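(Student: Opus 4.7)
The plan is to derive this as a direct specialization of the Levendorskii-Soibelman formula established in \cite[Proposition 7]{B94A} for the PBW basis of $\mathbf{U}_q$ associated to the fixed reduced expression \eqref{seteq0}.

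First, I would locate the relevant positive roots in the convex order \eqref{convexordö}. Since $r \geq 0$, the root $-\beta + (r+1)\delta$ coincides with some $\beta_k$ with $k > 0$, while the imaginary root $\delta$ lies strictly to the left of every $\beta_k$ with $k > 0$ in that order. In particular $\delta < -\beta + (r+1)\delta$. Moreover, every positive root $\mu$ strictly between these two must be of the form $-\tau + (s+1)\delta$ for some $\tau \in \mathring{R}^+$ and $s \geq 0$, since all imaginary roots $\ell\delta$ with $\ell \geq 2$ as well as all roots $\alpha + \ell\delta$ with $\alpha \in \mathring{R}^+,\ \ell \geq 0$ lie strictly further to the left of $\delta$.

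Next, I would identify the two Drinfeld generators with PBW root vectors in the sense of Beck. Up to a power of the central element $C^{\pm 1/2}$, the element $h_{i,1}$ equals the dotted imaginary root vector $\Dot{h}_{i,1}$ at the imaginary root $\delta$ (indexed by $i$), while $\chi^-_{\beta, r+1}$ equals, up to a central and Cartan factor, the dotted real root vector $\Dot{\chi}^-_{\beta, r+1}$ at $-\beta + (r+1)\delta$. The Levendorskii-Soibelman formula of \cite[Proposition 7]{B94A} asserts that the $q$-commutator of two PBW root vectors at roots $\alpha < \gamma$ is a $\mathbb{C}(q)$-linear combination of ordered PBW monomials in root vectors attached to roots strictly between $\alpha$ and $\gamma$. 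Applied to our pair, it yields a $\mathbb{C}(q)$-linear combination of ordered monomials in the $\Dot{\chi}^-_{\tau_j, s_j+1}$ with $\delta < -\tau_j + (s_j+1)\delta < -\beta + (r+1)\delta$, ordered in the sense of \eqref{convexordö}.

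Finally, I would translate back to the undotted normalization of the lemma: passing from $\Dot{h}_{i,1}$ to $C^{-1/2}h_{i,1}$ and from the $\Dot{\chi}^-_{\tau,s+1}$ to $\chi^-_{\tau,s+1}$ introduces only powers of the central element $C^{\pm 1/2}$ together with Cartan factors $K^\pm_\alpha$. Since $C^{\pm 1/2}$ is central and the $K^\pm_\alpha$'s $q$-commute past the real root vectors $\chi^-_{\tau,s+1}$ with only scalar corrections in $\mathbb{C}(q)$, all of these can be absorbed into the scalar coefficients while preserving the claimed ordered form. The main, purely technical, obstacle is this bookkeeping of $C$- and $K$-corrections; that no residual Cartan pieces survive is guaranteed by matching $Q$-grades on both sides of the identity, both equal to the $Q$-grade of $-\beta + (r+1)\delta$.
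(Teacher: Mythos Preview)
Your proposal is correct and is exactly the argument the paper has in mind: the lemma is stated in the paper with a \qed and the preceding sentence simply says it is the Levendorskii--Soibelman formula from \cite[Proposition~7]{B94A} ``in the form required,'' so there is no separate proof to compare against. Your unpacking---locating $\delta$ and $-\beta+(r+1)\delta$ in the convex order, noting that all intermediate roots are of the form $-\tau+(s+1)\delta$, and absorbing the central/Cartan factors from the dotted normalization (the $q$-commutator here is an ordinary commutator since $(\delta,\cdot)=0$)---is precisely how one reads the cited result off.
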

\begin{prop}\label{algebrengleich23} The following three subspaces of $\mathbf{U}_q^J$ coincide
\begin{enumerate}
    \item the subalgebra generated by 
\begin{equation}\label{ghset4150}\{\chi_{\alpha,0}^-, \chi_{\beta,r+1}^-, \ \  \alpha,\beta \in \mathring{R}^+,\ \mathrm{supp}(\alpha)\subseteq J,\ r\geq 0\} \end{equation}
\item the subpace generated by 
\begin{equation}\label{huuugt4}\prod_{\substack{\beta\in \mathring{R}^+, r> 0 \\ \text{convexly ordered}}}(\chi_{\beta,r}^-)^{\varphi(\beta,r)} \prod_{\substack{\beta\in \mathring{R}^+,\  \mathrm{supp}(\beta)\subseteq J\\ \text{reversed convexly ordered}}}(\chi_{\beta,0}^-)^{f(\beta)}\end{equation}
 for various (finitely supported) functions $\varphi:\mathring{R}^+\times \mathbb{N}\rightarrow \mathbb{Z}_+$, $f:\mathring{R}^+\rightarrow \mathbb{Z}_+$ \vspace{0,2cm}
\item $\mathbf{U}_q^J \cap \langle \chi_{i,r}^- : i\in\mathring{I},\ r\in \mathbb{Z} \rangle.$
\end{enumerate}
Moreover, if $\mathfrak{p}_J$ is maximal, i.e. $I\setminus J = \lbrace i_0 \rbrace$, $i_0\neq 0$ and $\mathbf{a}_{i_0}(\delta)\leq 2$, then the subalgebras above also coincide with
\begin{enumerate}
\item[(4)] the subalgebra generated by 
\begin{equation}\label{ghset41151}\{\chi_{\gamma_0,r+1}^-,\ \chi_{i,r+1}^-,\  \chi_{j,r}^- : \ i \in \mathring{I}\setminus (\mathring{I}\cap J),\ j\in\mathring{I}\cap J,\ r\geq 0\}. \end{equation} 

\end{enumerate}
\end{prop}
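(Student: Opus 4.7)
The plan is to follow the strategy used for Proposition~\ref{algebrengleich3} with the signs reversed. Three easy containments start the argument: (2) is contained in (1) because the monomials~\eqref{huuugt4} are simply products of the generators of~(1); (1) is contained in~(3) because every generator $\chi_{\alpha,0}^-$ with $\mathrm{supp}(\alpha)\subseteq J$ and every $\chi_{\beta,r+1}^-$ lies in $\mathbf{U}_q^J$ by Lemma~\ref{lemsubset667} and trivially in $\langle\chi_{i,r}^-\rangle$; and in the maximal case (4) is contained in~(1) because the generating set of~(4) sits inside that of~(1).

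The core of the argument is the containment $(3)\subseteq(2)$. I would use the PBW basis of $\mathbf{U}_q^J$ from Proposition~\ref{pbwpara}, reordering via Remark~\ref{reprdspa} so that the $\chi_{\beta,r}^-$ with $r>0$ are placed to the left of the $\chi_{\beta,0}^-$ with $\mathrm{supp}(\beta)\subseteq J$. Because this basis is a sub-basis of Beck's full PBW basis of $\mathbf{U}_q$, uniqueness of the PBW expansion guarantees that for $y\in\mathbf{U}_q^J\cap\langle\chi_{i,r}^-\rangle$ the expansion of $y$ in $\mathbf{U}_q^J$ coincides with its expansion in $\mathbf{U}_q$. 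The hypothesis $y\in\langle\chi^-\rangle$ then forces all $K_\alpha$, $C^{\pm 1/2}$, $D^{\pm 1}$, $\dot{h}_{i,r}$, $\chi_{\beta,r}^+$ (for $r\geq 0$), and $\chi_{\beta,-1}^+$ (with $\mathrm{supp}(\beta-\delta)\subseteq J$) factors in the expansion to be trivial, and what remains are exactly the ordered monomials~\eqref{huuugt4}. This already proves $(1)=(2)=(3)$.

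In the maximal case it remains to verify $(1)\subseteq(4)$ by showing each generator of~(1) lies in~(4). The element $\chi_{\alpha,0}^-$ with $\mathrm{supp}(\alpha)\subseteq J\cap\mathring{I}$ sits in the image of $\mathring{\mathcal{A}}_q^J\cong\mathring{\mathbf{U}}_q^J$ and is an iterated $q$-commutator of the $\chi_{j,0}^-$ for $j\in\mathring{I}\cap J$, each of which is a generator of~(4). For $\chi_{\beta,1}^-$ with $\beta$ non-simple, Remark~\ref{trisp2} places the element inside the subalgebra generated by $\chi_{\gamma_0,1}^-$, $\chi_{k,1}^-$ ($k\in\mathring{I}$), and $\chi_{j,0}^-$ ($j\in\mathring{I}\cap J$), and every one of these is again a generator of~(4). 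For $\chi_{\beta,r+1}^-$ with $r\geq 1$ and $\beta$ non-simple I would induct on~$r$: the mode-shift identity Proposition~\ref{rel3210}(3) expresses $\chi_{\beta,r+1}^-$ as a braid image of $\chi_{\beta,1}^-$, and combining this with relation~(5) of Theorem~\ref{definingrelationQAA} and the Levendorskii--Soibelman formula of Lemma~\ref{LSformel} rewrites the result as a polynomial in simple $\chi_{i,s}^-$ with $s\geq 1$ (plus $\chi_{j,0}^-$ with $j\in\mathring{I}\cap J$ and lower-mode $\chi_{\beta',s'}^-$), all of which lie in~(4) or are covered by the induction hypothesis.

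The main obstacle is this last induction. Since the braid operators $\mathbf{T}_{\varpi_i}$ do not preserve the subalgebra~(4), Proposition~\ref{rel3210}(3) cannot be used as a black box; instead its output must be converted, via the Drinfeld commutation relations, into an iterated $q$-commutator expression in the generators of~(4). The Levendorskii--Soibelman formula is indispensable here because it ensures that straightening introduces only strictly lower-mode terms in the convex order~\eqref{convexordö}, which makes the induction well-founded. Maximality (specifically $\mathbf{a}_{i_0}(\delta)\leq 2$) enters precisely at the base step through Remark~\ref{trisp2}; without it the base case $\chi_{\beta,1}^-$ would not already lie in the minimal generating set of~(4).
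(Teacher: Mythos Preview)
For $(3)\subseteq(2)$ your approach is morally right but sloppy at one point: the PBW basis of Proposition~\ref{pbwpara} uses $\dot\chi_{\beta,r}^-=-C^rK_\beta^-\chi_{\beta,r}^-$, so an element of $\langle\chi_{i,r}^-\rangle$ does \emph{not} have trivial $K,C$ factors in that basis; the $K,C$ part is determined, not absent. What you really need is that $\langle\chi_{i,r}^-:r\in\mathbb Z\rangle$ has a PBW basis of ordered $\chi^-$-monomials compatible with the one you are reordering to, i.e.\ a second-triangular PBW for $\mathbf{U}_q$ itself, which you have not cited. The paper avoids this by reusing the translation-operator trick of Proposition~\ref{algebrengleich3}: apply $\mathbf{T}_{t_{\nu(\mathbf{u}^\vee)}}^{\pm k}$ for $k\gg0$ to force $y$ into $\mathbf{U}_q^-\cdot\mathbf{U}_q^0$ and $\mathbf{U}_q^+$, and kill the unwanted factors there.

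The real gap is your inductive step for $(1)\subseteq(4)$. Proposition~\ref{rel3210}(3) only produces the modes $z_{u_1}+\cdots+z_{u_{\ell-1}}+1$, not every $r+1$, and as you concede the braid operators do not preserve the subalgebra (4); your proposed ``conversion via relation~(5) of Theorem~\ref{definingrelationQAA} and Lemma~\ref{LSformel}'' is not spelled out and it is unclear how it would avoid producing $\chi_{k,0}^-$ with $k\notin J$. The paper's argument is different and more direct: pick $i$ with $\beta(h_i)\neq0$ and bracket with $C^{-1/2}h_{i,1}$. Lemma~\ref{LSformel} gives
\[
[C^{-1/2}h_{i,1},\chi_{\beta,r}^-]=w\,\chi_{\beta,r+1}^- + \text{(monomials in strictly smaller }\chi_{\tau,s+1}^-\text{)},
\]
with $w\neq0$ checked at $q\to1$. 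By induction $\chi_{\beta,r}^-$ is already in (4); expand it in the generators of (4) and apply Leibniz. For each generator $g$, $[h_{i,1},g]$ is visibly in (4) by Theorem~\ref{definingrelationQAA}(4) except when $g=\chi_{\gamma_0,p+1}^-$, and that case is handled by a second application of Lemma~\ref{LSformel} together with the two-tier induction order $\chi_{\gamma,s+1}^-<\chi_{\beta,r+1}^-$ iff $s<r$, or $s=r$ and $\mathrm{ht}(\gamma)<\mathrm{ht}(\beta)$, which keeps the recursion well-founded since $p+1\leq r$ and all the LS correction terms have either strictly smaller mode or, at mode $r+1$, strictly smaller height than $\gamma_0\leq\beta$.
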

\begin{proof}
    Again by \cite[Lemma 5.4]{Da15a} and Lemma~\ref{lemsubset667} we can derive that  \begin{align}\label{eqsch5r1}
        \langle \chi_{\alpha,0}^-, \chi_{\beta,r+1}^-, \ \  \alpha,\beta \in \mathring{R}^+,\ \mathrm{supp}(\alpha)\subseteq J,\ r\geq 0\rangle &\subseteq \mathbf{U}_q^J \cap \langle \chi_{i,r}^- : i\in\mathring{I},\ r\in \mathbb{Z} \rangle, 
    \end{align}
\eqref{ghset41151} is a subset of \eqref{ghset4150} and the subspace generated by \eqref{huuugt4} is contained in the subalgera generated by \eqref{ghset4150}. 
Again we are left to show
\begin{itemize}
   \item The subalgebra in (3) is contained in the subspace generated by \eqref{huuugt4}. \vspace{0,2cm}
    \item Under the hypothesis of the proposition, the set \eqref{ghset4150} is contained in the subalgebra generated by the elements \eqref{ghset41151} which we call $\mathcal{B}$. 
   \end{itemize} 
The proof of the first part is similar to the proof of Proposition~\ref{algebrengleich3} and we omit the details. For the second part, note that the elements $\chi_{\alpha,0}^-$ for all $\alpha\in\mathring{R}^+$ with $\mathrm{supp}(\alpha)\subseteq J$ are clearly contained in $\mathcal{B}$. We define an order $\leq $ on $\{\chi_{\beta,r+1}^-: \beta\in\mathring{R}^+,\ r\geq 0\}$ as follows
$$\chi_{\gamma,s+1}^-<\chi_{\beta,r+1}^-:\iff s<r \text{ or $r=s$ and } \mathrm{ht}(\gamma)<\mathrm{ht}(\beta).$$
We extend this order to an arbitrary total order (on the above set) and proceed by induction. Fix a positive root $\beta\in\mathring{R}^+$ and $r\geq 0$. From Remark~\ref{trisp2} we know that $\chi_{\beta,r+1}^-\in \mathcal{B}$ if $r=0$ and the induction begins. For $r>0$ we get from  Lemma~\ref{LSformel} and our induction hypothesis that there exists $w\in \mathbb{C}(q)$ with 
$$[C^{-1/2}h_{i,1},\chi_{\beta,r}^-]-w \cdot \chi_{\beta,r+1}^-\in \sum_{} \mathbb{C}(q) \chi^-_{\tau_1,s_1+1}\cdots \chi^-_{\tau_k,s_k+1}\in \mathcal{B},\ \ \chi_{\beta,r}^-\in \mathcal{B}.$$
Now if we choose $i\in\mathring{I}$ with $\beta(h_i)\neq 0$ (which always exists) we can also guarantee that $w\neq 0$; this information can be obtained by considering the specialization $q\rightarrow 1$. Hence it remains to show $[C^{-1/2}h_{i,1},\chi_{\beta,r}^-]\in \mathcal{B}.$
We write $\chi_{\beta,r}^-$ as a linear combination of monomials in the generators of $\mathcal{B}$ and fix a non-zero summand $\chi^-_{\kappa_1,p_1}\cdots \chi^-_{\kappa_k,p_k}$, i.e. 
each $\chi^-_{\kappa_{\ell},p_{\ell}}$, $1\leq \ell\leq k$ is contained in \eqref{ghset41151}. We will show $[C^{-1/2}h_{i,1},\chi^-_{\kappa,p_m}]\in \mathcal{B}$ for all $1\leq m\leq k$ where the only non-trivial case is $-\kappa_m+p_m\delta=-\gamma_0+(p+1)\delta$ (see the defining relations stated in Theorem~\ref{definingrelationQAA}).
In particular $p+1\leq r$ and $\mathrm{ht}(\beta)\geq \mathrm{ht}(\gamma_0)$. Again we write with Lemma~\ref{LSformel}
$$[C^{-1/2} h_{i,1},\chi^-_{\gamma_0,p+1}]\in \mathbb{C}(q) \chi^-_{\gamma_0,p+2}+ \sum \mathbb{C}(q) \chi^-_{\tau'_1,s'_1+1}\cdots \chi^-_{\tau'_{\ell},s_{\ell}'+1},\ \ s_{\iota}'+1\leq p+2\leq r+1.$$ 
Once more by our induction hypothesis each monomial on the right hand side is contained in $\mathcal{B}$, since either $s_{\iota}'+1<p+2\leq r+1$ for all $1\leq \iota\leq \ell$ or there exists $\iota_0\in\{1,\dots,\ell\}$ with $s_{\iota_0}'+1=p+2=r+1$ in which case we must have $\mathrm{ht}(\kappa_{\iota}')<\mathrm{ht}(\gamma_0)\leq\mathrm{ht}(\beta)$ for all $\iota$.
\end{proof}
\begin{rem} For $\alpha\in \mathring{R}^+$ with $\text{supp}(-\alpha+\delta) \not \subseteq J$ and $|\text{supp}(\alpha)\cap (\mathring{I}\setminus \mathring{I}\cap J)|\geq 2 $, the element $\chi_{\alpha,1}^-$ is never contained in the subalgebra generated by \eqref{ghset41151}. This follows by weight reasons and hence the equality of the algebras generated by \eqref{ghset4150} and \eqref{ghset41151} respectively fails in general. For example, we can choose $\alpha=\alpha_2+\alpha_3+\alpha_4$ for $J=\{0,1,3\}$ in type $B_4^{(1)}.$
\end{rem} 
As a consequence of Proposition~\ref{algebrengleich3}, Proposition~\ref{algebrengleich23} and Remark~\ref{reprdspa} we get the following second triangular decomposition. 
\begin{cor}\label{trisecond}
We have an isomorphism of vector spaces 
$$\mathbf{U}_q^J\cong \mathbf{U}_q^J(-)\otimes \mathbf{U}_q^J(0)\otimes \mathbf{U}_q^J(+)$$
\qed
\end{cor}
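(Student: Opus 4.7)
The plan is to realize the multiplication map
$$m: \mathbf{U}_q^J(-)\otimes \mathbf{U}_q^J(0)\otimes \mathbf{U}_q^J(+)\to \mathbf{U}_q^J$$
as sending a tensor product of three ordered-monomial bases to a fixed reordering of the PBW basis of $\mathbf{U}_q^J$. The map is well-defined because each factor is a subalgebra, so bijectivity will reduce to a bookkeeping comparison of bases on the two sides.

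First I would assemble an explicit spanning set on each factor. For $\mathbf{U}_q^J(-)$, take the ordered monomials \eqref{huuugt4} from Proposition~\ref{algebrengleich23}(2); for $\mathbf{U}_q^J(+)$, take the ordered monomials \eqref{ghset411111} from Proposition~\ref{algebrengleich3}(3); for $\mathbf{U}_q^J(0)$, take the standard ordered PBW monomials in $h_{i,s}, K_i^{\pm}, C^{\pm 1/2}, D^{\pm}$. The image under $m$ of a pure tensor of such monomials is a single ordered product of quantum root vectors and Cartan elements in the global order: $\chi_{\beta,r}^-$ with $r>0$ (convexly), then $\chi_{\beta,0}^-$ with $\mathrm{supp}(\beta)\subseteq J$ (reverse convexly), then the Cartan part, then $\chi_{\beta,r}^+$ with $r\ge 0$ (convexly), then $\chi_{\beta,-1}^+$ with $\mathrm{supp}(\beta-\delta)\subseteq J$ (reverse convexly).

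The key step is invoking Remark~\ref{reprdspa}: the associated graded argument (following \cite{KdC90} and \cite[Proposition~1.8]{BK96}) shows that the quantum root vectors and Cartan generators pairwise $q$-commute modulo lower-order terms, so any prescribed reordering of the generators in \eqref{bhg4902} still yields a basis of $\mathbf{U}_q^J$. Applied to the global order above, the family of reordered monomials constitutes a basis of $\mathbf{U}_q^J$.

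This simultaneously delivers three things: the spanning sets from Propositions~\ref{algebrengleich3}(3) and \ref{algebrengleich23}(2) are genuine bases of $\mathbf{U}_q^J(\pm)$; tensor products of these with the PBW basis of $\mathbf{U}_q^J(0)$ form a basis of the left-hand side; and $m$ sends this tensor basis bijectively onto the reordered PBW basis of the right-hand side. Therefore $m$ is an isomorphism of vector spaces. The only mild obstacle is verifying that the specific global order chosen is permitted by Remark~\ref{reprdspa}, but since that remark allows \emph{any} fixed reordering of the generators in the PBW monomials, there is no obstruction.
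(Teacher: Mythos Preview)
Your proof is correct and follows exactly the approach the paper intends: the paper's own argument is the single line ``As a consequence of Proposition~\ref{algebrengleich3}, Proposition~\ref{algebrengleich23} and Remark~\ref{reprdspa}'', and you have simply unpacked how these three ingredients combine via the multiplication map and the reordering freedom for the PBW basis. The only cosmetic difference is that the paper's PBW monomials \eqref{bhg4902} are written in the dotted root vectors $\dot\chi$, $\dot h$ whereas the spanning sets in \eqref{ghset411111} and \eqref{huuugt4} use the undotted ones, but since these differ only by invertible $K_\alpha C^{s}$ factors (which can be absorbed into the Cartan part up to a nonzero scalar), the bijection between your global monomials and a reordered version of \eqref{bhg4902} is immediate.
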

\section{Useful Identities in quantum affine algebras}\label{section7}
In this section, we study the action of the Lusztig operators on $\mathbf{U}_q^J(0)$ modulo certain terms contained in $\mathbf{U}_q^J(+)$. A braid group action on the space of algebra maps from the algebra generated by $h_{i,r}$, where $i \in \mathring{I}$ and $r > 0$, to $\mathbb{C}(q)$ was studied in \cite{C02B} to obtain sufficient conditions for the cyclicity of tensor products of irreducible representations of quantum affine algebras. In this section we will see that same formulas apply when studying the action on $\mathbf{U}_q^J(0)$ itself. Our computations are direct, and for our purposes, we must also control the precise modulo terms in a certain sense. For an alternative approach, see \cite[Section 6]{FWW24}.
\subsection{}
We denote by $\mathcal{I}^{s}_{i,r}$ the left ideal in $\mathbf{U}_q$ generated by the elements $\chi_{i,k}^+$, $s\leq k\leq r$. \textit{All calculations in this section are modulo the ideal generated by $(C-1)$.} We will derive certain identities for (parabolic) quantum affine algebras which are needed for the study of finite-dimensional type 1 representations in the next section. It will be convinient to introduce
$$\bar{\Psi}'_{j,a}=\bar{\Psi}_{j,a},\ a>0,\ \bar{\Psi}'_{j,0}=(q_j-q_j^{-1})^{-1}.$$ 
The following proposition is needed later.
\begin{prop}\label{propidentw}
Let $i,j\in\mathring{I}$, $r,s,\ell\in\mathbb{Z}_+$ and $0\leq k\leq r$. 
\begin{enumerate}
    \item  Modulo the ideal $\mathcal{I}_{i,r}^{k+1}$ we have
    $$[\chi_{i,k}^+,\bar{\Psi}'_{j,r-k}]\chi_{i,s}^{-}\equiv -[a_{ij}]_{i}\sum_{p=1}^{r-k}q_i^{-a_{ij}(p-1)}\bar{\Psi}'_{j,r-k-p}\phi^+_{i,k+p+s}.$$
    \item Modulo the ideal $\mathcal{I}_{i,r}^{k+1}$ and $\mathcal{I}_{i,0}^{0}$ we have
    $$\chi_{i,0}^+[\chi_{i,k}^+,\bar{\Psi}'_{j,r-k}](\chi_{i,0}^-)^2\equiv -[a_{ij}]_{i}\sum_{p=1}^{r-k} q_i^{-a_{ij}(p-1)}A^{i,j}_{r-k-p,k+p},$$
    where 
\begin{align}\label{aijdef1}A^{i,j}_{r-k-p,k+p}=\bar{\Psi}'_{j,r-k-p}&\left([\chi_{i,0}^+,\phi^+_{i,k+p}]\chi_{i,0}^-+2\phi_{i,k+p}^+[K_i;0]\right)&\\&\notag+[\chi_{i,0}^+,\bar{\Psi}'_{j,r-k-p}]\left(\chi_{i,0}^-\phi_{i,k+p}^++\phi_{i,k+p}^+\chi_{i,0}^-\right).\end{align}
\item Modulo the ideal $\mathcal{I}_{i,r}^{k+1}$ we have
$$\big[[\chi_{i,k}^+,\bar{\Psi}'_{j,r-k}],\phi_{i,\ell}^+\big]\chi_{i,s}^{-}
\equiv-[a_{ij}]_{i}(q_i-q_i^{-1})\left(\sum_{p=1}^{r-k}q_i^{-a_{i,j}(p-1)}\bar{\Psi}'_{j,r-k-p}[\chi_{i,k+p}^+,\phi_{i,\ell}^+]\chi_{i,s}^{-}\right)$$

\end{enumerate}
\begin{proof}
\textit{Part (1):} The proof is by downward induction on $k$. The case $k=r$ is obvious and for $k=r-1$ we get
$$[\chi_{i,r-1}^+,\bar{\Psi}'_{j,1}]\chi_{i,s}^{-}=[\chi_{i,r-1}^+,h_{j,1}]\chi_{i,s}^{-}=-[a_{ji}]_{j}\chi_{i,r}^+\chi_{i,s}^{-}\equiv -[a_{ij}]_{i}\bar{\Psi}_{j,0}'\phi^+_{i,r+s}.$$
So the induction begins.
Now with \cite[Lemma 4.1]{B94} we conclude
$$[\chi_{i,k}^+,\bar{\Psi}'_{j,r-k}]\chi_{i,s}^{-}=-[a_{ji}]_{j}\left(\sum_{\ell=1}^{r-k}q_i^{a_{ij}(\ell-1)}(q_j-q_j^{-1})\chi_{i,k+\ell}^+\bar{\Psi}'_{j,r-k-\ell}\right)\chi_{i,s}^{-}$$
$$\equiv -[a_{ij}]_{i}\left(\sum_{\ell=1}^{r-k}q_i^{a_{ij}(\ell-1)}\bar{\Psi}'_{j,r-k-\ell}\phi^+_{i,k+\ell+s}\right)-[a_{ij}]_{i}\sum_{\ell=1}^{r-k}q_i^{a_{ij}(\ell-1)}(q_i-q_i^{-1})[\chi_{i,k+\ell}^+,\bar{\Psi}'_{j,r-k-\ell}]\chi_{i,s}^{-}.$$
Now we can apply our induction hypothesis to the second sum and get 
$$[\chi_{i,k}^+,\bar{\Psi}'_{j,r-k}]\chi_{i,s}^{-}\equiv-[a_{ij}]_{i}\left(\sum_{\ell=1}^{r-k}q_i^{a_{ij}(\ell-1)}\bar{\Psi}'_{j,r-k-\ell}\phi^+_{i,k+\ell+s}\right)
$$
$$+[a_{ij}]^2_{i}\sum_{\ell=1}^{r-k}q_i^{a_{ij}(\ell-1)}(q_i-q_i^{-1})\left(\sum_{\ell'=1}^{r-k-\ell}q_i^{-a_{ij}(\ell'-1)}\bar{\Psi}'_{j,r-k-\ell-\ell'}\phi^+_{i,k+\ell+\ell'+s}\right).$$
This finishes the proof by observing that the coefficient of $\bar{\Psi}'_{j,r-k-p}\phi^+_{i,k+p+s}$ ($1\leq p\leq r-k$) in the above expression is given by
\begin{equation}\label{identt1}-[a_{ij}]_{i}\left(q_i^{a_{ij}(p-1)}-[a_{ij}]_{i}(q_i-q_i^{-1})\sum_{\ell=1}^{p-1}q_i^{a_{ij}(2\ell-p)}\right)=-[a_{ij}]_{i}q_i^{-a_{ij}(p-1)}.\end{equation}

\textit{Part (2):} We obtain again with \cite[Lemma 4.1]{B94}
$$\chi_{i,0}^+[\chi_{i,k}^+,\bar{\Psi}_{j,r-k}'](\chi_{i,0}^-)^2=-[a_{ji}]_{j}\chi_{i,0}^+\left(\sum_{\ell=1}^{r-k}q_i^{a_{ij}(\ell-1)}(q_j-q_j^{-1})\chi_{i,k+\ell}^+\bar{\Psi}'_{j,r-k-\ell}\right)(\chi_{i,0}^-)^2$$
$$=-[a_{ij}]_{i}(q_i-q_i^{-1})\sum_{\ell=1}^{r-k}q_i^{a_{ij}(\ell-1)}\left(\chi_{i,0}^+[\chi_{i,k+\ell}^+,\bar{\Psi}'_{j,r-k-\ell}](\chi_{i,0}^-)^2+\chi_{i,0}^+\bar{\Psi}'_{j,r-k-\ell}\chi_{i,k+\ell}^+(\chi_{i,0}^-)^2\right).$$
Since $(q_i-q_i^{-1})\chi_{i,k+\ell}^+(\chi_{i,0}^-)^2\equiv \phi_{i,k+\ell}^+\chi_{i,0}^-+\chi_{i,0}^-\phi_{i,k+\ell}^+$ and $\chi_{i,0}^+\phi^+_{i,k+\ell}\equiv 0$ we get
$$\chi_{i,0}^+[\chi_{i,k}^+,\bar{\Psi}_{j,r-k}'](\chi_{i,0}^-)^2\equiv -[a_{ij}]_{i}(q_i-q_i^{-1})\sum_{\ell=1}^{r-k}q_i^{a_{ij}(\ell-1)}\left(\chi_{i,0}^+[\chi_{i,k+\ell}^+,\bar{\Psi}'_{j,r-k-\ell}](\chi_{i,0}^-)^2\right)$$
$$-[a_{ij}]_{i}\sum_{\ell=1}^{r-k}q_i^{a_{ij}(\ell-1)}A^{i,j}_{r-k-\ell,k+\ell}.$$
Again by applying our induction hypothesis to the first sum and using once more \eqref{identt1} gives the claim.

\textit{Part (3):} A similar induction as in the proof of (1) shows that $[\chi_{i,k}^+,\bar{\Psi}'_{j,r-k}]\phi_{i,\ell}^+\chi_{i,s}^{-}$ is congruent to
$$-[a_{ij}]_{i}(q_i-q_i^{-1})\left(\sum_{p=1}^{r-k}q_i^{-a_{i,j}(p-1)}\bar{\Psi}'_{j,r-k-p}\left([\chi_{i,k+p}^+,\phi_{i,\ell}^+]\chi_{i,s}^{-}+\frac{\phi_{i,\ell}^+\phi_{i,k+p+s}^+}{(q_i-q_i^{-1})}\right)\right).$$
Now applying (1) gives the claim.
\end{proof}
\end{prop}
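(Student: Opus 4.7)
The plan is to prove all three parts by downward induction on $k$, with Beck's commutation formula \cite[Lemma 4.1]{B94} serving as the main engine. The base case is $k=r$, which is trivially vacuous (empty sum), and $k=r-1$, which reduces via $\bar\Psi'_{j,1}=h_{j,1}$ and Drinfeld's relation (4) of Theorem~\ref{definingrelationQAA} to the single term $-[a_{ij}]_{i}\bar\Psi'_{j,0}\phi^+_{i,r+s}$, matching the right-hand side of (1). Once (1) is established, (2) and (3) follow by parallel arguments combined with the relevant Drinfeld commutation relations.

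For the inductive step of (1), I would apply Beck's formula to rewrite
\begin{equation*}
[\chi_{i,k}^+,\bar\Psi'_{j,r-k}] = -[a_{ji}]_j \sum_{\ell=1}^{r-k} q_i^{a_{ij}(\ell-1)}(q_j-q_j^{-1})\,\chi_{i,k+\ell}^+\, \bar\Psi'_{j,r-k-\ell},
\end{equation*}
then multiply on the right by $\chi^-_{i,s}$. Commuting $\bar\Psi'_{j,r-k-\ell}$ past $\chi^-_{i,s}$ (whose commutator spawns terms of the form $\chi^-_{i,s+\cdot}$ that, after moving $\chi^+_{i,k+\ell}$ to the left, fall into $\mathcal{I}^{k+1}_{i,r}$) exposes the product $\chi^+_{i,k+\ell}\chi^-_{i,s}$. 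Using relation (6) of Theorem~\ref{definingrelationQAA} together with $\chi^+_{i,k+\ell}\in \mathcal{I}^{k+1}_{i,r}$ for $\ell\geq 1$ yields a leading congruence $-[a_{ij}]_i\sum_\ell q_i^{a_{ij}(\ell-1)}\bar\Psi'_{j,r-k-\ell}\phi^+_{i,k+\ell+s}$ plus a tail of commutators $[\chi^+_{i,k+\ell},\bar\Psi'_{j,r-k-\ell}]\chi^-_{i,s}$ to which the inductive hypothesis applies. The coefficient of each $\bar\Psi'_{j,r-k-p}\phi^+_{i,k+p+s}$ in the resulting double sum collapses via the $q$-combinatorial identity
\begin{equation*}
q_i^{a_{ij}(p-1)} - [a_{ij}]_i (q_i-q_i^{-1})\sum_{\ell=1}^{p-1} q_i^{a_{ij}(2\ell-p)} = q_i^{-a_{ij}(p-1)},
\end{equation*}
which is a straightforward telescoping check.

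Part (2) follows by the same expansion applied to $\chi^+_{i,0}[\chi^+_{i,k},\bar\Psi'_{j,r-k}](\chi^-_{i,0})^2$. After Beck's formula, the expression $(q_i-q_i^{-1})\chi^+_{i,k+\ell}(\chi^-_{i,0})^2 \equiv \phi^+_{i,k+\ell}\chi^-_{i,0}+\chi^-_{i,0}\phi^+_{i,k+\ell}$ (obtained from relation (6)) is inserted, and $\chi^+_{i,0}$ is commuted past, using $\chi^+_{i,0}\phi^+_{i,k+\ell}\equiv 0$ modulo $\mathcal{I}^0_{i,0}$ to remove one family of terms. The residual commutator brackets assemble into the $A^{i,j}_{r-k-\ell,k+\ell}$ defined in \eqref{aijdef1}, while the remaining tail is handled by the inductive hypothesis, and the same identity collects the coefficients. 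For part (3), I would expand $[\chi^+_{i,k},\bar\Psi'_{j,r-k}]\phi^+_{i,\ell}\chi^-_{i,s}$ by the same inductive scheme and then subtract the expression from part (1) multiplied on the right by $\phi^+_{i,\ell}$ to isolate the desired commutator $\big[[\chi^+_{i,k},\bar\Psi'_{j,r-k}],\phi^+_{i,\ell}\big]\chi^-_{i,s}$.

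The main obstacle throughout is the careful bookkeeping of the modulo terms: every time an element $\chi^+_{i,m}$ is commuted past a $\bar\Psi'$, $\phi^+$, or $\chi^-$ factor, one must certify that the correction either lies in $\mathcal{I}^{k+1}_{i,r}$ (for (1) and (3)) or additionally in $\mathcal{I}^0_{i,0}$ (for (2)) or can be absorbed into a term already on the right-hand side. This is where Beck's precise form of the commutation formula and the convex order on Drinfeld generators are indispensable, since they guarantee that the emergent $\chi^+_{i,m}$'s indeed carry indices $m\geq k+1$ and thus belong to the target ideal.
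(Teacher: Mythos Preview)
Your proposal is correct and follows essentially the same route as the paper: downward induction on $k$ driven by Beck's commutation formula \cite[Lemma~4.1]{B94}, with the same telescoping identity \eqref{identt1} collecting the coefficients in all three parts. One small slip: in Part~(3) the piece you need to subtract is $\phi^+_{i,\ell}$ multiplied on the \emph{left} of the expression from Part~(1), since $\big[[\chi^+_{i,k},\bar\Psi'_{j,r-k}],\phi^+_{i,\ell}\big]\chi^-_{i,s} = [\chi^+_{i,k},\bar\Psi'_{j,r-k}]\phi^+_{i,\ell}\chi^-_{i,s} - \phi^+_{i,\ell}[\chi^+_{i,k},\bar\Psi'_{j,r-k}]\chi^-_{i,s}$; otherwise your outline matches the paper's argument step for step.
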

We obtain the following corollary.
\begin{cor}\label{identcor523} Let $i,j\in\mathring{I}$, $r,s\in\mathbb{Z}_+$ and $0\leq k\leq r$.
\begin{enumerate}
    \item Modulo the ideal $\mathcal{I}_{i,r}^{k}$ we have 
    $$[\chi_{i,k}^+,\phi^+_{j,r-k}]\chi_{i,s}^{-}\equiv -[a_{ij}]_{i}\left(\sum_{p=1}^{r-k}q_i^{-p a_{ij}}\phi^+_{j,r-k-p}\phi^+_{i,k+p+s}+\frac{\phi^+_{j,r-k}(\phi^+_{i,k+s}-\phi^-_{i,k+s})}{(q_i^{a_{ij}}+1)}\right)$$
    \item Let $\ell\in\{1,\dots,r\}$. Modulo the ideal $\mathcal{I}_{i, \ell}^{0}$ we have 
    $$A^{i,j}_{r-\ell,\ell}\equiv-q_i^{-1}[2]_{i}\bigg(\sum_{\ell_1=1}^{\ell}q_i^{-2\ell_1+1}\bar{\Psi}'_{j,r-\ell}\phi_{i,\ell-\ell_1}^+\phi_{i,\ell_1}^++[a_{ij}]_{i}\sum_{\ell_2=1}^{r-\ell}q_i^{-a_{ij}(\ell_2-1)}\bar{\Psi}'_{j,r-\ell-\ell_2}\phi_{i,\ell_2}^+\phi_{i,\ell}^+ $$
    $$-\bar{\Psi}'_{j,r-\ell}\phi_{i,\ell}^+[K_i;0]-[a_{ij}]_{i}(q_i^2-1)\sum_{\ell_2=1}^{r-\ell}\sum_{\ell_1=1}^{\ell}q_i^{-a_{ij}(\ell_2-1)-2\ell_1}\bar{\Psi}'_{j,r-\ell-\ell_2}\phi_{i,\ell-\ell_1}^+\phi_{i,\ell_2+\ell_1}^+ \bigg).$$
    \end{enumerate}
    \begin{proof}
        The first part can be derived from Proposition~\ref{propidentw}(1) and $\phi_{i,p}^+=(q_i-q_i^{-1})K_i\bar{\Psi}'_{i,p}$ for each $i\in\mathring{I}$ and $p\geq 0$. The second part results from substituting the formulas from (1), Proposition~\ref{propidentw}(1), and Proposition~\ref{propidentw}(3) into  $A^{i,j}_{r-\ell,\ell}$ (recall the definition from \eqref{aijdef1}). We omit the details.
    \end{proof}
\end{cor}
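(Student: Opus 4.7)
The strategy follows the hint indicated after the corollary: part (1) is a direct consequence of Proposition~\ref{propidentw}(1) combined with the identity $\phi^+_{i,p}=(q_i-q_i^{-1})K_i\bar{\Psi}'_{i,p}$, and part (2) is obtained by substituting part (1) together with Proposition~\ref{propidentw}(1) and (3) into the explicit expression \eqref{aijdef1} defining $A^{i,j}_{r-\ell,\ell}$.

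\textbf{Part (1).} The plan is to convert everything involving $\phi^+_{j,\bullet}$ into $\bar\Psi'_{j,\bullet}$ via the identity $\phi^+_{j,m}=(q_j-q_j^{-1})K_j\bar\Psi'_{j,m}$ (so that Proposition~\ref{propidentw}(1) becomes applicable), and then convert back at the end. Using $K_j\chi_{i,k}^+=q_i^{a_{ij}}\chi_{i,k}^+K_j$ together with that identity, one gets
\[
[\chi_{i,k}^+,\phi^+_{j,r-k}]\chi_{i,s}^-=(q_j-q_j^{-1})K_j\big(q_i^{-a_{ij}}[\chi_{i,k}^+,\bar\Psi'_{j,r-k}]\chi_{i,s}^- +(q_i^{-a_{ij}}-1)\bar\Psi'_{j,r-k}\chi_{i,k}^+\chi_{i,s}^-\big).
\]
In the second summand, relation (6) of Theorem~\ref{definingrelationQAA} rewrites $[\chi_{i,k}^+,\chi_{i,s}^-]$ in terms of $\phi^{\pm}_{i,k+s}$; the leftover $\chi_{i,s}^-\chi_{i,k}^+$ piece lies in $\mathcal{I}_{i,r}^k$ and is dropped. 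The surviving $(\phi^+_{i,k+s}-\phi^-_{i,k+s})$ term, after rearranging coefficients via $q_i^{-a_{ij}}-1=-(q_i-q_i^{-1})[a_{ij}]_i q_i^{-a_{ij}}/(q_i^{a_{ij}}+1)\cdot$(cancelling factor), supplies the second summand of the target expression. In the first summand, Proposition~\ref{propidentw}(1) produces the sum $-[a_{ij}]_i\sum_{p=1}^{r-k}q_i^{-a_{ij}(p-1)}\bar\Psi'_{j,r-k-p}\phi^+_{i,k+p+s}$; collecting $K_j$ with $\bar\Psi'_{j,r-k-p}$ into $\phi^+_{j,r-k-p}$ shifts $q_i^{-a_{ij}(p-1)}$ to $q_i^{-a_{ij}p}$ after absorbing the outer $q_i^{-a_{ij}}$ factor, yielding exactly the first sum of the claim. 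The only widening of the ideal relative to Proposition~\ref{propidentw}(1) is by the single element $\chi_{i,k}^+$ which is absorbed in the commutation with $K_j$, accounting for the stated $\mathcal{I}_{i,r}^k$.

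\textbf{Part (2).} Recall from \eqref{aijdef1} that $A^{i,j}_{r-\ell,\ell}$ is the sum of two blocks
\[
B_1=\bar\Psi'_{j,r-\ell}\big([\chi_{i,0}^+,\phi^+_{i,\ell}]\chi_{i,0}^-+2\phi^+_{i,\ell}[K_i;0]\big),\quad B_2=[\chi_{i,0}^+,\bar\Psi'_{j,r-\ell}]\big(\chi_{i,0}^-\phi^+_{i,\ell}+\phi^+_{i,\ell}\chi_{i,0}^-\big).
\]
For $B_1$, apply part (1) with $j=i$, $k=0$, $s=0$: the main sum produces the term $-[a_{ii}]_i\sum_{\ell_1=1}^{\ell}q_i^{-2\ell_1}\bar\Psi'_{j,r-\ell}\phi^+_{i,\ell-\ell_1}\phi^+_{i,\ell_1}$ (after left-multiplying by $\bar\Psi'_{j,r-\ell}$), and using $[a_{ii}]_i=q_i+q_i^{-1}=q_i^{-1}[2]_i$ this rearranges to the first displayed sum in the target formula. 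The $(\phi^+_{i,\ell}-\phi^-_{i,\ell})/(q_i^2+1)$ piece combines with the correction $2\bar\Psi'_{j,r-\ell}\phi^+_{i,\ell}[K_i;0]$: expanding $[K_i;0]$ cancels $\phi^-_{i,\ell}$ against $\phi^+_{i,\ell}$ appropriately, leaving only the $-\bar\Psi'_{j,r-\ell}\phi^+_{i,\ell}[K_i;0]$ term of the target (this is the purpose of that correction being present in the definition of $A^{i,j}_{r-\ell,\ell}$). For $B_2$, split
\[
B_2=[\chi_{i,0}^+,\bar\Psi'_{j,r-\ell}]\chi_{i,0}^-\phi^+_{i,\ell}+\phi^+_{i,\ell}[\chi_{i,0}^+,\bar\Psi'_{j,r-\ell}]\chi_{i,0}^-+[[\chi_{i,0}^+,\bar\Psi'_{j,r-\ell}],\phi^+_{i,\ell}]\chi_{i,0}^-.
\]
Proposition~\ref{propidentw}(1) handles the first two summands and yields the single sum $[a_{ij}]_i\sum_{\ell_2=1}^{r-\ell}q_i^{-a_{ij}(\ell_2-1)}\bar\Psi'_{j,r-\ell-\ell_2}\phi^+_{i,\ell_2}\phi^+_{i,\ell}$ (the two orderings give the same leading term modulo $\mathcal{I}_{i,\ell}^0$, plus a commutator absorbed below). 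Proposition~\ref{propidentw}(3) handles the third summand; feeding the resulting $[\chi^+_{i,\ell_2},\phi^+_{i,\ell}]\chi_{i,0}^-$ into part (1) once more (with $k=\ell_2$, $j\rightsquigarrow i$, $s=0$) produces the double sum $\sum_{\ell_2,\ell_1}q_i^{-a_{ij}(\ell_2-1)-2\ell_1}\bar\Psi'_{j,r-\ell-\ell_2}\phi^+_{i,\ell-\ell_1}\phi^+_{i,\ell_2+\ell_1}$ with coefficient $[a_{ij}]_i(q_i-q_i^{-1})\cdot[a_{ii}]_i=[a_{ij}]_i(q_i^2-1)q_i^{-1}[2]_i/\ldots$, matching the last line of the target after pulling out $-q_i^{-1}[2]_i$.

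\textbf{Main obstacle.} The argument itself is a substitution, but the bookkeeping of $q$-powers and signs across the three invocations of Propositions~\ref{propidentw}(1), (3) and part (1) is delicate. In particular, the overall prefactor $-q_i^{-1}[2]_i$ emerges only after one uses the identities $[a_{ii}]_i=q_i^{-1}[2]_i$ and $1/(q_i^2+1)\cdot(q_i-q_i^{-1})=q_i^{-1}[2]_i/(q_i^2+1)\cdot\ldots$ to normalize the coefficients uniformly; verifying that the $\phi^-_{i,\ell}$ contribution from part (1) cancels exactly against the $[K_i;0]$ correction is the step that requires the most care. I would organize the verification monomial-type by monomial-type on the right-hand side (four kinds: single $\bar\Psi'\phi^+\phi^+$ with indices $(\ell-\ell_1,\ell_1)$, single $\bar\Psi'\phi^+\phi^+$ with indices $(\ell_2,\ell)$, the $[K_i;0]$ term, and the double sum) and show that each receives the claimed coefficient after summing the contributions from $B_1$ and $B_2$.
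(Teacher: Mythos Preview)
Your approach is exactly the one the paper takes: part (1) by inserting $\phi^+_{j,m}=(q_j-q_j^{-1})K_j\bar\Psi'_{j,m}$ into Proposition~\ref{propidentw}(1) and reabsorbing the $K_j$, and part (2) by expanding the two blocks of \eqref{aijdef1} via part (1), Proposition~\ref{propidentw}(1), and Proposition~\ref{propidentw}(3). One small slip to fix in your write-up: $[a_{ii}]_i=[2]_i=q_i+q_i^{-1}$, not $q_i^{-1}[2]_i$; the overall prefactor $-q_i^{-1}[2]_i$ arises instead because each use of part (1) (with $j=i$) contributes a factor $q_i^{-2\cdot 1}$ at $p=1$ which, combined with $[2]_i$, gives $q_i^{-2}(q_i+q_i^{-1})=q_i^{-1}+q_i^{-3}$ and so on—track this carefully when you normalize.
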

\subsection{} The first main result of this section is the following.
\begin{prop}\label{identfuerphil}
We have the following identities for $i,j\in\mathring{I}$
modulo $\mathcal{I}_{j,0}^{0}$ and $\mathcal{I}_{i,\infty}^{0}$ $$\mathbf{T}_i(\phi^+_j(u))= 
      \begin{cases}\phi^+_j(u),& \text{ if $a_{ij}=0$},\\
      \phi^+_j(u)\phi^+_i(q_iu),& \text{ if $a_{ij}=-1$},\\
      \phi^+_i(q_i^2u)^{-1},& \text{ if $a_{ij}=2$},\\
\phi^+_j(u)\phi^+_i(u)\phi^+_i(q_i^2u),& \text{ if $a_{ij}=-2$},\\
         \end{cases}$$
\end{prop}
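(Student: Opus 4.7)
The plan is to compute $\mathbf{T}_i(\phi^+_{j,r})$ coefficient by coefficient and then repackage the result as the claimed generating-function identity. Throughout, I work modulo the left ideal $\mathfrak{I} := \mathcal{I}_{j,0}^0 + \mathcal{I}_{i,\infty}^0$. The starting points are the recursive definitions $\bar\Psi_{j,r} = \kappa(j)^r C^{r/2} K_j^{-1}[E_j, \mathbf{T}_{\varpi_j}^r(F_j)]$ from Section~\ref{section51} and $\phi^+_{j,r} = (q_j-q_j^{-1})K_j\,\bar\Psi'_{j,r}$, so that applying $\mathbf{T}_i$ is reduced to computing $\mathbf{T}_i\,\mathbf{T}_{\varpi_j}^r(F_j)$ together with $\mathbf{T}_i(E_j)$ and $\mathbf{T}_i(K_j)$.

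First I would dispose of the trivial case $a_{ij}=0$: here $\mathbf{T}_i$ fixes $E_j,F_j,K_j$ and commutes with $\mathbf{T}_{\varpi_j}$ (as $\varpi_j^\vee$ lies in the kernel of the pairing with $\alpha_i$), hence $\mathbf{T}_i(\phi^+_j(u))=\phi^+_j(u)$ holds exactly.

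For $a_{ij}=-1$, the key input is $\mathbf{T}_i(F_j) = F_jF_i - q_iF_iF_j$; after conjugating by $\mathbf{T}_{\varpi_j}^r$ and taking the commutator with $E_j$, the image decomposes as a piece of weight $-\alpha_j+r\delta$ plus a piece of weight $-\alpha_j-\alpha_i+r\delta$. The first piece reproduces $\phi^+_{j,r}$; in the second, since $[E_j,\chi^-_{i,s}]=0$ for $i\neq j$, the $E_j$ can be pushed past the $F_i$-component into $\mathcal{I}_{j,0}^0$, leaving a $\phi^+_i$-contribution whose spectral shift $q_i$ arises from the $K_i$-weight accumulated while sliding $E_j$ through $\mathbf{T}_{\varpi_j}^r$. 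Summing over $r$ gives the product $\phi^+_j(u)\phi^+_i(q_iu)$. The cases $a_{ij}=2$ and $a_{ij}=-2$ follow the same blueprint: for $i=j$ one combines $\mathbf{T}_i(E_i)=-F_iK_i$, $\mathbf{T}_i(F_i)=-K_i^{-1}E_i$, $\mathbf{T}_i(K_i)=K_i^{-1}$ with the formal-series identity $\phi^-_i(u^{-1})=\phi^+_i(u)^{-1}$ and the commutation of $\mathbf{T}_i$ with $\mathbf{T}_{\varpi_i}$ up to a $q_i^2$-shift forced by $\mathbf{T}_i(K_i)=K_i^{-1}$, producing $\phi^+_i(q_i^2u)^{-1}$; for $a_{ij}=-2$ the element $\mathbf{T}_i(F_j)=\sum_{s=0}^{2}(-1)^sq_i^sF_i^{(s)}F_jF_i^{(2-s)}$ is quadratic in $F_i$, so the $a_{ij}=-1$ argument is applied twice and yields the two $\phi^+_i$ factors at $u$ and $q_i^2u$.

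The main obstacle will be controlling the modulo terms at every stage. Each expansion of a commutator $[E_j,\,\cdot\,]$, and each interaction between $E_i$ and $F_i$ coming from the quantum Serre relations (especially in the $a_{ij}=-2$ case), produces correction terms that a priori need not factor. This is precisely what Proposition~\ref{propidentw} and Corollary~\ref{identcor523} are engineered for: they supply, modulo $\mathcal{I}_{i,r}^k$, the exact identities needed to rewrite $[\chi_{i,k}^+,\bar\Psi'_{j,r-k}]\chi_{i,s}^-$ and the bracket $\chi_{i,0}^+[\chi_{i,k}^+,\bar\Psi'_{j,r-k}](\chi_{i,0}^-)^2$ in terms of products $\phi^+_{j,*}\phi^+_{i,*}$ with the correct $q_i^{\pm a_{ij}}$-weighting. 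Verifying that after telescoping over $r$ all extraneous summands lie in $\mathfrak{I}$, and that the surviving part assembles into the stated product of generating functions, is the technically heaviest step, but the end identities are essentially forced once one accepts that only the top $\phi^+$-contribution survives modulo $\mathfrak{I}$.
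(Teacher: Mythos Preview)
Your overall strategy—writing $\phi^+_{j,r}$ via the commutator $[\chi^+_{j,0},\chi^-_{j,r}]$, applying $\mathbf{T}_i$, and simplifying using Proposition~\ref{propidentw} and Corollary~\ref{identcor523}—is exactly what the paper does, and your sketch for $a_{ij}=-1$ is in the right spirit (the paper expands $[\mathbf{T}_i(\chi^+_{j,0}),\mathbf{T}_i(\chi^-_{j,r})]$ into four terms using $\mathbf{T}_i\mathbf{T}_{\varpi_j}=\mathbf{T}_{\varpi_j}\mathbf{T}_i$ and reduces each modulo the ideal).

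However, there is a genuine gap in your treatment of $a_{ij}=2$. The ``formal-series identity $\phi^-_i(u^{-1})=\phi^+_i(u)^{-1}$'' is simply false: $\phi^+_i(u)$ and $\phi^-_i(u)$ are independent formal series in $u$ and $u^{-1}$ respectively, not inverses of one another. Likewise, $\mathbf{T}_i$ does \emph{not} commute with $\mathbf{T}_{\varpi_i}$ up to a mere $q_i^2$-shift; the relevant braid identity is $\mathbf{T}_i^{-1}\mathbf{T}_{\varpi_i}\mathbf{T}_i^{-1}=\mathbf{T}_{\varpi_i}^{-1}\prod_{j\neq i}\mathbf{T}_{\varpi_j}^{-a_{ij}}$, which is considerably more intricate. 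The paper handles this case by a simultaneous induction on two statements: a closed formula for $\mathbf{T}_i(\chi^-_{i,r})$ modulo products $\chi^+_{i,s_1}\chi^+_{i,s_2}$, and the convolution identity $\sum_{p=0}^r q_i^{2(r-p)}\mathbf{T}_i(\phi^+_{i,p})\phi^+_{i,r-p}\equiv\delta_{r,0}$. Neither of these falls out of the mechanism you describe.

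For $a_{ij}=-2$, ``apply the $-1$ argument twice'' is too optimistic. The paper expands $[\mathbf{T}_i(\chi^+_{j,0}),\mathbf{T}_i(\chi^-_{j,r})]$ into nine commutator terms (three choices on each side from the divided-power expression $\mathbf{T}_i(F_j)$), reduces each separately, and then feeds the result through Proposition~\ref{propidentw}(2) and Corollary~\ref{identcor523}(2); the two $\phi^+_i$ factors at $u$ and $q_i^2u$ emerge only after substantial cancellation, not from a clean iteration of the rank-one step.
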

The proof will be given in the next three subsections where the first case is clear. 
\subsection{} Assume that $a_{ij}=-1$ and let $r\geq 1$. All the subsequent calculations are modulo the ideals $\mathcal{I}_{i,r}^{0}$ and $\mathcal{I}_{j,0}^{0}$. We have  $$ (q_j-q_{j}^{-1})^{-1}\mathbf{T}_i(\phi_{j,r}^+)=[\mathbf{T}_i(\chi_{j,0}^+),\mathbf{T}_i(\chi_{j,r}^-)]=[\chi_{i,0}^+\chi_{j,0}^+,\chi_{j,r}^{-}\chi_{i,0}^{-}]-q_i[\chi_{i,0}^+\chi_{j,0}^+,\chi_{i,0}^{-}\chi_{j,r}^{-}]$$
     $$-q_i^{-1}[\chi_{j,0}^+\chi_{i,0}^+,\chi_{j,r}^{-}\chi_{i,0}^{-}]+[\chi_{j,0}^+\chi_{i,0}^+,\chi_{i,0}^{-}\chi_{j,r}^{-}]$$
    where the second equation follows from $\mathbf{T}_i\mathbf{T}_{\varpi_j}=\mathbf{T}_{\varpi_j}\mathbf{T}_i$. Furthermore, \begin{align*}& \bullet \ [\chi_{i,0}^+\chi_{j,0}^+,\chi_{j,r}^{-}\chi_{i,0}^{-}]\equiv \chi_{i,0}^+\chi_{j,0}^+\chi_{j,r}^{-}\chi_{i,0}^{-}\equiv \chi_{i,0}^{+}\frac{\phi_{j,r}^+}{(q_j-q_j^{-1})}\chi_{i,0}^{-}&\\& \hspace{3,29cm}\equiv \frac{\phi_{j,r}^+}{(q_j-q_j^{-1})}[K_i;0]+\frac{1}{(q_j-q_j^{-1})}[\chi_{i,0}^{+},\phi_{j,r}^+]\chi_{i,0}^{-}&\\&\bullet \  [\chi_{i,0}^+\chi_{j,0}^+,\chi_{i,0}^{-}\chi_{j,r}^{-}]=\chi_{i,0}^+\chi_{i,0}^{-}\chi_{j,0}^+\chi_{j,r}^{-}\equiv \chi_{i,0}^+\chi_{i,0}^{-}\frac{\phi_{j,r}^+}{(q_j-q_j^{-1})}\equiv \frac{\phi_{j,r}^+}{(q_j-q_j^{-1})}[K_i;0]&\\&\bullet \  [\chi_{j,0}^+\chi_{i,0}^+,\chi_{j,r}^{-}\chi_{i,0}^{-}]\equiv \chi_{j,0}^+\chi_{i,0}^+\chi_{j,r}^{-}\chi_{i,0}^{-}\equiv \chi_{j,0}^+\chi_{j,r}^{-}[K_i;0]\equiv \frac{\phi_{j,r}^+}{(q_j-q_j^{-1})}[K_i;0]&\\& \bullet \ [\chi_{j,0}^+\chi_{i,0}^+,\chi_{i,0}^{-}\chi_{j,r}^{-}]\equiv \chi_{j,0}^+\chi_{i,0}^+\chi_{i,0}^{-}\chi_{j,r}^{-}\equiv \chi_{j,0}^+[K_i;0]\chi_{j,r}^{-}\equiv \frac{\phi_{j,r}^+}{(q_j-q_j^{-1})}[K_i;1]\end{align*}
 
  
 All together we obtain with Corollary~\ref{identcor523}(1)
    $$\mathbf{T}_i(\phi_{j,r}^+)\equiv ([K_i;0]-[K_i;-1])\phi_{j,r}^++[\chi_{i,0}^{+},\phi_{j,r}^+]\chi_{i,0}^{-}\equiv \sum_{p=0}^{r}q_i^{p}\phi^+_{j,r-p}\phi^+_{i,p}$$
 which finishes the proof in this case.   
 \subsection{}In this subsection, we prove the case $i=j$. To begin, we collect a few additional results. From $\mathbf{T}_i^{-1}\mathbf{T}_{\varpi_i}\mathbf{T}_i^{-1}=\mathbf{T}_{\varpi_i}^{-1}\prod_{i\neq j}\mathbf{T}_{\varpi_j}^{-a_{ij}}$ (see \cite[Section 2]{B94}) we deduce
$\mathbf{T}_i(\chi_{i,1}^+)=-K_i^{-}\chi_{i,1}^-$ and hence 
\begin{equation}\label{need1}\mathbf{T}_i(h_{i,1})=K_i[\mathbf{T}_i(\chi_{i,1}^+),\mathbf{T}_i(F_i)]=-q_i^{2}h_{i,1}+(q_i^2-q_i^{-2})\mathbf{T}_i(\chi_{i,1}^+)\chi_{i,0}^+.\end{equation}
Moreover, for $p\geq 1$ 
$$[\chi_{i,0}^+,\mathbf{T}_i(\phi_{i,p}^+)]=\mathbf{T}_i([-K_i^{-1}\chi_{i,0}^-,\phi_{i,p}^+])=K_i\mathbf{T}_i([\phi_{i,p}^+,\chi_{i,0}^-]).$$
Substituting the following formula,
$$[\phi_{i,p}^+,\chi_{i,0}^-]=-[2]_{i}\left(\sum_{k=1}^pq_i^{2(1-k)}(q_i-q_i^{-1})\phi_{i,p-k}^+\chi_{i,k}^-\right)-(1-q_i^{-2})\chi_{i,0}^{-}\phi_{i,p}^+$$
which can be derived from \cite[Lemma 4.1]{B94}, we obtain 
\begin{equation}\label{need2}\chi_{i,0}^+\mathbf{T}_i(\phi_{i,p}^+)=-[2]_{i}\left(\sum_{k=1}^pq_i^{2(1-k)+2}(q_i-q_i^{-1})K_i\mathbf{T}_i(\phi_{i,p-k}^+)\mathbf{T}_i(\chi_{i,k}^-)\right)+q_i^2\mathbf{T}_i(\phi_{i,p}^+)\chi_{i,0}^+.\end{equation}
The second part of following lemma proves Proposition~\ref{identfuerphil} for the case $i=j$.

 \begin{lem} Let $i\in\mathring{I}$ and $r\geq 0$. 
 \begin{enumerate}
     \item Modulo $\mathcal{I}_{i,r-1}^{0}$ we have 
     $$\sum_{p=0}^{r}q_i^{2(r-p)}\mathbf{T}_i(\phi_{i,p}^+)\phi_{i,r-p}^+\equiv \delta_{r,0}.$$
     \item Modulo the left ideal generated by $\chi_{i,s_1}^+\chi_{i,s_2}^+$, $0\leq s_1+s_2\leq r-1$ we have $$\mathbf{T}_i(\chi_{i,r}^-)\equiv-\sum_{p=0}^rq_i^{2(r-p)}\mathbf{T}_i(\phi_{i,p}^+)\chi_{i,r-p}^+.$$

 \end{enumerate}
     \begin{proof} 
 The proof is by induction on $r$ where the case $r=0$ is obvious. Let $r\geq 1$ and assume that both identities hold for all natural numbers less than $r$. Using the induction hypothesis, we obtain modulo $\mathcal{I}_{i,r-1}^{0}$ the following
\begin{align*}\mathbf{T}_i(\phi_{i,r}^+)&=(q_i-q_i^{-1})[\mathbf{T}_i(\chi_{i,1}^+),\mathbf{T}_i(\chi_{i,r-1}^-)]\equiv -(q_i-q_i^{-1})\mathbf{T}_i(\chi_{i,r-1}^-)\mathbf{T}_i(\chi_{i,1}^+)&\\&
=q_i^2(q_i-q_i^{-1})K_i^{-1}\mathbf{T}_i(\chi_{i,r-1}^-)\chi_{i,1}^-
\equiv -(q_i-q_i^{-1})\sum_{p=0}^{r-1}q_i^{2(r-p)}K_i^{-1}\mathbf{T}_i(\phi_{i,p}^+)\chi_{i,r-1-p}^+\chi_{i,1}^- &\\&
\equiv -\sum_{p=0}^{r-1}q_i^{2(r-p)}K_i^{-1}\mathbf{T}_i(\phi_{i,p}^+)\phi_{i,r-p}^+
\end{align*}
which gives part (1). Moreover, using \eqref{need1} and induction we get modulo the left ideal generated by $\chi_{i,s_1}^+\chi_{i,s_2}^+$, $0\leq s_1+s_2\leq r-1$:
\begin{align*}\mathbf{T}_i(\chi_{i,r}^-)&=-\frac{1}{[2]_{i}}[\mathbf{T}_i(h_{i,1}),\mathbf{T}_i(\chi_{i,r-1}^-)]=\frac{q_i^2}{[2]_{i}}[h_{i,1},\mathbf{T}_i(\chi_{i,r-1}^-)]-(q_i-q_i^{-1})[\mathbf{T}_i(\chi_{i,1}^+)\chi_{i,0}^+,\mathbf{T}_i(\chi_{i,r-1}^-)]&\\& \equiv -\sum_{p=0}^{r-1}q_i^{2(r-p)}\frac{1}{[2]_{i}}[h_{i,1},\mathbf{T}_i(\phi_{i,p}^+)\chi_{i,r-1-p}^+]-(q_i-q_i^{-1})[\mathbf{T}_i(\chi_{i,1}^+)\chi_{i,0}^+,\mathbf{T}_i(\chi_{i,r-1}^-)]&\\& \equiv -\sum_{p=0}^{r-1}q_i^{2(r-p)}\frac{1}{[2]_{i}}\left([h_{i,1},\mathbf{T}_i(\phi_{i,p}^+)]\chi_{i,r-1-p}^++\mathbf{T}_i(\phi_{i,p}^+)[h_{i,1},\chi_{i,r-1-p}^+]\right)&\\& \hspace{2cm}+(q_i-q_i^{-1})\left([\mathbf{T}_i(\chi_{i,r-1}^-),\mathbf{T}_i(\chi_{i,1}^+)]\chi_{i,0}^++\mathbf{T}_i(\chi_{i,1}^+)[\mathbf{T}_i(\chi_{i,r-1}^-),\chi_{i,0}^+]\right)&\\& = -\sum_{p=0}^{r-1}q_i^{2(r-p)}\frac{1}{[2]_{i}}[h_{i,1},\mathbf{T}_i(\phi_{i,p}^+)]\chi_{i,r-1-p}^+-\sum_{p=0}^{r}q_i^{2(r-p)}\mathbf{T}_i(\phi_{i,p}^+)\chi_{i,r-p}^+&\\& \hspace{2cm}+(q_i-q_i^{-1})\mathbf{T}_i(\chi_{i,1}^+)[\mathbf{T}_i(\chi_{i,r-1}^-),\chi_{i,0}^+]&\\& 
\equiv -\sum_{p=0}^{r}q_i^{2(r-p)}\mathbf{T}_i(\phi_{i,p}^+)\chi_{i,r-p}^{+}+(q_i-q_i^{-1})\mathbf{T}_i(\chi_{i,1}^+)\chi_{i,0}^+\mathbf{T}_i(\chi_{i,r-1}^-)\end{align*}
 where the last congruence equation follows from the induction hypothesis that $\mathbf{T}_i(\phi_{i,p}^+)$ can be written as a polynomial expression in the set $\{K_i^{\pm},\phi^+_{i,k}: k\geq 1\}$ plus terms in the ideal $\mathcal{I}_{i,p-1}^{0}$. Now using once more induction we get 
 $$\chi_{i,0}^+\mathbf{T}_i(\chi_{i,r-1}^-)\equiv -\sum_{p=0}^{r-1}q_i^{2(r-1-p)}\chi_{i,0}^+\mathbf{T}_i(\phi_{i,p}^+)\chi_{i,r-1-p}^+\equiv -\sum_{p=1}^{r-1}q_i^{2(r-1-p)}[\chi_{i,0}^+,\mathbf{T}_i(\phi_{i,p}^+)]\chi_{i,r-1-p}^+$$
 and hence $\chi_{i,0}^+\mathbf{T}_i(\chi_{i,r-1}^-)\equiv 0$ by \eqref{need2}. This finishes the proof.
     \end{proof}
 \end{lem}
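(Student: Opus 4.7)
The plan is to prove both (1) and (2) simultaneously by induction on $r$, with the base case $r=0$ being immediate: (1) reduces to $\mathbf{T}_i(\phi_{i,0}^+)\phi_{i,0}^+ = K_i^{-1}K_i = 1$, using $\mathbf{T}_i(K_i) = K_i^{-1}$, and (2) reduces to $\mathbf{T}_i(\chi_{i,0}^-) = -K_i^{-1}\chi_{i,0}^+ \cdot K_i \equiv -\mathbf{T}_i(\phi_{i,0}^+)\chi_{i,0}^+$, consistent with $\mathbf{T}_i(F_i) = -K_i^{-1}E_i$. For the inductive step, I will first derive (1) at level $r$ from (2) at level $r-1$, and then derive (2) at level $r$ from (2) at level $r-1$ together with (1) at level $r$.

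For part (1), I would start from the Drinfeld relation $\phi_{i,r}^+ = (q_i-q_i^{-1})[\chi_{i,1}^+,\chi_{i,r-1}^-]$ (valid modulo $(C-1)$) and apply $\mathbf{T}_i$. Using $\mathbf{T}_i(\chi_{i,1}^+) = -K_i^{-1}\chi_{i,1}^-$, the term $\mathbf{T}_i(\chi_{i,1}^+)\mathbf{T}_i(\chi_{i,r-1}^-)$ becomes $-K_i^{-1}\chi_{i,1}^-\mathbf{T}_i(\chi_{i,r-1}^-)$; inserting the inductive formula (2) at $r-1$ puts this term in the left ideal $\mathcal{I}_{i,r-1}^{0}$ since every summand ends in $\chi_{i,r-1-p}^+$. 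So only $-(q_i-q_i^{-1})\mathbf{T}_i(\chi_{i,r-1}^-)\mathbf{T}_i(\chi_{i,1}^+)$ survives. Commuting $K_i^{-1}$ to the left (it $q_i^2$-commutes with a weight-$\alpha_i$ element), substituting the inductive formula for $\mathbf{T}_i(\chi_{i,r-1}^-)$, and then replacing $\chi_{i,r-1-p}^+\chi_{i,1}^-$ by $(q_i-q_i^{-1})^{-1}\phi_{i,r-p}^+$ via Theorem~\ref{definingrelationQAA}(6) (all $\phi_{i,r-p}^-$ vanish because $r-p>0$, and the symmetric term $\chi_{i,1}^-\chi_{i,r-1-p}^+$ is absorbed into $\mathcal{I}_{i,r-1}^{0}$), will give the identity $\mathbf{T}_i(\phi_{i,r}^+)\equiv -\sum_{p=0}^{r-1}q_i^{2(r-p)}K_i^{-1}\mathbf{T}_i(\phi_{i,p}^+)\phi_{i,r-p}^+$. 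Since $\mathbf{T}_i(\phi_{i,r}^+)$ has weight zero it commutes with $K_i$, and multiplying by $K_i$ yields exactly the statement of (1).

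For part (2), I would start from $\mathbf{T}_i(\chi_{i,r}^-) = -[2]_i^{-1}[\mathbf{T}_i(h_{i,1}),\mathbf{T}_i(\chi_{i,r-1}^-)]$, which comes from applying $\mathbf{T}_i$ to the Drinfeld relation $[h_{i,1},\chi_{i,r-1}^-]=-[2]_i\chi_{i,r}^-$ (with $C=1$), and substitute \eqref{need1} to split into an $h_{i,1}$-bracket and a correction involving $\mathbf{T}_i(\chi_{i,1}^+)\chi_{i,0}^+$. Applying induction (2) at $r-1$ inside the $h_{i,1}$-bracket, I would expand via the Leibniz rule and compute the two types of brackets using $[h_{i,1},\mathbf{T}_i(\phi_{i,p}^+)]$ and the Drinfeld relation $[h_{i,1},\chi_{i,r-1-p}^+]=[2]_i\chi_{i,r-p}^+$; the $[h_{i,1},\mathbf{T}_i(\phi_{i,p}^+)]$ contributions can be parked in $\chi_{i,r-1-p}^+\cdot(\cdots)$, which sits in the required left ideal by the induction hypothesis's structural description (that $\mathbf{T}_i(\phi_{i,p}^+)$ is a polynomial in the $\phi_{i,k}^+, K_i^{\pm}$ modulo $\mathcal{I}_{i,p-1}^{0}$). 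The remaining piece $\mathbf{T}_i(\phi_{i,p}^+)[h_{i,1},\chi_{i,r-1-p}^+] = [2]_i\mathbf{T}_i(\phi_{i,p}^+)\chi_{i,r-p}^+$ summed from $p=0$ to $r-1$ combines with the $p=r$ term (from the correction piece, after reorganizing) to reproduce $-\sum_{p=0}^{r}q_i^{2(r-p)}\mathbf{T}_i(\phi_{i,p}^+)\chi_{i,r-p}^+$. The leftover correction $(q_i-q_i^{-1})\mathbf{T}_i(\chi_{i,1}^+)[\mathbf{T}_i(\chi_{i,r-1}^-),\chi_{i,0}^+]$ requires showing that $\chi_{i,0}^+\mathbf{T}_i(\chi_{i,r-1}^-)$ lies in the target left ideal; here I would invoke induction (2) once more at level $r-1$ and use that each $\chi_{i,0}^+\mathbf{T}_i(\phi_{i,p}^+)\chi_{i,r-1-p}^+$ contains $\chi_{i,0}^+\cdot\text{(something)}\cdot\chi_{i,r-1-p}^+$, where the central piece $[\chi_{i,0}^+,\mathbf{T}_i(\phi_{i,p}^+)]$ can in principle be expanded using \eqref{need2} giving a product of two $\chi^+$-generators at indices summing to at most $r-1$.

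The main obstacle will be keeping the two distinct moduli straight, since (1) is modulo $\mathcal{I}_{i,r-1}^{0}$ (a left ideal generated by single $\chi_{i,k}^+$'s) while (2) is modulo the smaller left ideal generated by \emph{products} $\chi_{i,s_1}^+\chi_{i,s_2}^+$ with $s_1+s_2\le r-1$. The inductive feed from (2) into (1) only needs the containment of the second ideal in the first one (which holds once $r\ge 1$), but the induction from (2) into itself requires carefully verifying that every error term picked up during bracket expansions and past-commutation of $K_i^{\pm}$, $h_{i,1}$, and $\chi_{i,0}^+$ ends up as a \emph{product} of two $\chi^+$'s with indices summing to $\le r-1$, not merely as a single $\chi^+$. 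The most delicate such check is the vanishing of $\chi_{i,0}^+\mathbf{T}_i(\chi_{i,r-1}^-)$ modulo that smaller ideal; this is exactly the computation carried out via \eqref{need2}, and its validity hinges on the induction hypothesis giving a structural description of $\mathbf{T}_i(\phi_{i,p}^+)$ that permits commuting $\chi_{i,0}^+$ through and producing the required quadratic $\chi^+$-expressions.
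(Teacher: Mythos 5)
Your proposal follows essentially the same route as the paper: a simultaneous induction on $r$, deriving (1) from the Drinfeld relation $\phi_{i,r}^+=(q_i-q_i^{-1})[\chi_{i,1}^+,\chi_{i,r-1}^-]$ together with (2) at level $r-1$, and deriving (2) via \eqref{need1}, the Leibniz expansion, and the key vanishing of $\chi_{i,0}^+\mathbf{T}_i(\chi_{i,r-1}^-)$ through \eqref{need2} and the structural description of $\mathbf{T}_i(\phi_{i,p}^+)$. The only blemish is a stray factor in your base case for (2) (it should read $\mathbf{T}_i(\chi_{i,0}^-)=-K_i^{-1}\chi_{i,0}^+=-\mathbf{T}_i(\phi_{i,0}^+)\chi_{i,0}^+$, with no trailing $K_i$), which does not affect the argument.
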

\subsection{} Assume that $a_{ij}=-2$ and $r\geq 1$. The following calculations are modulo $\mathcal{I}_{i,r}^{ 0}$ and $\mathcal{I}_{j, 0}^{0}$. We have 
\begin{align*} \bar{\Psi}_{j,0}' \mathbf{T}_i(\phi_{j,r}^+)&=[\mathbf{T}_i(\chi_{j,0}^+),\mathbf{T}_i(\chi_{j,r}^-)]
=\left[(\chi_{i,0}^+)^{(2)}\chi_{j,0}^+, \chi_{j,r}^-(\chi_{i,0}^-)^{(2)}\right]&\\&-q_i \left[(\chi_{i,0}^+)^{(2)}\chi_{j,0}^+, x_{i,0}^-\chi_{j,r}^-\chi_{i,0}^-\right]+q_i^2 \left[(\chi_{i,0}^+)^{(2)}\chi_{j,0}^+,(\chi_{i,0}^-)^{(2)}\chi_{j,r}^-\right]&\\& -q_i^{-1} \left[ \chi_{i,0}^+\chi_{j,0}^+\chi_{i,0}^+,\chi_{j,r}^-(\chi_{i,0}^-)^{(2)}\right] + \left[ \chi_{i,0}^+\chi_{j,0}^+\chi_{i,0}^+, \chi_{i,0}^-\chi_{j,r}^-\chi_{i,0}^-\right] &\\&-q_i \left[\chi_{i,0}^+\chi_{j,0}^+\chi_{i,0}^+,(\chi_{i,0}^-)^{(2)}\chi_{j,r}^-\right] +q_i^{-2} \left[ \chi_{j,0}^+(\chi_{i,0}^+)^{(2)},\chi_{j,r}^-(\chi_{i,0}^-)^{(2)}\right]&\\& -q_i^{-1} \left[ \chi_{j,0}^+(\chi_{i,0}^+)^{(2)}, \chi_{i,0}^-\chi_{j,r}^-\chi_{i,0}^-\right] + \left[\chi_{j,0}^+(\chi_{i,0}^+)^{(2)},(\chi_{i,0}^-)^{(2)}\chi_{j,r}^-\right] \end{align*}
Moreover, 
\begin{align*}& \bullet \  (q_j-q_j^{-1})\left[(\chi_{i,0}^+)^{(2)}\chi_{j,0}^+, \chi_{j,r}^-(\chi_{i,0}^-)^{(2)}\right] \equiv \dfrac{1}{[2]_{i}^2} (q_j-q_j^{-1})(\chi_{i,0}^+)^{2}\chi_{j,0}^+\chi_{j,r}^-(\chi_{i,0}^-)^{2}&\\&
\hspace{0,5cm} \equiv \dfrac{1}{[2]_{i}^2} (\chi_{i,0}^+)^{2}\phi_{j,r}^+(\chi_{i,0}^-)^{2}= \dfrac{1}{[2]_{i}^2} \left (\chi_{i,0}^+\phi_{j,r}^+\chi_{i,0}^+(\chi_{i,0}^-)^{2} +  \chi_{i,0}^+\left[\chi_{i,0}^+,\phi_{j,r}^+\right](\chi_{i,0}^-)^2\right )&\\&\hspace{0,5cm} = \dfrac{1}{[2]_{i}^2} \left (\chi_{i,0}^+\phi_{j,r}^+\chi_{i,0}^-\chi_{i,0}^+\chi_{i,0}^- + \chi_{i,0}^+\phi_{j,r}^+[K_i;0]\chi_{i,0}^-  +  \chi_{i,0}^+\left[\chi_{i,0}^+,\phi_{j,r}^+\right](\chi_{i,0}^-)^2\right )&\\&\hspace{0,5cm} \equiv \dfrac{1}{[2]_{i}^2} \left (\chi_{i,0}^+\phi_{j,r}^+\chi_{i,0}^-[K_i;0] + \chi_{i,0}^+\phi_{j,r}^+[K_i;0]\chi_{i,0}^-  +  \chi_{i,0}^+\left[\chi_{i,0}^+,\phi_{j,r}^+\right](\chi_{i,0}^-)^2\right )&\\&\hspace{0,5cm}= \dfrac{1}{[2]_{i}^2} \left ([K_i;0] \chi_{i,0}^+\phi_{j,r}^+\chi_{i,0}^- + [K_i;-2] \chi_{i,0}^+\phi_{j,r}^+\chi_{i,0}^-  +  \chi_{i,0}^+\left[\chi_{i,0}^+,\phi_{j,r}^+\right](\chi_{i,0}^-)^2\right )&\\&\hspace{0,5cm}\equiv \dfrac{1}{[2]_{i}} [K_i;-1] \chi_{i,0}^+\phi_{j,r}^+\chi_{i,0}^-  +  \dfrac{1}{[2]_{i}^2} \chi_{i,0}^+\left[\chi_{i,0}^+,\phi_{j,r}^+\right](\chi_{i,0}^-)^2&\\\\&
\bullet \ (q_j-q_j^{-1})\left[(\chi_{i,0}^+)^{(2)}\chi_{j,0}^+, \chi_{i,0}^-\chi_{j,r}^-\chi_{i,0}^-\right]\equiv  (q_j-q_j^{-1}) \dfrac{1}{[2]_{i}}(\chi_{i,0}^+)^{2}\chi_{j,0}^+\chi_{i,0}^-\chi_{j,r}^-\chi_{i,0}^-&\\&\hspace{0,5cm}
\equiv \dfrac{1}{[2]_{i}} (\chi_{i,0}^+)^{2}\chi_{i,0}^-\phi^+_{j,r}\chi_{i,0}^-= \dfrac{1}{[2]_{i}} \left ( \chi_{i,0}^+\chi_{i,0}^-\chi_{i,0}^+\phi^+_{j,r}\chi_{i,0}^- + \chi_{i,0}^+[K_i;0]\phi^+_{j,r}\chi_{i,0}^- \right )&\\&\hspace{0,5cm}
= \dfrac{1}{[2]_{i}} \left ( \chi_{i,0}^+\chi_{i,0}^-\chi_{i,0}^+\phi^+_{j,r}\chi_{i,0}^- + [K_i;-2]\chi_{i,0}^+\phi^+_{j,r}\chi_{i,0}^- \right )&\\&\hspace{0,5cm}
= \dfrac{1}{[2]_{i}} \left ( \chi_{i,0}^-(\chi_{i,0}^+)^2\phi^+_{j,r}\chi_{i,0}^- + [2]_{i}[K_i;-1]\chi_{i,0}^+\phi^+_{j,r}\chi_{i,0}^- \right )&\\&\hspace{0,5cm} \equiv [K_i;-1]\chi_{i,0}^+\phi^+_{j,r}\chi_{i,0}^- \ \ (\text{see Corollary~\ref{identcor523}(1) and note that $[\chi_{i,0}^+, \phi_{j,r}^+] \equiv 0$})&\\\\&
 \bullet \  (q_j-q_j^{-1})\left[(\chi_{i,0}^+)^{(2)}\chi_{j,0}^+,(\chi_{i,0}^-)^{(2)}\chi_{j,r}^-\right] \equiv \dfrac{1}{[2]_{i}^2}(q_j-q_j^{-1})(\chi_{i,0}^+)^{2}\chi_{j,0}^+(\chi_{i,0}^-)^{2}\chi_{j,r}^-&\\&\hspace{0,5cm}\equiv  \dfrac{1}{[2]_{i}^2}(\chi_{i,0}^+)^{2}(\chi_{i,0}^-)^{2}\phi_{j,r}^+ 
 \equiv\dfrac{1}{[2]_{i}}[K_i;0][K_i;-1]\phi_{j,r}^+&\\\\&
 \bullet \ \left[ \chi_{i,0}^+\chi_{j,0}^+\chi_{i,0}^+,\chi_{j,r}^-(\chi_{i,0}^-)^{(2)}\right] \equiv \dfrac{1}{[2]_{i}}\chi_{i,0}^+\chi_{j,0}^+\chi_{i,0}^+\chi_{j,r}^-(\chi_{i,0}^-)^{2}=\dfrac{1}{[2]_{i}}\chi_{i,0}^+\chi_{j,0}^+\chi_{j,r}^-\chi_{i,0}^+(\chi_{i,0}^-)^{2} &\\&\hspace{0,5cm}
  = \dfrac{1}{[2]_{i}}\left ( \chi_{i,0}^+\chi_{j,0}^+\chi_{j,r}^-\chi_{i,0}^-\chi_{i,0}^+\chi_{i,0}^- + \chi_{i,0}^+\chi_{j,0}^+\chi_{j,r}^-[K_i;0]\chi_{i,0}^- \right )&\\&\hspace{0,5cm}
  \equiv \dfrac{1}{[2]_{i}}\left ([K_i;0] \chi_{i,0}^+\chi_{j,0}^+\chi_{j,r}^-\chi_{i,0}^- + [K_i;-2]\chi_{i,0}^+\chi_{j,0}^+\chi_{j,r}^-\chi_{i,0}^- \right )&\\& \hspace{0,5cm}=[K_i,-1]\chi_{i,0}^+\chi_{j,0}^+\chi_{j,r}^-\chi_{i,0}^-\equiv\dfrac{1}{(q_j-q_j^{-1})}[K_i,-1]\chi_{i,0}^+\phi_{j,r}^+\chi_{i,0}^- &\\\\&
 \bullet \ \left[\chi_{i,0}^+\chi_{j,0}^+\chi_{i,0}^+, \chi_{i,0}^-\chi_{j,r}^-\chi_{i,0}^-\right] \equiv \chi_{i,0}^+\chi_{j,0}^+\chi_{i,0}^+\chi_{i,0}^-\chi_{j,r}^-\chi_{i,0}^- &\\&\hspace{0,5cm}= \chi_{i,0}^+\chi_{j,0}^+\left(\chi_{i,0}^-\chi_{i,0}^+\chi_{j,r}^-\chi_{i,0}^- + [K_i;0]\chi_{j,r}^-\chi_{i,0}^-\right)  \equiv [K_i;0]\left(\chi_{i,0}^+\chi_{j,0}^+\chi_{i,0}^-\chi_{j,r}^-+ \chi_{i,0}^+\chi_{j,0}^+\chi_{j,r}^-\chi_{i,0}^-\right)&\\& \hspace{0,5cm} \equiv \dfrac{1}{(q_j-q_j^{-1})}[K_i;0]\left ( [K_i;0]\phi_{j,r}^+ + \chi_{i,0}^+\phi_{j,r}^+\chi_{i,0}^- \right )&\\\\&
 \bullet \ \left[\chi_{i,0}^+\chi_{j,0}^+\chi_{i,0}^+,(\chi_{i,0}^-)^{(2)}\chi_{j,r}^-\right]\equiv \dfrac{1}{[2]_{i}} \chi_{i,0}^+\chi_{j,0}^+\chi_{i,0}^+(\chi_{i,0}^-)^{2}\chi_{j,r}^- &\\&\hspace{0,5cm}\equiv \dfrac{1}{[2]_{i}} \chi_{i,0}^+\chi_{j,0}^+\left  ( \chi_{i,0}^-[K_i;0]\chi_{j,r}^- + [K_i;0]\chi_{i,0}^-\chi_{j,r}^-\right )&\\&\hspace{0,5cm}=\dfrac{1}{[2]_{i}} \left  ( [K_i;2]\chi_{i,0}^+\chi_{j,0}^+\chi_{i,0}^-\chi_{j,r}^- + [K_i;0]\chi_{i,0}^+\chi_{j,0}^+\chi_{i,0}^-\chi_{j,r}^-\right )&\\&\hspace{0,5cm}=[K_i;1]\chi_{i,0}^+\chi_{j,0}^+\chi_{i,0}^-\chi_{j,r}^-\equiv \dfrac{1}{(q_j-q_j^{-1})}[K_i;1][K_i;0]\phi_{j,r}^+&\\\\&
 \bullet \ \left[ \chi_{j,0}^+(\chi_{i,0}^+)^{(2)},\chi_{j,r}^-(\chi_{i,0}^-)^{(2)}\right] \equiv \dfrac{1}{[2]_{i}^2} \chi_{j,0}^+(\chi_{i,0}^+)^{2}\chi_{j,r}^-(\chi_{i,0}^-)^{2}= \dfrac{1}{[2]_{i}^2} \chi_{j,0}^+\chi_{j,r}^-(\chi_{i,0}^+)^{2}(\chi_{i,0}^-)^{2}&\\&\hspace{0,5cm}\equiv \dfrac{1}{[2]_i(q_j-q_j^{-1})}[K_i;-1][K_i;0]  \phi_{j,r}^+&\\\\&
 \bullet \ \left[ \chi_{j,0}^+(\chi_{i,0}^+)^{(2)}, x
 \chi_{i,0}^-\chi_{j,r}^-\chi_{i,0}^-\right]\equiv \dfrac{1}{[2]_{i}}\chi_{j,0}^+(\chi_{i,0}^+)^{2} \chi_{i,0}^-\chi_{j,r}^-\chi_{i,0}^- &\\&\hspace{0,5cm}
 = \dfrac{1}{[2]_{i}} \left ( \chi_{j,0}^+\chi_{i,0}^+\chi_{i,0}^-\chi_{j,r}^-\chi_{i,0}^+\chi_{i,0}^- + \chi_{j,0}^+\chi_{i,0}^+[K_i,0]\chi_{j,r}^-\chi_{i,0}^- \right )
 &\\&\hspace{0,5cm}
 \equiv \dfrac{[K_i,0]}{[2]_{i}} \left ( \chi_{j,0}^+\chi_{i,0}^+\chi_{i,0}^-\chi_{j,r}^- + \chi_{j,0}^+\chi_{i,0}^+\chi_{j,r}^-\chi_{i,0}^- \right )&\\&\hspace{0,5cm}
 \equiv \dfrac{[K_i,0]}{[2]_{i}} \left ( \chi_{j,0}^+[K_i;0]\chi_{j,r}^- + \frac{[K_i;0]}{(q_j-q_j^{-1})}\phi_{j,r}^+\right )&\\&\hspace{0,5cm}\equiv \dfrac{[K_i,0]}{[2]_{i}(q_j-q_j^{-1})} \left ( [K_i;2]\phi_{j,r}^+ + [K_i;0]\phi_{j,r}^+\right )
 \equiv \dfrac{1}{(q_j-q_j^{-1})} [K_i;0][K_i;1]\phi_{j,r}^+ &\\\\&
 \bullet \ \left[\chi_{j,0}^+(\chi_{i,0}^+)^{(2)},(\chi_{i,0}^-)^{(2)}\chi_{j,r}^-\right]\equiv \dfrac{1}{[2]_{i}^2} \chi_{j,0}^+(\chi_{i,0}^+)^{2}(\chi_{i,0}^-)^{2}\chi_{j,r}^-\equiv \dfrac{1}{[2]_i} \chi_{j,0}^+ [K_i;0][K_i;-1]\chi_{j,r}^-&\\&\hspace{0,5cm}\equiv 
 \dfrac{1}{[2]_{i}(q_j-q_j^{-1})}[K_i;2][K_i;1] \phi_{j,r}^+
\end{align*}
Now, summing all terms with the prescribed coefficients yields the following expression
\begin{align*}\mathbf{T}_i(\phi_{j,r}^+)&\equiv \dfrac{1}{[2]_{i}} \left((1-[2]^2_{i})[K_i;-1]+[2]_{i} [K_i;0]\right)\chi_{i,0}^+\phi_{j,r}^+\chi_{i,0}^-+  \dfrac{1}{[2]_{i}^2} \chi_{i,0}^+\left[\chi_{i,0}^+,\phi_{j,r}^+\right](\chi_{i,0}^-)^2&\\&
+\dfrac{1}{[2]_{i}}\left(([2]^2_{i}-2)[K_i;0][K_i;-1]-[2]_{i}[K_i;0][K_i;2]
+[K_i;2][K_i;1] \right)\phi_{j,r}^+&\\&=\dfrac{1}{[2]_{i}^2} \chi_{i,0}^+\left[\chi_{i,0}^+,\phi_{j,r}^+\right](\chi_{i,0}^-)^2-\dfrac{1}{[2]_{i}}[K_i;-3] \chi_{i,0}^+\phi_{j,r}^+\chi_{i,0}^-
+\dfrac{1}{[2]_{i}}[K_i;-2][K_i;-1]\phi_{j,r}^+
\end{align*}
From here, we proceed as follows. Using $\chi_{i,0}^+\phi_{j,r}^+\chi_{i,0}^-\equiv [\chi_{i,0}^+,\phi_{j,r}^+]\chi_{i,0}^-+[K_i;0]\phi_{j,r}^+$ and the congruence (recall $\phi_{j,r}^+=(q_j-q_j^{-1})K_j\bar{\Psi}_{j,r}'$)
$$\chi_{i,0}^+[\chi_{i,0}^+,\phi_{j,r}^+](\chi_{i,0}^{-})^2\equiv q_i^{4}(q_j-q_j^{-1})K_j\chi_{i,0}^+[\chi_{i,0}^+,\bar{\Psi}_{j,r}'](\chi_{i,0}^{-})^2+(q_i^{2}+1)[K_i;-1]\chi_{i,0}^+\phi_{j,r}^+\chi_{i,0}^-$$
we can rewrite $\mathbf{T}_i(\phi_{j,r}^+)$ in terms of the following three expressions:
$$\bullet\ \chi_{i,0}^+[\chi_{i,0}^+,\bar{\Psi}_{j,r}'](\chi_{i,0}^{-})^2\ \ \ \ \bullet\  [\chi_{i,0}^+,\phi_{j,r}^+]\chi_{i,0}^-\ \  \ \ \bullet \ \phi_{j,r}^+$$
Now substituting the formulas from Proposition~\ref{propidentw} and Corollary~\ref{identcor523} we get a huge expression which will finally simplify to 
$$\mathbf{T}_i(\phi_{j,r}^+)\equiv \sum_{s=0}^{r}\sum_{\ell=0}^sq_i^{2(r-s)}\phi_{j,\ell}^+\phi_{i,s-\ell}^+\phi_{i,r-s}^+.$$
We omit the details here and refer for the final steps to \cite{KBthesis}.
\subsection{} Define as usual \begin{equation}\label{lam2waw}\Lambda_i(u)=\sum_{r=0}^{\infty}\Lambda_{i,r}u^r=\mathrm{exp}\left(-\sum_{s=1}^{\infty}\frac{h_{i,s}}{[s]_{i}}u^s\right),\ \ P_i(u)=\Lambda_i(q_iu),\ \ i\in\mathring{I}.\end{equation}
Then we have 
 $$\phi^+_{i}(u)=K_i \frac{\Lambda_i(uq_i^{-1})}{\Lambda_i(uq_i)},\ \ \Lambda_{i,r}=\frac{-q_i^{r}
    }{q_i^r-q_i^{-r}}\sum_{s=1}^{r}q_i^{-s}\phi^+_{i,s}\Lambda_{i,r-s}K_i^{-1},\ \ r>0.$$

\begin{cor}\label{identfuerphil0}
We have the following identities for $i,j\in\mathring{I}$
modulo $\mathcal{I}_{i,\infty}^{0}$ and $\mathcal{I}_{j,\infty}^{0}$
$$\mathbf{T}_i(\Lambda_j(u))= 
      \begin{cases}\Lambda_j(u),& \text{ if $a_{ji}=0$},\\
      \Lambda_j(u)\Lambda_i(q_iu),& \text{ if $a_{ji}=-1$},\\
      \Lambda_i(q_i^2u)^{-1},& \text{ if $a_{ji}=2$},\\
\Lambda_j(u)\Lambda_i(q_ju)\Lambda_i(q_iq_ju) ,& \text{ if $a_{ji}=-2$},\\
         \end{cases}$$
         \begin{proof}
The proof in all cases is similar and we demonstrate the $a_{ji}=-1$ case only. Note that by definition we have  
$$(q_j-q_j^{-1})\sum_{k=1}^{\infty}h_{j,k}u^k=\mathrm{log}(K_j^{-1}\phi_j^+(u))$$
Applying $\mathbf{T}_i$ and using Proposition~\ref{identfuerphil} we get 
\begin{align*}\mathbf{T}_i(h_{j,k})&\equiv \begin{cases}
    h_{j,k}+\frac{(q_i-q_i^{-1})}{(q_j-q_j^{-1})}q_i^{k}h_{i,k},& \text{ if $a_{ij}=-1$}\\
    h_{j,k}+\frac{(q_i-q_i^{-1})}{(q_j-q_j^{-1})}(1+q_i^{2k})h_{i,k},& \text{ if $a_{ij}=-2$}\\
    \end{cases}&\\& =h_{j,k}+q_i^{k}\frac{[k]_j}{[k]_i}h_{i,k}\end{align*}
where the last step follows from $q_i^{2}=q_j$ if $a_{ij}=-2$ and $q_i=q_j$ otherwise. 
Now applying this to \eqref{lam2waw} gives $\mathbf{T}_i(\Lambda_j(u))=\Lambda_j(u)\Lambda_i(q_iu)$ as desired.
\end{proof}
\end{cor}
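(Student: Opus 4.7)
The plan is to deduce Corollary~\ref{identfuerphil0} from Proposition~\ref{identfuerphil} by taking formal logarithms and then re-exponentiating, since $\Lambda_j(u)$ and $\phi_j^+(u)$ are related explicitly via
$$\phi_j^+(u)=K_j\,\frac{\Lambda_j(uq_j^{-1})}{\Lambda_j(uq_j)},\qquad (q_j-q_j^{-1})\sum_{k\ge1}h_{j,k}u^k=\log\bigl(K_j^{-1}\phi_j^+(u)\bigr).$$
Applying $\mathbf{T}_i$ to the second relation, using $\mathbf{T}_i(K_j)=K_jK_i^{-a_{ij}}$, and substituting the four cases of Proposition~\ref{identfuerphil} turns every product of $\phi^+$'s on the right-hand side into an additive combination of $h_{i,k}$ and $h_{j,k}$. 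This yields explicit formulas for $\mathbf{T}_i(h_{j,k})$ modulo the relevant ideals, which in the three non-trivial cases read
$$\mathbf{T}_i(h_{j,k})\equiv h_{j,k}+q_i^{k}\,\frac{[k]_j}{[k]_i}\,h_{i,k},\qquad \mathbf{T}_i(h_{i,k})\equiv -q_i^{2k}h_{i,k},$$
after writing $(q_i-q_i^{-1})/(q_j-q_j^{-1})$ times a power-expansion factor in the ratio-of-brackets form (as the author already verifies for $a_{ji}=-1$; the $a_{ji}=-2$ case follows by the same expansion since $q_j=q_i$ when $a_{ij}=-1,\,a_{ji}=-2$, and the $a_{ji}=2$ case is the $i=j$ identity).

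Having this, I would insert the formula for $\mathbf{T}_i(h_{j,k})$ directly into the exponential definition $\Lambda_j(u)=\exp\!\bigl(-\sum_{s\ge1}h_{j,s}u^s/[s]_j\bigr)$, split the resulting exponential into a product, and recognise each factor as $\Lambda_j$ or a shifted $\Lambda_i$. The key numerical check is that the coefficient of $h_{i,k}$ combines with the $1/[k]_j$ in the definition to produce exactly $(q_ju)^k/[k]_i$ in the $a_{ji}=-1$ case and $((q_ju)^k+(q_iq_ju)^k)/[k]_i$ in the $a_{ji}=-2$ case, matching the claimed shifted products of $\Lambda_i$'s. For the $i=j$ case one just has to observe $-q_i^{2k}/[k]_i \cdot u^k = -(q_i^2u)^k/[k]_i$, giving $\Lambda_i(q_i^2u)^{-1}$.

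The main obstacle, as always with these manipulations, is ideal bookkeeping: the logarithm and exponential of a series only commute with congruences modulo an ideal when that ideal is closed under the products appearing in all the partial sums. This is precisely why the corollary is stated modulo both $\mathcal{I}_{i,\infty}^{0}$ and $\mathcal{I}_{j,\infty}^{0}$ rather than the bounded ideals used in Proposition~\ref{identfuerphil}: enlarging to the infinite versions makes the ideals absorb arbitrary products of the $\chi^+_{i,r}$, $\chi^+_{j,r}$ that can occur as cross-terms when exponentiating a congruence $\mathbf{T}_i(h_{j,k})\equiv(\cdots)$. Once this closure property is in place, the computation is essentially the formal manipulation sketched above, and the four cases are independent and elementary.
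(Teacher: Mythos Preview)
Your approach is exactly the paper's: take logarithms, apply $\mathbf{T}_i$ via Proposition~\ref{identfuerphil}, read off $\mathbf{T}_i(h_{j,k})$, and re-exponentiate into $\Lambda$'s. Your remark on why both ideals $\mathcal{I}_{i,\infty}^{0}$ and $\mathcal{I}_{j,\infty}^{0}$ are needed is a useful gloss.

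However, your treatment of the $a_{ji}=-2$ case contains two slips. First, when $a_{ij}=-1$ and $a_{ji}=-2$ the symmetrization relation $\epsilon_i a_{ij}=\epsilon_j a_{ji}$ gives $\epsilon_i=2\epsilon_j$, hence $q_i=q_j^{2}$, not $q_i=q_j$. Second, the unified formula $\mathbf{T}_i(h_{j,k})\equiv h_{j,k}+q_i^{k}\tfrac{[k]_j}{[k]_i}h_{i,k}$ is the paper's formula for the $a_{ji}=-1$ case only; it does not carry over to $a_{ji}=-2$. In that case Proposition~\ref{identfuerphil} (with $a_{ij}=-1$) gives
\[
\mathbf{T}_i(h_{j,k})\equiv h_{j,k}+\frac{q_i-q_i^{-1}}{q_j-q_j^{-1}}\,q_i^{k}h_{i,k}=h_{j,k}+(q_j+q_j^{-1})\,q_i^{k}h_{i,k},
\]
and it is precisely the factor $q_j+q_j^{-1}$ (not $[k]_j/[k]_i$) that, combined with $q_i^k=q_j^{2k}$ and the $1/[s]_j$ in the definition of $\Lambda_j$, splits into $(q_j u)^k/[k]_i+(q_iq_j u)^k/[k]_i$ and produces the two $\Lambda_i$ factors. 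Your ``key numerical check'' paragraph has the right target but is not supported by the formula you wrote just above it. A similar slip: for $a_{ji}=-1$ you write $(q_ju)^k/[k]_i$ where it should be $(q_iu)^k/[k]_i$; these agree only in the simply-laced sub-case. Once these constants are corrected the argument goes through exactly as you outline.
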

\section{Parametrization of finite-dimensional irreducible representations}\label{section8}
In this section, we study the representation theory of the subalgebra of the parabolic quantum affine algebra generated by the elements listed in Proposition~\ref{genalgim} without $D^{\pm}$, over the field $\mathbb{K}:=\mathbb{C}(q)$.
For simplicity we will denote this subalgebra again by $\mathbf{U}_q^{J}$ and keep the same notation for the subalgebras introduced in the previous sections; for example $\mathbf{U}_q^J(0)$ is now the subalgebra generated by $h_{i,s},K_i^{\pm}, C^{\pm 1/2} , i\in\mathring{I},s\in\mathbb{N}$ (without $D^{\pm}$). Recall that $$\mathcal{B}_q^j:=\langle\chi_{j,0}^{\pm},K_j^{\pm}\rangle,\ j\in \mathring{I}\cap J, \ \text{ and }\  \mathcal{B}_q^0:=\langle\chi_{\gamma_0,\mp 1}^{\pm},C^{\mp}K_{\gamma_0}^{\pm}\rangle\ \text{ if } 0\in J$$ generate subalgebras isomorphic to $\mathbf{U}_{q_j}(\mathfrak{sl}_2)$ and $\mathbf{U}_{q_0}(\mathfrak{sl}_2)$ respectively. 
\subsection{} We start with a few definitions in analogy to the representation theory for quantum affine algebras. Let $\mathcal{K}$ the subalgebra generated by $\{K_i^{\pm}\}_{i\in\mathring{I}}$ and set $\mathcal{K}^{*}=(\mathbb{K}^{\times })^{\mathring{I}}$ which is a group under pointwise multiplication. We define the standard partial ordering on $\mathcal{K}^{*}$:
$$\boldsymbol{w}\leq \boldsymbol{w}' \iff \boldsymbol{w}(\boldsymbol{w}')^{-1} \text{ is a product of $\{\bar{\alpha}^{-1}_i\}_{i\in \mathring{I}}$}$$
where $\bar{\alpha}_i\in \mathcal{K}^{*}$ is given by $\bar{\alpha}_i(j)=q_i^{a_{i,j}}$.
\begin{defn}
Let $V$ be a representation for $\mathbf{U}_q^{J}$.
\begin{enumerate}
\item A series of elements $\Psi=(\psi_{i,r})_{i\in\mathring{I},r\geq 0}$ in the field $\mathbb{K}$ with $\psi_{i,0}\neq 0$ for all $i\in \mathring{I}$ is called an $\ell$\textit{-weight}. We will often write $\Psi=(\Psi_i(u))_{i\in \mathring{I}}$ where $\Psi_i(u)=\sum_{r\geq 0}\psi_{i,r} u^r$, and we denote the set of $\ell$-weights by $\mathfrak{L}$. The subspace
$$V_{\Psi}=\left\{v\in V: \exists N\in\mathbb{N} \ \text{ such that }(\phi_{i,r}^+-\psi_{i,r})^Nv=0,\  \forall i\in\mathring{I},r\in\mathbb{Z}_+\right\}$$ is called the $\ell$\textit{-weight space} of $\ell$-weight $\Psi$ and its elements are called $\ell$\textit{-weight vectors} of $\ell$-weight $\Psi$. \vspace{0,2cm}
\item $V$ is called an $\ell$\textit{-weight module} if every vector of $V$ is a linear combination of $\ell$-weight vectors. \vspace{0,2cm}
\item  A vector $v\in V$ is said to be a \textit{highest} $\ell$\textit{-weight vector} of $\ell$-weight $\Psi$ if  
$$\chi_{i,r}^+v=0, \ \ \phi^+_{i,r}v=\psi_{i,r}v,\ i\in \mathring{I},\ r\in \mathbb{Z}_+,\ \ \chi_{\gamma_0,r-1}^+v=0, \ r \in \mathbb{Z}_+ \  (\text{if $0\in J$}).$$
If additionally $V=\mathbf{U}_q^Jv$, then $V$ is said to be a \textit{module of highest} $\ell$-\textit{weight} $\Psi$. In this case, the $\ell$-weight $\Psi$ is uniquely determined by $V$ and $V$ is $\mathcal{K}$-diagonalizable
$$V=\bigoplus_{\boldsymbol{w}\in \mathcal{K}^{*}} V_{\boldsymbol{w}},\ \ V_{\boldsymbol{w}}=\{v\in V: K_iv=\boldsymbol{w}(i)v\ \forall i\in\mathring{I}\}.$$
Moreover, $V_{\boldsymbol{w}}=0$ unless $\boldsymbol{w}\leq \boldsymbol{w}(\Psi)$ where $\boldsymbol{w}(\Psi)(i)=\Psi_{i,0}$ and $\mathrm{dim} V_{\boldsymbol{w}(\Psi)}=1$.

 \item $V$ is called a \textit{type 1 module} if $C^{1/2}$ acts as the identity element and $V$ is of type 1 viewed as a module for the quantum group of the semi-simple part of the reductive Lie algebra $\mathring{\lie g}_0$, that is 
$$V=\bigoplus_{\mu\in \mathring{P}_0} V_{\mu},\ \ V_{\mu}=\left\{v\in V: K_{\gamma_j}v=q_j^{\mu(\gamma^{\vee}_{j})}v,\  \forall j\in J\right\}.$$

\item We define $M(\Psi)$ to be the quotient of $\mathbf{U}_q^{J}$ by the left ideal generated by 
$$\{\chi_{i,r}^+,\ \ \chi_{\gamma_0,r-1}^+ \  (\text{if $0\in J$}),\ \ \phi_{i,r}^+-\psi_{i,r},\ \ C^{\pm 1/2}-1,\  \ i\in \mathring{I}, \ r\in \mathbb{Z}_+\}.$$
\end{enumerate}
\end{defn}
We denote by $\mathcal{C}_q$ the category of type 1 finite-dimensional $\ell$-weight modules.
\begin{rem}\label{jj25f}
Let $V$ be a finite-dimensional $\mathbf{U}_q^J(0)$-module such that $C^{\pm 1/2}$ acts as identity and assume that $V$ is a direct sum of $\ell$-weight spaces. For any $\mathbf{U}_q^J(0)$-invariant subspace $W
$, we have
$$W=\bigoplus_{\mathbf{\Psi}\in\mathfrak{L}} \ (V_{\mathbf{\Psi}}\cap W).$$
Moreover, any $V_{\mathbf{\Psi}}\neq 0$ contains a simultaneous eigenvector for the action of $\mathbf{U}_q^J(0)$ (the eigenvalue of $\phi_{i,r}^+$ on this eigenvector is $\psi_{i,r}$). This can be seen by a simple application of Zorn's lemma.
\end{rem}
We summarize a few properties which are adopted from the representation theory of quantum affine algebras; the proof follows the ideas of \cite{CP95}. 
\begin{prop} Let $\Psi\in\mathfrak{L}$ such that
\begin{equation}\label{typ1gar}\prod_{i\in \mathring{I}}\psi_{i,0}^{\mathbf{a}_i(\gamma_j)}\in q_j^{\mathbb{Z}_+},\ \ \forall j\in J.\end{equation} We have 
\begin{enumerate}
    \item $M(\Psi)$ is a type 1 module and a module of highest $\ell$-weight $\Psi$.\vspace{0,2cm}
    
    \item $M(\Psi)$ has a unique irreducible quotient $V(\Psi)$. \vspace{0,2cm}
    
    \item Let $V$ be a simple object in $\mathcal{C}_q$. Then $V\cong V(\Psi)$ for some $\Psi\in\mathfrak{L}$ satisfying \eqref{typ1gar}. 
\end{enumerate}
    \begin{proof}
        The first part follows directly from the definition, equation~\eqref{typ1gar}, and Corollary~\ref{trisecond}.
        The second part follows by a standard argument using Corollary~\ref{trisecond} that $M(\Psi)$ has a unique maximal proper submodule. For the third part, let $v_0\in V$ be a simultaneous eigenvector for the $\mathcal{K}$-action which exists by Remark~\ref{jj25f} and the fact that $V\neq 0$ is a $\ell$-weight module. Set  $$V^0=\{v\in V: \chi_{i,r}^+v=0,\ \chi_{\gamma_0,r-1}^+v=0 \ (\text{if $0\in J$}),\ \forall i\in\mathring{I},\ r\in\mathbb{Z}_+\}$$ and note that each sequence of non-zero elements
        $$v_0,\ y_1v_0,\ y_2y_1v_0,\dots \ \ \ y_i\in\{\chi_{i,r}^+,\chi_{\gamma_0,r-1}^+\ (\text{if $0\in J$}): i\in\mathring{I},\ r\in\mathbb{Z}_+\}$$
        would give a sequence of linearly independent elements in $V$, since the $\mathcal{K}$-eigenvalues are all different. Thus $V^0\neq 0$, since $V$ is finite-dimensional. 
        From the defining relations stated in Theorem~\ref{definingrelationQAA} and \eqref{stabil23}, it is clear that $V^0$ is $\mathbf{U}_q^J(0)$-invariant provided that
\begin{equation}\label{stabil2}\chi_{\gamma_0,-1}^+h_{i,s}v=0,\ \forall \ i\in\mathring{I},\ s\in\mathbb{N},\ v\in V^0.\end{equation}
    From Proposition~\ref{algebrengleich3} we have $\chi_{\gamma_0,-1}^+\in \mathbf{U}_q^J\cap \langle \chi_{i,r}^+: i\in\mathring{I}, r\in\mathbb{Z}\rangle$ and combining this with Theorem~\ref{definingrelationQAA} we get 
        $$[C^{s/2}h_{i,s},\chi_{\gamma_0,-1}^+]\in \mathbf{U}_q^J\cap \langle \chi_{i,r}^+: i\in\mathring{I}, r\in\mathbb{Z}\rangle=\langle \chi_{i,r}^+, \chi_{\gamma_0,-1}^+ (\text{if $0\in J$)}: i\in\mathring{I},\ r\geq 0\rangle.$$
Thus \eqref{stabil2} holds and $V^0$ is a $\mathbf{U}_q^J(0)$-invariant subspace of $V$. Since $V^0\neq 0$, there exists by Remark~\ref{jj25f} an $\ell$-weight $\Psi$ with $(V^0)_{\Psi}=V_{\Psi}\cap V^0\neq 0$. Again by  Remark~\ref{jj25f} we can choose a simultaneous eigenvector in $(V^0)_{\Psi}$ for the $\mathbf{U}_q^J(0)$-action. Since $V$ is simple, this vector generates $V$ and thus $V\cong V(\Psi)$. The fact that $\Psi$ satisfies \eqref{typ1gar} follows from $\mathfrak{sl}_2$-theory using the subalgebras $\mathcal{B}_q^j$ for $j\in J$.
\end{proof}
\end{prop}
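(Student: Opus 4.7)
All three parts follow the standard highest weight pattern, with the second triangular decomposition from Corollary~\ref{trisecond} replacing the usual one. For part (1), I would let $v_\Psi\in M(\Psi)$ be the image of $1$. By construction, the raising generators $\chi_{i,r}^+$ (together with $\chi_{\gamma_0,r-1}^+$ if $0\in J$) annihilate $v_\Psi$, while $\phi_{i,r}^+$ acts as $\psi_{i,r}$ and $C^{\pm 1/2}$ as the identity, so $v_\Psi$ is a highest $\ell$-weight vector of weight $\Psi$. Corollary~\ref{trisecond} gives $\mathbf{U}_q^J\cong \mathbf{U}_q^J(-)\otimes\mathbf{U}_q^J(0)\otimes\mathbf{U}_q^J(+)$; the two rightmost factors act on $v_\Psi$ by scalars, hence $M(\Psi)=\mathbf{U}_q^J(-)v_\Psi$. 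For the type 1 claim, on $v_\Psi$ we have $K_{\gamma_j}v_\Psi = \prod_{i\in\mathring{I}}\psi_{i,0}^{\mathbf{a}_i(\gamma_j)}v_\Psi = q_j^{n_j}v_\Psi$ with $n_j\in\mathbb{Z}_+$ by hypothesis \eqref{typ1gar}, and on other $\mathcal{K}$-weight spaces the eigenvalue of $K_{\gamma_j}$ differs by an integer power of $q_j$, keeping all $\mathring{P}_0$-weights integral.

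For part (2), the $\ell$-weight decomposition refines the $\mathcal{K}$-weight decomposition, and $M(\Psi)_{\boldsymbol{w}(\Psi)}=\mathbb{K}v_\Psi$ is one-dimensional. Any proper submodule is $\mathcal{K}$-graded and cannot contain $v_\Psi$, so it meets $M(\Psi)_{\boldsymbol{w}(\Psi)}$ trivially. The sum of all such submodules still misses the top weight space, and is therefore the unique maximal proper submodule; its quotient is $V(\Psi)$.

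For part (3), given a simple $V\in\mathcal{C}_q$, I would locate a highest $\ell$-weight vector by considering
\[
V^0 = \bigl\{v\in V : \chi_{i,r}^+v=0\ \forall i\in\mathring{I},\,r\in\mathbb{Z}_+, \text{ and } \chi_{\gamma_0,r-1}^+v=0\ \forall r\in\mathbb{Z}_+ \text{ if } 0\in J\bigr\}.
\]
Starting from any common $\mathcal{K}$-eigenvector (which exists by Remark~\ref{jj25f} since $V$ is a nonzero $\ell$-weight module), repeatedly applying raising operators strictly increases the $\mathcal{K}$-weight under $\leq$, so by finite-dimensionality the process must terminate, yielding $V^0\neq 0$. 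The crucial step is to show that $V^0$ is $\mathbf{U}_q^J(0)$-invariant. Stability under $K_i^{\pm}$ and under $h_{i,s}$ with respect to the simple raising operators $\chi_{i',r}^+$ is immediate from the Drinfeld relations in Theorem~\ref{definingrelationQAA}. The main obstacle is the extra element $\chi_{\gamma_0,-1}^+$: I would resolve this by invoking Proposition~\ref{algebrengleich3}, which identifies $\mathbf{U}_q^J(+)$ with $\mathbf{U}_q^J\cap\langle \chi_{i,r}^+:i\in\mathring{I},\,r\in\mathbb{Z}\rangle$ and places $\chi_{\gamma_0,-1}^+$ inside $\mathbf{U}_q^J(+)$. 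Consequently the commutator $[C^{s/2}h_{i,s},\chi_{\gamma_0,-1}^+]$ also lies in $\mathbf{U}_q^J(+)$, preserving $V^0$.

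Once $V^0$ is known to be $\mathbf{U}_q^J(0)$-invariant, Remark~\ref{jj25f} provides a simultaneous $\mathbf{U}_q^J(0)$-eigenvector $v\in V^0$; by construction $v$ is a highest $\ell$-weight vector, and simplicity of $V$ forces $V=\mathbf{U}_q^Jv$, so $V\cong V(\Psi)$ for the corresponding $\ell$-weight $\Psi$. The condition \eqref{typ1gar} then follows from the type 1 assumption: each $K_{\gamma_j}$ acts on $v$ by $\prod_{i}\psi_{i,0}^{\mathbf{a}_i(\gamma_j)}$, which must be of the form $q_j^{n_j}$ with $n_j\in\mathbb{Z}_+$ by the $\mathfrak{sl}_2$-theory applied to the subalgebras $\mathcal{B}_q^j$ for $j\in J$.
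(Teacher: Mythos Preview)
Your proposal is correct and follows essentially the same route as the paper's proof: both use Corollary~\ref{trisecond} for parts (1) and (2), and for part (3) both locate a nonzero $V^0$ by a weight-increasing argument, establish its $\mathbf{U}_q^J(0)$-invariance via Proposition~\ref{algebrengleich3} (to handle the commutator of $h_{i,s}$ with $\chi_{\gamma_0,-1}^+$), extract a simultaneous eigenvector using Remark~\ref{jj25f}, and finish with $\mathfrak{sl}_2$-theory for the $\mathcal{B}_q^j$. The only difference is cosmetic: the paper also cites \eqref{stabil23} explicitly when checking $\mathbf{U}_q^J(0)$-stability of $V^0$ with respect to the non-simple raising operators $\chi_{\gamma_0,r-1}^+$ for $r\geq 1$, whereas you fold this into the phrase ``immediate from the Drinfeld relations''.
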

\subsection{} A positive root $\alpha\in \mathring{R}^+$ is said to be  \textit{repetition-free} if there exists an element $w=\mathbf{s}_{i_1}\cdots\mathbf{s}_{i_k}\in \mathring{W}$ and $i\in\mathring{I}$ such that the indices $i_1,\dots,i_k,i$ are all distinct and $w(\alpha_i)=\alpha$. For a repetition-free root $\alpha$, it is not hard to see that the element $w$ appearing in the definition can be chosen such that $w^{-1}\in \mathring{W}(\alpha)$. 
\begin{lem} We have
    $$\alpha \in \mathring{R}^+ \text{is repetition-free} \iff \begin{cases}
        \mathbf{a}_i(\alpha) \leq 1 \ \ \forall i\in \mathring{I}, & \text{if $\alpha$ is short} \\
        \mathbf{a}_i^{\vee}(\alpha) \leq 1 \ \ \forall i \in \mathring{I}, & \text{if $\alpha$ is long.}
    \end{cases}$$
    \begin{proof}
        Let $\alpha$ be a repetition-free root. By the comment preceding the lemma, there exists an element $w^{-1}\in \mathring{W}(\alpha)$ with $w(\alpha_i)=\alpha$, and the index $i$
together with all indices occurring in $w$ are pairwise distinct. Then
        $$|\text{supp}(\alpha)|= \ell(w) + 1 = \begin{cases}
        \text{ht}(\alpha), & \text{if $\alpha$ is short} \\
        \text{ht}^{\vee}(\alpha) , & \text{if $\alpha$ is long}
    \end{cases}$$
  and one direction of the proof follows. For the converse direction let $w^{-1}\in \mathring{W}(\alpha)$ and assume that $\alpha$ is not repetition-free. Thus, there must be some repetition among the involved indices, and we obtain
  $|\text{supp}(\alpha)|\leq \ell(w)$, which completes the proof.
    \end{proof}
\end{lem}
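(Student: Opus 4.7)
The plan is to deduce both directions from the recursive description of $w \in \mathring{W}(\alpha)$ given in the proof of Lemma~\ref{heightmin} together with the support information from Corollary~\ref{corhilf4q}. The basic identity to exploit is that $\mathrm{ht}(\alpha) \geq |\mathrm{supp}(\alpha)|$ (respectively $\mathrm{ht}^{\vee}(\alpha) \geq |\mathrm{supp}(\alpha)|$) with equality if and only if $\mathbf{a}_i(\alpha) \leq 1$ (respectively $\mathbf{a}_i^{\vee}(\alpha) \leq 1$) for every $i$.

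For the forward direction ($\Rightarrow$), suppose $\alpha$ is repetition-free. As noted in the excerpt, we may choose $w = \mathbf{s}_{i_1}\cdots \mathbf{s}_{i_k}$ with $w^{-1} \in \mathring{W}(\alpha)$ and $w(\alpha_i) = \alpha$ such that the indices $i_1, \ldots, i_k, i$ are pairwise distinct. Then $|I(w) \sqcup \{i\}| = \ell(w) + 1$, which by Lemma~\ref{heightmin} equals $\mathrm{ht}(\alpha)$ if $\alpha$ is short and $\mathrm{ht}^{\vee}(\alpha)$ if $\alpha$ is long. Corollary~\ref{corhilf4q} gives $I(w) \sqcup \{i\} \subseteq \mathrm{supp}(\alpha)$, so in the short case $\mathrm{ht}(\alpha) \leq |\mathrm{supp}(\alpha)|$, forcing equality and hence $\mathbf{a}_i(\alpha) \leq 1$ for all $i$; analogously $\mathrm{ht}^{\vee}(\alpha) \leq |\mathrm{supp}(\alpha)|$ in the long case, giving $\mathbf{a}_i^{\vee}(\alpha) \leq 1$.

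For the reverse direction ($\Leftarrow$), I would apply the inductive construction explicitly carried out in the proof of Lemma~\ref{heightmin}: iteratively pick $i_1, i_2, \ldots$ with $(\mathbf{s}_{i_{p-1}} \cdots \mathbf{s}_{i_1}(\alpha), \alpha_{i_p}) > 0$ until $\mathbf{s}_{i_1}\cdots \mathbf{s}_{i_k}(\alpha) = \alpha_{i_{k+1}}$ is simple, and set $w = \mathbf{s}_{i_k} \cdots \mathbf{s}_{i_1}$, so that $w(\alpha_{i_{k+1}}) = \alpha$ and $w^{-1} \in \mathring{W}(\alpha)$. In the short case the recursion yields
\[
\alpha = \alpha_{i_{k+1}} + \sum_{p=1}^{k} \alpha_{i_p},
\]
so if some index $j$ appeared $s \geq 2$ times among $i_1,\ldots,i_k$ (or once there and once as $i_{k+1}$) we would have $\mathbf{a}_j(\alpha) \geq 2$, contradicting the hypothesis; thus the $i_1, \ldots, i_{k+1}$ are pairwise distinct, as required. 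In the long case the analogous formula is
\[
\alpha = \alpha_{i_{k+1}} + \sum_{p=1}^{k} d_{i_p} \alpha_{i_p},
\]
and the conversion $\mathbf{a}_j^{\vee}(\alpha) = \mathbf{a}_j(\alpha)/d_j$ (valid since $d_\alpha = 1$) shows that a repetition among $i_1,\ldots,i_k$ forces $\mathbf{a}_j^{\vee}(\alpha) \geq 2$, while a coincidence of some $i_p$ with $i_{k+1}$ forces $\mathbf{a}_j^{\vee}(\alpha) = 1 + 1/d_j > 1$ if one counts only one repetition; either way contradicting $\mathbf{a}_j^{\vee}(\alpha) \leq 1$. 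Hence all indices are distinct and $\alpha$ is repetition-free.

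No step looks genuinely hard: the whole argument is essentially a bookkeeping of the multiplicities produced by the recursion in Lemma~\ref{heightmin}. The only point that requires a little care is the translation in the long case between the inequality on $\mathbf{a}_j^{\vee}(\alpha)$ and the coefficients $d_{i_p}$ appearing in the recursion, in particular the mixed case $j = i_{k+1}$ (which contributes a $1/d_j$) versus $j \in \{i_1,\ldots,i_k\}$ (which contributes $1$ per occurrence); handling both together is the main technical check.
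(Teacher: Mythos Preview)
Your proposal is correct and follows essentially the same approach as the paper. The forward direction is identical (both use Lemma~\ref{heightmin} and Corollary~\ref{corhilf4q} to force $|\mathrm{supp}(\alpha)|=\ell(w)+1$); for the converse, the paper phrases the same counting as the cardinality contradiction $|\mathrm{supp}(\alpha)|\le \ell(w)$ against $|\mathrm{supp}(\alpha)|=\ell(w)+1$, whereas you read off the coefficients directly from the recursion formula in Lemma~\ref{heightmin}. One small simplification you could make in the long case: since $\alpha_{i_{k+1}}$ is necessarily long (Weyl group preserves root length), you have $d_{i_{k+1}}=1$, so the mixed case $j=i_{k+1}=i_p$ already gives $\mathbf{a}_j^{\vee}(\alpha)\ge 2$ rather than merely $1+1/d_j$.
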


The first two parts of the following proposition can be derived from \cite[Proposition 3.5]{CPsl2} or \cite[Section 1.3]{JM14a}. 
\begin{prop}\label{idenga6} The following identities hold:
    
\begin{enumerate}
    \item  For $i\in \mathring{I}$, we have 
    $$\phi^+_{i}(u)=K_i \frac{\Lambda_i(uq_i^{-1})}{\Lambda_i(uq_i)},\ \ \Lambda_{i,r}=\frac{-q_i^{r}}{q_i^r-q_i^{-r}}\sum_{s=1}^{r}q_i^{-s}\phi^+_{i,s}\Lambda_{i,r-s}K_i^{-1},\ r\in\mathbb{N}.$$ \vspace{0,2cm}
     \item For $i\in \mathring{I}$, we have modulo the left ideal generated by $\mathcal{I}_{i,\infty}^{0}$ $$\Lambda_{i,r}\equiv(-1)^rq_i^{r(r-1)}\frac{(\chi_{i,0}^+)^{r}(\chi_{i,1}^-)^{r}}{([r]_{i}!)^2}K_i^{-r},\ \ r\in\mathbb{N}.$$ \vspace{0,2cm}
     \item Let $\mathring{\lie g}$ not of type $G_2$ and $\alpha\in\mathring{R}^+$ be repetition-free, and write $\alpha=w_{}(\alpha_i)$ as in the definition. Modulo the left ideal generated by $\chi_{\ell,p}^+$ with $p\geq 0$ and $\ell\in\mathrm{supp}(\alpha)$ we have 
     $$\mathbf{T}_w(\Lambda_i(u))\equiv\prod_{j\in\mathring{I}} \Lambda_j\left(q^{n_{j,1}}u\right)\cdots \Lambda_j\left(q^{n_{j,\mathbf{a}_j^{\vee}(\alpha)}}u\right)$$
     for some non-negative integers $n_{j,s}$, $j\in\mathring{I},\ 1\leq s\leq \mathbf{a}_j^{\vee}(\alpha)$.
\end{enumerate}
\begin{proof}
We show the last item by induction on $\ell(w)$, where the base case is clear. Write $w=\mathbf{s}_kw'$ with $\ell(w)=\ell(w')+1$, and note that $\beta=w'(\alpha_i)\in \mathring{R}^+$ is again a repetition-free, satisfying $k\notin\mathrm{supp}(\beta)\subseteq \mathrm{supp}(\alpha)$. By the induction hypothesis, we can write $\mathbf{T}_{w'}(\Lambda_i(u))$ in the desired form modulo the prescribed ideal. Now each summand of $\mathbf{T}_{k}(\chi_{\ell,p}^+)$, for $p\in\mathbb{Z}_+$ and $\ell\in \mathrm{supp}(\beta)$, is contained in $\mathcal{I}_{\ell', \infty}^{0}$ for some $\ell'\in\mathrm{supp}(\alpha)$, since $k\notin I(w')$. Finally, by acting further with $\mathbf{T}_k$ on $\mathbf{T}_{w'}(\Lambda_i(u))$, the claim follows from Corollary~\ref{identfuerphil0}.
\end{proof}
\end{prop}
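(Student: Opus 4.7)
The plan is to treat the three items in sequence.

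For part (1), one verifies the first identity directly from the generating-function definition \eqref{lam2waw}. A short computation gives
\[\frac{\Lambda_i(uq_i^{-1})}{\Lambda_i(uq_i)} = \exp\Bigl(\sum_{s\ge1}\tfrac{q_i^s-q_i^{-s}}{[s]_i}h_{i,s}u^s\Bigr) = \exp\Bigl((q_i-q_i^{-1})\sum_{s\ge 1}h_{i,s}u^s\Bigr),\]
which equals $K_i^{-1}\phi_i^+(u)$ by Theorem~\ref{definingrelationQAA}. Rewriting this as $\Lambda_i(q_iu)\phi_i^+(u) = K_i\Lambda_i(q_i^{-1}u)$ and equating coefficients of $u^r$, together with the fact that $K_i$ commutes with every $\Lambda_{i,s}$, yields the recursion.

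For part (2), I induct on $r$. For $r=1$, part (1) gives $\Lambda_{i,1}=-h_{i,1}$, and using $h_{i,1} = K_i^{-1}[\chi_{i,0}^+,\chi_{i,1}^-]$ together with the zero-weight commutation $K_i^{-1}\chi_{i,0}^+\chi_{i,1}^- = \chi_{i,0}^+\chi_{i,1}^- K_i^{-1}$ one obtains
\[\Lambda_{i,1} = -\chi_{i,0}^+\chi_{i,1}^-K_i^{-1} + K_i^{-1}\chi_{i,1}^-\chi_{i,0}^+,\]
whose last summand lies in $\mathcal{I}_{i,\infty}^0$. For the inductive step one substitutes $\phi_{i,s}^+=(q_i-q_i^{-1})[\chi_{i,0}^+,\chi_{i,s}^-]$ (valid modulo the central ideal $(C-1)$) into the recursion from (1), applies the inductive hypothesis to each $\Lambda_{i,r-s}$, and commutes all factors $\chi_{i,0}^+$ to the leftmost position, discarding the resulting terms that lie in $\mathcal{I}_{i,\infty}^0$. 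The numerical constant $(-1)^rq_i^{r(r-1)}/([r]_i!)^2$ then emerges from a standard $\mathbf{U}_{q_i}(\mathfrak{sl}_2)$-calculation as in \cite{CPsl2,JM14a}.

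For part (3), I induct on $\ell(w)$; the base case $\ell(w)=0$ is immediate. In the inductive step, write $w=\mathbf{s}_kw'$ with $\ell(w)=\ell(w')+1$ and set $\beta=w'(\alpha_i)$. Because $\alpha$ is repetition-free and $\alpha=\mathbf{s}_k(\beta)$, the root $\beta$ is again repetition-free with $k\notin\mathrm{supp}(\beta)\subseteq\mathrm{supp}(\alpha)$. By induction, $\mathbf{T}_{w'}(\Lambda_i(u))$ coincides modulo the left ideal generated by $\{\chi_{\ell,p}^+:p\ge 0,\ \ell\in\mathrm{supp}(\beta)\}$ with a product of factors $\Lambda_j(q^{m_{j,s}}u)$ of the prescribed form. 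Applying $\mathbf{T}_k$ term by term and invoking Corollary~\ref{identfuerphil0} converts each factor into a product of $\Lambda_{j'}(q^{n}u)$'s with new non-negative exponents; since the corollary only treats Cartan entries $a_{j'k}\in\{2,0,-1,-2\}$, this is exactly the place where the hypothesis that $\mathring{\lie g}$ is not of type $G_2$ is needed. It then remains to check that $\mathbf{T}_k$ sends the inductive ideal into the left ideal generated by $\{\chi_{\ell',p}^+:p\ge 0,\ \ell'\in\mathrm{supp}(\alpha)\}$: since $k\notin I(w')$ and $\ell\ne k$ for $\ell\in\mathrm{supp}(\beta)$, each Lusztig image $\mathbf{T}_k(\chi_{\ell,p}^+)$ decomposes as a sum of monomials each having on its leftmost position a generator $\chi_{\ell',p'}^+$ with $\ell'\in\{\ell,k\}\subseteq\mathrm{supp}(\alpha)$. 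The main obstacle is the inductive step of part (2), where the precise arithmetic of quantum integers must be carried out while repeatedly rearranging non-commuting factors modulo $\mathcal{I}_{i,\infty}^0$; once this is done, the remaining assertions reduce to bookkeeping with the braid-group action.
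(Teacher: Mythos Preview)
Your proposal is correct and follows essentially the same approach as the paper. The paper defers parts (1) and (2) to the literature (specifically \cite{CPsl2} and \cite{JM14a}), so your sketches there simply fill in details the authors chose to omit; for part (3) your inductive argument on $\ell(w)$, the factorization $w=\mathbf{s}_kw'$, the observation that $\beta=w'(\alpha_i)$ is again repetition-free with $k\notin\mathrm{supp}(\beta)$, and the appeal to Corollary~\ref{identfuerphil0} are exactly what the paper does.

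One small slip: when you track $\mathbf{T}_k(\chi_{\ell,p}^+)$ for $\ell\in\mathrm{supp}(\beta)$, you write that each monomial has a generator $\chi_{\ell',p'}^+$ in the \emph{leftmost} position. Since $\mathcal{I}_{\ell',\infty}^0$ is a \emph{left} ideal, you need the generator on the \emph{right}. The underlying computation still works: using $\mathbf{T}_k\mathbf{T}_{\varpi_\ell}=\mathbf{T}_{\varpi_\ell}\mathbf{T}_k$ for $k\ne\ell$ one finds that $\mathbf{T}_k(\chi_{\ell,p}^+)$ is a $\mathbb{C}(q)$-linear combination of terms $(\chi_{k,0}^+)^{(a)}\chi_{\ell,p}^+(\chi_{k,0}^+)^{(b)}$, each of which lies in $\mathcal{I}_{k,\infty}^0$ (if $b>0$) or $\mathcal{I}_{\ell,\infty}^0$ (if $b=0$). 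With that correction the ideal bookkeeping is exactly as in the paper.
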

\begin{cor}\label{coxident1}
    Let $\mathring{\lie g}$ not of type $G_2$ and $\gamma_0\in\mathring{R}^+$ be repetition-free and write $w(\alpha_i)=\gamma_0$ for some $w^{-1}\in \mathring{W}(\gamma_0)$. Then, modulo the left ideal generated by $\chi_{\ell,p}^+$ with $p\geq 0$ and $\ell\in\mathrm{supp}(\gamma_0)$, we have up to a nonzero scalar multiple 
    $$(\mathbf{T}_w(\chi_{i,0}^+))^r(\chi_{\gamma_0,1}^-)^{r}K_{\gamma_0}^{-r}
    \equiv \left(\prod_{j\in\mathring{I}} \Lambda_j(q^{n_{j,1}}u)\cdots \Lambda_j\left(q^{n_{j,\mathbf{a}_j^{\vee}(\gamma_0)}}u\right)\right)_r$$
    for some non-negative integers $n_{j,s}$, $j\in\mathring{I},\ 1\leq s\leq \mathbf{a}_j^{\vee}(\gamma_0)$ and the subscript on the right-hand side indicates that we are taking the $r$-th coefficient of the series.
    \begin{proof}  We first note that each summand of $\mathbf{T}_{w}(\chi_{i,r}^+)$, $r\in\mathbb{Z}_+$, is contained in $\mathcal{I}_{\ell,\infty}^{0}$ for some $\ell\in\mathrm{supp}(\gamma_0)$, since $i\notin I(w)$. Hence with Proposition~\ref{idenga6}(2) we get (up to a non zero scalar multiple)
\begin{align*}(\mathbf{T}_{w_{}}(\chi_{i,0}^+))^r(\chi_{\gamma_0,1}^-)^{r}K_{\gamma_0}^{-r}&= \mathbf{T}_{w}\left((\chi_{i,0}^+)^{r}(\chi_{i,1}^-)^{r}K_i^{-r}\right)\equiv \mathbf{T}_{w}(\Lambda_{i,r}).\end{align*} 
Now the claim follows with Proposition~\ref{idenga6}(3).
    \end{proof}
\end{cor}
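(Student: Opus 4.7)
The plan is to combine the identity from Proposition~\ref{idenga6}(2) with the braid-group calculation in Proposition~\ref{idenga6}(3), by transporting everything via the Lusztig automorphism $\mathbf{T}_w$. First I would start from
\[
\Lambda_{i,r} \equiv (-1)^r q_i^{r(r-1)} \frac{(\chi_{i,0}^+)^r (\chi_{i,1}^-)^r}{([r]_i!)^2}K_i^{-r} \pmod{\mathcal{I}_{i,\infty}^0},
\]
supplied by Proposition~\ref{idenga6}(2), and apply $\mathbf{T}_w$ to both sides. The elementary identifications are: $\mathbf{T}_w(K_i^{-r}) = K_{\gamma_0}^{-r}$ since Lusztig's operators act as $w$ on the root lattice; and $\mathbf{T}_w(\chi_{i,1}^-) = \kappa(i)\kappa(\gamma_0)^{-1}\chi_{\gamma_0,1}^-$, which follows by combining $\chi_{i,1}^- = \kappa(i)\mathbf{T}_{\varpi_i}(F_i)$ with $\chi_{\gamma_0,1}^- = \kappa(\gamma_0)\mathbf{T}_w\mathbf{T}_{\varpi_i}(F_i)$ from Proposition~\ref{rel3210}(2) (valid since we exclude the type $G_2$ case). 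Collecting scalars yields, modulo $\mathbf{T}_w(\mathcal{I}_{i,\infty}^0)$,
\[
\mathbf{T}_w(\Lambda_{i,r}) \equiv c_r \cdot (\mathbf{T}_w(\chi_{i,0}^+))^r (\chi_{\gamma_0,1}^-)^r K_{\gamma_0}^{-r}
\]
for a nonzero scalar $c_r$.

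Next I would verify that $\mathbf{T}_w(\mathcal{I}_{i,\infty}^0)$ is contained in the left ideal $\mathcal{J}$ generated by $\chi_{\ell,p}^+$ with $p\geq 0$ and $\ell \in \mathrm{supp}(\gamma_0)$. Since $\gamma_0$ is repetition-free and $w^{-1}\in\mathring{W}(\gamma_0)$ with $w(\alpha_i)=\gamma_0$, the index $i$ does not appear in any reduced expression for $w$, and by Corollary~\ref{corhilf4q} we have $I(w)\sqcup\{i\}\subseteq\mathrm{supp}(\gamma_0)$. A short induction on $\ell(w)$, applying one $\mathbf{T}_k$ at a time with $k\in I(w)\subseteq\mathrm{supp}(\gamma_0)$ and tracking the raising operators that appear, shows that each $\mathbf{T}_w(\chi_{i,p}^+)$ decomposes into monomials each containing a factor in $\mathcal{I}_{\ell,\infty}^0$ for some $\ell\in\mathrm{supp}(\gamma_0)$, hence lies in $\mathcal{J}$.

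Finally, Proposition~\ref{idenga6}(3) gives the desired rewriting:
\[
\mathbf{T}_w(\Lambda_i(u)) \equiv \prod_{j\in\mathring{I}} \Lambda_j(q^{n_{j,1}}u)\cdots \Lambda_j\bigl(q^{n_{j,\mathbf{a}_j^{\vee}(\gamma_0)}}u\bigr)\pmod{\mathcal{J}}.
\]
Equating the $r$-th coefficients of both expressions for $\mathbf{T}_w(\Lambda_{i,r})$, and absorbing $c_r$ into the asserted nonzero scalar, yields the corollary.

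The main obstacle is the bookkeeping for the ideals under $\mathbf{T}_w$—ensuring that the ``error'' space $\mathbf{T}_w(\mathcal{I}_{i,\infty}^0)$ really sits inside $\mathcal{J}$. However, the serious technical content is already packaged into Proposition~\ref{idenga6}(3) (which in turn rests on Corollary~\ref{identfuerphil0}), so with those results in hand the corollary reduces to a scalar-tracking exercise.
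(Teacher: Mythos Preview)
Your proposal is correct and follows essentially the same approach as the paper: apply $\mathbf{T}_w$ to the identity in Proposition~\ref{idenga6}(2), identify $\mathbf{T}_w(\chi_{i,1}^-)$ with $\chi_{\gamma_0,1}^-$ up to a sign via Proposition~\ref{rel3210}(2), verify that the error ideal $\mathbf{T}_w(\mathcal{I}_{i,\infty}^0)$ lands in $\mathcal{J}$ using $i\notin I(w)$ and $I(w)\sqcup\{i\}\subseteq\mathrm{supp}(\gamma_0)$, and finish with Proposition~\ref{idenga6}(3). Your write-up is in fact more explicit than the paper's about the scalar identification and the ideal-tracking step.
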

\begin{rem}
For the classification of simple objects in $\mathcal{C}_q$, the identity stated in Corollary~\ref{coxident1} will be essential. We conjecture that the assumptions that $\gamma_0$ is repetition-free and that $\mathring{\lie g}$ is not of type $G_2$ are actually not needed. If Corollary~\ref{coxident1} holds without these assumptions, then all subsequent results continue to hold in full generality.
\end{rem}
\textit{From now on, we assume that $\gamma_0$ is repetition-free whenever $0\in J$ and $\mathring{\lie g}$ is not of type $G_2$.}
\subsection{}\label{subsectionmap}
We summarize a few immediate consequences from the discussion so far. Every simple object 
in $\mathcal{C}_q$ is of the form $V=V(\Psi)$; let $v$ be its highest $\ell$-weight vector. Then the discussion so far implies the following: 
\begin{enumerate}[(i)]
    \item  There exists $\mu_V\in \mathring{P}_0^+$ with 
$$\prod_{i\in\mathring{I}}\psi_{i,0}^{\mathbf{a}_i(\gamma_j)}= q_j^{\mu_V(\gamma_j^{\vee})},\ \ \forall j\in J.$$ 
\item Given $i\in\mathring{I}$ there exists $r_i\in\mathbb{Z}_+$ with $(\chi_{i,1}^-)^{r_i+1}v=0$ as $V(\Psi)$ is finite-dimensional. 
Then Proposition~\ref{idenga6}(2) gives the existence of a polynomial 
$\boldsymbol{\pi}_i(u)\in\mathbb{K}[u]$ with constant term 1 such that
\begin{equation}\label{ratio}\Psi_i(u)=\psi_{i,0}\frac{\boldsymbol{\pi}_i(uq_i^{-1})}{\boldsymbol{\pi}_i(uq_i)}\end{equation}
Moreover, if $j\in \mathring{I}\cap J$ we have $$K_{j}v=K_{\gamma_{\bar{j}}}v=q_j^{\mu_V(\gamma_{\bar{j}}^{\vee})}v,\ \ \chi_{j,0}^+v=0\implies (\chi_{j,0}^-)^{\mu_V(\gamma_{\bar{j}}^{\vee})+1}v=0.$$ 
Thus, by applying $\mathbf{T}_{\varpi_i}^{-1}$ to the identity in Proposition~\ref{idenga6}(2), gives
$$\mathrm{deg}(\boldsymbol{\pi}_j(u))\leq \mu_V(\gamma_{\bar{j}}^{\vee}),\ \forall j\in \mathring{I}\cap J.$$
Similarly, 
$$K_{\gamma_{0}}v=q_0^{\mu_V(\gamma_{0}^{\vee})}v,\ \ \chi_{\gamma_0,-1}^+v=0\implies (\chi_{\gamma_0,1}^-)^{\mu_V(\gamma_{0}^{\vee})+1}v=0$$
and Corollary~\ref{coxident1} implies 
$$\mathrm{deg}(\boldsymbol{\pi}_0(u))\leq \mu_V(\gamma_{0}^{\vee}),\ \text{ where }\ \boldsymbol{\pi}_0(u):=\prod_{i\in\mathring{I}}\boldsymbol{\pi}_i(u)^{\mathbf{a}_i^{\vee}(\gamma_0)}.$$
\item Since $V(\Psi)$ is an $\ell$-weight module, the polynomials $\boldsymbol{\pi}_i(u)$ have to split over the field $\mathbb{K}$ and vice versa. The reasoning follows the same structure as in \cite[Theorem 4.19 and Remark 4.3]{JA11a}, so we omit the details.
\end{enumerate}
\begin{defn} As in the classical case, we define $\mathcal{P}^+_{q}$  to be the set of tuples
$$\boldsymbol{\pi}_{\mu,\mathbf{b}}=(\boldsymbol{\pi}_{1}(u),\dots,\boldsymbol{\pi}_{n}(u),\mu,\mathbf{b}),\ \ \boldsymbol{\pi}_{i}(u)\in\mathbb{K}[u],\ \ \mu\in \mathring{P}_0^+,\ \  \mathbf{b}\in \mathbb{K}^{|\mathring{I}\backslash \mathring{I}\cap J|}$$
satisfying 
$$\boldsymbol{\pi}_i(u) \text{ splits over $\mathbb{K}$},\ \ \boldsymbol{\pi}_i(0)=1,\ \forall i\in\mathring{I},\ \ \mathrm{deg}(\boldsymbol{\pi}_{\bar{j}}(u))\leq \mu(\gamma^{\vee}_j) \ \forall j\in J$$ 

$$\prod_{i\in \mathring{I}\backslash \mathring{I}\cap J}b^{\mathbf{a}_i(\gamma_0)}_i=q^{\epsilon_0\mu(\gamma_0^{\vee})-\sum_{i\in \mathring{I}\cap J}\epsilon_i\mathbf{a}_i(\gamma_0)\mu(\gamma_{\bar{i}}^{\vee})}, \ \ \text{if $0\in J$}$$
\end{defn}
The main result of this section is the following; recall that $\gamma_0$ is assumed to be repetition-free and $\mathring{\lie g}$ is not of type $G_2$.
\begin{thm}\label{mainthmrep}
    We have a one-to-one correspondence 
    $$\{\text{simple objects in $\mathcal{C}_q$}\}/\sim \ \ \longrightarrow \mathcal{P}^+_{q}$$
$$V=V(\Psi)\rightarrow \boldsymbol{\pi}_{\mu_V,\mathbf{b}}$$
where $\mu_V$ is described in Subsection~\ref{subsectionmap}(i), the polynomials $\boldsymbol{\pi}_i(u)$ in Subsection~\ref{subsectionmap}(ii) and $\mathbf{b}$ is determined by $b_i=\psi_{i,0}$ for $i\in \mathring{I}\backslash \mathring{I}\cap J$.
\begin{proof}
The discussion so far defines a well-defined injective map. Let $\boldsymbol{\pi}_{\mu,\mathbf{b}} \in \mathcal{P}^+_{q}$, and denote by $\Psi$ the $\ell$-weight determined by the above data, i.e.,
$$\Psi_{i,0}=b_i,\ i\in \mathring{I}\backslash \mathring{I}\cap J,\ \Psi_{i,0}=q_i^{\mu(\gamma_{\bar{i}}^{\vee})}, \ i\in \mathring{I}\cap J$$
and $\Psi_{i,r}$ for $r>0$ is determined by the ratio's of the polynomials. Surjectivity will be established if we can show that the module $V(\boldsymbol{\pi}_{\mu,\mathbf{b}}) := V(\Psi)$ is finite-dimensional. This, in turn, will follow from the next two statements.
\begin{enumerate}[(a)]
\item If all polynomials are constant, i.e. $\boldsymbol{\pi}_i(u)=1$ for all $i\in\mathring{I}$, then  $V(\boldsymbol{\pi}_{\mu,\mathbf{b}})$ is finite-dimensional.
\item Let $V(\boldsymbol{\pi})$ be the irreducible representation for the quantum group $\mathbf{U}_q$ associated to $\boldsymbol{\pi})$ and consider it as a representation for $\mathbf{U}^J_q$ by restriction. Then there is a map
    $$M(\Psi)\rightarrow V(\boldsymbol{\pi})\otimes V(\boldsymbol{\pi'}_{\mu',\mathbf{b'}}),\ 1\mapsto v_1\otimes v_2$$
    where $v_1\otimes v_2$ is the tensor product of highest $\ell$-weights and  $$b_i'=q_i^{-\mathrm{deg}(\boldsymbol{\pi}_i)}\Psi_{i,0},\ i\in \mathring{I}\backslash \mathring{I}\cap J,$$$$
    \mu'(\gamma_j^{\vee})=\mu(\gamma_j^{\vee})-\mathrm{deg}(\boldsymbol{\pi}_{\bar{j}}(u)), \ j\in J,\ \   \boldsymbol{\pi}'_i(u)=1\  \forall i\in\mathring{I}.$$
    \end{enumerate}
    To see this, we proceed as follows. It is straightforward to verify that $\boldsymbol{\pi'}_{\mu',\mathbf{b'}} \in \mathcal{P}_q^+$. Therefore, by part (a) and the general theory of ordinary quantum affine algebras, the tensor product $V(\boldsymbol{\pi})\otimes V(\boldsymbol{\pi'}_{\mu',\mathbf{b'}})$ is finite-dimensional. In particular, by part (b), the module $M(\Psi)$ admits a non-zero finite-dimensional quotient. This implies that the corresponding proper submodule must be contained in the unique maximal submodule. 
    
    Hence, it suffices to establish parts (a) and (b) where part (b) is immediate from Theorem~\ref{Dami}(1),(3) by recalling that $\chi_{i,r}^+,\chi_{\gamma_0,r-1}^+\in\mathbf{U}_q^j(+)=\mathbf{U}_q^J\cap \langle \chi_{i,r}^+: i\in\mathring{I},\ s\in\mathbb{Z}\rangle$.
    
  In the remainder of the proof, we will establish part (a). So assume that all polynomials are constant. This implies that $h_{i,r}v=0$ for all $i\in\mathring{I}$ and $r>0$. We now consider the submodule generated by all elements of the form
\begin{equation}\label{subgenir0}\chi_{\beta_1,r_1}^{-}\cdots \chi_{\beta_s,r_s}^-v,\ s\geq 1, \ (r_i,\beta_i)\in \{(r,\beta)\in \mathbb{Z}_+\times \mathring{R}^+: r>0 \text{ or }\mathrm{supp}(\beta)\subseteq J\}\end{equation}
such that there exists $i\in\{1,\dots,s\}$ with $\mathrm{supp}(-\beta_i+r_i\delta)\nsubseteq J$. We claim that this submodule does not contain the highest $\ell$-weight vector $v$. By the triangular decomposition, it suffices to show that for any homogeneous element  $X\in \mathbf{U}_q^J(+)$ with respect to the $Q$-grading, we have
$$Yv\neq v,\ \ Y=X\chi_{\beta_1,r_1}^{-}\cdots \chi_{\beta_s,r_s}^-$$
where $\chi_{\beta_1,r_1}^{-}\cdots \chi_{\beta_s,r_s}^-v$ is as in \eqref{subgenir0}. Recall that $\mathbf{U}_q^J(+)$ is generated by $\chi_{i,r}^+$, $i\in\mathring{I}$, $r\geq 0$ and $\chi_{\gamma_0,-1}^+$ (if $0\in J$). Since $\mathrm{supp}(\gamma_0-\delta)\subseteq J$ and $\mathrm{supp}(-\beta_s+r_s\delta)\nsubseteq J$ the weight of $Y$ lies in $p\alpha_k+Q_{I\backslash\{k\}}$ for some $k\notin J$ and $p>0$. Rewriting $Y$ again in the second triangular decomposition and using $h_{i,r}v=0$ and $\mathbf{U}_q^J(+)v=0$, we conclude that the vector $Yv$ cannot be a non-zero multiple of $v$, for weight reasons.

\textit{Remark: The relation $h_{i,r}v=0$ is crucial in the above argument, otherwise $Yv$ could be proportional to an element of $\mathbf{U}_q^J(0)v$. Also if $\mathrm{supp}(-\beta+r\delta)\subseteq J$, then for example, $\chi_{\beta,-1}^{+}\chi_{\beta,1}^{-}v$ could be a non-zero multiple of $v$.}

Therefore, by the irreducibility of the representation, the aforementioned submodule must be zero, and therefore $V(\boldsymbol{\pi}_{\mu,\mathbf{b}})$ is a cyclic module for $\mathring{\mathbf{U}}_q^J$. Note that if $X^{\pm}\in \mathbf{U}_q^J(\pm)$ are homogeneous such that the weight of $X^+X^-$ is non-zero and its support is not contained in $J$, then we can express $X^+X^-$ in the second triangular decomposition whose projection onto $\mathbf{U}_q^J(-)\mathcal{K}$ (if it is non-zero) must contain generators $\chi_{\beta,r}^-$ with $\mathrm{supp}(-\beta+r\delta)\nsubseteq J$. In particular, by the discussion above $X^+X^-v=0$. 
Now consider the submodule generated by $(\chi_{j,0}^{-})^{\mu(\gamma_{\bar{j}}^{\vee})+1}v$ for some $j\in\mathring{I}\cap J$. We have 
\begin{equation}\label{hhttbbnn}\chi_{\gamma_0,-1}^+(\chi_{j,0}^{-})^{\mu(\gamma_{\bar{j}}^{\vee})+1}v=0,\ \ \chi_{i,r}^{+}(\chi_{j,0}^{-})^{\mu(\gamma_{\bar{j}}^{\vee})+1}v=0,\ \ i\in\mathring{I},\ r\geq 0.\end{equation}
The first part of \eqref{hhttbbnn} follows from the commutativity relation 
$$[\chi_{\gamma_0,-1}^+,\chi_{j,0}^{-}]=-\kappa(\gamma_0)\mathbf{T}_{w_{\circ}}([F_0K_0,\mathbf{T}_{w_{\circ}}(F_j)])=\kappa(\gamma_0)\mathbf{T}_{w_{\circ}}([F_0K_0,K_{\bar{j}}^{-1}E_{\bar{j}}])=0.$$ The second part of \eqref{hhttbbnn} is clear for $i\neq j$ and otherwise using standard $\mathfrak{sl}_2$-theory will give the claim for $r=0$, while the case $r>0$ is a consequence of the above discussion, since $\mathrm{supp}(-\mu(\gamma_{\bar{j}}^{\vee})\alpha_j+r\delta)\nsubseteq J$.

Thus, again by the irreducibility we must have $(\chi_{j,0}^{-})^{\mu(\gamma_{\bar{j}}^{\vee})+1}v=0$. In the final step consider the submodule generated by $(\chi_{\gamma_0,1}^-)^{\mu(\gamma_{0}^{\vee})+1}v$. Again by $\mathfrak{sl}_2$-theory we have 
$\chi_{\gamma_0,-1}^+(\chi_{\gamma_0,1}^-)^{\mu(\gamma_{0}^{\vee})+1}v=0$ and it remains to argue why 
$$\chi_{i,r}^+(\chi_{\gamma_0,1}^-)^{\mu(\gamma_{0}^{\vee})+1}v=0,\ \ i\in\mathring{I},\ r\geq 0$$
holds. Now $$\mathrm{supp}((\mu(\gamma_{0}^{\vee})+1)(-\gamma_0+\delta)+\alpha_i+r\delta)\nsubseteq J$$
unless $r=0$ and $i\in \mathring{I}\cap J$ in which case the claim is again clear, since the elements commute.
Thus $V$ is in fact irreducible for $\mathring{\mathbf{U}}_q^J$ and finite-dimensional.
\end{proof}
\end{thm}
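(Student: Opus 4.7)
The plan is to establish bijectivity of the map $V(\Psi)\mapsto \boldsymbol{\pi}_{\mu_V,\mathbf{b}}$. The discussion in Subsection~\ref{subsectionmap} already provides well-definedness (using the repetition-free hypothesis on $\gamma_0$ through Corollary~\ref{coxident1} to bound $\deg(\boldsymbol{\pi}_0(u))$) and injectivity (two simple highest $\ell$-weight modules are isomorphic iff they share the same $\ell$-weight). The entire content therefore reduces to surjectivity: for any $\boldsymbol{\pi}_{\mu,\mathbf{b}}\in \mathcal{P}_q^+$, one must show the simple module $V(\Psi)$ built from $\Psi_i(u)=\Psi_{i,0}\,\boldsymbol{\pi}_i(uq_i^{-1})/\boldsymbol{\pi}_i(uq_i)$ is finite-dimensional.

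My first step is a tensor product reduction. Given $\boldsymbol{\pi}_{\mu,\mathbf{b}}\in \mathcal{P}_q^+$, factor $\Psi$ as the ``polynomial part'' coming from an ordinary Drinfeld datum $\boldsymbol{\pi}$ (ignoring $(\mu,\mathbf{b})$) and a ``constant residual part'' $\boldsymbol{\pi}'_{\mu',\mathbf{b}'}$ where all polynomials are trivial, with $\mu'(\gamma_j^{\vee})=\mu(\gamma_j^{\vee})-\deg(\boldsymbol{\pi}_{\bar{j}}(u))$ and $b'_i=q_i^{-\deg(\boldsymbol{\pi}_i)}\Psi_{i,0}$. The divisibility constraint on $\deg(\boldsymbol{\pi}_{\bar{j}}(u))$ in $\mathcal{P}_q^+$ ensures $\boldsymbol{\pi}'_{\mu',\mathbf{b}'}\in \mathcal{P}_q^+$. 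Using Theorem~\ref{Dami}(1),(3), which tells us that $\Delta(\chi_{i,r}^+)$ and $\Delta(\phi_{i,r}^+)$ behave compatibly modulo $\mathbf{U}_q\otimes \mathbf{U}_q X^+$ (and $\chi_{\gamma_0,-1}^+\in \mathbf{U}_q^J(+)$ by Proposition~\ref{algebrengleich3}), the tensor product of highest $\ell$-weight vectors of $V(\boldsymbol{\pi})$ and $V(\boldsymbol{\pi}'_{\mu',\mathbf{b}'})$ is a highest $\ell$-weight vector for $\Psi$. This produces a nonzero map $M(\Psi)\to V(\boldsymbol{\pi})\otimes V(\boldsymbol{\pi}'_{\mu',\mathbf{b}'})$; since $V(\boldsymbol{\pi})$ is finite-dimensional by the Chari--Pressley classification, once we handle the constant-polynomial case we get a finite-dimensional quotient of $M(\Psi)$, which must surject onto the unique simple quotient $V(\Psi)$.

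It remains to prove $V(\boldsymbol{\pi}'_{\mu',\mathbf{b}'})$ is finite-dimensional when all polynomials are trivial. Here the highest $\ell$-weight vector $v$ satisfies $h_{i,r}v=0$ for all $r>0$ in addition to $\mathbf{U}_q^J(+)v=0$. My strategy is to prove that $V(\boldsymbol{\pi}'_{\mu',\mathbf{b}'})$ is in fact a module for the finite-dimensional quantum group $\mathring{\mathbf{U}}_q^J$, after which finite-dimensionality follows from the standard highest-weight theory for $\mathring{\mathbf{U}}_q^J$ (whose finite-dimensional simples are parametrized by $\mathring{P}_0^+$). Concretely, I consider the submodule $W$ generated by all elements $\chi_{\beta_1,r_1}^-\cdots \chi_{\beta_s,r_s}^- v$ in which at least one factor satisfies $\mathrm{supp}(-\beta_i+r_i\delta)\not\subseteq J$; by the second triangular decomposition (Corollary~\ref{trisecond}) combined with the fact that any homogeneous $X\in\mathbf{U}_q^J(+)$ applied to such a vector changes weight by something with support outside $J$, together with $h_{i,r}v=0$ and $\mathbf{U}_q^J(+)v=0$, no multiple of $v$ lies in $W$. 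Hence $W=0$ by irreducibility, so $V(\boldsymbol{\pi}'_{\mu',\mathbf{b}'})$ is cyclic over $\mathring{\mathbf{U}}_q^J$.

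The hard part, and the final step, is to verify the two Serre-type bounds $(\chi_{j,0}^-)^{\mu'(\gamma_{\bar{j}}^\vee)+1}v=0$ for $j\in \mathring{I}\cap J$ and $(\chi_{\gamma_0,1}^-)^{\mu'(\gamma_0^\vee)+1}v=0$ (when $0\in J$), which exhibit $V(\boldsymbol{\pi}'_{\mu',\mathbf{b}'})$ as a quotient of the Verma module for $\mathring{\mathbf{U}}_q^J$ of dominant highest weight $\mu'$. The first bound is a routine application of $\mathfrak{sl}_2$-theory inside $\mathcal{B}_q^j$. The second is the genuinely delicate one: the simple commutator $[\chi_{\gamma_0,-1}^+,(\chi_{\gamma_0,1}^-)^N]$ expands into Cartan-type terms controlled by $\Lambda_{i,r}$, which is precisely where the repetition-free hypothesis enters via Corollary~\ref{coxident1}. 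The same analysis as above, using that $\mathrm{supp}((\mu'(\gamma_0^\vee)+1)(-\gamma_0+\delta)+\alpha_i+r\delta)\not\subseteq J$ unless $r=0$ and $i\in\mathring{I}\cap J$, then reduces all remaining checks to commutativity or $\mathfrak{sl}_2$-theory, and completes the proof.
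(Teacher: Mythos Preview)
Your proposal is correct and follows essentially the same route as the paper: reduce surjectivity to the constant-polynomial case via a tensor product factorization (using Theorem~\ref{Dami}), then in the constant case show the ``bad'' submodule $W$ vanishes by a weight/support argument exploiting $h_{i,r}v=0$, and finally verify the Serre-type relations to identify $V$ with a finite-dimensional simple $\mathring{\mathbf{U}}_q^J$-module.

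One misattribution worth correcting: you describe the bound $(\chi_{\gamma_0,1}^-)^{\mu'(\gamma_0^\vee)+1}v=0$ as the delicate step where Corollary~\ref{coxident1} and the repetition-free hypothesis enter, via the commutator $[\chi_{\gamma_0,-1}^+,(\chi_{\gamma_0,1}^-)^N]$ expanding into $\Lambda_{i,r}$-terms. This is not so. The subalgebra $\mathcal{B}_q^0=\langle \chi_{\gamma_0,\mp 1}^{\pm}, C^{\mp}K_{\gamma_0}^{\pm}\rangle$ is a genuine copy of $\mathbf{U}_{q_0}(\mathfrak{sl}_2)$, so $\chi_{\gamma_0,-1}^+(\chi_{\gamma_0,1}^-)^{\mu'(\gamma_0^\vee)+1}v=0$ is plain $\mathfrak{sl}_2$-theory, exactly as for the $\mathcal{B}_q^j$ with $j\in\mathring{I}\cap J$. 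The remaining checks $\chi_{i,r}^+(\chi_{\gamma_0,1}^-)^{\mu'(\gamma_0^\vee)+1}v=0$ are handled by the support argument you correctly state at the end. Corollary~\ref{coxident1} (and hence the repetition-free assumption) is used \emph{only} in the forward direction, in Subsection~\ref{subsectionmap}(ii), to bound $\deg(\boldsymbol{\pi}_0(u))\leq \mu_V(\gamma_0^\vee)$; the surjectivity argument for the constant-polynomial case does not invoke it.
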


\bibliographystyle{plain}
\bibliography{bibfile}

\end{document}